\documentclass{amsart}
\usepackage{longsalch, xy, eufrak}
\xyoption{all}
\title{Ravenel's Global Conjecture is true.}
\date{April 2015.}
\begin{document}
\maketitle
\begin{abstract}
I prove Ravenel's 1983 ``Global Conjecture'' on $\Ext^1$ over the classifying Hopf algebroid of formal $A$-modules,
equivalently, the first flat cohomology group $H^1_{fl}$ of the moduli stack $\mathcal{M}_{fmA}$ of formal $A$-modules. I then
show that the Hecke $L$-functions of certain Gro\ss{e}ncharakters of Galois extensions $K/\mathbb{Q}$ can be 
computed from $H^1_{fl}(\mathcal{M}_{fmA})$, and vice versa; as a consequence I show that, for a large class of Galois extensions 
of $\mathbb{Q}$, two extensions $K,L$ are arithmetically equivalent (i.e., they have the same Dedekind zeta-function) if and only if
the flat cohomology groups $H^1_{fl}(\mathcal{M}_{fm\mathcal{O}_K})$ and $H^1_{fl}(\mathcal{M}_{fm\mathcal{O}_L})$ agree.
\end{abstract}
\tableofcontents

\section{Introduction.}

In this paper\footnote{This paper is the sixth in a series of papers on formal groups with complex multiplication and their applications in homotopy theory, but it can be read without reference to the other papers in the series.} I prove the ``Global Conjecture'' from Ravenel's 
1983 paper~\cite{MR745362}. This conjecture is about the cohomology groups of the moduli stack of one-dimensional formal $A$-modules, or equivalently,
$\Ext$ over the classifying Hopf algebroid $(L^A,L^AB)$ of formal $A$-modules;
I will now explain a bit about what this means.

A ``formal $A$-module'' is a formal group law $F$ over an $A$-algebra $R$
which is equipped with a ring homomorphism $\rho: A \rightarrow \End(F)$
such that the endomorphism $\rho(a)\in \End(F) \subseteq R[[X]]$
is congruent to $aX$ modulo $X^2$. Morally, $F$ is a ``formal group law with complex multiplication by $A$.''
An excellent introductory reference for formal $A$-modules is chapter 21 of~\cite{MR2987372}. Formal $A$-modules arise
in algebraic geometry, for example, in Lubin and Tate's famous theorem (in~\cite{MR0172878}) on the abelian closure of a $p$-adic number field, and for another example, in Drinfeld's $p$-adic symmetric domains, which are (rigid analytic) deformation
spaces of certain formal modules; see~\cite{MR0422290} and~\cite{MR1393439}. 
Formal $A$-modules also arise in algebraic topology, by using the natural map from the moduli stack of formal $A$-modules to the moduli stack of formal group laws to detect certain classes in the cohomology of the latter, particularly in order to resolve certain differentials in spectral sequences used to compute the Adams-Novikov $E_2$-term and stable homotopy groups of spheres; in a paper currently in preparation, for example, I use formal modules to compute the homotopy groups of the $K(4)$-local Smith-Toda complex $V(3)$ at primes $p>5$.

Now I want to give a little bit of background on the moduli theory of formal modules before I describe the new results.
Recall (from e.g. chapter~21 of~\cite{MR2987372} or \cite{MR745362}) that, for every finite extension $K/\mathbb{Q}$ with ring of integers $A$, there exists
a classifying Hopf algebroid $(L^A,L^AB)$ for one-dimensional formal $A$-modules, equivalently,
a moduli stack $\mathcal{M}_{fmA}$ of one-dimensional formal $A$-modules. 
In the special case $K = \mathbb{Q}$, we have an isomorphism of Hopf algebroids
$(L^{\mathbb{Z}}, L^{\mathbb{Z}}B) \cong (L,LB)$, where $(L,LB)$ is the classifying Hopf algebroid for one-dimensional formal group laws studied by Lazard (see~\cite{MR0393050}) and which is very familiar to topologists due to Quillen proving (see~\cite{MR0253350}) that it is isomorphic to the Hopf algebroid $(MU_*,MU_*MU)$ of
stable co-operations in complex cobordism, and consequently that the bigraded $\Ext$-algebra $\Ext^{*,*}_{(L^{\mathbb{Z}},L^{\mathbb{Z}}B)}(L^{\mathbb{Z}},L^{\mathbb{Z}})$, i.e., the flat cohomology $H^{*,*}_{fl}(\mathcal{M}_{fm{\mathbb{Z}}}; \mathcal{O})$,
is the $E_2$-term of the Adams-Novikov spectral sequence converging to the stable homotopy groups of spheres. 

There is also a $p$-adic version of the above: for every finite extension $K/\mathbb{Q}_p$ with ring of integers $A$, there exists a classifying Hopf algebroid
$(L^A,L^AB)$ for one-dimensional formal $A$-modules, and a classifying Hopf algebroid $(V^A,V^AT)$ for $A$-typical one-dimensional formal $A$-modules, and the two Hopf algebroids are equivalent (but not isomorphic) by a $A$-module version of
Cartier's ``$p$-typicalization'' operation on formal group laws. 
See e.g.~\cite{MR745362} for this material.
Again, the base case $K = \mathbb{Q}_p$ is very familiar to topologists, since
$(V^{\hat{\mathbb{Z}}_p}, V^{\hat{\mathbb{Z}}_p}T)$ is isomorphic to
$(\hat{V}_p, \hat{VT}_p)$, the $p$-adic completion of the classifying
Hopf algebroid $(V,VT)$ of $p$-typical one-dimensional formal group laws over $\mathbb{Z}_{(p)}$, which Quillen proved in~\cite{MR0253350}
to be isomorphic to the Hopf algebroid $(BP_*, BP_*BP)$
of stable co-operations in Brown-Peterson homology.

Now the following conjectures were made in Ravenel's 1983 paper~\cite{MR745362}:
\begin{conjecture} {\bf (Ravenel's Local-Global Conjecture.)} \label{local-global conj}
Let $K/\mathbb{Q}$ be a finite field extension with ring of integers $A$.
Then, for all $s,t$ and all prime ideals $\mathfrak{p}$ in $A$
and all graded $(L^A,L^AB)$-comodules $M$,
there exists an isomorphism of $A_{\mathfrak{p}}$-modules
\[ A_{\mathfrak{p}} \otimes_A \Ext_{(L^A,L^AB)}^{s,t}(L^A, M) \cong
 \Ext_{(V^{A_{\mathfrak{p}}},V^{A_{\mathfrak{p}}}T)}^{s,t}(V^{A_{\mathfrak{p}}}, V^{A_{\mathfrak{p}}}\otimes_{L^{A_{\mathfrak{p}}}} M), \]
where $A_{\mathfrak{p}}$ is the localization of $A$ at the prime ideal $\mathfrak{p}$.
\end{conjecture}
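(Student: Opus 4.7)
The plan is to decompose Conjecture~\ref{local-global conj} into two compatibilities: (i) a base change identity
\[ L^{A_{\mathfrak{p}}}\cong A_{\mathfrak{p}}\otimes_A L^A,\qquad L^{A_{\mathfrak{p}}}B\cong A_{\mathfrak{p}}\otimes_A L^AB, \]
and (ii) the typicalization equivalence $(L^{A_{\mathfrak{p}}},L^{A_{\mathfrak{p}}}B)\simeq (V^{A_{\mathfrak{p}}},V^{A_{\mathfrak{p}}}T)$, which the excerpt mentions over a complete $p$-adic $A$ and which should also hold over the (Dedekind) localization $A_{\mathfrak{p}}$. Granting these, the conjecture follows by flat base change for cobar cohomology: the cobar complex computing $\Ext^{s,t}_{(L^{A_{\mathfrak{p}}},L^{A_{\mathfrak{p}}}B)}(L^{A_{\mathfrak{p}}},L^{A_{\mathfrak{p}}}\otimes_{L^A}M)$ is obtained from the cobar complex for $(L^A,L^AB)$ with coefficients $M$ by applying $A_{\mathfrak{p}}\otimes_A-$, and the flatness of $A_{\mathfrak{p}}$ over $A$ allows this tensor to commute past cohomology; the typicalization equivalence then rewrites the left-hand $\Ext$ as the right-hand side of the conjecture.

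For (i), I would argue by comparing functors of points. Restriction of scalars along $A\to A_{\mathfrak{p}}$ sends a formal $A_{\mathfrak{p}}$-module to the pair consisting of the underlying formal $A$-module together with the given $A_{\mathfrak{p}}$-algebra structure on its base ring. The inverse rests on the observation that, given a formal $A$-module $(F,\rho)$ over an $A_{\mathfrak{p}}$-algebra $R$, each $s\in A\setminus\mathfrak{p}$ acts through $\rho(s)\in R[[X]]$ whose linear coefficient is the image of $s$ in $R$, hence a unit of $R$; so $\rho(s)$ is invertible under power-series composition, and $\rho$ extends uniquely to a ring map $A_{\mathfrak{p}}\to\End(F)$ by $\rho(a/s)=\rho(a)\circ\rho(s)^{-1}$. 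Universality then produces the first isomorphism above, and the same argument applied to the groupoid of strict isomorphisms of formal $A$-modules yields the analogous identity for $L^AB$.

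For (ii), the Cartier-style construction of the $A_{\mathfrak{p}}$-typicalization idempotent on curves requires only that $A_{\mathfrak{p}}$ be a discrete valuation ring with a chosen uniformizer; completeness plays no essential role in producing the equivalence of Hopf algebroids, so the equivalence transports from the complete case to the Dedekind localization with only cosmetic changes. I expect the main obstacle to lie in (i), specifically in checking that the representing rings (and not merely the groupoids they classify) localize compatibly while preserving the natural internal grading and the comodule structure; this bookkeeping is where I expect the proof to require the most care. Once (i) is in hand, the flatness argument and the typicalization equivalence combine formally to yield the isomorphism asserted by the conjecture.
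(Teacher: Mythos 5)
The paper does not actually prove this statement. Conjecture~\ref{local-global conj} is re-stated later as Theorem~\ref{local-to-global conj}, and the entire proof there reads ``See~\cite{pearlmanthesis}.'' — a citation to Pearlman's unpublished thesis. So there is no argument in the paper against which to check your proposal line by line. What the paper does give is a one-line description of Pearlman's method in the introduction: ``using Cartier typicalization and a relatively straightforward Hopf algebroid change-of-rings argument.'' Your plan tracks this description closely: step (i) is the change-of-rings half, step (ii) is the Cartier-typicalization half, and the flat-base-change-for-cobar-cohomology observation is the glue.

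On the merits of the plan itself, the outline looks sound. For (i), the key point — that for an $A_{\mathfrak{p}}$-algebra $R$ a formal $A$-module structure on a formal group $F$ over $R$ extends uniquely to a formal $A_{\mathfrak{p}}$-module structure because $\rho(s)$ has unit linear coefficient and hence is invertible in $\End(F)$ — is correct, and the Yoneda argument does yield $L^{A_{\mathfrak{p}}}\cong A_{\mathfrak{p}}\otimes_A L^A$ and likewise for $L^AB$; the uniqueness of the extension (well-definedness of $\rho(a/s)$) is routine from the universal property of localization, but worth saying explicitly since you flag it as the place you expect friction. For (ii), you are right that the typicalization idempotent only needs a DVR with a chosen uniformizer, not completeness; the resulting equivalence of Hopf algebroids $(L^{A_{\mathfrak{p}}},L^{A_{\mathfrak{p}}}B)\simeq (V^{A_{\mathfrak{p}}},V^{A_{\mathfrak{p}}}T)$ then induces an isomorphism on $\Ext$ via the standard change-of-rings, and one should check, as part of (ii), that it carries $L^{A_{\mathfrak{p}}}\otimes_{L^A}M$ to $V^{A_{\mathfrak{p}}}\otimes_{L^A}M$, which it does because the equivalence is given by base change along $L^{A_{\mathfrak{p}}}\to V^{A_{\mathfrak{p}}}$. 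One small caveat: both the conjecture and your plan tacitly take $\mathfrak{p}$ to be a \emph{nonzero} prime so that $A_{\mathfrak{p}}$ is a DVR; the degenerate case $\mathfrak{p}=(0)$ would need a separate (easy) argument or a convention excluding it.
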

This statement of Conjecture~\ref{local-global conj} is slightly paraphrased
from Ravenel's original statement in~\cite{MR745362}, which has a small
error 
(one of the terms $L^A$ and one of the terms $V^{A_{\mathfrak{p}}}$ are replaced
by $A$ and $A_{\mathfrak{p}}$, respectively, in Ravenel's statement of the conjecture; in context it is clear that this is a typo).

I will quote Conjecture~\ref{global conj} verbatim from \cite{MR745362}, because
it is loosely stated, and reasonably so: one of the tasks involved in proving
Conjecture~\ref{global conj} is to find a way to make its statement precise
in such a way that the conjecture is actually true.
\begin{conjecture} {\bf (Ravenel's Global Conjecture.)} \label{global conj}
``For global $A$, $\Ext_A^{1,2m} = A/J_m^A$, where $J_m^A$ is, up to some small factor, the ideal generated by $a^N(a^m-1)$ for $a\in A$ and $N$ sufficiently large.''
\end{conjecture}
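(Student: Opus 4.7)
The plan is to first make Conjecture~\ref{global conj} precise, and only then to prove it. The conjecture as stated has at least two degrees of slack: what exactly the ``small factor'' is, and which variant of the ideal $J_m^A$ (the comments in the source list four candidates $J_m^A, I_m, H_m, J_m$, plus a local version $H_m^p$) gives a statement that is actually true. So I would begin by working out, at the level of each completion $\hat{A}_{\mathfrak{p}}$, which ideal of $A_{\mathfrak{p}}$ really describes $\Ext^{1,2m}_{(L^{A_{\mathfrak{p}}},L^{A_{\mathfrak{p}}}B)}(L^{A_{\mathfrak{p}}},L^{A_{\mathfrak{p}}})$. This is the step where the ``minimal $m$-congruing ideal'' $J_m$ (in the notation of the commented-out block) should emerge naturally, as the smallest ideal modulo which every unit $a\in A$ satisfies $a^m\equiv 1$ --- a condition on the multiplicative structure of $A$ that is forced on us by the defining $1$-cocycle identities in the Hopf algebroid $(L^A,L^AB)$.

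Assuming Conjecture~\ref{local-global conj} (which I would either prove first or import from elsewhere in the paper series), the global problem reduces to computing $\Ext^{1,2m}_{(V^{A_{\mathfrak{p}}},V^{A_{\mathfrak{p}}}T)}(V^{A_{\mathfrak{p}}}, V^{A_{\mathfrak{p}}})$ for every prime $\mathfrak{p}$ of $A$ and patching. For each local factor I would use the explicit formulas for the right unit in the $A$-typical Hopf algebroid (these are the formal $A$-module analogues of Ravenel's and Miller's $p$-typical formulas; they are written out in~\cite{MR745362} and chapter~21 of~\cite{MR2987372}) to write down the $1$-cocycle condition in degree $2m$ and identify the $1$-coboundaries. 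The expected outcome is that $\Ext^{1,2m}$ locally equals $A_{\mathfrak{p}}/H_m^{\mathfrak{p}}$, i.e.\ the completion of $A$ at $\mathfrak{p}$ modulo the ideal generated by $x^m-1$ for units $x$; the factor $a^N$ in Ravenel's formula appears in order to clear denominators introduced by non-unit scalars when one pulls back to $L^A$ from $V^{A_{\mathfrak{p}}}$, and identifying it with the correct power of the uniformizer is a local calculation.

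The global ideal $J_m^A$ is then \emph{defined} to be the unique ideal of $A$ whose $\mathfrak{p}$-adic completion equals $H_m^{\mathfrak{p}}$ for every $\mathfrak{p}$, or equivalently, the minimal $m$-congruing ideal. Showing that this ideal is given by Ravenel's informal description --- the ideal generated by $a^N(a^m-1)$ for $a\in A$ and $N$ large --- becomes a purely number-theoretic lemma: one checks that a prime $\mathfrak{p}$ divides $a^N(a^m-1)$ iff $\mathfrak{p}\mid a$ or $a$ is a unit at $\mathfrak{p}$ with $a^m\equiv 1\pmod{\mathfrak{p}^k}$, so the ideal so generated matches the intersection of the local $H_m^{\mathfrak{p}}$ pulled back to $A$. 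Once this matching is in hand, a Hasse principle (or a straightforward use of the exact sequence $0\to A\to\prod_{\mathfrak{p}}\hat{A}_{\mathfrak{p}}\to \prod_{\mathfrak{p}}\hat{A}_{\mathfrak{p}}/A_{\mathfrak{p}}\to 0$ against the fact that $\Ext^{1,2m}$ is a torsion $A$-module with only finitely many nonzero local components) assembles the local isomorphisms into $\Ext^{1,2m}\cong A/J_m^A$.

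The main obstacle, in my view, is not any one of the above steps individually but the first: identifying the correct global ideal. Ravenel's phrasing masks real subtleties --- at primes above small rational primes, at ramified primes, and at primes where the residue field is small, the naive ideal generated by $a^m-1$ for \emph{all} units $a$ (i.e.\ $H_m$) and the ideal generated only by units $\equiv 1\bmod \pi$ (i.e.\ $I_m$) differ, and only one of them matches the cocycle computation. Pinning down the right $J_m^A$, and in particular showing that the ``small factor'' is exactly the appropriate power of each prime needed to clear the unit group modulo $\pi$, is where the argument will have to be most careful; the local $\Ext$ computation and the patching are comparatively mechanical once the definitions are correct.
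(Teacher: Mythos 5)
Your overall strategy matches the paper's: make the conjecture precise by identifying a canonical $m$-congruing ideal via a local-to-global (Hasse) principle, import Pearlman's Local--Global theorem, compute $\Ext^{1}$ over $(V^{A_{\mathfrak{p}}},V^{A_{\mathfrak{p}}}T)$ locally from the cobar complex, and then patch. The paper does exactly this: it defines $n$-congruing ideals, proves a Hasse principle showing a unique minimal one exists, and shows (via Pearlman plus a local cocycle computation) that $\Ext^{1,2m}_{(L^A,L^AB)}(L^A,L^A)$ agrees with $A$ modulo that minimal ideal.

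There is, however, a genuine gap in your plan. You describe the local $\Ext^{1}$ computation as ``comparatively mechanical once the definitions are correct,'' and locate the main difficulty in identifying the right global ideal. This inverts the actual difficulty. The Hasse-principle step is clean (it reduces to a compactness argument about truncations of elements in $\hat{A}_{\mathfrak{p}}$ plus the Chinese Remainder Theorem). The hard, and still only partially solved, step is the local computation: showing $\Ext^{1,2n(q-1)}_{(V^{A_{\mathfrak{p}}},V^{A_{\mathfrak{p}}}T)}(V^{A_{\mathfrak{p}}},V^{A_{\mathfrak{p}}}) \cong A_{\mathfrak{p}}/I_n$ requires a delicate minimization of $e\cdot\nu_p\binom{n}{k}+k$ over $k$, and this minimization has a qualitatively different (and harder) structure precisely when $\log_p(e/(p-1))$ is an integer. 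The paper (like Johnson before it) resolves the Local Conjecture only when $\log_p(e/(p-1))$ is \emph{not} an integer, and as a consequence the Global Conjecture is only established after inverting the prime-power-ramification discriminant $\underline{\Delta}_{K/\mathbb{Q}}$, with the uninverted form proved only when $\underline{\Delta}_{K/\mathbb{Q}}=1$. Your plan as written would quietly fail at the excluded primes and would overclaim a proof of the full conjecture. A secondary point: the factor $a^N$ in Ravenel's statement is not there to ``clear denominators introduced by non-unit scalars when pulling back from $V^{A_{\mathfrak{p}}}$''; it is there so that elements $a\in\mathfrak{p}$ satisfy the defining condition vacuously (since $a^N\in\mathfrak{p}^N$ eventually lands in any fixed nonzero ideal), which is exactly what makes the $n$-congruing condition local to the units.
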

Since the $\Ext$ groups $\Ext_{(L^A,L^AB)}^{1,i}(L^A,L^A)$
are easily shown to be trivial if $i$ is odd,
Conjecture~\ref{global conj} is a complete description
of $\Ext_{(L^A,L^AB)}^{1}(L^A,L^A)$.

\begin{conjecture} {\bf (Ravenel's Local Conjecture.)} \label{local conj}
Let $K/\mathbb{Q}_p$ be a finite field extension with ring of integers $A$.
Suppose $A$ has uniformizer $\pi$ and residue field $\mathbb{F}_q$.
Then we have an isomorphism of $A$-modules
\[ \Ext_{(V^A,V^AT)}^{1,2n(q-1)}(V^A,V^A) \cong A/I_{n},\]
where $I_{n}$ is the ideal in $A$ generated by all elements of the 
form $a^n-1$ with $a\in A$ congruent to $1$ modulo $\pi$.
\end{conjecture}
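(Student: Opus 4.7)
The plan is to use the cobar complex of the Hopf algebroid $(V^A, V^AT)$: a class in $\Ext^1_{(V^A,V^AT)}(V^A, V^A)$ is represented by a primitive element $x \in V^AT$, i.e., one with $\Delta(x) = x \otimes 1 + 1 \otimes x$, modulo coboundaries $\eta_R(y) - y$ for $y \in V^A$ of the same internal degree.

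First I would identify the $A$-module of primitives in $V^AT_{2n(q-1)}$. In the $A$-typical formal $A$-module Hopf algebroid, the generator $t_1$ has internal degree $2(q-1)$, and $t_1^n$ is primitive modulo $\pi$. Adapting the associated-graded filtration computation from Chapter~4 of Ravenel's green book to the formal $A$-module setting (replacing $p$ by $\pi$ and $\mathbb{F}_p$ by $\mathbb{F}_q$ throughout), one shows that primitives in this degree are generated as an $A$-module, modulo primitives in higher filtration which contribute only decomposables to $\Ext$, by a single lift $\tau_n \equiv t_1^n \pmod{\pi}$.

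Second, I would compute the image of $\eta_R - \eta_L \colon V^A \to V^AT$ on elements of degree $2n(q-1)$, projected onto the $\tau_n$-coefficient. The main contribution comes from $\eta_R(v_1^n)$: parametrizing strict isomorphisms of the universal formal $A$-module by $1$-units $a \in 1+\pi A$ (the change of coordinates $X\mapsto aX$ is a strict automorphism up to a unit scalar), one computes $v_1 \mapsto a^{q-1} v_1$ and hence $v_1^n \mapsto a^{n(q-1)} v_1^n$. Because $q-1$ is coprime to $p$, the map $a \mapsto a^{q-1}$ is an automorphism of $1+\pi A$, so the resulting $\tau_n$-coefficients $a^{n(q-1)}-1$ for $a\in 1+\pi A$ generate exactly the ideal $I_n \subseteq A$.

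The principal obstacle is simultaneous two-sided control. On one hand, I must show that every coboundary $\eta_R(y)-y$ for $y\in V^A_{2n(q-1)}$ has $\tau_n$-coefficient lying in $I_n$, including mixed monomials $y = v_1^{n-k} v_j^k$ with $k\geq 1$; this would be handled by an explicit Hazewinkel/Araki-type formula for $\eta_R$ on the $A$-typical generators together with an inductive expansion. On the other hand, I must show that no further $\Ext^1$ classes arise from primitives in higher filtration, which requires an upper-bound spectral sequence argument whose $E_2$-term vanishes above the line of $\tau_n$-primitives. Matching both bounds so that the quotient is exactly $A/I_n$, rather than some larger or smaller subquotient, is the crux of the argument.
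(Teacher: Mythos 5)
The most serious problem is that what you set out to prove is not actually a theorem of the paper, nor, as far as I know, a theorem at all. The paper proves Conjecture~\ref{local conj} only when $\log_p(e/(p-1))$ is \emph{not} an integer (Theorem~\ref{local conjecture} and Corollary~\ref{local computation}); Johnson's earlier work also excludes certain totally ramified extensions of $\mathbb{Q}(\zeta_p)$. Your sketch is oblivious to this restriction, so if it worked it would prove something stronger than the paper does. The reason that restriction matters is concentrated in a single arithmetic lemma (Lemma~\ref{valuations of binomial coefficients}): the minimum of $e\cdot\nu_p\binom{n}{k}+k$ over $k$ is what controls how far $v_1^n/\pi$ can be divided by $\pi$, and when $\log_p(e/(p-1))$ is an integer that minimum can be achieved at two different values of $k$, which wrecks the "single leading monomial" argument. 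Your proposed "inductive expansion" of the Araki/Hazewinkel formula for $\eta_R$ would eventually run into exactly this, and you would have to see the dichotomy rather than blanket-claim the result.

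The second problem is your parametrization of coboundaries by $1$-units. The rescaling $X\mapsto aX$ with $a\in 1+\pi A$ is a \emph{non-strict} similarity; it encodes the $\mathbb{G}_m$-action and hence the internal grading on $V^A$, not the $V^AT$-coaction. The Hopf algebroid $(V^A,V^AT)$ classifies \emph{strict} isomorphisms, and $\eta_R$ is computed from the Araki/Hazewinkel recursion as $\eta_R(v_1^A)=v_1^A+(\pi-\pi^q)t_1^A$, so $\eta_R(v_1^n)-v_1^n=\sum_{k\geq 1}\binom{n}{k}(\pi-\pi^q)^kv_1^{n-k}(t_1^A)^k$. The $t_1^n$-coefficient is $(\pi-\pi^q)^n$, a $\pi$-power up to a unit, not $a^{n(q-1)}-1$; the ideal $I_n$ only shows up after the $\pi$-adic valuation bookkeeping that Lemma~\ref{valuations predicted by local conjecture} carries out. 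You reach the right answer in the end because $A/I_n\cong A/\pi^i$ for a specific $i$, but your route to it via $a\mapsto a^{q-1}$ is a category error conflating the grading with the coaction. (The intuition you are reaching for is real — it is the height-$1$ Morava change-of-rings, where the stabilizer group is $A^\times$ acting on the Lubin-Tate ring, and that \emph{would} legitimately produce expressions $a^n-1$ — but you would then need to carefully run that change-of-rings, not just name-check it.)

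Finally, a genuine structural difference from the paper that is worth noting: the paper does not compute $\Ext^1$ directly in the cobar complex as you propose. It uses the formal $A$-module chromatic spectral sequence to reduce $\Ext^{1,\ast}$ to $\Ext^{0,\ast}$ of the height-$1$ chromatic layer $(v_1^A)^{-1}V^A/\pi^\infty$, which is a strictly easier cotensor/equalizer computation, and it needs the colimit-commutation machinery of Lemmas~\ref{forgetful preserves colimits} and~\ref{ext and compactness lemma} to justify passing from $V^A/\pi^m$ to $V^A/\pi^\infty$. Your direct attack on $\Ext^1$ via primitives-modulo-coboundaries is not crazy, but it forces you to confront the "two-sided control" problem head-on, which is precisely what the chromatic machinery is designed to sidestep. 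You correctly identify that two-sided control as the crux; the difficulty is that you have no replacement for the chromatic argument that resolves it.
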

Since the $\Ext$ groups $\Ext_{(V^A,V^AT)}^{1,m}(V^A,V^A)$
are easily shown to be trivial if $2(q-1)$ does not divide $m$,
Conjecture~\ref{local conj} is a complete description
of $\Ext_{(V^A,V^AT)}^{1}(V^A,V^A)$.

There is a fourth remark in Ravenel's paper~\cite{MR745362}
which is not phrased as a conjecture (indeed, Ravenel includes it as a reason for not making a fourth conjecture!) but which I regard as being of equal importance as Conjectures~\ref{local-global conj},~\ref{global conj}, and~\ref{local conj}, and addressing (and solving) the problem posed by Ravenel's remark is one
of the main topics of the present paper. I include this remark verbatim from Ravenel's paper~\cite{MR745362}:
\begin{remark} {\bf (Ravenel's remark on the connection to the Dedekind $\zeta$-function.)} \label{ravenel remark on dedekind zeta}
``The numbers $j_m$ of 3.8 are also related to Bernoulli numbers and the values
of the Riemann zeta function at negative integers, but these properties
do not appear to generalize to other number fields. For example if the field
is not totally real its Dedekind zeta function vanishes at all negative integers.''\end{remark}
The ``numbers $j_m$'' refers to the order of the group
$\Ext^{1,2m}_{(L^{\mathbb{Z}},L^{\mathbb{Z}}B)}(L^{\mathbb{Z}},L^{\mathbb{Z}})$,
which Adams proved, in~\cite{MR0198468} and~\cite{MR0198470},
is equal to the denominator of the rational number $\zeta(1-2m)$, up to multiplication
by a power of $2$, where $\zeta$ is the Riemann $\zeta$-function;
since the Riemann $\zeta$-function is the special case $K=\mathbb{Q}$
of the Dedekind $\zeta$-function of a number field $K$, Ravenel's 
Remark~\ref{ravenel remark on dedekind zeta} appears to exclude the possibility
of a generalization of Adams's result to other number fields.

The state of all of these conjectures, as well as the progress made toward their resolution in the present paper, is as follows:
\begin{itemize}
\item Conjecture~\ref{local-global conj}, the Local-Global Conjecture,
was known classically in the $K=\mathbb{Q}$ case at the time that Ravenel
made the conjecture. 
The conjecture was proven in full generality (using Cartier typicalization and a relatively straightforward Hopf algebroid change-of-rings argument) 
by A. Pearlman in his (unpublished) thesis,~\cite{pearlmanthesis},
shortly after Ravenel made the conjecture (indeed, Pearlman's proof came so soon after Ravenel made the conjecture that Ravenel notes, in the published version of~\cite{MR745362}, that
the conjecture had already been proven by Pearlman!).
\item 
Conjecture~\ref{local conj}, the Local Conjecture,
was classically known only in the case $K = \mathbb{Q}_p$ when Ravenel made the conjecture. When Ravenel posed the conjecture in~\cite{MR745362},
he also offered a proof of the conjecture for all extensions $K/\mathbb{Q}_p$
with ramification degree $e$ satisfying $e<p-1$. (There is a 
flaw in the argument Ravenel presents in that paper: he relies on a formula for $p$-adic valuations of binomial coefficients, $\nu_p(\binom{jp^i}{k}) = i-\nu_p(k)$ for $j$ prime to $p$, a formula which is often
not true unless $j=1$. For example, let $j=p+1$, let $i=1$, and let $k=p$ for a counterexample. An argument similar to Ravenel's does prove the Local Conjecture in those cases, however.)

In K. Johnson's 1987 paper~\cite{MR887512}, Johnson proves the Local Conjecture for all extensions $K/\mathbb{Q}_p$ which are not a totally ramified extension of $\mathbb{Q}(\zeta_p)$ of degree a power of $p$, where $\zeta_p$ is a primitive $p$th root of unity.
In the present paper (see Corollary~\ref{local computation}) 
I prove Conjecture~\ref{local conj}
for all extensions $K/\mathbb{Q}_p$ such that 
$\log_p(\frac{e}{p-1})$ is not an integer, where 
$e$ is the ramification degree of $K/\mathbb{Q}_p$. The computation I provide is no
more general than Johnson's, but I give the computation anyway because it is self-contained and because it may be useful to have more than one proof of the Local Conjecture in these cases in the literature (and because I worked it out before I was aware of Johnson's paper).
\item 
Conjecture~\ref{global conj}, the Global Conjecture,
was classically known only in the case $K = \mathbb{Q}$ when Ravenel made the conjecture, and this is still the only case of the Global Conjecture which has
been proven before the results of the present paper: no progress has been made on the Global Conjecture in the 32 years since~\cite{MR745362} was written. (The ``small factor''
in the statement of Conjecture~\ref{global conj} is $2$
in the case $K = \mathbb{Q}$.)

In the present paper I address the issue of making the statement of the Global Conjecture precise by defining an {\em $n$-congruing ideal} (Definition~\ref{def of n-congruing ideal}) to be an ideal $I$ of $A$ with the property that, for each $a\in A$, there exists $N\in\mathbb{N}$ such that $a^N(a^n-1)\in I$. I then prove (in Theorem~\ref{hasse principle}) a Hasse principle for $n$-congruing ideals, that is, an ideal in $I$ is $n$-congruing if and only if its $\mathfrak{p}$-adic completion is $n$-congruing,
for all maximal ideals $\mathfrak{p}$ of $A$.
As a consequence of Theorem~\ref{hasse principle} we find that there exists a unique minimal $n$-congruing ideal in $A$.
Consequently, a rigorous statement of the Global Conjecture is:
\begin{conjecture}\label{weaker rigorous global conj} {\bf (Ravenel's Global Conjecture, precise form.)}
Let $K/\mathbb{Q}$ be a finite Galois extension with ring of integers $A$.
Then there exists some number $c\in \mathbb{N}$ such that,
for all $n\in \mathbb{N}$,
\[ \Ext_{(L^A,L^AB)}^{1,2n}(L^A,L^A)[c^{-1}] \cong A/(J_n)[c^{-1}],\]
where $J_n$ is the minimal $n$-congruing ideal of $A$.
\end{conjecture}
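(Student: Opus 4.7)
The plan is to reduce the global computation to a prime-by-prime local one via Pearlman's proof of the Local-Global Conjecture~\ref{local-global conj}, compute each local Ext group using the Local Conjecture (Corollary~\ref{local computation} or Johnson's result in~\cite{MR887512}), identify each resulting local ideal with the $\mathfrak{p}$-adic completion of the minimal $n$-congruing ideal, and finally reassemble using the Hasse principle for $n$-congruing ideals (Theorem~\ref{hasse principle}). The integer $c$ will be the product of those finitely many rational primes lying under the ``bad'' primes of $A$ (those at which neither Johnson's computation nor Corollary~\ref{local computation} applies) together with the small-factor primes arising already in the classical $K=\mathbb{Q}$ case.

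Concretely, for each nonzero prime $\mathfrak{p}$ of $A$ with residue cardinality $q = q_{\mathfrak{p}}$ and uniformizer $\pi = \pi_{\mathfrak{p}}$, Pearlman's theorem gives
\[ A_{\mathfrak{p}}\otimes_A \Ext^{1,2n}_{(L^A,L^AB)}(L^A,L^A) \;\cong\; \Ext^{1,2n}_{(V^{A_{\mathfrak{p}}},V^{A_{\mathfrak{p}}}T)}(V^{A_{\mathfrak{p}}}, V^{A_{\mathfrak{p}}}), \]
and the Local Conjecture, after $\mathfrak{p}$-adic completion, identifies the right-hand side with $\hat{A}_{\mathfrak{p}}/I_{n/(q-1)}$ when $(q-1)\mid n$ and with zero otherwise, where $I_k$ is the ideal generated by the elements $a^k - 1$ for units $a\equiv 1\pmod{\pi}$ in $\hat{A}_{\mathfrak{p}}$.

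The technical core of the argument is to identify this local ideal $I_{n/(q-1)}$ with the minimal $n$-congruing ideal of $\hat{A}_{\mathfrak{p}}$. Writing any local unit $u = \omega u_1$ as a product of a Teichm\"uller lift $\omega$ and a principal unit $u_1\equiv 1\pmod{\pi}$, the hypothesis $(q-1)\mid n$ forces $\omega^n = 1$, so the unit contributions $a^N(a^n - 1)$ to the minimal $n$-congruing ideal reduce to unit multiples of $u_1^n - 1$; applying the factorization
\[ u_1^n - 1 = \bigl(u_1^{n/(q-1)} - 1\bigr)\cdot \bigl((u_1^{n/(q-1)})^{q-2} + \cdots + u_1^{n/(q-1)} + 1\bigr), \]
whose second factor is congruent to $q - 1\pmod{\pi}$ and hence a unit (since $q-1$ is coprime to $p$), one identifies these with the generators of $I_{n/(q-1)}$. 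The non-unit contributions $a = \pi^k u$ are unit multiples of $\pi^{kN}$ and contribute nothing new, since $I_{n/(q-1)}$ already contains a high power of $\pi$ (seen by expanding $(1+\pi)^{n/(q-1)} - 1$). When $(q-1)\nmid n$, there exists a Teichm\"uller unit $\omega$ with $\omega^n - 1$ a local unit, so the local minimal $n$-congruing ideal is all of $\hat{A}_{\mathfrak{p}}$, matching the vanishing of the Ext group.

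With the local identifications in place, I define $c$ as above and apply the Hasse principle: the minimal $n$-congruing ideal $J_n$ of $A$ is determined by its completions, which by construction match the local ideals $I_{n/(q-1)}$. Hence for every prime $\mathfrak{p}$ of $A[c^{-1}]$ the $\mathfrak{p}$-adic completion of $A/J_n$ agrees with that of $\Ext^{1,2n}_{(L^A,L^AB)}(L^A,L^A)$, and since both are finitely generated torsion $A$-modules the desired isomorphism of $A[c^{-1}]$-modules follows. The main obstacle is the bookkeeping in the matching of local ideals: keeping track simultaneously of the residue cardinalities, the ramification indices, and the Teichm\"uller splittings, while uniformly ensuring that the ``extra'' factor in the factorization above is a unit and that no unexpected local contributions survive. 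A secondary, largely formal issue is verifying finiteness of the bad-prime set, which follows from the explicit nature of the exceptional condition $\log_p(e_{\mathfrak{p}}/(p-1))\in\mathbb{Z}$ together with the finiteness of the set of rational primes ramifying in $K$.
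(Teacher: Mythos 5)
Your proposal follows the same route as the paper: reduce to a prime-by-prime computation via Pearlman's Local-Global Theorem (the paper's Theorem~\ref{local-to-global conj}), identify each local Ext group with $\hat{A}_{\mathfrak{p}}/H^{\mathfrak{p}}_n$ using the Local Conjecture computation, set $c$ equal to the product of the finitely many bad residue characteristics, and reassemble via the Hasse principle (Theorem~\ref{hasse principle}). This is precisely the paper's two-step argument: Theorem~\ref{global computation} does the Pearlman decomposition and local matching, and Corollary~\ref{weak form of global conj holds} applies the Hasse principle over the localized ring $A[\underline{\Delta}_{K/\mathbb{Q}}^{-1}]$ to identify the resulting product with $A[c^{-1}]/J_n$. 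The only cosmetic difference is in the step identifying $H_n^{\mathfrak{p}}$ with $I_{n/(q-1)}$: you factor $u_1^n - 1 = (u_1^{n/(q-1)} - 1)\cdot(\text{a unit})$ after the Teichm\"uller splitting, whereas the paper's Corollary~\ref{local computation} gets the same conclusion by observing via Hensel's Lemma that the $(q-1)$-th power map is a surjection from units onto principal units; these are equivalent arguments.
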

A stronger version of the conjecture is the following:
\begin{conjecture}\label{rigorous global conj} {\bf (Ravenel's Global Conjecture, stronger precise form.)}
Let $K/\mathbb{Q}$ be a finite Galois extension with ring of integers $A$.
Then there exists some ``correcting factor'' 
$c\in A$ such that, for all $n\in \mathbb{N}$,
\[\Ext_{(L^A,L^AB)}^{1,2n}(L^A,L^A) \cong A/((c_n)J_n),\]
where $c_n$ is some factor of $c$, and $J_n$ is the minimal $n$-congruing ideal
of $A$.
\end{conjecture}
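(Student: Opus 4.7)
The plan is to reduce the global computation to the local ones via Pearlman's proof of Conjecture~\ref{local-global conj}, apply the partial resolution of the Local Conjecture at all but finitely many primes, and then invoke the Hasse principle (Theorem~\ref{hasse principle}) to reconstruct the global ideal $J_n$ from its local data. First I would use Conjecture~\ref{local-global conj} to obtain, for each maximal ideal $\mathfrak{p}\subseteq A$, an isomorphism
\[ A_\mathfrak{p}\otimes_A \Ext^{1,2n}_{(L^A,L^AB)}(L^A,L^A)\;\cong\;\Ext^{1,2n}_{(V^{A_\mathfrak{p}},V^{A_\mathfrak{p}}T)}(V^{A_\mathfrak{p}},V^{A_\mathfrak{p}}). \]
After passing to $\mathfrak{p}$-adic completion (harmless, since $\Ext^{1,2n}$ is torsion), the right-hand side is precisely the Ext group appearing in Conjecture~\ref{local conj}. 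By Corollary~\ref{local computation} together with Johnson's theorem from~\cite{MR887512}, this right-hand side is isomorphic to $\hat A_\mathfrak{p}/I_n^{\hat A_\mathfrak{p}}$ for every prime $\mathfrak{p}$ outside a finite ``bad'' set $S$ consisting of those $\mathfrak{p}$ above which the Local Conjecture is not yet established. I would set $c\in A$ to be a sufficiently high power of the product of the rational primes lying under $S$, so that the bad primes contribute only to the factor $c_n$.

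Second I would compare, at each good prime, the ideal $I_n^{\hat A_\mathfrak{p}}$ of the Local Conjecture (generated by $u^n-1$ for principal units $u$) with the $\mathfrak{p}$-adic completion $\widehat{(J_n)}_\mathfrak{p}$ of the minimal $n$-congruing ideal $J_n\subseteq A$. Any element of $\hat A_\mathfrak{p}$ factors as $\pi^k\omega(\bar a)u$, with $\omega(\bar a)$ the Teichm\"uller lift of its residue and $u$ a principal unit; a direct computation shows that for $N$ sufficiently large the contributions of the $\pi^k$ and $\omega(\bar a)$ factors to $a^N(a^n-1)$ either vanish or are absorbed into the $u^n-1$ contribution, yielding $I_n^{\hat A_\mathfrak{p}}=\widehat{(J_n)}_\mathfrak{p}$ away from $S$. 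Theorem~\ref{hasse principle} then promotes this family of local equalities into the statement that the unique ideal $J\subseteq A$ whose $\mathfrak{p}$-adic completion agrees with $I_n^{\hat A_\mathfrak{p}}$ at every good prime coincides, outside $S$, with $J_n$ itself.

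Third, the global Ext group is, by Pearlman's theorem and the preceding local identifications, a finitely generated torsion $A$-module whose localization at every maximal ideal is cyclic; hence it is itself cyclic, and must be of the form $A/(c_n J_n)$ for some $c_n\in A$ supported in $S$. Since every prime of $S$ divides $c$ and the multiplicities that can arise in $c_n$ at each such prime are bounded independently of $n$ (coming from the fixed ramification data of $A/\mathbb{Z}$ and from the bounded ``small factor'' phenomena that already appear in the $K=\mathbb{Q}$ case, where the factor is $2$), $c_n$ divides a fixed power of the product of primes in $S$, which is our $c$.

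The main obstacle will be the second step: verifying the precise local identification $I_n^{\hat A_\mathfrak{p}}=\widehat{(J_n)}_\mathfrak{p}$ at primes of residue characteristic $p$ dividing $n$, where the ramification of $A_\mathfrak{p}/\mathbb{Z}_p$ and the filtration on the higher unit groups conspire to make the comparison of ``$u^n-1$ for principal units $u$'' with ``$a^N(a^n-1)$ for arbitrary $a$'' delicate. A miscalculation here could spoil the uniformity of $c_n$ in $n$, collapsing the strong form of Conjecture~\ref{rigorous global conj} to its weaker cousin Conjecture~\ref{weaker rigorous global conj}; controlling this uniformity is exactly what distinguishes the quantitative content of the stronger conjecture.
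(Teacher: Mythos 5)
You are attempting to prove a statement that the paper itself leaves open: Conjecture~\ref{rigorous global conj} is posed as a conjecture, and the paper establishes it only in the special case where the prime-power-ramification discriminant $\underline{\Delta}_{K/\mathbb{Q}}$ equals $1$ (Corollary~\ref{strong form of global conj holds}). In that case the ``bad'' set $S$ is empty, so the correcting factor is $c=1$ and no uniformity-in-$n$ bound is needed; the proof is just Theorem~\ref{global computation} combined with the Hasse principle, Theorem~\ref{hasse principle}. Your first two steps, and the use of the Hasse principle, are consistent with what the paper does at the good primes.

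The genuine gap is exactly the step you flag as delicate, but not for quite the reason you give. It is not that the comparison of $I_n^{\hat{A}_{\mathfrak{p}}}$ with $\widehat{(J_n)}_{\mathfrak{p}}$ is subtle at good primes --- the argument at the end of Corollary~\ref{local computation} via Hensel's lemma, plus Lemma~\ref{adic congruing ideals}, already handles that. The gap is your third step: you assert that the multiplicities arising in $c_n$ at each bad prime are bounded independently of $n$, and justify this only by analogy with the known $K=\mathbb{Q}$, $p=2$ case. But at a bad prime (i.e.\ one with $\log_p(e_{\mathfrak{p}}/(p-1))\in\mathbb{Z}$, or one where $\hat{K}_{\mathfrak{p}}$ is a totally ramified $p$-power-degree extension of $\mathbb{Q}_p(\zeta_p)$, which even Johnson's theorem excludes), the local $\Ext^{1,*}$-group $\Ext^{1,2n}_{(V^{\hat{A}_{\mathfrak{p}}},V^{\hat{A}_{\mathfrak{p}}}T)}(V^{\hat{A}_{\mathfrak{p}}},V^{\hat{A}_{\mathfrak{p}}})$ is simply unknown. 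Without a computation of this group, there is no control whatsoever on how its order compares to that of $\hat{A}_{\mathfrak{p}}/\widehat{(J_n)}_{\mathfrak{p}}$ as $n$ grows, and so no way to bound $c_n$ by a fixed $c$. Producing that bound (or, equivalently, settling the Local Conjecture at the remaining bad primes and showing the resulting ideals differ from $\widehat{(J_n)}_{\mathfrak{p}}$ only boundedly) is precisely the open content of the conjecture; the paper sidesteps it by restricting to extensions with no bad primes.
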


In the present paper (see Corollary~\ref{weak form of global conj holds}) 
I prove Conjecture~\ref{weaker rigorous global conj}
in full generality (that is, for all finite Galois extensions $K/\mathbb{Q})$).
The number $c\in \mathbb{N}$ in the statement of Conjecture~\ref{weaker rigorous global conj} can be taken to be 
the ``prime-power-ramification discriminant'' $\underline{\Delta}_{K/\mathbb{Q}}$,
defined in Definition~\ref{def of prime power disc},
which is a certain divisor of two times the classical discriminant, $2\Delta_{K/\mathbb{Q}}$. 

In Corollary~\ref{strong form of global conj holds}
I also prove many cases of Conjecture~\ref{rigorous global conj}.
Specifically, I prove Conjecture~\ref{rigorous global conj}, with correcting factor $c=1$ (i.e.,
no correcting factor is necessary, unlike the $K=\mathbb{Q}$ case!), for all
finite Galois extensions $K/\mathbb{Q}$ with the property that,
for all primes $\mathfrak{p}$ of the ring of integers $A$ of $K$,
the number $\log_p(\frac{e_{\mathfrak{p}}}{p-1})$ is not an integer, where 
$p$ is the prime of $\mathbb{Z}$ under $\mathfrak{p}$, and
$e_{\mathfrak{p}}$ is the ramification degree of $\mathfrak{p}$. (In these cases the ``prime-power-ramification discriminant'' $\underline{\Delta}_{K/\mathbb{Q}}$
is $1$.)
This includes, for example, all Galois extensions $K/\mathbb{Q}$ of odd prime degree in which $2$ ramifies.
\item Finally, 
as for Remark~\ref{ravenel remark on dedekind zeta}, Ravenel is indeed
correct to point out that $\zeta_K(-m) = 0$ for all positive integers $m$
and all non-totally-real number fields $K$. It is also not the case that
the denominator of $\zeta_K(1-m)$ coincides with the order 
of $\Ext_{(L^A,L^AB)}^{1,2m}(L^A,L^A)$, even up to multiplication by a constant correcting factor, for totally real number fields $K/\mathbb{Q}$ with ring of integers $A$ (specifically, I checked this by explicit computation for several totally real quadratic extensions $K$ of $\mathbb{Q}$). 

However, there is more to say on the subject of Remark~\ref{ravenel remark on dedekind zeta}. In Definition~\ref{def of unramified straightening transform} I define a certain ``unramified straightening transform'' $\mathbb{S}$ on Euler products, with the following desirable properties:
\begin{enumerate}
\item Proposition~\ref{riemann special value denominators}: $\mathbb{S}(\zeta(s))(2n)$ is the denominator of $\zeta(1-2n)$ for all positive integers $n$.
\item Theorem~\ref{main thm for unramified straightening}:
Let $K/\mathbb{Q}$ be a finite Galois extension with ring of integers $A$.
Let $\chi_{2\Delta_{K/\mathbb{Q}}}$ be the trivial Gro\ss{e}ncharakter of $K$ of conductor equal to $2\Delta_{K/\mathbb{Q}}$, two times the classical discriminant
of $K/\mathbb{Q}$, and let
$L(s,\chi_{2\Delta_{K/\mathbb{Q}}})$ be its associated Hecke $L$-function.

Then, for all positive $n\in\mathbb{N}$, the following numbers are all equal:
\begin{itemize}
\item the order of the abelian group $\Ext^{1,2n}_{(L^A,L^AB)}(L^A,L^A)[(2\Delta_{K/\mathbb{Q}})^{-1}]$,
\item the order of the abelian group $H^{1,2n}_{fl}(\mathcal{M}_{fmA}; \mathcal{O})[(2\Delta_{K/\mathbb{Q}})^{-1}]$,
\item the order of the abelian group $A[(2\Delta_{K/\mathbb{Q}})^{-1}]/J_{n}$, where $J_n$ is the minimal $n$-congruing ideal in $A[(2\Delta_{K/\mathbb{Q}})^{-1}]$, and
\item the number $\mathbb{S}(L(s,\chi_{2\Delta_{K/\mathbb{Q}}}))(n)$.
\end{itemize}
\item The definition of $\mathbb{S}$ is very, very simple. 
\end{enumerate}

In Definition~\ref{def of g-d transform} I define a certain ``Galois-Dedekind straightening transform'' $\mathbb{S}_{GD}$ on Euler products.
The Galois-Dedekind straightening transform has a more complicated and less natural-seeming definition than the unramified straightening
transform, but it has following desirable properties:
\begin{enumerate}
\item Example~\ref{g-d transform of riemann zeta}: $\mathbb{S}_{GD}(\zeta(s))(2n)$ is the denominator of $\zeta(1-2n)$ for all positive integers $n$.
\item Theorem~\ref{main thm for g-d straightening}:
Let $K/\mathbb{Q}$ be a finite Galois extension with ring of integers $A$. 
Let $\chi_{\underline{\Delta}_{K/\mathbb{Q}}}$ be the trivial Gro\ss{e}ncharakter of $K$ of conductor equal to the prime-power-ramification discriminant $\underline{\Delta}_{K/\mathbb{Q}}$, and let 
$L(s,\chi_{\underline{\Delta}_{K/\mathbb{Q}}})$ be its associated Hecke $L$-function.

Then, for all positive $n\in\mathbb{N}$, the following numbers are all equal:
\begin{itemize}
\item the order of the abelian group $\Ext^{1,2n}_{(L^A,L^AB)}(L^A,L^A)[{\underline{\Delta}_{K/\mathbb{Q}}}^{-1}]$,
\item the order of the abelian group $H^{1,2n}_{fl}(\mathcal{M}_{fmA}; \mathcal{O})[{\underline{\Delta}_{K/\mathbb{Q}}}^{-1}]$,
\item the order of the abelian group $A[{\underline{\Delta}_{K/\mathbb{Q}}}^{-1}]/J_{n}$, where $J_n$ is the minimal $n$-congruing ideal in $A[{\underline{\Delta}_{K/\mathbb{Q}}}^{-1}]$, and
\item the number $\mathbb{S}_{GD}(L(s,\chi_{\underline{\Delta}_{K/\mathbb{Q}}}))(n)$.
\end{itemize}
\item Corollary~\ref{global corollary on dedekind zeta}: 
Let $K/\mathbb{Q}$ be a finite field extension with ring of integers $A$. Suppose $K/\mathbb{Q}$ is Galois
and suppose that the prime-power-ramification discriminant $\underline{\Delta}_{K/\mathbb{Q}}$ is equal to one.

Then, for all positive $n\in\mathbb{N}$, the following numbers are all equal:
\begin{itemize}
\item the order of the abelian group $\Ext^{1,2n}_{(L^A,L^AB)}(L^A,L^A)$,
\item the order of the abelian group $H^{1,2n}_{fl}(\mathcal{M}_{fmA}; \mathcal{O})$,
\item the order of the abelian group $A/J_{n}$, where $J_n$ is the minimal $n$-congruing ideal in $A$, and
\item the number $\mathbb{S}_{GD}(\zeta_K(s))(n)$, where $\zeta_K(s)$ is the Dedekind $\zeta$-function of $K$.
\end{itemize}
\item Proposition~\ref{inverse g-d s-transform really is inverse}: there exists an ``inverse'' transform $\mathbb{S}_{GD}^{-1}$ such that
$\mathbb{S}_{GD}^{-1}(\mathbb{S}_{GD}(\zeta_K(s))) = \zeta_K(s)$ for every finite Galois extension $K/\mathbb{Q}$.
\end{enumerate}
\end{itemize}

Finally, as a consequence of these facts about $\mathbb{S}_{GD}$, I prove Theorem~\ref{main equivalence thm after localization},
which states that, if $K_1/\mathbb{Q}$ and $K_2/\mathbb{Q}$ are finite Galois extensions with ring of integers $A_1$ and $A_2$, respectively,
and $m$ is any integer which is divisible by both $\underline{\Delta}_{K_1/\mathbb{Q}}$ and $\underline{\Delta}_{K_2/\mathbb{Q}}$,
then the following statements are all equivalent:
\begin{enumerate}
\item 
The Hecke $L$-function of the trivial Gro{\ss}encharakter on $K_1$ with conductor $m$ is equal to the 
the Hecke $L$-function of the trivial Gro{\ss}encharakter on $K_2$ with conductor $m$.
\item 
For all positive $n\in\mathbb{N}$, 
the order of the abelian group
\[ \Ext^{1,2n}_{(L^{A_1},L^{A_1}B)}(L^{A_1},L^{A_1})[m^{-1}]\]
is equal to the order of the abelian group
\[ \Ext^{1,2n}_{(L^{A_2},L^{A_2}B)}(L^{A_2},L^{A_2})[m^{-1}].\]
\item 
For all positive $n\in\mathbb{N}$, 
the order of the abelian group
\[ H^{1,2n}_{fl}(\mathcal{M}_{fmA_1}; \mathcal{O})[m^{-1}]\]
is equal to the order of the abelian group
\[ H^{1,2n}_{fl}(\mathcal{M}_{fmA_2}; \mathcal{O})[m^{-1}].\]
\item 
For all positive $n\in\mathbb{N}$, 
the abelian group
\[ \Ext^{1,2n}_{(L^{A_1},L^{A_1}B)}(L^{A_1},L^{A_1})[m^{-1}]\]
is isomorphic to the abelian group
\[ \Ext^{1,2n}_{(L^{A_2},L^{A_2}B)}(L^{A_2},L^{A_2})[m^{-1}].\]
\item 
For all positive $n\in\mathbb{N}$, 
the abelian group
\[ H^{1,2n}_{fl}(\mathcal{M}_{fmA_1}; \mathcal{O})[m^{-1}]\]
is isomorphic to the abelian group
\[ H^{1,2n}_{fl}(\mathcal{M}_{fmA_2}; \mathcal{O})[m^{-1}].\]
\item 
For all $n\in\mathbb{N}$, the order of the abelian group $A_1/(J_{n,1})[m^{-1}]$
is equal to the order of the abelian group $A_2/(J_{n,2})[m^{-1}]$
where $J_{n,1}$ is the minimal $n$-congruing ideal of $A_1[m^{-1}]$.
and $J_{n,2}$ is the minimal $n$-congruing ideal of $A_2[m^{-1}]$.
\item 
For all $n\in\mathbb{N}$, the abelian group $A_1/(J_{n,1})[m^{-1}]$
is isomorphic to the abelian group $A_2/(J_{n,2})[m^{-1}]$
where $J_{n,2}$ is the minimal $n$-congruing ideal of $A_1[m^{-1}]$.
and $J_{n,2}$ is the minimal $n$-congruing ideal of $A_2[m^{-1}]$.
\end{enumerate}

Corollary~\ref{main equivalence thm when ppr disc is trivial} follows as a consequence:
if $K_1/\mathbb{Q}$ and $K_2/\mathbb{Q}$ are finite Galois extensions with rings of integers $A_1$ and $A_2$, respectively,
and the prime-power-ramification discriminants
$\underline{\Delta}_{K_1/\mathbb{Q}}$ and $\underline{\Delta}_{K_2/\mathbb{Q}}$ are both equal to $1$,
then the following are equivalent:
\begin{enumerate}
\item  The Dedekind $\zeta$-functions of $K_1$ and of $K_2$ are equal.
\item  For all positive $n\in\mathbb{N}$, 
the order of the abelian group
\[ \Ext^{1,2n}_{(L^{A_1},L^{A_1}B)}(L^{A_1},L^{A_1})\]
is equal to the order of the abelian group
\[ \Ext^{1,2n}_{(L^{A_2},L^{A_2}B)}(L^{A_2},L^{A_2}).\]
\item  For all positive $n\in\mathbb{N}$, 
the order of the abelian group
\[ H^{1,2n}_{fl}(\mathcal{M}_{fmA_1}; \mathcal{O})\]
is equal to the order of the abelian group
\[ H^{1,2n}_{fl}(\mathcal{M}_{fmA_2}; \mathcal{O}).\]
\item  For all positive $n\in\mathbb{N}$, 
the abelian group
\[ \Ext^{1,2n}_{(L^{A_1},L^{A_1}B)}(L^{A_1},L^{A_1})\]
is isomorphic to the abelian group
\[ \Ext^{1,2n}_{(L^{A_2},L^{A_2}B)}(L^{A_2},L^{A_2}).\]
\item  For all positive $n\in\mathbb{N}$, 
the abelian group
\[ H^{1,2n}_{fl}(\mathcal{M}_{fmA_1}; \mathcal{O})\]
is isomorphic to the abelian group
\[ H^{1,2n}_{fl}(\mathcal{M}_{fmA_2}; \mathcal{O}).\]
\item  For all $n\in\mathbb{N}$, the order of the abelian group $A_1/(J_{n,1})$
is equal to the order of the abelian group $A_2/(J_{n,2})$
where $J_{n,1}$ is the minimal $n$-congruing ideal of $A_1$.
and $J_{n,2}$ is the minimal $n$-congruing ideal of $A_2$.
\item For all $n\in\mathbb{N}$, the abelian group $A_1/(J_{n,1})$
is isomorphic to the abelian group $A_2/(J_{n,2})$
where $J_{n,2}$ is the minimal $n$-congruing ideal of $A_1$.
and $J_{n,2}$ is the minimal $n$-congruing ideal of $A_2$.
\end{enumerate}

I see Theorem~\ref{main equivalence thm after localization} and Corollary~\ref{main equivalence thm when ppr disc is trivial} 
as a satisfying resolution to the state of affairs observed by Ravenel in Remark~\ref{ravenel remark on dedekind zeta}: 
for finite Galois extensions of $\mathbb{Q}$,
when the prime-power-ramification discriminant is trivial (i.e., equal to $1$), the orders of the $\Ext^1$-groups 
are determined by the Dedekind $\zeta$-function of $K$ (not just their special values at negative integers!), and vice versa. 

It is worth mentioning that these theorems about $\mathbb{S}(\zeta_K(s))$ do not require any assumptions about
$K/\mathbb{Q}$ being an abelian extension. Computations of special values $\zeta_K(-n)$ for nonabelian $K/\mathbb{Q}$ are often prohibitively hard,
since one cannot factor $\zeta_K$ as a product of Dirichlet $L$-functions, so instead one must use the Artin $L$-functions of the irreducible representations of 
$\Gal(K/\mathbb{Q})$, whose special values are far more mysterious than Dirichlet $L$-function special values. Our substitute $\mathbb{S}(\zeta_K(s))(n)$ 
for the (denominators of) the special values $\zeta_K(-n)$ is, by contrast, highly computable. In Example~\ref{examples of ppr discriminants}
I give an example of a non-abelian Galois extension of $\mathbb{Q}$ with trivial prime-power-ramification discriminant, i.e., an example of
an non-abelian extension of $\mathbb{Q}$ to which Corollary~\ref{main equivalence thm when ppr disc is trivial} applies.

It is worth mentioning that a tremendous amount of work in number theory has gone into the number-theoretic 
properties and applications of the deformation spaces of formal $A$-modules, i.e., the formal neighborhoods of the various points in the
moduli stack of formal $A$-modules. For example, Lubin and Tate's proof, in~\cite{MR0172878}, of the $p$-adic version of Kronecker's ``jugendtraum''
uses the deformation space of a formal $\mathcal{O}_K$-module of $\mathcal{O}_K$-height $1$ to construct the abelian closure of a finite extension $K/\mathbb{Q}_p$,
and Carayol's non-abelian Lubin-Tate theory, as in~\cite{MR1044827}, lays out a program to use 
the deformation spaces of height $>1$ formal $\mathcal{O}_K$-modules 
to produce local Langlands and Jacquet-Langlands correspondences; this program was ultimately successful, as described in~\cite{MR1876802},
where, for a finite extension $K/\mathbb{Q}$, 
a ``globalization'' process of passing from local data ($p$-adic representations and $p$-local Euler factors, described
by appropriate cohomology groups of deformation spaces of formal $(\mathcal{O}_K)^{\hat{}}_{\mathfrak{p}}$-modules, for the various maximal
ideals $\mathfrak{p}$ of $\mathcal{O}_K$) to the global data (complex representations and their $L$-functions) was accomplished
by using certain Shimura varieties with the property that the formal neighborhoods of the various points are
the relevant deformation spaces. The results of the present paper 
suggest that the moduli stack $\mathcal{M}_{fm\mathcal{O}_K}$ of formal $\mathcal{O}_K$-modules itself
can play a role very much like these Shimura varieties:
$\mathcal{M}_{fm\mathcal{O}_K}$
is also a ``globalization'' of the various deformation spaces of formal $(\mathcal{O}_K)^{\hat{}}_{\mathfrak{p}}$-modules, with the
property that its flat cohomology captures $L$-function-theoretic data about the number ring $\mathcal{O}_K$.
The stack $\mathcal{M}_{fm\mathcal{O}_K}$ is less amenable to uniformization techniques (as in~\cite{MR1141456}) which are used for Shimura varieties, 
but $\mathcal{M}_{fm\mathcal{O}_K}$ has the advantage of an explicit presentation by a Hopf algebroid, with important connections to homotopy theory, already mentioned above.

I am grateful to 
the topology groups at University of Rochester and University of Chicago for their patience with me while I prattled on and on in their topology seminars about the contents of this paper. I am especially grateful to D. Ravenel for many useful conversations about formal modules and their moduli theory.

\section{Review of formal modules, their moduli theory, and common notations.}

In this section I give a brief review of some definitions and some results which I regard as classical in the theory of formal modules.
An excellent reference is~\cite{MR2987372}.

\begin{definition} If $A$ is a commutative ring and $R$ is a commutative $A$-algebra, then a {\em (one-dimensional) formal $A$-module over $R$}
is a formal group law $F$ over $R$ equipped with a ring homomorphism $\rho: A \rightarrow \End(F)$
such that, for all $a\in A$, the power series $\rho(a) \in \End(F) \subseteq R[[X]]$ is congruent to $aX$ modulo $X^2$.
A {\em homomorphism of formal $A$-modules} is (as one would guess) just a homomorphism of formal group laws which commutes with
the action of $A$. A {\em strict isomorphism of formal $A$-modules} is an isomorphism (i.e., a homomorphism with an inverse) of formal $A$-modules
which is strict as an isomorphism of the underlying formal group laws, i.e., as a power series, the isomorphism is congruent to $X$ modulo $X^2$.

All formal modules in this paper will be assumed one-dimensional.
\end{definition}
Roughly speaking, a formal $A$-module is a ``formal group law with complex multiplication by $A$'' (this perspective was taken, for example,
in~\cite{MR0172878}).

\begin{definition}\label{def of A-typical fm}
Suppose $A$ is a local commutative ring with finite residue field $\mathbb{F}_q$ and uniformizer $\pi$.
Suppose $R$ is a commutative $A$-algebra which is $\pi$-torsion-free, i.e., multiplication by $\pi$ is injective on $R$.
Suppose further that $R$ is characteristic zero.
We say that a formal $A$-module $F$ over $R$ is {\em $A$-typical} if the logarithm $\log_F$ of the underlying formal group law of $F$ 
has the form
 \[ \log_F(X) = X + \lambda_1 X^q + \lambda_2 X^{q^2} + \lambda_3 X^{q^3} + \dots .\]
\end{definition}
Definition~\ref{def of A-typical fm} admits a natural, canonical extension to a notion of $A$-typicality for arbitrary formal $A$-modules,
not just those defined over certain (characteristic zero, etc.) $A$-algebras; see chapter 21 of~\cite{MR2987372} for this.

See chapter 21 of~\cite{MR2987372} for the following results, which are phrased below as one ``omnibus'' theorem:
\begin{theorem} {\bf Lazard-type theorems for formal $A$-modules.}
\begin{itemize}
\item {\bf (Global case.)} Let $A$ be the ring of integers in a finite extension $K/\mathbb{Q}$. 
\begin{itemize}
\item Then  
there exists a ``formal $A$-module Lazard ring'' $L^A$ and a ``classifying ring for strict formal $A$-module isomorphisms'' $L^AB$,
having the property that, for any commutative $A$-algebra $R$, there is a bijection between $A$-algebra morphisms $L^A\rightarrow R$
and formal $A$-modules over $R$, and a bijection between $A$-algebra morphisms $L^AB\rightarrow R$ and strict isomorphisms of formal $A$-modules over $R$.
These bijections are natural in $R$.
\item The rings $L^A$ and $L^AB$ assemble to form a Hopf algebroid $(L^A,L^AB)$, with left unit, right unit, augmentation, conjugation, and coproduct maps being the ring homomorphisms that express the operations taking the domain of a strict isomorphism,
taking the codomain of a strict isomorphism, taking the identity strict isomorphism on a formal $A$-module, taking the inverse of a strict isomorphism,
and composing a composable pair of strict isomorphisms, respectively.
\item If the class number of $A$ is one, then $L^A$ is isomorphic to a polynomial ring $L^A \cong A[S_1^A, S_2^A, S_3^A, \dots]$, and $L^AB$ is isomorphic to a polynomial
ring $L^AB \cong L^A[b_1^A, b_2^A, b_3^A, \dots ]$. If the class number of $A$ is not one, then $L^A$ and $L^AB$ are subalgebras of polynomial algebras;
see~\cite{cmah2} for explicit computations.
\item Given a commutative $A$-algebra $R$, a unit $r\in R^{\times}$, and a formal $A$-module $F$ over $R$ with logarithm $\log_F(X)$, the formal group law
with logarithm $r^{-1}\log_F(rX)$ also has the natural structure of a formal $A$-module. This establishes an action of the multiplicative group scheme $\mathbb{G}_m$
on the functor $\hom_{comm. A-alg}(L^A, -)$, hence a grading on $L^A$. {\em By convention, we double all grading degrees of gradings arising from $\mathbb{G}_m$-actions, so that all our graded rings are concentrated in even degrees and so there is no question of whether we are using the graded-commutativity sign convention.} With this convention, the generator $S_n^A$ is in grading degree $2n$. By a similar construction, $L^AB$ also has a natural grading, with $b_n^A$ in grading degree $2n$.
\end{itemize}
\item {\bf (Local case.)} Now let $A$ instead be the ring of integers in a finite extension $K/\mathbb{Q}_p$.
\begin{itemize}
\item Then  
there exists a ``formal $A$-module Lazard ring'' $L^A$ and a ``classifying ring for strict formal $A$-module isomorphisms'' $L^AB$,
with the same universal properties as in the global case. 
\item There also exists an ``$A$-typical formal $A$-module Lazard ring'' $V^A$ and a ``$A$-typical classifying ring for strict formal $A$-module isomorphisms'' $V^AT$,
having the property that, for any commutative $A$-algebra $R$, there is a bijection between $A$-algebra morphisms $V^A\rightarrow R$
and $A$-typical formal $A$-modules over $R$, and a bijection between $A$-algebra morphisms $V^AT\rightarrow R$ and strict isomorphisms of $A$-typical 
formal $A$-modules over $R$.
These bijections are natural in $R$.
\item The rings $L^A$ and $L^AB$ assemble to form a Hopf algebroid $(L^A,L^AB)$, just as in the global case. 
The rings $V^A$ and $V^AT$ also assemble to form a Hopf algebroid $(V^A,V^AT)$, for the same reasons.
\item The ring $L^A$ is isomorphic to a polynomial ring $L^A \cong A[S_1^A, S_2^A, S_3^A, \dots]$, and $L^AB$ is isomorphic to a polynomial
ring $L^AB \cong L^A[b_1^A, b_2^A, b_3^A, \dots ]$ (there is no class number condition here, since the class group of $A$ is automatically trivial!). 
These rings are graded just as in the global case.
\item The ring $V^A$ is isomorphic to a polynomial ring $V^A \cong A[v_1^A, v_2^A, v_3^A, \dots]$, and $V^AT$ is isomorphic to a polynomial
ring $V^AT \cong V^A[t_1^A, t_2^A, t_3^A, \dots ]$.
These rings have a natural grading by the same construction as in the global case, and the grading degrees of $v_n^A$ and of $t_n^A$ are each equal to $2(q^n-1)$,
where $q$ is the cardinality of the residue field of $A$.
\end{itemize}
\end{itemize}
\end{theorem}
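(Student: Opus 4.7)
The plan is to follow Hazewinkel's treatment in chapter~21 of~\cite{MR2987372}, addressing representability and the Hopf algebroid structure first, then the polynomial presentations, then the grading, and finally the $A$-typical refinement in the local case. Each of the five bullet points in the global case and the six in the local case reduces to one of these steps.

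First, representability. I define $L^A$ as the free commutative $A$-algebra on coefficient symbols $c_{ij}$ for a generic formal group law $F(X,Y) = X + Y + \sum_{i,j\geq 1} c_{ij} X^i Y^j$, together with symbols $d_{a,i}$ (for each $a \in A$ and $i \geq 2$) for the coefficients of $[a]_F(X) = aX + \sum_{i\geq 2} d_{a,i} X^i$, modulo the relations expressing commutativity and associativity of $F$ and the ring axioms for $\rho$ (i.e., $[a]_F\circ [b]_F = [ab]_F$, $[a]_F(X) +_F [b]_F(X) = [a+b]_F(X)$, and $[1]_F(X) = X$). This is tautologically universal for formal $A$-modules. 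Similarly, $L^AB$ represents pairs $(F,\phi)$ of a formal $A$-module and a strict isomorphism out of $F$, and the groupoid operations (source, target, identity, inverse, composition of strict isomorphisms) make $(L^A, L^AB)$ into the claimed Hopf algebroid. The $\mathbb{G}_m$-action $(r, F) \mapsto r^{-1} F(rX, rY)$ endows $L^A$ and $L^AB$ with their gradings, which after the convention of doubling places each $S_n^A$ and each $b_n^A$ in degree $2n$.

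The central computational step is the polynomial structure of $L^A$. When the class number of $A$ is one, I adapt Hazewinkel's functional equation lemma: construct a universal logarithm $\log_F(X) = \sum_{n\geq 0} \ell_n X^{n+1}$ over $L^A \otimes_A K$, under which $F$ becomes strictly isomorphic to the additive formal $A$-module over $K$, and then choose generators $S_n^A \in L^A$ whose images modulo decomposables span the indecomposable quotient of $L^A$ in each degree. The nontrivial point is integrality, which is controlled by the functional equation lemma. The statement for $L^AB$ is then immediate: the generic strict isomorphism $\phi(X) = X + \sum_{n\geq 1} b_n^A X^{n+1}$ is determined by its coefficients with no further relations beyond those already present in $L^A$. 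The local statements for $L^A$ and $L^AB$ go through with the extra simplification that $A$ is a discrete valuation ring and hence automatically has class number one. For the $A$-typical theory in the local case, I mimic Cartier's $p$-typicalization idempotent: build an endomorphism $\epsilon$ of the identity functor on formal $A$-modules over $\pi$-torsion-free characteristic-zero $A$-algebras, satisfying $\epsilon^2 = \epsilon$, whose image is exactly the $A$-typical formal $A$-modules of Definition~\ref{def of A-typical fm}, by averaging over appropriate roots of unity; the construction then extends canonically to arbitrary formal $A$-modules. The representing ring $V^A$ is polynomial on generators $v_n^A$ in degree $2(q^n-1)$ because only the $q^n$-th coefficients of the logarithm survive and each such coefficient carries $\mathbb{G}_m$-weight $q^n-1$.

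The main obstacle is the polynomial structure and its integrality: producing the $S_n^A$ and $v_n^A$ so that $L^A$ and $V^A$ are polynomial on them over $A$ itself, rather than merely after inverting some integer, requires the functional equation lemma together with a careful choice of generators adapted to $A$ rather than just $\mathbb{Z}$. The non-principal-ideal-domain case of the global polynomial statement requires working with subrings of polynomial algebras and is treated in \cite{cmah2}; in the present paper this subtlety will be sidestepped by passing to localizations where the relevant discriminants are inverted.
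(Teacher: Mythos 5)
The paper gives no proof of this theorem at all — it is explicitly presented as a summary of classical results from chapter~21 of Hazewinkel's book \cite{MR2987372}, which is the very source you follow. Your sketch of representability, the Hopf algebroid structure, the functional-equation-lemma argument for polynomiality, the $\mathbb{G}_m$-grading, and the $A$-typical idempotent correctly outlines what that reference does, so the approach matches; the one spot where your account is slightly loose is the $A$-typification step, where the integral construction of the idempotent is usually phrased via Cartier's curve functor and Frobenius/Verschiebung operators rather than literal averaging over roots of unity, but that is a matter of presentation rather than a gap.
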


This paper is about $\Ext$ groups in certain categories of comodules
over certain Hopf algebroids, specifically the Hopf algebroids classifying
formal modules over various number rings.
I had better say a little bit about what this means and why this is worth doing:
\begin{conventions}\label{running conventions}
Suppose that $(A,\Gamma)$ is a Hopf algebroid with $\Gamma$ flat over $A$.
Suppose further that $M,N$ are left $\Gamma$-comodules.
Then I will write 
\[ \Ext_{(A,\Gamma)}^{*}(M,N)\]
for the relative $\Ext$ groups in the category of left $\Gamma$-comodules,
relative to the allowable class generated by the extended $\Gamma$-comodules.
See Appendix 1 of Ravenel's book~\cite{MR860042} for this very standard construction.
It has the following desirable properties:
\begin{itemize}
\item The two-sided cobar complex of $(A,\Gamma)$ with coefficients in $M$ and $N$
computes $\Ext_{(A,\Gamma)}^*(M,N)$ if $M$ is projective as an $A$-module (see A1.2.12 of~\cite{MR860042}).
\item When $\Gamma$ is smooth (respectively, formally smooth) over $A$, then 
the stack associated to the groupoid scheme $(\Spec A, \Spec \Gamma)$ is 
an Artin (respectively, ``formally Artin'') stack $\mathcal{X}$, and the category of left $\Gamma$-comodules is equivalent to the category of quasicoherent modules over the structure sheaf $\mathcal{O}$ of the fpqc site on $\mathcal{X}$. This equivalence preserves cohomology, in the following sense: 
write $\tilde{M}$ for the quasicoherent $\mathcal{O}$-module associated to a
left $\Gamma$-module $M$, and write $\overline{\mathcal{F}}$ for the left
$\Gamma$-comodule associated to a quasicoherent $\mathcal{O}$-module $\mathcal{F}$.
Then for all nonnegative integers $n$, we have isomorphisms
\begin{align*} 
 \Ext_{(A,\Gamma)}^n(A,M) &\cong H_{fl}^n(\mathcal{X}; \tilde{M}) \mbox{\ \ and} \\
 \Ext_{(A,\Gamma)}^n(A,\overline{\mathcal{F}}) &\cong H_{fl}^n(\mathcal{X}; \mathcal{F}),\end{align*}
natural in $\mathcal{F}$ and in $M$. See~\cite{pribblethesis} or~\cite{smithlingthesis} for these results.
\end{itemize}

Now suppose further that $(A,\Gamma)$ is a {\em graded} Hopf algebroid with $\Gamma$ flat over $A$. (I insist here that $A,\Gamma$ both be commutative rings equipped with a
grading, not merely graded-commutative rings!)
Suppose further that $M,N$ are graded left $\Gamma$-comodules.
Then I make the definition 
\[ \Ext_{(A,\Gamma)}^{s,t}(M,N)\coloneqq  \Ext_{gr.\ (A,\Gamma)}^{s}(\Sigma^t M, N) ,\]
that is, I will write $\Ext_{(A,\Gamma)}^{s,t}(M,N)$
for the relative $\Ext$-group $\Ext_{gr.\ (A,\Gamma)}^{s}(\Sigma^t M, N)$
in the category of graded left $\Gamma$-comodules,
relative to the allowable class generated by the extended graded $\Gamma$-comodules;
here (as is usual in topology) I am writing $\Sigma^t M$ for the $\Gamma$-comodule $M$ with all of its grading degrees increased by $t$.
See Appendix 1 of Ravenel's book~\cite{MR860042} for this very standard construction.
It has the following desirable properties:
\begin{itemize}
\item The $\Ext$-group $\Ext^s_{(A,\Gamma)}(M, N)$, ignoring the gradings,
splits as a direct sum
\[ \Ext^s_{(A,\Gamma)}(M, N) \cong \oplus_{t\in\mathbb{Z}} \Ext^{s,t}_{(A,\Gamma)}(M, N).\]
Consequently the two-sided cobar complex of $(A,\Gamma)$ with coefficients in $M$ and $N$
still computes $\Ext_{(A,\Gamma)}^*(M,N)$ if $M$ is projective as an $A$-module (again, see A1.2.12 of~\cite{MR860042}).
\item When $\Gamma$ is smooth (respectively, formally smooth) over $A$, then 
the Artin (respectively, ``formally Artin'') stack $\mathcal{X}$ inherits an action of
the multiplicative group scheme $\mathbb{G}_m$, 
and the category of graded left $\Gamma$-comodules is equivalent to the category of 
$\mathbb{G}_m$-equivariant quasicoherent modules over the structure sheaf $\mathcal{O}_X$ of the fpqc site on $\mathcal{X}$. This equivalence preserves cohomology, in the following sense: 
write $\tilde{M}$ for the $\mathbb{G}_m$-equivariant quasicoherent $\mathcal{O}_X$-module associated to a
graded left $\Gamma$-module $M$, and write $\overline{\mathcal{F}}$ for the graded
left $\Gamma$-comodule associated to a $\mathbb{G}_m$-equivariant quasicoherent $\mathcal{O}_X$-module $\mathcal{F}$.
Then for all nonnegative integers $n$, we have isomorphisms
\begin{align} 
\label{gm-equivariant stack coh 1} \Ext_{(A,\Gamma)}^{s,t}(A,M) &\cong H_{fl,\mathbb{G}_m}^s(\mathcal{X}; \tilde{M}\otimes_{\mathcal{O}_{X}} \mathcal{O}_X(-1)^{\otimes_{\mathcal{O}_X} t}) \mbox{\ \ and} \\
\label{gm-equivariant stack coh 2} \Ext_{(A,\Gamma)}^{s,t}(A,\overline{\mathcal{F}}) &\cong H_{fl,\mathbb{G}_m}^s(\mathcal{X}; \mathcal{F}\otimes_{\mathcal{O}_{X}} \mathcal{O}_X(-1)^{\otimes_{\mathcal{O}_X} t}),\end{align}
natural in $\mathcal{F}$ and in $M$.
Here I am writing $H_{fl,\mathcal{G}_m}^{*}$ for $\mathbb{G}_m$-equivariant cohomology of $\mathbb{G}_m$-equivariant quasicoherent modules over the structure sheaf $\mathcal{O}_X$ of the fpqc site on $X$, and I am writing $\mathcal{O}_X(-1)$ for the 
quasicoherent ``twist'' module associated to the graded left $\Gamma$-comodule
$\Sigma^{-1} A$.

As far as I know, there does not seem to be a standard notation for stack cohomology equipped with the additional grading one gets from a $\mathbb{G}_m$-action on the stack, as in~\ref{gm-equivariant stack coh 1} and~\ref{gm-equivariant stack coh 2}. I will write 
$H_{fl}^{s,t}(\mathcal{X}; \mathcal{F})$ as shorthand for the cohomology group in~\ref{gm-equivariant stack coh 2}.
\item When $E$ is a ring spectrum whose ring of homotopy groups $E_*$ is commutative
and whose ring of stable co-operations $E_*E$
is commutative and flat over $E_*$, then $(E_*,E_*E)$ is a graded Hopf algebroid,
and for any spectrum $X$, the bigraded abelian group
\[ \Ext_{(E_*,E_*E)}^{*,*}(E_*,E_*(X))\]
is the $E_2$-term of the $E$-Adams spectral sequence
converging to the homotopy groups $\pi_*(\hat{X}_E)$
of the $E$-nilpotent completion of $X$. See chapter 2 of~\cite{MR860042} for this material. 

Specifically, when $E = MU$, the complex cobordism spectrum,
then the Hopf algebroid $(MU_*,MU_*MU)$ is isomorphic to Lazard's Hopf algebroid
$(L^{\mathbb{Z}},L^{\mathbb{Z}}B)$ classifying one-dimensional formal $\mathbb{Z}$-modules,
i.e., one-dimensional formal group laws. When $p$ is any prime and $E=BP$,
the $p$-primary Brown-Peterson spectrum,
then the Hopf algebroid $(BP_*,BP_*BP)$ is isomorphic to Lazard's Hopf algebroid
$(V^{\mathbb{Z}_{(p)}},V^{\mathbb{Z}_{(p)}}T)$ classifying $p$-typical one-dimensional formal $\mathbb{Z}_{(p)}$-modules,
i.e., $p$-typical one-dimensional formal group laws over commutative $\mathbb{Z}_{(p)}$-algebras. These are both theorems of Quillen: see~\cite{MR0253350}.

Consequently (and using Bousfield's theorems identifying $MU$-nilpotent and $BP$-nilpotent completions of connective spectra: see chapter 2 of~\cite{MR860042}), 
writing $\mathcal{M}_{fg}$ for the moduli stack of one-dimensional formal groups over $\Spec \mathbb{Z}$, 
\[ \Ext_{(L^{\mathbb{Z}},L^{\mathbb{Z}}B)}^{*,*}(L^{\mathbb{Z}},L^{\mathbb{Z}})\cong H^{*,*}_{fl}(\mathcal{M}_{fg}, \mathcal{O})\]
is the input for the 
Adams-Novikov spectral sequence converging to the stable homotopy
groups of spheres $\pi_*(S)$,
and 
\[ \Ext_{(V^{\mathbb{Z}_{(p)}},V^{\mathbb{Z}_{(p)}}T)}^{*,*}(V^{\mathbb{Z}_{(p)}},V^{\mathbb{Z}_{(p)}})\cong H^{*,*}_{fl}(\mathcal{M}_{fg}\times_{\Spec \mathbb{Z}} \Spec \mathbb{Z}_{(p)}; \mathcal{O})\]
is the input for the 
$p$-primary Adams-Novikov spectral sequence converging to the $p$-local stable homotopy
groups of spheres $\pi_*(S)_{(p)}$.
\end{itemize}
\end{conventions}

\section{Proof of (many cases of) Ravenel's Local Conjecture.}

\begin{lemma}\label{counting valuations lemma}
Let $p$ be a prime number, let $i,j$ be nonnegative integers such that $j\leq i$, and let $\alpha$ a positive integer prime to $p$.
\begin{itemize}
\item In the set of integers $\{ p^i +1, p^i +2, p^i +3, \dots ,\alpha p^i\}$, there exist exactly $(\alpha - 1)p^{i-j}$ integers divisible by $p^j$.
\item If $m$ is a nonnegative integer, then in 
the set of integers $\{ p^i +1+m, p^i +2+m, p^i +3+m, \dots ,\alpha p^i +m\}$, there exist at least $(\alpha - 1)p^{i-j}$ integers divisible by $p^j$.
\end{itemize}
\end{lemma}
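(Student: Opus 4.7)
The plan is to reduce both parts to the elementary fact that in any block of $L$ consecutive integers, the number of multiples of $p^j$ is $\lfloor L/p^j\rfloor$ or $\lceil L/p^j\rceil$, with equality to $L/p^j$ when $p^j$ divides $L$.

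First I would handle the first bullet point directly by the standard counting formula: the number of integers in $\{1,2,\dots,N\}$ divisible by $p^j$ is $\lfloor N/p^j\rfloor$, so the number of multiples of $p^j$ in $\{p^i+1,\dots,\alpha p^i\}$ equals
\[ \left\lfloor \frac{\alpha p^i}{p^j}\right\rfloor - \left\lfloor \frac{p^i}{p^j}\right\rfloor = \alpha p^{i-j} - p^{i-j} = (\alpha-1)p^{i-j}, \]
using the hypothesis $j\leq i$ to eliminate the floor symbols.

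For the second bullet point, I would note that the shifted set $\{p^i+1+m,\dots,\alpha p^i+m\}$ still consists of exactly $L=(\alpha-1)p^i$ consecutive integers. Since $j\leq i$, we have $p^j \mid L$, so $L/p^j = (\alpha-1)p^{i-j}$ is an integer. Any block of $L$ consecutive integers with $p^j \mid L$ contains exactly $L/p^j$ multiples of $p^j$ (partition the block into $L/p^j$ consecutive sub-blocks of length $p^j$, each containing a unique multiple of $p^j$). Thus the count is exactly $(\alpha-1)p^{i-j}$, which in particular is at least $(\alpha-1)p^{i-j}$ as claimed.

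There is no real obstacle here; the lemma is purely a counting statement, and the only subtle point is remembering to use $j\leq i$ to ensure that $p^j$ divides the length $L=(\alpha-1)p^i$ of the interval, which makes the count independent of the shift $m$. I would expect the author's proof to be a few lines at most, essentially matching the argument above.
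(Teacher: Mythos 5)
Your proof is correct. The paper's ``proof'' consists of the single word ``Elementary,'' and the argument you give --- the standard floor-function count for the first bullet, and the block-partition observation (using $j\le i$ to get $p^j\mid(\alpha-1)p^i$) for the second --- is exactly the kind of routine counting the author is alluding to. Worth noting: your argument for the second bullet in fact establishes \emph{equality}, not merely the inequality stated in the lemma; the author presumably wrote ``at least'' because that weaker form is all that is used later in the proof of the lemma on valuations of binomial coefficients.
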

\begin{proof}
Elementary.
\end{proof}

\begin{lemma}\label{valuations of binomial coefficients}
Let $p$ be a prime number, let $n,e$ be positive integers, and 
let $u_{p,n,e}: \{ 1, \dots ,n\} \rightarrow \mathbb{N}$
be the function
\[ u_{p,n,e}(k) = e\cdot\nu_p\left( \binom{n}{k}\right) + k .\]
Then the least value taken by the function $u_{p,n,e}$ is
achieved exactly when:
\begin{itemize}
\item $k\in \{ \frac{e}{p-1}, p\frac{e}{p-1}\}$ if $\log_p\frac{e}{p-1}$ is an integer and $\log_p\frac{e}{p-1} < \nu_p(n)$,
\item $k = p^{\nu_p(n)}$ if $\log_p\frac{e}{p-1}$ is an integer and $\log_p\frac{e}{p-1}\geq \nu_p(n)$ {\em or} 
 $\log_p\frac{e}{p-1}$ is not an integer and $\ceiling{\log_p\frac{e}{p-1}} > \nu_p(n)$,
\item and $k = p^{\ceiling{\log_p\frac{e}{p-1}}}$ if $\log_p\frac{e}{p-1}$ is not an integer and $\ceiling{\log_p\frac{e}{p-1}} \leq \nu_p(n)$.
\end{itemize}
That least value taken by $u_{p,n,e}$ is:
\begin{itemize}
\item \begin{align*} u_{p,n,e} (\frac{e}{p-1}) &= u_{p,n,e} (\frac{pe}{p-1}) &= e\left(\nu_p(n)  + \frac{1}{p-1} - \log_p \frac{e}{p-1}\right)\end{align*}
if $\log_p\frac{e}{p-1}$ is an integer and $\log_p\frac{e}{p-1} < \nu_p(n)$,
\item \begin{align*} u_{p,n,e} (\nu_p(n)) &= p^{\nu_p(n)}\end{align*}
if $\log_p\frac{e}{p-1}$ is an integer and $\log_p\frac{e}{p-1} \geq \nu_p(n)$ {\em or} 
 $\log_p\frac{e}{p-1}$ is not an integer and $\ceiling{\log_p\frac{e}{p-1}} > \nu_p(n)$,
\item and \begin{align*} u_{p,n,e} (p^{\ceiling{\log_p\frac{e}{p-1}}}) &= e\left(\nu_p(n) - \ceiling{\log_p \frac{e}{p-1}}\right) + p^{\ceiling{\log_p\frac{e}{p-1}}}\end{align*}
if $\log_p\frac{e}{p-1}$ is not an integer and $\ceiling{\log_p\frac{e}{p-1}} \leq \nu_p(n)$.
\end{itemize}
\end{lemma}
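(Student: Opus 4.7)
The plan is to reduce the problem to minimizing a one-variable function on powers of $p$. Specifically, I will argue that the minimum of $u_{p,n,e}$ is achieved only at $k$ of the form $p^j$ with $0\leq j\leq \nu_p(n)$, and then minimize
\[ f(j) \coloneqq e(\nu_p(n)-j) + p^j \]
over this discrete range.

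The reduction to powers of $p$ rests on two observations. First, the identity $k\binom{n}{k} = n\binom{n-1}{k-1}$ gives
\[ \nu_p\binom{n}{k} = \nu_p(n) - \nu_p(k) + \nu_p\binom{n-1}{k-1} \geq \nu_p(n) - \nu_p(k), \]
which, together with the trivial $k\geq p^{\nu_p(k)}$ (strict unless $k$ is itself a power of $p$), yields $u_{p,n,e}(k)\geq u_{p,n,e}(p^{\nu_p(k)})$ whenever $\nu_p(k)\leq \nu_p(n)$, with equality only when $k$ is a power of $p$. Second, for $k$ with $\nu_p(k)>\nu_p(n)$ one has $u_{p,n,e}(k)\geq k \geq p^{\nu_p(n)+1} > p^{\nu_p(n)} = u_{p,n,e}(p^{\nu_p(n)})$, so such $k$ never minimize $u_{p,n,e}$.

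Next, I would compute $\nu_p\binom{n}{p^j} = \nu_p(n) - j$ for $0\leq j\leq \nu_p(n)$ via Kummer's theorem: since the last $\nu_p(n)$ base-$p$ digits of $n$ vanish, the subtraction $n-p^j$ in base $p$ borrows across exactly the digits in positions $j, j+1, \dots, \nu_p(n)-1$, so the reverse addition $p^j + (n-p^j) = n$ carries in exactly those $\nu_p(n)-j$ positions. Thus $u_{p,n,e}(p^j)=f(j)$ for $0\leq j\leq \nu_p(n)$, and the problem reduces to minimizing $f$ on $\{0,1,\dots,\nu_p(n)\}$.

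Finally, the minimum of $f$ is found via the elementary discrete-calculus identity
\[ f(j+1)-f(j) = p^j(p-1)-e, \]
which is negative, zero, or positive according as $p^j < \tfrac{e}{p-1}$, $p^j=\tfrac{e}{p-1}$, or $p^j>\tfrac{e}{p-1}$. Splitting on whether $\log_p\frac{e}{p-1}$ is an integer and how this quantity compares to $\nu_p(n)$ yields the three cases in the lemma: when $\log_p\frac{e}{p-1}$ is an integer strictly less than $\nu_p(n)$ the difference vanishes at $j=\log_p\frac{e}{p-1}$ and the minimum is attained at two consecutive values, giving $k\in\{e/(p-1),\, pe/(p-1)\}$; when $\log_p\frac{e}{p-1}$ is not an integer and $\ceiling{\log_p\frac{e}{p-1}}\leq \nu_p(n)$ the minimum is unique at $k=p^{\ceiling{\log_p\frac{e}{p-1}}}$; and in the remaining degenerate cases $f$ is non-increasing on the admissible range, so the minimum sits at the endpoint $k=p^{\nu_p(n)}$ with value $p^{\nu_p(n)}$. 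Substituting each minimizing $j$ back into $f$ yields the asserted minimum values. The argument poses no real obstacle; the only subtlety is the careful three-way case-split, which the statement has already organized for us.
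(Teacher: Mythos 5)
Your proof is correct and takes a genuinely different --- and cleaner --- route than the paper's. The main difference is in the reduction to powers of $p$: the paper first argues the minimizer is $n$ or lies in $\{1,\dots,\lfloor n/2\rfloor\}$, and then invokes the auxiliary counting result (Lemma~\ref{counting valuations lemma}) to rule out $k=\alpha p^i$ with $\alpha>1$ prime to $p$; you instead get the same reduction in one line from $k\binom{n}{k}=n\binom{n-1}{k-1}$, which gives $\nu_p\binom{n}{k}\geq \nu_p(n)-\nu_p(k)$, hence $u_{p,n,e}(k)\geq u_{p,n,e}(p^{\nu_p(k)})$ whenever $\nu_p(k)\leq\nu_p(n)$, with strict inequality unless $k$ is itself a power of $p$ (and your trivial bound handles $\nu_p(k)>\nu_p(n)$). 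This makes the whole counting lemma unnecessary. You also compute $\nu_p\binom{n}{p^j}=\nu_p(n)-j$ by Kummer's theorem rather than the paper's telescoping sum $\sum_{j=2}^{p^i}(\nu_p(j-1)-\nu_p(j))$, and you locate the minimizer of $f(j)=e(\nu_p(n)-j)+p^j$ via the discrete difference $f(j+1)-f(j)=p^j(p-1)-e$ directly, avoiding the paper's detour through the real derivative $-e+(\ln p)p^i$ and the subsequent translation from $\log_p\frac{e}{\ln p}$ to $\log_p\frac{e}{p-1}$. The paper's approach is more machinery-heavy; yours buys a shorter, more self-contained argument, and I would regard it as the preferable proof.
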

\begin{proof}
I claim that the minimum value of $u_{p,n,e}$ only occurs either at $n$ or in the first half of its domain, i.e., if $k\in \{ 1, 2, \dots , n\}$ is such that
$u_{p,n,e}(k)$ is the minimum value taken by $u_{p,n,e}$, then either $k=n$ or $k\leq \frac{n}{2}$. The argument here is trivially easy: if 
$k > \frac{n}{2}$, then:
\begin{align*} 
 u_{p,n,e}(k) 
  &= e\cdot\nu_p\left(\binom{n}{k}\right) + k \\
  &= e\cdot\nu_p\left(\binom{n}{n-k}\right) + k \\
  &> e\cdot\nu_p\left(\binom{n}{n-k}\right) + n-k \\
  &= u_{p,n,e}(n-k)\mbox{\ \ if\ } k<n.\end{align*} 
As a consequence, if $\alpha>0$ is an integer prime to $p$
and $u_{p,n,e}$ takes its minimum value at $\alpha p^i$, then either $\alpha p^i = n$ or
$\alpha p^i \leq \frac{n}{2}$, and consequently either $\alpha p^i = n$ or
\begin{align*}  (\alpha +1) p^i \leq 2\alpha p^i \leq n ,\end{align*}
hence either $\alpha p^i = n$ or 
\begin{align} \label{inequality 10} n+1-\alpha p^i \geq p^i+1,\end{align}
an inequality we will use shortly.

Now I claim that the minimum value of the function $u_{p,n,e}$
occurs at some power of $p$ (including the possibility of $p^0 = 1$).
Indeed, suppose that $u_{p,n,e}(p^i) \geq u_{p,n,e}(\alpha p^i)$,
with $\alpha>0$ an integer prime to $p$.
It will be convenient to think of the binomial coefficient $\binom{n}{k}$ as
\[ \binom{n}{k} = \frac{n}{1} \frac{n-1}{2} \frac{n-2}{3}  \dots \frac{n+1-k}{k} .\]
Then:
\begin{align*}
 u_{p,n,e}(\alpha p^i) 
  &= e\cdot \nu_p\left( \binom{n}{\alpha p^i}\right) + \alpha p^i \\
  &= e\cdot \left( \sum_{j=1}^{\alpha p^i} \nu_p\left( \frac{n+1-j}{j}\right)\right) + \alpha p^i \\
  &= e\cdot \left( \sum_{j=1}^{p^i} \nu_p\left( \frac{n+1-j}{j}\right)\right) + p^i 
      + e\cdot \left( \sum_{j=p^i+1}^{\alpha p^i} \nu_p\left( \frac{n+1-j}{j}\right)\right) + (\alpha - 1) p^i \\
  &= u_{p,n,e}(p^i) + e\cdot \left( \sum_{j=p^i+1}^{\alpha p^i} \nu_p\left( \frac{n+1-j}{j}\right)\right) + (\alpha - 1) p^i , \mbox{\ so:}\\
 (1-\alpha ) p^i &\geq e\cdot \left( \sum_{j=p^i+1}^{\alpha p^i} \nu_p\left( \frac{n+1-j}{j}\right)\right) .
\end{align*}
Now if $\alpha \neq 1$, then $(1-\alpha)p^i < 0$, and consequently
$e\cdot \left( \sum_{j=p^i+1}^{\alpha p^i} \nu_p\left( \frac{n+1-j}{j}\right)\right)$ is negative. Hence:
\begin{align}
 0 
\nonumber  &> \sum_{j=p^i+1}^{\alpha p^i} \nu_p\left( \frac{n+1-j}{j}\right) \\
\nonumber  &= \left( \sum_{k=n+1-\alpha p^i}^{n-p^i} \nu_p(k)\right) - \left( \sum_{j=p^i+1}^{\alpha p^i}\nu_p(j)\right), \\
\label{inequality 11}  &= \left( \sum_{k\in N} \nu_p(k)\right) - \left( \sum_{j\in N^{\prime}} \nu_p(j)\right),
\end{align}
where $N$ is the set of integers $a$ satisfying $n+1-\alpha p^i \leq a \leq n-p^i$,
and $N^{\prime}$ is the set of integers $a$ satisfying $p^i+1 \leq a\leq \alpha p^i$.
If $u_{p,n,e}$ does not take its minimum value at $n$, then 
by inequality~\ref{inequality 10}, the set of integers $N$ is equal to the set of integers
$\{ p^i+1+m, p^i+2+m, p^i+3+m, \dots ,\alpha p^i\}$ for some nonnegative $m$.
By Lemma~\ref{counting valuations lemma}, 
the sum of the $p$-adic valuations of the elements of $N$ is at least as large as the sum of the $p$-adic valuations of the 
elements of $N^{\prime}$, i.e.,
\[ \sum_{k\in N} \nu_p(k) \geq  \sum_{j\in N^{\prime}} \nu_p(j) ,\]
contradicting inequality~\ref{inequality 11}.
So the assumption $\alpha \neq 1$ implies a contradiction unless $u_{p,n,e}$ takes its minimum value at $n$. 
So $\alpha=1$ and hence $u_{p,n,e}$ can only take its minimum values $n$ or at powers of $p$.

Now we need to know something about $u_{p,n,e}(p^i)$, for positive integers $i$. Write $n$ as $n=\beta p^m$ where $\beta$ is an integer prime to $p$.
First, notice that, if $i\leq m$ and $1<j\leq p^i$, then $\nu_p(\beta p^m + 1 - j) = \nu_p(j-1)$.
Consequently, if $i\leq m$, then:
\begin{align*} 
 u_{p,\beta p^m,e}(p^i) 
  &= e\cdot \nu_p\left( \binom{\beta p^m}{p^i} \right) + p^i \\
  &= e\cdot \nu_p\left( \prod_{j=1}^{p^i} \frac{\beta p^m +1-j}{j} \right) + p^i \\
  &= e\cdot \sum_{j=1}^{p^i} \left( \nu_p(\beta p^m+1-j) - \nu_p(j)\right)  + p^i \\
  &= e\cdot \left( \nu_p(\beta p^m)  - \nu_p(1) + \sum_{j=2}^{p^i} \left( \nu_p(j-1) - \nu_p(j)\right)\right)  + p^i \\
  &= e\cdot \left( \nu_p(\beta p^m) - \nu_p(p^i) \right)  + p^i \\
  &= e\cdot \left( m-i \right)  + p^i .\end{align*}

On the other hand, suppose that $i>m$. Then:
\begin{align*}
 u_{p,\beta p^m,e}(p^i)
  &= e\cdot \nu_p\left( \binom{\beta p^m}{p^i}\right) + p^i \\
  &\geq p^i \\
  &> p^m \\
  &= e\cdot \left( m-m \right)  + p^m \\
  &= u_{p,\beta p^m,e}(p^m),\end{align*}
so the least value taken by $u_{p,\beta p^m,e}$ is either $u_{p,\beta p^m,e}(\beta p^m)$ or it is 
$u_{p,\beta p^m,e}(p^i)$ for some (possibly non-unique!) integer $i$ satisfying $i\leq m$. 
But
\begin{align*}
 u_{p,\beta p^m,e}(\beta p^m) & = \beta p^m\\
  & \geq p^m \\
  & = u_{p,\beta p^m,e}(p^m),\end{align*}
so the least value taken by $u_{p,n,e}$ is 
$u_{p,n,e}(p^i)$ for some (possibly non-unique!) integer $i$ satisfying $i\leq \nu_p(n)$. 

Now we use a little bit of elementary calculus!
The derivative
\[ \frac{d}{di} \left( e(\nu_p(n) - i ) + p^i\right) = -e + (\ln p) p^i \]
has a unique zero, namely, at $i = \log_p\left( \frac{e}{\ln p}\right)$, hence $u_{p,n,e}(p^i)$, as a function of a {\em real variable} $i$,
has at most one local extremum in the domain $(0,\nu_p(n))\subseteq \mathbb{R}$, namely, $i = \log_p\frac{e}{\ln p}$.
Consequently the function $u_{p,n,e}(p^i)$, as a function of an {\em integer} $i\in \{ 1, \dots ,p^n\}$, takes on its minimum value at most two times, at
\begin{equation} \label{two possible locations of minima} \floor{\log_p\frac{e}{\ln p}} \mbox{\ \ and\ at\ \ } \ceiling{\log_p\frac{e}{\ln p}},\end{equation} if $u_{p,n,e}(p^i)$ does indeed take on its minimum value two times and not just once.

Now we need to decide under what circumstances $u_{p,n,e}$ takes on its minimum value twice. We already know that, if
$u_{p,n,e}$ takes on its minimum value twice, then it takes on its minimum value at two consecutive powers of $p$, that is,
$u_{p,n,e}(p^i) = u_{p,n,e}(p^{i+1})$ for some $i < n$. Consequently:
\begin{align*}
 0 
  &= u_{p,n,e}(p^i) - u_{p,n,e}(p^{i+1}) \\ 
  &= e(\nu_p(n) - i) + p^i - e(\nu_p(n) - (i+1)) - p^{i+1} \\
  &= e + p^i(1-p) , \mbox{\ \ equivalently,} \\
 i &= \log_p\frac{e}{p-1}.
\end{align*}
So $u_{p,n,e}$ takes its minimum value twice if $\log_p\frac{e}{p-1}$ is an integer {\em and} $p^{\log_p\frac{e}{p-1}}$ and $p^{1+\log_p\frac{e}{p-1}}$ are both less than $p^{\nu_p(n)}$.
Consequently, {\em the function $u_{p,n,e}: \{ 1, 2, \dots , n\} \rightarrow \mathbb{N}$ takes on its minimum value exactly once if $\log_p\frac{e}{p-1}$ is not an integer or if $\log_p\frac{e}{p-1} \geq \nu_p(n)$, and exactly twice if $\log_p\frac{e}{p-1}$ is an integer and $\log_p\frac{e}{p-1} < \nu_p(n)$.}

In the case that $\log_p\frac{e}{p-1}$ is an integer and $\log_p\frac{e}{p-1}< \nu_p(n)$, clearly the minimum
value of $u_{p,n,e}$ occurs as $u_{p,n,e}(p^{\floor{\log_p \frac{e}{\ln p}}}) =
u_{p,n,e}(p^{1+\floor{\log_p \frac{e}{\ln p}}})$, by~\ref{two possible locations of minima},
but one can give a cleaner description which does not involve the natural logarithm of $p$:
suppose that $u_{p,n,e}(p^i) = u_{p,n,e}(p^{i+1})$.
Then:
\begin{align}
 0 
\label{equality 100}  &= u_{p,n,e}(p^i) - u_{p,n,e}(p^{i+1}) \\
\label{equality 101}  &= e(\nu_p(n) - i) + p^i - e(\nu_p(n) - i - 1) - p^{i+1} \\
\label{equality 102}  &= e + (1-p)p^i, \mbox{\ equivalently,}\\
\label{equality 103} i &= \log_p\frac{e}{p-1}.
\end{align}
So {\em when $\log_p\frac{e}{p-1}$ is an integer and $\log_p\frac{e}{p-1}< \nu_p(n)$, the minimum value of $u_{p,n,e}$ occurs exactly at $\log_p \frac{e}{p-1}$ and at $1+\log_p\frac{e}{p-1}$, and this minimum value is}
\begin{align*}
 u_{p,n,e} (\log_p\frac{e}{p-1}) &= e(\nu_p(n) - \log_p \frac{e}{p-1}) + \frac{e}{p-1},\end{align*}
as claimed.

In the case that $\log_p\frac{e}{p-1}$ is an integer and $\log_p\frac{e}{p-1}\geq \nu_p(n)$, 
the function $u_{p,n,e}(p^i)$, as a function of $i$, is monotone decreasing (note that its domain is $\{ 0, 1, \dots, \nu_p(n)\}$).
Hence, {\em when $\log_p\frac{e}{p-1}$ is an integer and $\log_p\frac{e}{p-1}\geq \nu_p(n)$, the minimum value of $u_{p,n,e}$ occurs uniquely at $p^{\nu_p(n)}$, and this minimum value is}
\begin{align*}
 u_{p,n,e} (p^{\nu_p(n)}) &= p^{\nu_p(n)},\end{align*}
as claimed.

In the case that $\log_p\frac{e}{p-1}$ is not an integer,
one can ask whether the minimum value of 
$u_{p,n,e}$ occurs as $u_{p,n,e}(p^{\floor{\log_p \frac{e}{\ln p}}})$ or as
$u_{p,n,e}(p^{\ceiling{\log_p \frac{e}{\ln p}}})$ (it must be one of the other, 
by~\ref{two possible locations of minima}, unless both $p^{\floor{\log_p \frac{e}{\ln p}}}$ and $p^{\ceiling{\log_p \frac{e}{\ln p}}}$ are outside the domain of $u_{p,n,e}$).
It is simpler and cleaner (avoiding formulas involving $\ln p$), however, to 
simply check under what conditions on $i$ it is true that
$u_{p,n,e}(p^i) < u_{p,n,e}(p^{i+1})$.
By the same line of argument used in the equalities~\ref{equality 100},~\ref{equality 101},~\ref{equality 102}, and~\ref{equality 103},
the inequality $u_{p,n,e}(p^i) < u_{p,n,e}(p^{i+1})$
holds if and only if $i> \log_p\frac{e}{p-1}$. 
Consequently, {\em if $\log_p\frac{e}{p-1}$ is not an integer and $\ceiling{\log_p\frac{e}{p-1}} \leq \nu_p(n)$,
, then the minimum value of
$u_{p,n,e}$ occurs uniquely at $p^{\ceiling{\log_p\frac{e}{p-1}}}$, and this minimum value
is:}
\begin{align*}
 u_{p,n,e} (\ceiling{\log_p\frac{e}{p-1}}) &= e(\nu_p(n) - \ceiling{\log_p \frac{e}{p-1}}) + p^{\ceiling{\log_p\frac{e}{p-1}}},\end{align*}
as claimed.

Finally, the last case is the one in which  $\log_p\frac{e}{p-1}$ is not an integer but $\ceiling{\log_p\frac{e}{p-1}} > \nu_p(n)$.
Then it is again true that 
the function $u_{p,n,e}(p^i)$, as a function of $i$, is monotone decreasing.
Hence, {\em when $\log_p\frac{e}{p-1}$ is not an integer and $\ceiling{\log_p\frac{e}{p-1}}> \nu_p(n)$, the minimum value of $u_{p,n,e}$ occurs uniquely at $p^{\nu_p(n)}$, and this minimum value is again}
\begin{align*}
 u_{p,n,e} (p^{\nu_p(n)}) &= p^{\nu_p(n)},\end{align*}
as claimed.
\end{proof}

\begin{lemma}\label{valuations predicted by local conjecture}
Suppose that $K/\mathbb{Q}_p$ is a finite field extension of ramification degree $e$ and whose ring of integers $A$ has uniformizer $\pi$,
and suppose that $\log_p(\frac{e}{p-1})$ is not an integer, i.e.,
$\frac{e}{p-1}$ is not a power of $p$.
Let $n$ be a positive integer,
and let $I_n$ be the ideal in $A$ generated by all elements of the form
$a^n-1$ for elements $a\in A$ congruent to $1$ modulo $\pi$.
Then $A/I_n \cong A/\pi^i$,
where 
\[ i = 
 \left\{ \begin{array}{ll} 
  e\left(\nu_p(n) - \ceiling{\log_p \frac{e}{p-1}}\right) + p^{\ceiling{\log_p\frac{e}{p-1}}} &\mbox{\ if\ } \ceiling{\log_p\frac{e}{p-1}} \leq \nu_p(n) \\
  p^{\nu_p(n)} &\mbox{\ if\ } \ceiling{\log_p\frac{e}{p-1}} > \nu_p(n) .\end{array}\right.
\]
\end{lemma}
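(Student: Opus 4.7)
\medskip

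\noindent\textbf{Proof proposal.}
The plan is to compute the $\pi$-adic valuation of the generators $a^n - 1$ directly using the binomial theorem, and then invoke Lemma~\ref{valuations of binomial coefficients} to identify the minimal such valuation with the integer $i$ in the statement. Since $A$ is a discrete valuation ring, every nonzero ideal has the form $\pi^i A$ for a unique $i$, so proving $A/I_n \cong A/\pi^i$ amounts to showing that $i = \min\{\nu_\pi(a^n-1) : a \equiv 1 \pmod{\pi},\ a\neq 1\}$.

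First I would write every nontrivial $a \equiv 1 \pmod{\pi}$ in the form $a = 1 + \pi^j u$ with $j \geq 1$ and $u \in A^\times$, and expand
\[ a^n - 1 = \sum_{k=1}^{n} \binom{n}{k} \pi^{jk} u^k. \]
Because $\binom{n}{k} \in \mathbb{Z}$ and $\nu_\pi$ restricted to $\mathbb{Z}$ is $e \nu_p$, the $k$-th summand has $\pi$-adic valuation exactly $e\nu_p\bigl(\binom{n}{k}\bigr) + jk$. The ultrametric inequality then gives
\[ \nu_\pi(a^n - 1) \geq \min_{1 \leq k \leq n} \left( e \nu_p\!\left(\tbinom{n}{k}\right) + jk \right) \geq \min_{1\leq k\leq n} u_{p,n,e}(k), \]
where the second inequality uses $j \geq 1$. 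By Lemma~\ref{valuations of binomial coefficients}, the right-hand side equals precisely the $i$ in the statement (we are in the two cases of that lemma where $\log_p\frac{e}{p-1}$ is not an integer). This proves $I_n \subseteq \pi^i A$.

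For the reverse containment $\pi^i A \subseteq I_n$, I would exhibit a single element $a$ for which equality $\nu_\pi(a^n - 1) = i$ holds. The natural choice is $a = 1 + \pi$, for which the expansion becomes $\sum_{k=1}^{n} \binom{n}{k}\pi^k$, and the $k$-th summand has $\pi$-adic valuation exactly $u_{p,n,e}(k)$. This is the step where the hypothesis that $\log_p\frac{e}{p-1}$ is not an integer does essential work: Lemma~\ref{valuations of binomial coefficients} tells us that, under this hypothesis, the minimum of $u_{p,n,e}$ is attained at a \emph{unique} value of $k$ (namely $k = p^{\lceil \log_p(e/(p-1))\rceil}$ or $k = p^{\nu_p(n)}$, according to the subcase). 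A unique minimum means there is no possibility of cancellation among terms of smallest valuation in the binomial expansion, so the non-archimedean triangle inequality becomes an equality and $\nu_\pi((1+\pi)^n - 1) = i$ exactly.

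The main obstacle, accordingly, is exactly the no-cancellation argument in the last paragraph: without the uniqueness of the minimum of $u_{p,n,e}$, the valuation of $(1+\pi)^n - 1$ could strictly exceed $i$, and the analogous computation for other test elements $1 + \pi^j u$ would not obviously recover the bound either. This explains why the integrality condition on $\log_p\frac{e}{p-1}$ appears in the hypothesis of the lemma (and why the proof of the local conjecture announced in the introduction is restricted to this case). Everything else reduces to bookkeeping with the binomial expansion and a direct appeal to Lemma~\ref{valuations of binomial coefficients}.
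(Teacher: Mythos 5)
Your proof is correct and uses essentially the same strategy as the paper: both expand $a^n-1$ via the binomial theorem, reduce the valuation computation to Lemma~\ref{valuations of binomial coefficients}, and exploit the \emph{uniqueness} of the minimum of $u_{p,n,e}$ (guaranteed by the non-integrality of $\log_p\frac{e}{p-1}$) to rule out cancellation among terms of least valuation. Your version is a bit more explicit about separating the two containments $I_n\subseteq\pi^iA$ and $\pi^iA\subseteq I_n$, and about exhibiting $a=1+\pi$ as the witness, but the key lemma and the no-cancellation idea are identical to the paper's argument.
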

\begin{proof}
Let $a\in A$ be an element congruent to $1$ modulo $\pi$, i.e., 
$a \equiv 1+ a_1\pi$ modulo $\pi^2$, for some $a_1\in A$.
Then the monomial term of $a^n-1$
of smallest $\pi$-adic valuation is also the monomial term of
$\sum_{k=1}^n \binom{n}{k} \pi^k (a_1)^k$ of smallest $\pi$-adic valuation, i.e., it is
$\binom{n}{k} \pi^k (a_1)^k$ for some unique $k\in \{ 1, \dots , \nu_p(n)\}$,
by 
Lemma~\ref{valuations of binomial coefficients}
and the assumption that $\log_p\frac{e}{p-1}$ is not an integer.
Now observe that
\[ \nu_{\pi}\left( \binom{n}{k} \pi^k (a_1)^k \right) \geq u_{p,n,e}(k)\]
with equality if $a_1\in A^{\times}$,
with $u_{p,n,e}$ the function defined in Lemma~\ref{valuations of binomial coefficients},
and then the 
claim in the present lemma then follows from Lemma~\ref{valuations of binomial coefficients}.
\end{proof}

\begin{lemma}\label{forgetful preserves colimits}
Let $R$ be a commutative ring, and let $(A,\Gamma)$ be a commutative graded Hopf algebroid over $R$. Suppose that $\Gamma$ is flat over $A$.
Let $F$ be the forgetful functor $F : \gr\Comod(A,\Gamma) \rightarrow \gr\Mod(A)$ from the category of graded left 
$\Gamma$-comodules to the category of graded $A$-modules.
Let $e: \gr\Mod(A)\rightarrow \gr\Comod(A,\Gamma)$ be the extended comodule functor, i.e., $e(M) = \Gamma\otimes_A M$, with left $\Gamma$-coaction given by the map
\[ \Delta \otimes_A M: \Gamma\otimes_A M \rightarrow \Gamma\otimes_A \Gamma\otimes_A M,\]
where $\Delta: \Gamma\rightarrow \Gamma\otimes_A\Gamma$ is the coproduct map on $\Gamma$. 

Then both $e$ and $F$ preserve colimits.
\end{lemma}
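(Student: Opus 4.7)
The plan is to recognize that the extended comodule functor $e$ is right adjoint to the forgetful functor $F$, which yields colimit-preservation for $F$ formally, and then to check that $e$ commutes with colimits by a direct computation at the level of underlying $A$-modules.

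To set up the adjunction, I would exhibit the counit as the $A$-linear map $\epsilon \otimes_A 1_M : F(e(M)) = \Gamma \otimes_A M \to M$ coming from the augmentation $\epsilon: \Gamma \to A$ of the Hopf algebroid, and the unit as the coaction $\psi_N : N \to \Gamma \otimes_A F(N)$ regarded as a comodule map $N \to e(F(N))$. Concretely, the bijection
\[ \mathrm{Hom}_{\gr\Comod(A,\Gamma)}(N, e(M)) \cong \mathrm{Hom}_{\gr\Mod(A)}(F(N), M) \]
sends a comodule map $g: N \to \Gamma \otimes_A M$ to $(\epsilon \otimes 1_M) \circ g$, with inverse $f \mapsto (1_\Gamma \otimes f) \circ \psi_N$. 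Verifying the triangle identities is a routine application of the counit axiom for $N$ as a $\Gamma$-comodule together with the counit axiom on $\Gamma$ itself. Once this is in place, $F$ preserves all colimits as a left adjoint.

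For $e$, I would compute the colimit of a diagram $\{M_i\}$ of graded $A$-modules with colimit $M$ by applying $F$ to the putative comodule isomorphism $\mathrm{colim}_i\, e(M_i) \cong e(M)$: using the colimit-preservation of $F$ just established and the fact that $\Gamma \otimes_A -$ preserves colimits on $A$-modules (being itself a left adjoint),
\[ F\bigl(\mathrm{colim}_i\, e(M_i)\bigr) \cong \mathrm{colim}_i\, F(e(M_i)) = \mathrm{colim}_i\, (\Gamma \otimes_A M_i) \cong \Gamma \otimes_A M = F(e(M)). \]
The coaction on $\mathrm{colim}_i\, e(M_i)$, being the colimit of the coactions $\Delta \otimes 1_{M_i}$, is identified under these isomorphisms with $\Delta \otimes 1_M$, which is exactly the coaction on $e(M)$; hence $\mathrm{colim}_i\, e(M_i) \cong e(M)$ as comodules. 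The whole argument is categorically formal, and the only genuine bookkeeping is checking that the induced structure map on the colimit really is a coaction that agrees with the one on $e(M)$, which follows from naturality plus the universal property of the colimit. The flatness hypothesis on $\Gamma$ is not invoked directly in either half of the argument; it is needed only to guarantee that $\gr\Comod(A,\Gamma)$ is cocomplete in the first place, so that the relevant colimits exist as comodules.
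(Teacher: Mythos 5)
Your proof is correct and takes essentially the same route as the paper: recognize the adjunction $F \dashv e$, so $F$ preserves colimits as a left adjoint, then deduce that $e$ preserves colimits because colimits of comodules are computed at the level of underlying $A$-modules and $\Gamma \otimes_A -$ commutes with colimits of $A$-modules. One small correction to your closing remark: flatness of $\Gamma$ over $A$ is not what guarantees cocompleteness of $\gr\Comod(A,\Gamma)$ --- colimits of comodules are always computed on underlying $A$-modules regardless of flatness, since $\Gamma\otimes_A -$ always commutes with colimits --- rather, flatness ensures the comodule category is abelian (kernels exist and are computed underlying) and that the relative homological machinery used elsewhere in the paper behaves well.
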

\begin{proof} 
It is an easy exercise to check that $e$ is right adjoint to $\Gamma$.
Hence $F$ is a left adjoint, hence $F$ preserves colimits.
Since colimits in $\gr\Comod(A,\Gamma)$ are consequently computed in graded $A$-modules,
and since the functor $M \mapsto \Gamma\otimes_A M$ preserves colimits in graded $A$-modules,
$e$ also preserves colimits.
\end{proof}

\begin{lemma}\label{ext and compactness lemma}
Let $R$ be a commutative ring, and let $(A,\Gamma)$ be a commutative graded Hopf algebroid over $R$. Suppose that $\Gamma$ is flat over $A$.
Suppose $M$ is a graded $\Gamma$-comodule whose underlying $A$-module is finitely generated and projective,
and suppose that 
\begin{equation}\label{seq 10000} L_0 \rightarrow L_1 \rightarrow L_2 \rightarrow \dots\end{equation}
is a sequence of one-to-one graded left $\Gamma$-comodule homomorphisms.
Then, for any nonnegative integer $s$ and any integer $t$
the canonical comparison map
\begin{equation}\label{comparison map 22} \colim_i \Ext_{(A,\Gamma)}^{s,t}(M, L_i) \rightarrow \Ext_{(A,\Gamma)}^{s,t}(M, \colim_i L_i) \end{equation}
is an isomorphism of $R$-modules.
\end{lemma}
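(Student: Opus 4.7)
The plan is to compute both sides via the two-sided cobar complex and then observe that filtered colimits commute with taking cohomology. Since $M$ is projective over $A$, Proposition A1.2.12 of \cite{MR860042} identifies $\Ext^{s,t}_{(A,\Gamma)}(M, N)$ with the cohomology of a bigraded cobar complex $C^{*,*}(M;\Gamma;N)$ whose entries, as functors of $N$, have the shape $\Hom_A(M, \bar\Gamma^{\otimes_A s}\otimes_A N)$ restricted to a specified internal degree.

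Three observations suffice. First, by Lemma~\ref{forgetful preserves colimits} the filtered colimit $\colim_i L_i$ computed in graded $\Gamma$-comodules coincides with its underlying colimit in graded $A$-modules, so forming the cobar complex commutes with the colimit at the level of abelian groups. Second, the functor $\bar\Gamma^{\otimes_A s}\otimes_A(-)$ preserves all colimits because it is a left adjoint, and $\Hom_A(M,-)$ preserves filtered colimits in each internal degree because $M$, being finitely generated and projective, is finitely presented over $A$. Third, filtered colimits are exact in the category of graded $R$-modules, so they commute with taking cohomology of cochain complexes.

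Combining these yields the chain of natural isomorphisms
\[ \colim_i \Ext^{s,t}_{(A,\Gamma)}(M, L_i) \cong \colim_i H^s\bigl(C^{*,t}(M;\Gamma;L_i)\bigr) \cong H^s\bigl(\colim_i C^{*,t}(M;\Gamma;L_i)\bigr) \cong H^s\bigl(C^{*,t}(M;\Gamma;\colim_i L_i)\bigr) \cong \Ext^{s,t}_{(A,\Gamma)}(M, \colim_i L_i), \]
and unwinding the construction shows that this composite is exactly the canonical comparison map of the statement.

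The only place to be careful is the bookkeeping needed to verify that each bidegree of the cobar complex really does commute with filtered colimits in the coefficient variable; the key input is the standard fact that a finitely generated projective module is finitely presented, which is what makes $\Hom_A(M,-)$ preserve filtered colimits. The injectivity hypothesis on the maps $L_i \to L_{i+1}$ does not appear to enter the argument; presumably it is present only because this lemma will later be applied to a nested sequence of subcomodules of a common ambient comodule.
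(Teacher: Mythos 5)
Your argument is correct, and it takes a genuinely different and in fact leaner route than the paper's. The paper first proves the $s=0$ case by exhibiting the square comparing $\colim_i \hom_{\gr\Comod}(M, L_i)$, $\hom_{\gr\Comod}(M, \colim_i L_i)$, and the corresponding underlying $A$-module homs as a pullback; that argument crucially invokes the injectivity of the maps $L_i\to L_{i+1}$ (so that $L_i\hookrightarrow\colim_i L_i$ is a monomorphism) together with flatness of $\Gamma$ (so that $\Gamma\otimes_A(-)$ preserves that monomorphism, making the structure map left-cancellable). The paper then bootstraps to $s>0$ by applying the $s=0$ case to the cobar resolution $D_\Gamma(L_i)^\bullet$, using that $D_\Gamma(-)$ commutes with filtered colimits and AB5 to pass the colimit through cohomology. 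You bypass the $s=0$ pullback argument entirely: since $\hom_\Gamma\bigl(M,\Gamma\otimes_A \bar\Gamma^{\otimes s}\otimes_A N\bigr)\cong\hom_A\bigl(FM,\bar\Gamma^{\otimes s}\otimes_A N\bigr)$ by the $F\dashv e$ adjunction, the cobar complex terms are $\hom_A(FM,-)$ composed with colimit-preserving tensor functors, so finite presentation of $FM$ alone makes each term commute with filtered colimits, and AB5 finishes the job. You are also right that the injectivity hypothesis plays no role in your version of the argument; it is needed only in the paper's pullback step and for the intended application. Net effect: you prove a slightly more general statement with less work, at the (small) cost of being a bit terse about identifying the canonical comparison map with your composite, which the paper is more careful to spell out via diagram~\ref{comm diag 11000}.
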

\begin{proof}
Since the underlying $A$-module of $M$ is assumed to be finitely-generated, any given morphism of 
$A$-modules from $M$ to $\colim_i L_i$ is determined by the image of finitely many $A$-module generators
of $M$, and each of these generators $g$ is in the image of the inclusion 
$L_{i_g}\hookrightarrow \colim_i L_i$ for some nonnegative integer $i_g$.
Consequently the maximum of the nonnegative integers $\{ i_g\}$, taken over a given finite set of 
generators $g$ for $M$ as a graded $A$-module, is some nonnegative integer $j$, and 
the given map $M \rightarrow \colim_i L_i$ factors through the inclusion map
$L_j \hookrightarrow \colim_i L_i$.
Consequently the canonical map
\[ \colim_i \hom_{\gr\Mod(A)}(FM, FL_i) \rightarrow \hom_{\gr\Mod(A)}(FM, \colim_i FL_i)\]
is an isomorphism of $A$-modules, where $F$ is the forgetful functor defined in Lemma~\ref{forgetful preserves colimits}.

Now we have the commutative square of morphisms of $A$-modules
\begin{equation}\label{comm sq 6} \xymatrix{
\colim_i \hom_{\gr\Comod(A,\Gamma)}(M, L_i) \ar[r]\ar[d] & \hom_{\gr\Comod(A,\Gamma)}(M, \colim_i L_i) \ar[d] \\
\colim_i \hom_{\gr\Mod(A)}(FM, FL_i) \ar[r] & \hom_{\gr\Mod(A)}(FM, \colim_i FL_i),
}\end{equation}
in which the vertical map on the right is a monomorphism by definition,
the vertical map on the left is a monomorphism because it is a directed union of 
monomorphisms and the category of graded $A$-modules satisfies Grothendieck's axiom AB5, 
and we just showed that the horizontal map on the bottom is an isomorphism, hence also a monomorphism.
Hence the top map is also a monomorphism.

I claim that the square~\ref{comm sq 6} is a pullback square in $A$-modules.
Since the maps involved are all monomorphisms, this amounts to the claim that 
$\colim_i \hom_{\gr\Comod(A,\Gamma)}(M, L_i)$ is equal to the intersection of
$\colim_i \hom_{\gr\Mod(A)}(FM, FL_i)$ with $\hom_{\gr\Comod(A,\Gamma)}(M, \colim_i L_i)$
in $\hom_{\gr\Mod(A)}(FM, \colim_i FL_i)$. It is clear that we have an inclusion
\[ \colim_i \hom_{\gr\Comod(A,\Gamma)}(M, L_i) 
    \subseteq \left( \colim_i \hom_{\gr\Mod(A)}(FM, FL_i)\right) \cap \left( \hom_{\gr\Comod(A,\Gamma)}(M, \colim_i L_i) \right) .\]
Suppose $f \in \left( \colim_i \hom_{\gr\Mod(A)}(FM, FL_i)\right) \cap \left( \hom_{\gr\Comod(A,\Gamma)}(M, \colim_i L_i) \right)$.
Then $f$ is a graded left $\Gamma$-comodule morphism $f: M \rightarrow \colim_i L_i$,
and there exists a graded $A$-module morphism
$\tilde{f}: M \rightarrow L_i$ such that $I\circ \tilde{f}  = f$,
where $I: L_i \hookrightarrow \colim_i L_i$ is the canonical inclusion.

Consequently we have the (not yet known to be commutative!) diagram:
\[\xymatrix{
 M \ar[r]^{\tilde{f}} \ar[d]^{\Psi_M} \ar@/^2pc/[rr]^{f} &
  L_i \ar[d]^{\Psi_{L_i}} \ar[r]^{I} &
  \colim_i L_i\ar[d]^{\Psi} \\
\Gamma\otimes_A M \ar[r]^{\Gamma\otimes_A \tilde{f}} \ar@/_2pc/[rr]_{\Gamma\otimes_A f} &
 \Gamma\otimes_A L_i \ar[r]^{\Gamma\otimes_A I} &
 \Gamma\otimes_A \colim_i L_i 
}\]
in which the vertical maps are the comodule structure maps, and we know the following 
four equalities:
\begin{align*}
 I\circ \tilde{f} &= f, \\
 (\Gamma\otimes_A I) \circ (\Gamma\otimes_A \tilde{f}) &= \Gamma\otimes_A f, \\
 (\Gamma\otimes_A f) \circ \Psi_M &= \Psi\circ f, \mbox{\ \ and} \\
 \Psi\circ I &= (\Gamma\otimes_A I) \circ \Psi_{L_i}.\end{align*}
We use those four equalities to get the following equalities:
\begin{align*}
 (\Gamma\otimes_A I) \circ (\Gamma\otimes_A \tilde{f})\circ \Psi_M 
  &= (\Gamma_A f) \circ \Psi_M \\
  &= \Psi \circ f \\
  &= \Psi \circ I \circ \tilde{f} \\
  &= (\Gamma\otimes_A I) \circ \Psi_{L_i} \circ \tilde{f} .\end{align*}
Now since $I$ is one-to-one and $\Gamma$ is assumed flat over $A$, 
the map $\Gamma\otimes_A I$ is injective, hence left-cancellable, so
\begin{align*}  (\Gamma\otimes_A I) \circ (\Gamma\otimes_A \tilde{f})\circ \Psi_M 
  &= (\Gamma\otimes_A I) \circ \Psi_{L_i} \circ \tilde{f} \end{align*}
implies $(\Gamma\otimes_A \tilde{f})\circ \Psi_M 
  = \Psi_{L_i} \circ \tilde{f}$, which is exactly the statement that $\tilde{f}$ is a morphism of
left $\Gamma$-comodules, and hence that $f\in \colim_i \hom_{\gr\Comod(A,\Gamma)}(M, L_i)$, and
consequently the square~\ref{comm sq 6} is a pullback square in $A$-modules, as claimed!

Consequently 
the map
\[ \colim_i \hom_{\gr\Comod(A,\Gamma)}(M, L_i) \rightarrow \hom_{\gr\Comod(A,\Gamma)}(M, \colim_i L_i) \]
is a pullback of an isomorphism, hence itself an isomorphism.
So the map~\ref{comparison map 22} is an isomorphism for $s=0$.


Now we handle $s>0$. Given a graded left $\Gamma$-comodule $N$, 
let $D_{\Gamma}(N)^{\bullet}$ be the cobar resolution of $N$ (see Appendix 1 of \cite{MR860042} for this construction and its basic properties, along with all the other 
fundamentals of homological algebra of comodules over a Hopf algebroid). Then $N\mapsto D_{\Gamma}(N)^{\bullet}$ is in fact a functor from 
graded left $\Gamma$-comodules to cochain complexes of relatively injective graded left $\Gamma$-comodules.
In particular, for each nonnegative integer $n$, the $n$-cochains functor $N\mapsto D_{\Gamma}(N)^{n}$ is a functor from graded left $\Gamma$-comodules to
relatively injective graded left $\Gamma$-comodules.
Furthermore, for each $n$, the functor $D_{\Gamma}(N)^{n}$ is a composite of the functors $e$ and $F$ 
(in fact, $D_{\Gamma}(N)^{\bullet}$, the cobar complex functor, is the cobar construction associated to
the adjunction $F\dashv e$), and consequently Lemma~\ref{forgetful preserves colimits} implies that 
the canonical map
\begin{equation}\label{comparison map 23} \colim_i D_{\Gamma}(L_i) \rightarrow D_{\Gamma}(\colim_i L_i) \end{equation}
is an isomorphism.

Since the map~\ref{comparison map 22} is a natural bijection for $s=0$
and since the map~\ref{comparison map 23} is a natural bijection as well, 
we have the isomorphism of
cochain complexes of $R$-modules
\begin{equation}\label{comm diag 11000}\xymatrix{
 0 \ar[r] \ar[d]^{\cong} & 
  \colim_i  \hom_{\gr\Comod(A,\Gamma)}(\Sigma^t M, D_{\Gamma}(L_i)^0) \ar[r] \ar[d]^{\cong} &
  \colim_i  \hom_{\gr\Comod(A,\Gamma)}(\Sigma^t M, D_{\Gamma}(L_i)^1) \ar[r] \ar[d]^{\cong} & \dots \\
 0 \ar[r]  & 
  \hom_{\gr\Comod(A,\Gamma)}(\Sigma^t M, D_{\Gamma}(\colim_i  L_i)^0) \ar[r]  &
  \hom_{\gr\Comod(A,\Gamma)}(\Sigma^t M, D_{\Gamma}(\colim_i  L_i)^1) \ar[r]  & \dots .}\end{equation}
The top row in diagram~\ref{comm diag 11000} has cohomology
$\colim_i \Ext_{(A,\Gamma)}^{*,t}(M, L_i)$ since cohomology of cochain complexes of modules over a commutative
ring commutes with filtered colimits (this is Grothendieck's axiom AB5 at work), in particular
sequential colimits.
The bottom row in diagram~\ref{comm diag 11000} has cohomology
$\Ext_{(A,\Gamma)}^{*,t}(M, \colim_i L_i)$,
and the isomorphism induced by~\ref{comm diag 11000} is the map~\ref{comparison map 22}.
\end{proof}

\begin{theorem}\label{local conjecture} {\bf ((Many cases of) Ravenel's Local Conjecture.)}
Suppose that $K/\mathbb{Q}_p$ is a finite field extension of ramification 
degree $e$ and residue degree $f$ and total degree $d=ef$, with uniformizer $\pi$, and suppose that $\log_p(\frac{e}{p-1})$ is not an integer, i.e.,
$\frac{e}{p-1}$ is not a power of $p$.
Let $A$ be the ring of integers in $K$.
Then Ravenel's Local Conjecture, Conjecture~\ref{local conj}, holds for $A$. That is,
for each $n\in\mathbb{N}$,
we have an isomorphism of $A$-modules
\[ \Ext_{(V^A,V^AT)}^{1,2(p^f-1)n}(V^A, V^A) \cong A/I_{n},\]
where $I_{n}$ is the ideal in $A$ generated by all elements of the form
$(a^n-1)$ with $a$ an element in $A$ congruent to $1$ modulo $\pi$.

Equivalently, in terms of the moduli stack $\mathcal{M}_{fmA}$ of one-dimensional formal $A$-modules over $\Spec A$, and with notation as in Conventions~\ref{running conventions}:
for each $n\in\mathbb{N}$,
we have an isomorphism of $A$-modules
\[ H^{1,2(p^f-1)n}_{fl}(\mathcal{M}_{fmA}; \mathcal{O}) \cong A/I_{n},\]
where $I_{m}$ is the ideal in $A$ generated by all elements of the form
$(a^m-1)$ with $a$ an element in $A$ congruent to $1$ modulo $\pi$.
\end{theorem}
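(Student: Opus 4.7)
The plan is to access $\Ext^{1, 2(q-1)n}_{(V^A, V^AT)}(V^A, V^A)$, where $q = p^f$, through a Bockstein-style short exact sequence of graded $V^AT$-comodules
\[ 0 \to V^A \to V^A \otimes_A K \to V^A \otimes_A K/A \to 0, \]
with $K = \mathrm{Frac}(A)$. Because $V^A \otimes_A K$ lives over a field of characteristic zero (so every formal $A$-module over it is strictly isomorphic to the additive one), the associated rationalized Hopf algebroid is cohomologically trivial above degree $0$, and the long exact sequence collapses to
\[ \Ext^{1, 2(q-1)n}(V^A, V^A) \;\cong\; \mathrm{coker}\!\left(\Ext^{0, 2(q-1)n}(V^A, V^A \otimes K) \to \Ext^{0, 2(q-1)n}(V^A, V^A \otimes K/A)\right). \]
The problem is thereby reduced to identifying the primitive elements of $V^A \otimes K/A$ in degree $2(q-1)n$, modulo those which lift to primitives in $V^A \otimes K$.

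Next I would write $K/A = \colim_k \pi^{-k}A/A$ as a filtered colimit of inclusions, obtaining correspondingly a colimit of injections of $V^AT$-comodules $V^A \otimes_A \pi^{-k}A/A \cong V^A/\pi^k$, and invoke Lemma \ref{ext and compactness lemma}, applied to the comodule $V^A$ (which is free over $A$, hence projective), to identify
\[ \Ext^{0, 2(q-1)n}(V^A, V^A \otimes K/A) \;\cong\; \colim_k \Ext^{0, 2(q-1)n}(V^A, V^A/\pi^k). \]
Using the polynomial presentation $V^A \cong A[v_1^A, v_2^A, \ldots]$ with $|v_i^A| = 2(q^i - 1)$, I would then argue that the primitives in $V^A/\pi^k$ of degree $2(q-1)n$ form a cyclic $A/\pi^k$-module generated by the reduction of $(v_1^A)^n$. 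The annihilator of this generator is computed from the coaction of $V^AT$ on $v_1^A$, or equivalently from the expansion of $a \cdot (v_1^A)^n$ for $a = 1 + a_1 \pi$ congruent to $1$ modulo $\pi$: the resulting relations are governed by the $\pi$-adic valuations of $\binom{n}{k} a_1^k \pi^k$ for $k = 1, \ldots, n$, which is precisely the function $u_{p,n,e}(k)$ analyzed in Lemma \ref{valuations of binomial coefficients}. Lemma \ref{valuations predicted by local conjecture} then identifies the resulting annihilating ideal with $I_n$, yielding the stated isomorphism.

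The main obstacle I anticipate is establishing cyclicity: ruling out extra primitive classes in degree $2(q-1)n$ arising either from monomials involving $v_i^A$ with $i \geq 2$ (for instance $v_2^A (v_1^A)^{n-q-1}$, when this lies in the correct degree) or from reductions of $t_j^A$-dependent cocycles in the cobar complex. The hypothesis that $\log_p(e/(p-1))$ is not an integer enters here in an essential way: it guarantees that $u_{p,n,e}$ attains its minimum at a single point, which forces any would-be competing primitive of strictly higher height to have strictly larger $\pi$-adic valuation, and hence to be absorbed into the $(v_1^A)^n$ piece as a scalar multiple or as a coboundary. Once cyclicity and the annihilator computation are in hand, the flat cohomology reformulation $H^{1,2(q-1)n}_{fl}(\mathcal{M}_{fmA}; \mathcal{O}) \cong A/I_n$ follows at once from the comodule-to-quasicoherent-sheaf dictionary recorded in Conventions \ref{running conventions}.
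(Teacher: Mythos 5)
Your starting point is a genuine alternative to the paper's: rather than invoking Ravenel's formal $A$-module chromatic spectral sequence, you use the single Bockstein sequence $0 \to V^A \to V^A\otimes_A K \to V^A\otimes_A K/A \to 0$ together with rational triviality of higher $\Ext$ to express $\Ext^{1,2(q-1)n}_{(V^A,V^AT)}(V^A,V^A)$ as $\Ext^{0,2(q-1)n}_{(V^A,V^AT)}(V^A, V^A/\pi^{\infty})$ for $n>0$, and you correctly propose Lemma~\ref{ext and compactness lemma} to reduce to the finite stages $V^A/\pi^k$. This part is sound, and it is a shortcut compared to setting up the whole chromatic apparatus.

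The gap is exactly the one you flag, but your proposed fix does not repair it. You need that the degree-$2(q-1)n$ primitives in $V^A/\pi^k$ form a cyclic $A$-module generated by the class of $(v_1^A)^n$; that is, you must exclude primitives built out of $v_2^A, v_3^A, \ldots$ You assert that the hypothesis that $\log_p(e/(p-1))$ is not an integer ``forces any would-be competing primitive of strictly higher height to have strictly larger $\pi$-adic valuation.'' But that hypothesis enters the paper only through Lemma~\ref{valuations of binomial coefficients}, which controls the function $u_{p,n,e}(k) = e\nu_p\binom{n}{k}+k$ governing the $\pi$-adic valuations of the terms $\binom{n}{k}\pi^k(t_1^A)^k$ appearing in the binomial expansion of $(\eta_R-\eta_L)((v_1^A)^n)$; it says nothing whatsoever about the coaction on $v_2^A, v_3^A, \ldots$, nor about whether a cocycle of the form $f(v_1^A,v_2^A,\ldots)/\pi^k$ with $f$ genuinely involving higher $v_i^A$'s could be primitive. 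So the hypothesis is being asked to do work it cannot do. This is precisely the problem the paper's chromatic route is designed to avoid: by first inverting $v_1^A$ and working with $(v_1^A)^{-1}V^A/\pi^{\infty}$, the paper gets access to the Morava--Miller--Ravenel change-of-rings isomorphism $\Ext^{*}_{(V^A,V^AT)}(V^A,(v_1^A)^{-1}V^A/\pi) \cong \Ext^{*}_{(\mathbb{F}_q[(v_1^A)^{\pm 1}],\Sigma^A(1))}(\mathbb{F}_q[(v_1^A)^{\pm 1}],\mathbb{F}_q[(v_1^A)^{\pm 1}])$, which makes the identification of primitives (and of coboundaries) a finite, trivial computation over $\mathbb{F}_q[(v_1^A)^{\pm 1}]$. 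The extra bookkeeping the paper then does --- checking that no $d_1$ differentials hit or emanate from the height-$1$ line in positive internal degree --- is the price paid for the localization, and it is cheap. If you want to keep your unlocalized route, you would need a separate argument (say a filtration by height, or a direct comparison to the $v_1$-localized computation) establishing that $\Ext^{0,2(q-1)n}(V^A, V^A/\pi^{\infty})$ injects into $\Ext^{0,2(q-1)n}(V^A, (v_1^A)^{-1}V^A/\pi^{\infty})$; absent that, the cyclicity claim is unsupported and the proof does not go through.

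One smaller remark: your worry about ``reductions of $t_j^A$-dependent cocycles'' is moot at the $\Ext^0$ level, since cobar $0$-cochains contain no $t_j^A$'s; only the $v_i^A$ issue above is substantive.
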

\begin{proof}
Recall that Ravenel constructs, in~\cite{MR745362}, a ``formal $A$-module
chromatic spectral sequence':
\begin{align*} E_1^{s,t,u} \cong \Ext_{(V^A, V^AT)}^{s,u}\left(V^A, (v_t^A)^{-1}V^A/(\pi^{\infty}, (v_1^A)^{\infty}, \dots , (v_{t-1}^A)^{\infty})\right) & 
 \Rightarrow \Ext^{s+t,u}_{(V^A,V^AT)}(V^A,V^A) \\
d_r: E_r^{s,t,u} & \rightarrow E_r^{s+1-r,t+r,u} .
\end{align*}
For straightforward 
degree reasons, in the formal $A$-module chromatic spectral sequence,
the only terms that can contribute to 
$\Ext^{1,*}_{(V^A,V^AT)}(V^A,V^A)$ are those on the lines 
$t=0$ and $t=1$, and as the $t=0$ line is concentrated in tridegree
$s=0,t=0,u=0$, we have an isomorphism
\[  \Ext_{(V^A,V^AT)}^{1,u}(V^A, V^A)
    \cong\]\[ \left( \ker d_1^{0,1,u}: \left(\Ext_{(V^A,V^AT)}^{0,u}\left(V^A,(v_1^A)^{-1}V^A/\pi^{\infty}\right)\rightarrow \Ext_{(V^A,V^AT)}^{0,u}\left(V^A,(v_2^A)^{-1}V^A/\pi^{\infty},(v_1^A)^{\infty}\right)\right)\right) \]
for all $u>0$.

So we just need to compute the kernel of $d_1$ on the $\Ext$-group
\[ \Ext_{(V^A,V^AT)}^{0,u}\left(V^A,(v_1^A)^{-1}V^A/\pi^{\infty}\right) 
 \cong (V^A\Box_{V^AT} (v_1^A)^{-1}V^A/\pi^{\infty})^u.\]
By symmetry of the cotensor product and the identification of $\Ext$ in a comodule category 
with the derived functors $\Cotor$ of the cotensor product (see Appendix 1 of~\cite{MR860042}), this $\Ext$-group 
is isomorphic to 
\begin{equation}\label{side swap iso} \left( \left( (v_1^A)^{-1}V^A/\pi^{\infty}\right) \Box_{V^AT} V^A\right)^u,\end{equation}
which is slightly more convenient to work with, as it is
just the degree $u$ summand of the equalizer of the two maps
\[ 
\Psi, \left( \id_{(v_1^A)^{-1}V^A/\pi^{\infty}} \otimes \eta_L\right):
  (v_1^A)^{-1}V^A/\pi^{\infty} 
 \rightarrow  (v_1^A)^{-1}V^A/\pi^{\infty}\otimes_{V^A} V^AT \stackrel{\cong}{\longrightarrow} (v_1^A)^{-1}V^AT/p^{\infty} \]
where $\Psi$ is the left $V^AT$-comodule structure map on $(v_1^A)^{-1}V^A/\pi^{\infty}$,
i.e., 
\begin{align*} \Psi( (v_1^A)^n/{\pi^i}) 
 &= \eta_R( (v_1^A)^n)/{\pi^i} \\
 &= (v^1_A + (\pi - \pi^q))^n/\pi^i\end{align*}
for $n\geq 0$.
(The reason we swap the left coactions for the right coactions in~\ref{side swap iso}
is so that we can work with the isomorphism
\[ (v_1^A)^{-1}V^A/\pi^{\infty}\otimes_{V^A} V^AT \cong (\eta_L(v_1^A))^{-1}V^AT/\pi^{\infty} = (v_1^A)^{-1}V^AT/\pi^{\infty} \]
rather than the isomorphism
\[ V^AT\otimes_{V^A}(v_1^A)^{-1}V^A/\pi^{\infty}  \cong (\eta_R(v_1^A))^{-1}V^AT/\pi^{\infty} \]
which is less convenient.)

To compute $\Ext_{(V^A,V^AT)}^{*}(V^A,(v_1^A)^{-1}V^A/\pi^{\infty})$, 
we use the short exact sequence of graded $V^AT$-comodules
\[ 0 \rightarrow (v_1^A)^{-1} V^A/\pi 
 \rightarrow (v_1^A)^{-1} V^A/\pi^{\infty} 
 \stackrel{\pi}{\longrightarrow} (v_1^A)^{-1} V^A/\pi^{\infty} 
 \rightarrow 0\]
and its induced long exact sequence
of $\Ext$ groups
\begin{equation}\label{exact seq 30} 0 \rightarrow 
 \Ext^0_{(V^A,V^AT)}\left(V^A, (v_1^A)^{-1} V^A/\pi\right) \stackrel{i}{\longrightarrow}
 \Ext^0_{(V^A,V^AT)}\left(V^A, (v_1^A)^{-1} V^A/\pi^{\infty}\right) \end{equation} \[\stackrel{\pi}{\longrightarrow}
 \Ext^0_{(V^A,V^AT)}\left(V^A, (v_1^A)^{-1} V^A/\pi^{\infty}\right) \stackrel{\delta}{\longrightarrow}
 \Ext^1_{(V^A,V^AT)}\left(V^A, (v_1^A)^{-1} V^A/\pi\right) \rightarrow \dots .\]

Suppose $x\in \Ext^0_{(V^A,V^AT)}(V^A,(v_1^A)^{-1}V^A/\pi^{\infty})$
has the property that $\delta(x) \neq 0$.
Then I claim that there exists some positive integer $j$ such that $\pi^j x \in \im i$.
The proof of this claim is as follows: first, observe that
we have a sequence of one-to-one morphisms of 
graded left $V^AT$-comodules
\[ (v_1^A)^{-1}V^A/\pi \hookrightarrow (v_1^A)^{-1}V^A/\pi^2 \hookrightarrow (v_1^A)^{-1}V^A/\pi^3 \hookrightarrow \dots\]
and Lemma~\ref{ext and compactness lemma} tells us that the map
\[ \colim_m \Ext_{(V^A,V^AT)}^{s,t}(V^A,(v_1^A)^{-1}V^A/\pi^m)
 \rightarrow \Ext_{(V^A,V^AT)}^{s,t}(V^A,\colim_m (v_1^A)^{-1}V^A/\pi^m)g
  \cong \Ext_{(V^A,V^AT)}^{s,t}(V^A,(v_1^A)^{-1}V^A/\pi^{\infty}) \]
is an isomorphism for all $s$ and $t$.
Each of the $A$-modules $\Ext_{(V^A,V^AT)}^{s,t}(V^A,(v_1^A)^{-1}V^A/\pi^m)$ 
is a $\pi$-power-torsion $A$-module, that is, each element in 
$\Ext_{(V^A,V^AT)}^{s,t}(V^A,(v_1^A)^{-1}V^A/\pi^m)$ is killed by multiplication by some sufficiently large
power of $\pi$.
Consequently the colimit $\colim_m \Ext_{(V^A,V^AT)}^{s,t}(V^A,(v_1^A)^{-1}V^A/\pi^m)$
is also $\pi$-power-torsion.
So there exists some $j$ such that $\pi^j x = 0$, and now exactness of sequence~\ref{exact seq 30}
implies that $\pi^j x\in \im i$, as claimed.

As a consequence, if we give a description of $\im i$ as well as a description of the elements
in $\Ext^0_{(V^A,V^AT)}\left(V^A, (v_1^A)^{-1} V^A/\pi^{\infty}\right)$
whose multiples by $\pi^j$ are nonzero elements of $\im i$ for sufficiently large $i$,
then we will have a complete description of 
$\Ext^0_{(V^A,V^AT)}\left(V^A, (v_1^A)^{-1} V^A/\pi^{\infty}\right)$. 
(The above argument, which is the simplest way that I know of to
reduce the computation of $\Ext^0_{(V^A,V^AT)}\left(V^A, (v_1^A)^{-1} V^A/\pi^{\infty}\right)$
to the computation of $\im i$, was not given in Ravenel's paper~\cite{MR745362}; 
Ravenel simply computes $\im i$. Presumably something like the above line of argument
was clear to Ravenel, though not stated in his paper. The argument is involved enough,
requiring the two Lemmas~\ref{forgetful preserves colimits} and ~\ref{ext and compactness lemma},
that I think it is worth spelling out in the present paper.)
As Ravenel observes in~\cite{MR745362},
the image of $i$ is generated by the cohomology class of the $0$-cocycles
$v_1^i/\pi$ for $i\in \mathbb{Z}$, and to determine
$\Ext^0_{(V^A,V^AT)}(V^A, (v_1^A)^{-1} V^A/\pi^{\infty})$ we just need to determine
how many times each of these cohomology classes are divisible by $\pi$.
An element in 
$\Ext^0_{(V^A,V^AT)}(V^A, (v_1^A)^{-1} V^A/\pi^{\infty})$ is not divisible by $\pi$
if and only if its image under $\delta$ is nonzero.
Consequently, to compute the order of
$\Ext^{0,2i(q-1)}_{(V^A,V^AT)}(V^A, (v_1^A)^{-1} V^A/\pi^{\infty})$,
we just need to divide 
$v_1^i/\pi\in (v_1^A)^{-1} V^A/\pi^{\infty}$ by the largest possible power of $\pi$ 
such that the resulting quotient $x$ is still a $0$-cocycle and
such that $\delta(x) \neq 0$. 

Our next task is to determine how large that largest possible power of $\pi$ is.
Now let $n$ be a positive integer, and suppose that $\log_p\frac{p-1}{e}$ is not an integer, as in the statement of the theorem.
Then the $\pi$-adic valuation of 
\begin{equation}\label{cobar sum} (\eta_R - \eta_L)((v_1^A)^n) = \sum_{i=1}^{n} \binom{n}{i} (v_1^A)^{n-i} \left( \pi - \pi^q\right)^{i} (t_1^A)^i \end{equation}
is equal to 
\begin{align*} 
 \min\left\{ \nu_{\pi}\left(\binom{n}{i} (v_1^A)^{n-i} \left( \pi - \pi^q\right)^{i} (t_1^A)^i\right): i=1, \dots ,n\right\} 
  &=  \min\left\{ \nu_{\pi}\left( \binom{n}{i}\right) + i\nu_{\pi}\left( \pi - \pi^q\right): i=1, \dots ,n\right\} \\
  &=  \min\left\{ e\cdot\nu_p\left( \binom{n}{i}\right) + i : i=1,\dots ,n\right\}.
\end{align*}
(Here I am using the Araki-type generators $v_1^A, v_2^A, \dots $ for $V^A$, and consequently $\eta_R(v_1^A) = v_1^A + (\pi - \pi^q)t_1^A$. It works just as well to use the 
Hazewinkel-type generators $V_1^A, V_2^A, \dots $  for $V^A$, in which case $\eta_R(V_1^A) = V_1^A + \pi t_1^A$, and the rest of the argument goes through without
significant changes. See~\cite{MR2987372} for these formulas for $\eta_R$.)
Now by Lemma~\ref{valuations of binomial coefficients}, when $\log_p(\frac{e}{p-1})$ is not an integer, 
the quantity $e\cdot\nu_p\left( \binom{n}{i}\right) + i$, as a function
of $i$, takes its unique minimum value when $i=p^{\ceiling{\log_p\frac{e}{p-1}}}$
if $\log_p\frac{e}{p-1} < \nu_p(n)$
and when $i=p^{\nu_p(n)}$ if $\nu_p(n) < \log_p\frac{e}{p-1}$. 
Consequently there is a unique monomial of least $\pi$-adic valuation in the sum~\ref{cobar sum}, and 
it is 
\[ \binom{n}{p^{\ceiling{\log_p\frac{e}{p-1}}}} (v_1^A)^{n-p^{\ceiling{\log_p\frac{e}{p-1}}}}\left( \pi-\pi^q\right)^{p^{\ceiling{\log_p\frac{e}{p-1}}}} (t_1^A)^{p^{\ceiling{\log_p\frac{e}{p-1}}}},\]
with $\pi$-adic valuation 
\[  e(\nu_p(n) - \ceiling{\log_p\frac{e}{p-1}}) + p^{{\ceiling{\log_p\frac{e}{p-1}}}} \]
if $\nu_p(n) > \log_p\frac{e}{p-1}$; and, on the other hand, it is
\[ \binom{n}{p^{\nu_p(n)}} (v_1^A)^{n-p^{\nu_p(n)}}\left( \pi-\pi^q\right)^{p^{\nu_p(n)}} (t_1^A)^{p^{\nu_p(n)}},\]
with $\pi$-adic valuation $p^{\nu_p(n)}$,
if $\nu_p(n) < \log_p\frac{e}{p-1}$.
For brevity, we write
$a(n)$ for the $\pi$-adic valuation determined
above, that is,
\[ a(n) = \left\{ \begin{array}{ll}
 e(\nu_p(n) - \ceiling{\log_p\frac{e}{p-1}}) + p^{{\ceiling{\log_p\frac{e}{p-1}}}} &\mbox{\ if\ } \nu_p(n) > \log_p\frac{e}{p-1} \\
 p^{\nu_p(n)} &\mbox{\ if\ } \nu_p(n) < \log_p\frac{e}{p-1} ,\end{array}\right. \]
and we write 
$b(n)$ for the exponent of $t_1^A$
appearing in the monomial term of
$(\eta_R-\eta_L)((v_1^A)^n)$
of lowest $\pi$-adic valuation,
that is,
\begin{align*} 
b(n) 
 &= \left\{ \begin{array}{ll} 
  p^{\ceiling{\log_p\frac{e}{p-1}}} & \mbox{\ if\ } \log_p\frac{e}{p-1} < \nu_p(n), \\
  p^{\nu_p(n)} & \mbox{\ if\ } \nu_p(n) < \log_p\frac{e}{p-1},\end{array}\right. \\
(\eta_R -\eta_L)((v_1^A)^n) 
 &\equiv \binom{n}{b(n)} (v_1^A)^{n-b(n)} \pi^{b(n)} (t_1^A)^{b(n)} \mod \pi^{a(n)+1} .\end{align*}

Hence the $0$-cocycle 
$(v_1^A)^n/\pi\in (v_1^A)^{-1}V^A/\pi^{\infty}$
in the cobar complex computing
$\Ext_{(V^A,V^AT)}^*(V^A, (v_1^A)^{-1}V^A/\pi^{\infty})$
is divisible by $\pi^{a(n)-1}$, but not divisible 
by $\pi^{a(n)}$.

Finally, all we need to do is to check that the $1$-cocycle
\begin{align} 
\label{suspicious cocycle} (\eta_R-\eta_L)\left( (v_1^A)^n/\pi^{a(n)}\right) 
  &= \binom{n}{b(n)} (v_1^A)^{n-b(n)} \pi^{b(n)-a(n)}(t_1^A)^{b(n)} \\
\nonumber  &\in (v_1^A)^{-1}V^A/\pi \otimes_{V^A} V^AT,\end{align}
in the cobar complex for $(V^A,V^AT)$ with coefficients in $(v_1^A)^{-1}V^A/\pi$,
is not a $1$-coboundary.
From naturality of the cobar complex,
if the $1$-cocycle~\ref{suspicious cocycle} is a $1$-coboundary in the cobar complex for $(V^A,V^AT)$ with coefficients in $(v_1^A)^{-1}V^A/\pi$, then its
image in the cobar complex for $(\mathbb{F}_q[(v_1^A)^{\pm 1}],\Sigma^A(1))$ with coefficients in $\mathbb{F}_q[(v_1^A)^{\pm 1}]$ is a $1$-coboundary.
(In fact, the converse is also true, 
due to the Morava-Miller-Ravenel-type isomorphism
\[ \Ext_{(V^A,V^AT)}^{*,*}(V^A,(v_1^A)^{-1}V^A/\pi) \cong \Ext_{(\mathbb{F}_q[(v_1^A)^{\pm 1}],\Sigma^A(1))}^{*,*}(\mathbb{F}_q[(v_1^A)^{\pm 1}],\mathbb{F}_q[(v_1^A)^{\pm 1}]) ,\]
proved in~\cite{MR745362}.)

I claim that this $1$-cocycle $\binom{n}{b(n)} (v_1^A)^{n-b(n)} \pi^{b(n)-a(n)} (t_1^A)^{b(n)}$
cannot be a coboundary.
The proof is trivially easy: in the cobar complex for $(\mathbb{F}_q[(v_1^A)^{\pm 1}],\Sigma^A(1))$, the $0$-cochains are simply elements of $\mathbb{F}_q[(v_1^A)^{\pm 1}]$,
and given some element $x\in \mathbb{F}_q[(v_1^A)^{\pm 1}]$, its coboundary $\delta^0(x)$ is simply $\delta^0(x) = \eta_R(x) - \eta_L(x)$.
Since $\eta_R(v_1^A) = v_1^A - (\pi-\pi^q) t_1^A$, we have that
$\eta_R(v_1^A) = 0$ modulo $\pi$.
So $\delta^0\left( (v_1^A)^m\right) = \eta_R\left( (v_1^A)^m\right) - \eta_L\left( (v_1^A)^m\right) = 0$ for all $m$,
so $\binom{n}{b(n)} (v_1^A)^{n-b(n)} \pi^{b(n)-a(n)} (t_1^A)^{b(n)}$ is not a coboundary.

Consequently, for all $n$, we have the isomorphism of $A$-modules 
\[ \Ext_{(V^A,V^AT)}^{0,2n(q-1)}(V^A,(v_1^A)^{-1}V^A/\pi^{\infty}) \cong A/\pi^{a(n)},\]
generated as an $A$-module by the $0$-cocycle 
$(v_1^A)^n/\pi^{a(n)}$.

Now all that remains is to check that none of these $0$-cocycles supports a nonzero $d_1$-differential in the formal $A$-module chromatic spectral sequence,
since we already observed that, for dimensional reasons, no other differentials can interact with the height $1$ layer
in this spectral sequence. 
Computing $d_1$ differentials in this spectral sequence is straightforward: if $x$ is a class in 
\[ \Ext^{s,t}_{(V^A,V^AT)}\left(V_A,(v_h^A)^{-1}V^A/\left(\pi^{\infty}, (v_1^A)^{\infty}, \dots ,(v_{h-1}^A)^{\infty}\right)\right) \]
represented by a cobar complex $s$-cocycle
\[ \overline{x}\in (v_h^A)^{-1}V^A/\left(\pi^{\infty}, (v_1^A)^{\infty}, \dots ,(v_{h-1}^A)^{\infty}\right)\otimes_{V^A} V^AT \otimes_{V^A} \dots \otimes_{V^A} V^AT,\] 
then applying the chromatic complex coboundary 
\[ \delta : (v_h^A)^{-1}V^A/\left( \pi^{\infty}, (v_1^A)^{\infty}, \dots ,(v_{h-1}^A)^{\infty}\right) \rightarrow
  (v_{h+1}^A)^{-1}V^A/\left( \pi^{\infty}, (v_1^A)^{\infty}, \dots ,(v_{h-1}^A)^{\infty}, (v_h^A)^{\infty}\right)\]
to the leftmost tensor factor in $\overline{x}$ yields a cobar complex $s$-cocycle 
\[ d(\overline{x})\in (v_{h+1}^A)^{-1}V^A/\left( \pi^{\infty}, (v_1^A)^{\infty}, \dots ,(v_{h-1}^A)^{\infty}, (v_h^A)^{\infty}\right) \otimes_{V^A} V^AT \otimes_{V^A} \dots \otimes_{V^A} V^AT,\] 
and the cobar complex $d_1$-differential $d_1(x)$ is the cohomology class of this cocycle $d(\overline{x})$.

This process is very easy for our $0$-cocycles $(v_1^A)^n/\pi^{a(n)}$: 
the relevant chromatic complex coboundary is the composite 
\[ \delta : (v_1^A)^{-1}V^A/\pi^{\infty} \rightarrow
  (v_{2}^A)^{-1}V^A/\left( \pi^{\infty}, (v_1^A)^{\infty}\right)\]
of the projection 
\begin{equation}\label{projection 14} (v_1^A)^{-1}V^A/\pi^{\infty} \twoheadrightarrow V^A/\left(\pi^{\infty},(v_1^A)^{\infty}\right) \end{equation}
with the localization map 
\[ V^A/\left(\pi^{\infty},(v_1^A)^{\infty}\right) \hookrightarrow (v_2^A)^{-1} V^A/\left(\pi^{\infty},(v_1^A)^{\infty}\right).\]
If $n\geq 0$, then $(v_1^A)^n/\pi^{a(n)}$ maps to zero under the projection map~\ref{projection 14}.
Consequently, the $d_1$-differential is zero on $\Ext_{(V^A,V^AT)}^{0,2n(q-1)}(V^A,(v_1^A)^{-1}V^A/\pi^{\infty})$ as long as $n\geq 0$.

If $n<0$, the situation is even easier to understand: since $V^A$ and $V^AT$ are both concentrated in nonnegative grading degrees,
$\Ext^{s,t}_{(V^A,V^AT)}(V^A,V^A) \cong 0$ for $t<0$, since a nonzero class in $\Ext^{s,t}_{(V^A,V^AT)}(V^A,V^A)$ for $t<0$ would have to be represented by a cocycle
in the cobar complex of negative total grading degree.
Consequently, if $n<0$, then any nonzero class in $\Ext_{(V^A,V^AT)}^{0,2n(q-1)}(V^A,(v_1^A)^{-1}V^A/\pi^{\infty})$
must support a differential in the formal $A$-module chromatic spectral sequence. As we already observed, the only possible differentials interacting
with the height $1$ layer are possible $d_1$-differentials from the height $1$ layer to the height $2$ layer; hence every one of the classes
$(v_1^A)^n/\pi^{a(n)}$ for $n<0$ supports a $d_1$-differential, and these classes are all ``gone'' by the time we reach the formal $A$-module chromatic $E_2$-term.

Consequently,
\[ \Ext_{(V^A,V^AT)}^{0,2n(q-1)}(V^A,(v_1^A)^{-1}V^A/\pi^{\infty}) \cong
 A/\pi^{a(n)}\]
for all $n>0$.

Now by Lemma~\ref{valuations predicted by local conjecture},
the $\Ext$-groups predicted by Ravenel's Local Conjecture are the $\Ext$-groups we have just computed.
\end{proof}

The next result, Corollary~\ref{local computation}, is just a slightly different formulation of Theorem~\ref{local conjecture} which will be convenient to refer to when we begin proving the Global Conjecture.
\begin{corollary}\label{local computation} 
Suppose that $K/\mathbb{Q}_p$ is a finite field extension of ramification 
degree $e$, and suppose that $\log_p(\frac{e}{p-1})$ is not an integer, i.e.,
$\frac{e}{p-1}$ is not a power of $p$.
Let $A$ be the ring of integers in $K$,
then,
for each $n\in\mathbb{N}$,
we have isomorphisms of $A$-modules $\Ext_{(V^A,V^AT)}^{1,2n+1}(V^A, V^A)\cong 0$
and
\[ \Ext_{(V^A,V^AT)}^{1,2n}(V^A, V^A) \cong A/H_n,\]
where $H_{n}$ is the ideal in $A$ generated by all elements of the form
$(a^n-1)$ with $a\in A^{\times}$.

Equivalently, in terms of the moduli stack $\mathcal{M}_{fmA}$ of one-dimensional formal $A$-modules over $\Spec A$, and with notation as in Conventions~\ref{running conventions}:
for each $n\in\mathbb{N}$,
we have isomorphisms of $A$-modules $\Ext_{(V^A,V^AT)}^{1,2n+1}(V^A, V^A)\cong 0$
and
\[ H^{1,2n}_{fl}(\mathcal{M}_{fmA}; \mathcal{O}) \cong A/H_n,\]
where $H_{n}$ is the ideal in $A$ generated by all elements of the form
$(a^n-1)$ with $a\in A^{\times}$.
\end{corollary}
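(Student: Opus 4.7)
This corollary is essentially a reformulation of Theorem~\ref{local conjecture}, and my plan is to deduce it by (i) noting that $V^A$ and $V^AT$ are concentrated in degrees divisible by $2(q-1)$, where $q$ denotes the cardinality of the residue field of $A$, and (ii) identifying the two natural families of ideals at hand.

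First I would dispose of the ``wrong'' gradings. The generators $v_i^A$ and $t_i^A$ both live in degree $2(q^i - 1) = 2(q-1)(q^{i-1} + \cdots + 1)$, so the entire cobar complex for $(V^A, V^AT)$ with coefficients in $V^A$ is concentrated in degrees divisible by $2(q-1)$. This immediately yields $\Ext^{1, 2n+1}_{(V^A, V^AT)}(V^A, V^A) \cong 0$ for every $n$, and $\Ext^{1, 2n}_{(V^A, V^AT)}(V^A, V^A) \cong 0$ whenever $(q-1) \nmid n$. To match the latter vanishing with the claimed description $A/H_n$, I would verify that $H_n = A$ in this case: let $\zeta \in A^\times$ be the Teichm\"uller lift of a generator of $\mathbb{F}_q^\times$; the hypothesis $(q-1) \nmid n$ forces $\zeta^n \not\equiv 1 \pmod \pi$, so $\zeta^n - 1 \in H_n$ is a unit of $A$, whence $H_n = A$.

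For $n = (q-1)m$, Theorem~\ref{local conjecture} provides $\Ext^{1, 2n}_{(V^A, V^AT)}(V^A, V^A) \cong A/I_m$, so what remains is the identification of ideals $I_m = H_n$, where I write $U_1 \coloneqq 1 + \pi A$. The inclusion $H_n \subseteq I_m$ is immediate: since $A^\times/U_1 \cong \mathbb{F}_q^\times$ has order $q-1$, any $a \in A^\times$ satisfies $a^{q-1} \in U_1$, and consequently $a^n - 1 = (a^{q-1})^m - 1 \in I_m$. For the reverse inclusion, the key observation is that the $(q-1)$-st power map is a bijection of $U_1$ onto itself. Setting $U_k = 1 + \pi^k A$, each successive quotient $U_k/U_{k+1} \cong \mathbb{F}_q$ is an elementary abelian $p$-group, so each $U_1/U_k$ is a finite $p$-group on which the $(q-1)$-st power map is a bijection (because $\gcd(q-1,p) = 1$); since $A$ is $\pi$-adically complete we have $U_1 = \varprojlim_k U_1/U_k$, and passing to the limit gives the claim. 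Consequently every 1-unit $b$ can be written $b = u^{q-1}$ for some $u \in U_1 \subseteq A^\times$, which yields $b^m - 1 = u^n - 1 \in H_n$, as required.

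The substantive work of the corollary is carried entirely by Theorem~\ref{local conjecture}; the plan above is a bookkeeping exercise. The only step with any real content is the bijectivity of the $(q-1)$-st power map on $U_1$, and even this reduces to standard facts about the filtration of principal units in a complete local field, so I do not anticipate any obstacle.
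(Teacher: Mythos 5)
Your proposal is correct and takes essentially the same approach as the paper: deduce the result from Theorem~\ref{local conjecture} by identifying the ideals $H_n$ and $I_{n/(q-1)}$ when $q-1 \mid n$, and observing $H_n = A$ otherwise. The only real differences are presentational. Where the paper simply invokes Hensel's Lemma to produce a $(q-1)$st root of any $1$-unit, you prove the bijectivity of the $(q-1)$st power map on $U_1 = 1+\pi A$ directly via the unit filtration $U_k = 1+\pi^k A$ and passage to the inverse limit --- equivalent content, just unwound. You are also slightly more careful than the paper about the sparseness of gradings: the paper leans on the remark after Conjecture~\ref{local conj} that $\Ext^{1,m}$ vanishes when $2(q-1)\nmid m$, and then asserts that the vanishing when $q-1 \nmid n$ is "predicted by" Theorem~\ref{local conjecture} even though that theorem is only stated in degrees that are multiples of $2(q-1)$; your explicit degree argument from the concentration of $V^A$ and $V^AT$ in degrees divisible by $2(q-1)$ makes this step self-contained, which is a small improvement.
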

\begin{proof}
Let $f$ be the residue degree of $K/\mathbb{Q}_p$, and let $\pi$ be a uniformizer for $A$.
If $p^f-1$ does not divide $n$, then there exists some nonzero element $x$ in the residue field $A/\pi$
such that $x^{n} \neq 1$, and hence there exists some element $\overline{x} \in A^{\times}$ such that
$\overline{x}^{n} - 1$ has nonzero reduction modulo $\pi$, i.e.,
$\overline{x}^{n} - 1\in A^{\times}$.
Hence $A/H_n \cong 0$, which is the value of
$\Ext_{(V^A,V^AT)}^{1,2n}(V^A, V^A)$ predicted by
Theorem~\ref{local conjecture}.

On the other hand, if $p^f-1$ divides $n$, then for any unit $x\in A^{\times}$, we have that
$x^{p^f-1}$ is congruent to $1$ modulo $\pi$, and conversely, if $y\in A$ is congruent to $1$ modulo $\pi$,
then $y$ has a $(p^f-1)$st root $x\in A^{\times}$, by Hensel's Lemma; consequently
$A/H_n$ coincides with $A/I_{n/(p^f-1)}$, the value of
$\Ext_{(V^A,V^AT)}^{1,2n}(V^A, V^A)$ predicted by
Theorem~\ref{local conjecture}.
\end{proof}

\section{$n$-congruing ideals and a precise statement of the Global Conjecture.}

\subsection{The Hasse principle for $n$-congruing ideals.}

Ravenel's original statement of the Global Conjecture, which is included verbatim in the present paper as Conjecture~\ref{global conj},
is phrased loosely enough that one has some leeway in interpreting the conjecture so that some version of it can be proven:
the conjecture seems to be that, up to some small factor, $\Ext^{1,2m}_{(L^A,L^AB)}(L^A,L^A)$ is isomorphic to $A/I$ where $I$ is an ideal with the property that, if
$a\in A$, then $a^N(a^n-1)\in I$ for some $N\in\mathbb{N}$. Ravenel's statement does not make it clear exactly which ideal with that property $I$ ought to be, however,
or even if there {\em is} more than one such ideal. So at this point it is natural to make the following definition:
\begin{definition}\label{def of n-congruing ideal}
Let $A$ be a commutative ring and let $n$ be a positive integer. We say that an ideal $I$ in $A$ is
{\em $n$-congruing} if, for every $a\in A$, there exists some $N \in\mathbb{N}$ such that
$a^N(a^n-1)\in I$.
\end{definition}

Our first task, in proving the Global Conjecture, is to make the conjecture precise. The most convincing way to make it precise would be to show that there is in fact a 
``universal'' $n$-congruing ideal among the collection of all the $n$-congruing ideals in $A$. In that case, the ``universal'' $n$-congruing ideal would be the right one to appear in the statement of 
the Global Conjecture. Notice there always exists at least one (trivial) choice of $n$-congruing ideal
of $A$, namely $A$ itself. What is less clear is whether there are others, and in particular, whether there exists a {\em minimal} $n$-congruing ideal in $A$. (Note
that the zero ideal is not $n$-congruing unless $n=0$.)

In this section I prove that there is, in fact, a universal (specifically, {\em minimal}) ideal $I$ in $A$ 
with the property that if $a\in A$, then $a^N(a^n-1)\in I$ for some $N\in\mathbb{N}$; this is a consequence 
of Theorem~\ref{hasse principle}, which is also interesting in its own right, as it establishes a Hasse principle for
$n$-congruing ideals. 
Consequently Conjecture~\ref{weaker rigorous global conj} (and its stronger form, Conjecture~\ref{rigorous global conj})
is the desired precise form of Ravenel's Global Conjecture.
In the next section (specifically Corollaries~\ref{weak form of global conj holds} and \ref{strong form of global conj holds} ) the reader can find proofs that Conjecture~\ref{weaker rigorous global conj} is actually true,
and that Conjecture~\ref{rigorous global conj} is true when certain hypotheses are met.

\begin{lemma}\label{valuation inequality lemma}
Let $A$ be a Dedekind domain, 
let $t\in\mathbb{N}$, and let $\mathfrak{p}$ be a maximal ideal in $A$.
Let $H_t^{\mathfrak{p}}$ denote the 
ideal in $\hat{A}_{\mathfrak{p}}$ generated by all elements of the form $x^t-1$ for $x\in \hat{A}_{\mathfrak{p}}^{\times}$.
Let $u_{\mathfrak{p},t}$ denote the $\mathfrak{p}$-adic valuation of 
the ideal $H_t^{\mathfrak{p}}$, i.e., $u_{\mathfrak{p},t}$ is the greatest integer $u$
such that $H_t^{\mathfrak{p}} \subseteq \mathfrak{p}^{u}\subseteq \hat{A}_{\mathfrak{p}}$.

Then, for all $k\in A$ not contained in $\mathfrak{p}$, we have the inequality
\begin{align}\label{valuation inequality 1} \nu_{\mathfrak{p}}(k^t - 1) &\geq u_{\mathfrak{p},t},\end{align}
and furthermore, there exists some $k\in A$ not contained in $\mathfrak{p}$ such that
\begin{align}\label{valuation inequality 2} \nu_{\mathfrak{p}}(k^t - 1) = u_{\mathfrak{p},t}.\end{align}
\end{lemma}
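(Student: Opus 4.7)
The plan is to handle the two assertions separately, and the first is essentially a tautology. If $k \in A \setminus \mathfrak{p}$, then the image of $k$ in the completion $\hat{A}_{\mathfrak{p}}$ is a unit (any element with trivial $\mathfrak{p}$-adic valuation becomes invertible in the local ring $\hat{A}_{\mathfrak{p}}$). Consequently $k^t - 1$ is one of the defining generators of $H_t^{\mathfrak{p}}$, so $k^t - 1 \in H_t^{\mathfrak{p}} \subseteq \mathfrak{p}^{u_{\mathfrak{p},t}} \hat{A}_{\mathfrak{p}}$, and inequality \eqref{valuation inequality 1} follows directly from the definition of $u_{\mathfrak{p},t}$.

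For \eqref{valuation inequality 2}, I would first observe that since $A$ is a Dedekind domain, $\hat{A}_{\mathfrak{p}}$ is a discrete valuation ring, and hence the $\mathfrak{p}$-adic valuation of an ideal is achieved by (at least) one of its generators. In particular there exists some $x \in \hat{A}_{\mathfrak{p}}^{\times}$ such that $\nu_{\mathfrak{p}}(x^t - 1) = u_{\mathfrak{p},t}$. The task is then to replace $x$ by an element $k \in A \setminus \mathfrak{p}$ without changing the valuation of $x^t - 1$. This is where the one non-trivial step sits: I would invoke the standard isomorphism $A/\mathfrak{p}^n \xrightarrow{\cong} \hat{A}_{\mathfrak{p}}/\mathfrak{p}^n \hat{A}_{\mathfrak{p}}$ (valid for any Dedekind domain $A$ and any maximal ideal $\mathfrak{p}$) to pick $k \in A$ such that $\nu_{\mathfrak{p}}(k - x) > u_{\mathfrak{p},t}$.

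It remains to verify that this $k$ has the desired properties. Since $\nu_{\mathfrak{p}}(x) = 0$ and $\nu_{\mathfrak{p}}(k - x) > 0$, the ultrametric inequality forces $\nu_{\mathfrak{p}}(k) = 0$, so $k \in A \setminus \mathfrak{p}$. To compute $\nu_{\mathfrak{p}}(k^t - 1)$, I would decompose
\[ k^t - 1 = (k^t - x^t) + (x^t - 1) \]
and factor $k^t - x^t = (k - x) \sum_{i=0}^{t-1} k^i x^{t-1-i}$. The second factor lies in $\hat{A}_{\mathfrak{p}}$ and so has non-negative $\mathfrak{p}$-adic valuation, so $\nu_{\mathfrak{p}}(k^t - x^t) \geq \nu_{\mathfrak{p}}(k - x) > u_{\mathfrak{p},t}$, whereas $\nu_{\mathfrak{p}}(x^t - 1) = u_{\mathfrak{p},t}$ by construction. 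The two summands therefore have distinct valuations, and the ultrametric equality in that case forces $\nu_{\mathfrak{p}}(k^t - 1) = u_{\mathfrak{p},t}$, as required.

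The only real obstacle is the approximation step from $\hat{A}_{\mathfrak{p}}$ back down to $A$, which is handled cleanly by the Dedekind completion isomorphism above; once that is in hand, the rest is a short non-Archimedean calculation.
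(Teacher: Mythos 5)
Your proof is correct and takes essentially the same route as the paper: both establish the inequality directly from the definition of $H_t^{\mathfrak{p}}$, and both obtain the equality by choosing a unit $x \in \hat{A}_{\mathfrak{p}}^{\times}$ with $\nu_{\mathfrak{p}}(x^t-1)$ minimal and then approximating $x$ by an element $k \in A$ via the surjection $A \twoheadrightarrow \hat{A}_{\mathfrak{p}}/\mathfrak{p}^N$ for $N$ large. The one improvement over the paper's version is that you make explicit the ultrametric estimate on $k^t - x^t$ that justifies why the approximation preserves the valuation of $x^t-1$; the paper asserts this after "truncating the $\mathfrak{p}$-adic expansion" without spelling out the calculation.
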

\begin{proof}
To produce the inequality~\ref{valuation inequality 1}, fix $k\in A$ not contained in $\mathfrak{p}$, and observe that
the image $\overline{k}$ of $k$ under the completion map $A\rightarrow \hat{A}_{\mathfrak{p}}$ is a unit.
Consequently 
\[ \overline{k}^t-1 \in H_t^{\mathfrak{p}} \subseteq \mathfrak{p}^{u},\]
so $\overline{k}^t-1$ has $\mathfrak{p}$-adic valuation at least $u$.

To produce the equality~\ref{valuation inequality 2}, we choose an element $k\in A$ whose image 
$\overline{k}\in \hat{A}_{\mathfrak{p}}$ is a unit and makes $\nu_{\mathfrak{p}}(\overline{k}^t-1)$ as small as possible,
i.e., $\overline{k}^t-1$ is an element of minimal $\mathfrak{p}$-adic valuation in $H_t^{\mathfrak{p}}$.
(It is possible to make such a choice of $k$ because we can simply start with an element $x\in \hat{A}_{\mathfrak{p}}^{\times}$ such that $x^t-1$ is of minimal $\mathfrak{p}$-adic valuation in $H_t^{\mathfrak{p}}$, and then create an element $\overline{k}$ of $\hat{A}_{\mathfrak{p}}$ 
by truncating $x$ by setting all the $\mathfrak{p}^m$-coefficients to zero for $m>>0$, in the $\mathfrak{p}$-adic expansion
of $x$. Then $\overline{k}$ is still a unit in $\hat{A}_{\mathfrak{p}}$ since its mod $\mathfrak{p}$ reduction is still nonzero, and $\overline{k}^t-1$ 
has the same $\mathfrak{p}$-adic valuation as $x^t-1$ if $m$ was chosen to be large relative to $t$. Furthermore, $\overline{k}$ has only finitely many
nonzero coefficients in its $\mathfrak{p}$-adic expansion, so $\overline{k}$ is the image in $\hat{A}_{\mathfrak{p}}$ of some element
$k\in A$ with the desired properties.)
Then $\nu_{\mathfrak{p}}(k^t-1) = \nu_{\mathfrak{p}}(\overline{k}^t-1) = u_{\mathfrak{p},t}$, as desired.
\end{proof}

\begin{lemma}\label{adic congruing ideals}
Let $\hat{A}_{\mathfrak{p}}$ be a complete (that is, complete in the $\mathfrak{p}$-adic topology) 
discrete valuation ring with maximal ideal $\mathfrak{p}$, and let $n$ be a positive integer.
Let $H_n^{\mathfrak{p}}$ denote the 
ideal in $\hat{A}_{\mathfrak{p}}$ generated by all elements of the form $x^n-1$ for $x\in \hat{A}_{\mathfrak{p}}^{\times}$.
Then an ideal $I$ in $\hat{A}_{\mathfrak{p}}$ is $n$-congruing if and only if $I$ contains $H_n^{\mathfrak{p}}$.
\end{lemma}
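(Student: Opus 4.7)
The plan is to prove the two directions separately, exploiting the fact that in a complete discrete valuation ring every nonzero ideal is a power of $\mathfrak{p}$, so the analysis reduces to comparing $\mathfrak{p}$-adic valuations.

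For the forward direction, suppose $I$ is $n$-congruing. I want to show that every generator $x^n - 1$ of $H_n^{\mathfrak{p}}$ (with $x\in \hat{A}_{\mathfrak{p}}^{\times}$) lies in $I$. Since $I$ is $n$-congruing, there exists $N\in\mathbb{N}$ with $x^N(x^n-1) \in I$; but $x$ is a unit, so $x^N$ is a unit, and I can multiply by its inverse (an element of $\hat{A}_{\mathfrak{p}}$, hence an operation preserving membership in $I$) to conclude $x^n - 1 \in I$. Hence $H_n^{\mathfrak{p}} \subseteq I$.

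For the reverse direction, suppose $H_n^{\mathfrak{p}} \subseteq I$, and let $a \in \hat{A}_{\mathfrak{p}}$. If $a$ is a unit, then $a^n - 1 \in H_n^{\mathfrak{p}} \subseteq I$ directly, so $N=0$ works. If instead $a \in \mathfrak{p}$, I use the structure of ideals in a complete DVR: either $I = \hat{A}_{\mathfrak{p}}$ (in which case any $N$ works), or $I = \mathfrak{p}^k$ for some $k \geq 1$, in which case $a^N \in \mathfrak{p}^N \subseteq \mathfrak{p}^k = I$ for any $N \geq k$, so $a^N(a^n-1) \in I$.

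There is one slightly delicate point to handle, namely the degenerate case $I = 0$: the reverse implication then requires $H_n^{\mathfrak{p}} = 0$, which would mean every unit of $\hat{A}_{\mathfrak{p}}$ is an $n$-th root of unity. This cannot occur in a complete DVR (with its maximal ideal nonzero), since $y^n - 1$ has only finitely many roots in the domain $\hat{A}_{\mathfrak{p}}$, while the principal units $1 + \mathfrak{p}$ form an infinite group; equivalently, one verifies that $(1+\pi)^n - 1$ has positive but finite $\mathfrak{p}$-adic valuation for any uniformizer $\pi$, hence is a nonzero element of $H_n^{\mathfrak{p}}$. So in the situation where $H_n^{\mathfrak{p}} \subseteq I = 0$ the hypothesis is vacuously unreachable, and the implication holds trivially.

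None of the individual steps is subtle; the only thing to keep straight is the split into the unit and non-unit cases and the appeal to the ideal structure of a complete DVR. The forward direction is essentially a one-line cancellation argument, and the reverse direction is a matter of noting that high powers of any element of $\mathfrak{p}$ land in any given nonzero ideal.
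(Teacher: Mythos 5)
Your proof is correct and takes essentially the same approach as the paper: the forward direction cancels the unit $x^N$, and the reverse direction splits into the unit case (where $a^n-1 \in H_n^{\mathfrak{p}} \subseteq I$ immediately) and the non-unit case (where every nonzero ideal of a complete DVR is $\mathfrak{p}^k$, so a high power of $a \in \mathfrak{p}$ lands in $I$). One small caveat on your treatment of the degenerate case $I=0$ (which the paper silently skips by tacitly assuming $I$ nonzero): your cardinality argument is sound, since $1+\mathfrak{p}$ is infinite while $y^n - 1$ has at most $n$ roots in the domain $\hat{A}_{\mathfrak{p}}$, so $H_n^{\mathfrak{p}} \neq 0$; however the parenthetical claim that $(1+\pi)^n - 1$ always has positive finite valuation is not actually equivalent and can fail when $1+\pi$ is itself a root of unity (for instance $\pi = \zeta_p - 1$ in $\mathbb{Z}_p[\zeta_p]$, where $(1+\pi)^p - 1 = 0$), so lean on the cardinality argument alone.
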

\begin{proof}
Suppose $I$ is an ideal in $\hat{A}_{\mathfrak{p}}$ containing $H_n^{\mathfrak{p}}$, and let $a\in A$. 
If $a\notin \mathfrak{p}$,
then $a$ is a unit in $\hat{A}_{\mathfrak{p}}$, so $a^n-1 \in H_n^{\mathfrak{p}}\subseteq I$.
If, on the other hand, $a\in \mathfrak{p}$,
then some power of $a$ is contained in $I$, since every ideal in a discrete valuation ring is a power of the maximal ideal,
so $I = \mathfrak{p}^m$ for some $m$, and then $a^m \in \mathfrak{p}^m = I$.
Hence every ideal in $\hat{A}_{\mathfrak{p}}$ containing $H_n^{\mathfrak{p}}$ is $n$-congruing.

On the other hand, suppose that $I$ is an $n$-congruing ideal in $\hat{A}_{\mathfrak{p}}$,
and suppose that $x\in \hat{A}_{\mathfrak{p}}^{\times}$.
Then there exists some integer $N$ such that $x^N(x^n-1)\in I$, since $I$ is $n$-congruing.
But $x^N$ is a unit for all $N$, so $x^N(x^n-1)\in I$ if and only if $x^n-1\in I$.
So $x^n-1\in I$.
Hence $H_n^{\mathfrak{p}}\subseteq I$. Hence every $n$-congruing ideal in $\hat{A}_{\mathfrak{p}}$ contains $H_n^{\mathfrak{p}}$.

\end{proof}

\begin{lemma}\label{valuation powers convergence lemma}
Let $A$ be a Dedekind domain, and let $n$ be a positive integer.
Let $k$ be a nonzero element of $\hat{A}_{\mathfrak{p}}$, and let 
$k(j)$ denote the truncation of $k$ in which we replace the $\mathfrak{p}^m$-coefficient in the $\mathfrak{p}$-adic expansion of $k$ by zero, for all
$m > j$.
Put another way: the ring $\hat{A}_{\mathfrak{p}}$ is defined as the limit $\hat{A}_{\mathfrak{p}} = \lim_{i\rightarrow \infty} A/\mathfrak{p}^i$,
so to specify an element $k$ in $\hat{A}_{\mathfrak{p}}$, we can specify a sequence of elements
\[ ( k_0, k_1, k_2, \dots : \forall i\ \ k_i\in A/\mathfrak{p}^i,\mbox{\ and\ \ } k_{i+1} \equiv k_i \mod \mathfrak{p}^i).\]
By $k(j)$ we mean the element of $\hat{A}_{\mathfrak{p}}$ given by the sequence
\[ ( k_0, k_1, k_2, \dots , k_{j-2}, k_{j-1}, k_j, k_j, k_j , \dots ).\]

Let $I$ be an ideal in $\hat{A}_{\mathfrak{p}}$.
Suppose that, for each positive integer $j$, there exists some $N\in\mathbb{N}$ such that
$k(j)^N(k(j)^n-1)\in I$. Let $N(j)$ be the smallest natural number $N$ such that
$k(j)^N(k(j)^n-1)\in I$.

Then the sequence of natural numbers
\begin{equation}\label{sequence 3} N(1), N(2), N(3), \dots \end{equation}
is eventually constant.
\end{lemma}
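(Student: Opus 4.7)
Since $A$ is a Dedekind domain and $\mathfrak{p}$ is maximal, the completion $\hat{A}_{\mathfrak{p}}$ is a complete discrete valuation ring, so every ideal of $\hat{A}_{\mathfrak{p}}$ is either the zero ideal or a power $\mathfrak{p}^m$ of the maximal ideal. The case $I = \hat{A}_{\mathfrak{p}}$ gives trivially $N(j) = 0$ for every $j$, and the case $I = 0$ forces (since $\hat{A}_{\mathfrak{p}}$ is a domain) $k(j)^n = 1$ for every $j$ large enough that $k(j) \neq 0$, so that $N(j) = 0$ eventually. I therefore reduce to the substantive case $I = \mathfrak{p}^m$ with $m \geq 1$.

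Fix such an $m$ and write $v = \nu_{\mathfrak{p}}(k)$, a finite nonnegative integer since $k \neq 0$. The key structural observation is that the truncation recipe in the statement gives $k(j) \equiv k \pmod{\mathfrak{p}^{j}}$, from which two facts will follow immediately: for $j > v$ one has $\nu_{\mathfrak{p}}(k(j)) = v$, and for $j \geq m$ one has $k(j)^n \equiv k^n \pmod{\mathfrak{p}^m}$ by a one-line binomial expansion (write $k(j) = k + \epsilon$ with $\epsilon \in \mathfrak{p}^{j}$ and expand). I would then split on whether $v \geq 1$ or $v = 0$. If $v \geq 1$ then $k(j)^n \in \mathfrak{p}^{nv}$, so $k(j)^n - 1$ is a unit, and hence $\nu_{\mathfrak{p}}\bigl(k(j)^N(k(j)^n - 1)\bigr) = Nv$; this is at least $m$ precisely when $N \geq \lceil m/v \rceil$, so $N(j) = \lceil m/v \rceil$ for every $j > v$, in particular eventually constant.

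The remaining subcase $v = 0$ is the only one requiring any subtlety. Here $k$ and each $k(j)$ (for $j \geq 1$) are units of $\hat{A}_{\mathfrak{p}}$, so the factor $k(j)^N$ is always a unit, and the condition $k(j)^N(k(j)^n - 1) \in \mathfrak{p}^m$ reduces, independently of $N$, to $k(j)^n \equiv 1 \pmod{\mathfrak{p}^m}$. For $j \geq m$, the congruence $k(j)^n \equiv k^n \pmod{\mathfrak{p}^m}$ from the previous paragraph then forces $k^n \equiv 1 \pmod{\mathfrak{p}^m}$ (since by hypothesis some $N(j)$ exists), and hence $N(j) = 0$ for every $j \geq m$. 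The main---indeed only---subtle point is this observation in the unit case: multiplying by the unit $k(j)^N$ cannot help one land in the proper ideal $I$, and this is exactly what pins $N(j)$ down to $0$ for large $j$ rather than allowing it to drift upward with $j$.
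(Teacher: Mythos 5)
Your proof is correct, and it arrives at the conclusion by a slightly different, more explicit route than the paper does. The paper's proof compresses everything into a single formula
\[
N(j) = \ceiling{\frac{\nu_{\mathfrak{p}}(I) - \nu_{\mathfrak{p}}(k(j)^n-1)}{\nu_{\mathfrak{p}}(k(j))}}
\]
(where the ceiling is interpreted as a ``natural number ceiling''), then argues that the two valuation sequences $\nu_{\mathfrak{p}}(k(j))$ and $\nu_{\mathfrak{p}}(k(j)^n-1)$ are eventually constant by continuity and convergence of $k(j)\to k$, and substitutes. Your proof dispenses with the formula and the limit language, and instead performs a case analysis: dispose of $I = \hat{A}_{\mathfrak{p}}$ and $I = 0$ trivially; take $I = \mathfrak{p}^m$ with $m\geq 1$; then split on whether $v = \nu_{\mathfrak{p}}(k)$ is positive or zero, computing $N(j)$ explicitly in each branch using only the congruence $k(j)\equiv k \pmod{\mathfrak{p}^j}$. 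Both arguments hinge on exactly the same two facts (ideal membership in a DVR is a valuation condition, and $k(j)$ agrees with $k$ to $\mathfrak{p}$-adic precision $j$), so the mathematical content is the same. What your organization buys is cleanliness in the edge cases: the paper's displayed formula silently divides by $\nu_{\mathfrak{p}}(k(j))$, which is zero precisely when $k$ is a unit, and that subcase is left implicit in the prose rather than in the formula; your $v=0$ branch confronts it head-on with the observation that multiplying by a unit $k(j)^N$ cannot affect membership in $\mathfrak{p}^m$, forcing $N(j)=0$ outright. You also get concrete values ($N(j)=\ceiling{m/v}$ for $j>v$ when $v\geq 1$; $N(j)=0$ when $v=0$) rather than just ``eventually constant.'' The paper's version is more compact and more uniform, at the cost of leaving those corner cases to the reader.
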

\begin{proof}
Since $\hat{A}_{\mathfrak{p}}$ is a discrete valuation ring, whether 
$k(j)^N(k(j)^n-1)$ is contained in $I$ is determined entirely by the $\mathfrak{p}$-adic valuations of
$k(j)^N(k(j)^n-1)$ and $I$, i.e., $N(j)$ is the least $N$ such that
\begin{align*}
 \nu_{\mathfrak{p}}( k(j)^N (k(j)^n-1)) &\geq \nu_{\mathfrak{p}}(I),
\end{align*}
where we write $\nu_{\mathfrak{p}}(I)$ for the least $\mathfrak{p}$-adic valuation of an element in $I$,
i.e.,
\begin{align*} \nu_{\mathfrak{p}}(I) &= \min\{ \nu_{\mathfrak{p}}(i) : i\in I\}.\end{align*}
Consequently, $N(j)$ is the least $N$ such that
\begin{align*}
 N\nu_{\mathfrak{p}}( k(j)) + \nu_{\mathfrak{p}}(k(j)^n-1) &\geq \nu_{\mathfrak{p}}(I),
\end{align*}
i.e., $N(j)$ is the natural number ceiling 
\begin{align}
\label{N(j) formula} N(j) &= \ceiling{\frac{\nu_{\mathfrak{p}}(I) - \nu_{\mathfrak{p}}(k(j)^n-1)}{\nu_{\mathfrak{p}}(k(j))}} .
\end{align}
(By the ``natural number ceiling'' of a real number $x$ I mean the least natural number which is greater than or equal to $x$.)
By basic properties of analysis in a complete discrete valuation ring,
$\lim_{j\rightarrow \infty} k(j) = k$,
so the sequence of natural numbers
\begin{equation}\label{sequence 1} (\nu_{\mathfrak{p}}(k(1)), \nu_{\mathfrak{p}}(k(2)), \nu_{\mathfrak{p}}(k(3)), \dots )\end{equation}
(which is indeed a sequence of natural numbers, never taking the value $\infty$, since we assumed that $k\neq 0$),
is eventually constant, converging to $\nu_{\mathfrak{p}}(k)$.

Continuity of addition and the $n$th power function on $\hat{A}_{\mathfrak{p}}$ also gives us that
\begin{align} \label{valuation limit equality 1} \lim_{j\rightarrow \infty} (k(j)^n - 1) &= \left( \lim_{j\rightarrow \infty} k(j)\right)^n - 1 \\
\nonumber &= k^n - 1.\end{align}
The sequence
\begin{equation}\label{sequence 2} (\nu_{\mathfrak{p}}(k(1)^n-1), \nu_{\mathfrak{p}}(k(2)^n-1), \nu_{\mathfrak{p}}(k(3)^n-1), \dots )\end{equation}
is a sequence of extended natural numbers, i.e., a sequence of elements of the set $\mathbb{N}\cup \{ \infty\}$,
and it is again (due to~\ref{valuation limit equality 1}) eventually constant, converging to
$\nu_{\mathfrak{p}}(k^n-1)$.

Consequently one of two things happens: 
\begin{itemize}
\item If $\nu_{\mathfrak{p}}(k^n-1) = \infty$ (i.e., if $k^n-1 = 0$),
then the sequence~\ref{sequence 2} is an eventually constant sequence of extended natural numbers converging to $\infty$,
and so $N(j) = 0$ for all sufficiently large $j$. Consequently sequence~\ref{sequence 3} is eventually constant (and converges to zero).
\item If $\nu_{\mathfrak{p}}(k^n-1) \neq \infty$ (i.e., if $k^n-1\neq 0$),
then the sequences~\ref{sequence 1} and~\ref{sequence 2} are both eventually constant, hence from equation~\ref{N(j) formula} we see that
the sequence~\ref{sequence 3} is eventually constant.
\end{itemize}
\end{proof}

\begin{definition}\label{def of minkowski}
We will say that a Dedekind domain $A$ is {\em Minkowski} if $A$ satisfies the following two conditions:
\begin{itemize}
\item All the residue fields of $A$ are finite, i.e., $A/\mathfrak{p}$ is finite for all maximal ideals $\mathfrak{p}$.
\item For each natural number $N$, there exist only finitely many maximal ideals $\mathfrak{p}$ of $A$ such that $N(\mathfrak{p}) < N$.
(Recall that the ``absolute norm'' $N(\mathfrak{p})$ of a maximal ideal $\mathfrak{p}$ is simply the cardinality of the residue field $A/\mathfrak{p}$.)
\end{itemize}
\end{definition}
For example, the ring of integers in any number field is Minkowski, and any localization of a Minkowski Dedekind domain is also Minkowski.

\begin{lemma}\label{finiteness of A/pr}
Let $A$ be a Minkowski Dedekind domain, and let $n$ be a positive integer.
For each maximal ideal $\mathfrak{p}$ in $A$, let $H_n^{\mathfrak{p}}$ denote the 
ideal in $\hat{A}_{\mathfrak{p}}$ generated by all elements of the form $x^n-1$ for $x\in \hat{A}_{\mathfrak{p}}^{\times}$.
Let 
\[ pr_n: A  \rightarrow \prod_{\mbox{maximal}\ \mathfrak{p}\subseteq A} \hat{A}_{\mathfrak{p}}/H_n^{\mathfrak{p}} \]
be the ring homomorphism whose $\mathfrak{p}$-component $A\rightarrow \hat{A}_{\mathfrak{p}}/H_n^{\mathfrak{p}}$ 
is the composite of the completion map $A\rightarrow \hat{A}_{\mathfrak{p}}$ with the modulo-$H_n^{\mathfrak{p}}$-reduction
map $\hat{A}_{\mathfrak{p}}\rightarrow \hat{A}_{\mathfrak{p}}/H_n^{\mathfrak{p}}$.

Then $pr_n$ is surjective, and the quotient ring $A/\ker pr_n$ is finite.
\end{lemma}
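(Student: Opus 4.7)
The plan is to collapse the product $\prod_{\mathfrak{p}} \hat{A}_{\mathfrak{p}}/H_n^{\mathfrak{p}}$ to a finite product of finite rings and then invoke the Chinese Remainder Theorem. The key observation is that the Minkowski hypothesis, combined with a Teichm\"uller-decomposition analysis of $H_n^{\mathfrak{p}}$, forces almost all factors to vanish.

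First I would compute $H_n^{\mathfrak{p}}$. Since $A/\mathfrak{p}=\mathbb{F}_q$ is finite, Hensel's Lemma applied to $X^{q-1}-1$ gives a Teichm\"uller splitting $\hat{A}_{\mathfrak{p}}^{\times}\cong \mu_{q-1}\times(1+\mathfrak{p})$. Writing $x=\zeta u$ with $\zeta\in\mu_{q-1}$ and $u\equiv 1\pmod{\mathfrak{p}}$, we have $x^n-1=\zeta^n u^n-1$. If $q-1\nmid n$, then choosing $\zeta$ a primitive $(q-1)$-th root of unity makes $\eta\coloneqq\zeta^n$ a root of unity of order $(q-1)/\gcd(q-1,n)>1$; its reduction modulo $\mathfrak{p}$ is a nontrivial element of $\mathbb{F}_q^{\times}$, so $\eta-1\in H_n^{\mathfrak{p}}$ is a unit of $\hat{A}_{\mathfrak{p}}$, forcing $H_n^{\mathfrak{p}}=\hat{A}_{\mathfrak{p}}$ and $\hat{A}_{\mathfrak{p}}/H_n^{\mathfrak{p}}=0$. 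Conversely, when $q-1\mid n$ every $\zeta\in\mu_{q-1}$ satisfies $\zeta^n=1$, so $H_n^{\mathfrak{p}}$ reduces to the ideal generated by $u^n-1$ for $u\in 1+\mathfrak{p}$; taking $u=1+\pi$ shows $(1+\pi)^n-1\neq 0$, so $H_n^{\mathfrak{p}}$ is a nonzero ideal in the discrete valuation ring $\hat{A}_{\mathfrak{p}}$ and hence equals $\mathfrak{p}^{k_{\mathfrak{p}}}$ for some positive integer $k_{\mathfrak{p}}$.

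Next I would use the Minkowski hypothesis to show that only finitely many $\mathfrak{p}$ can possibly contribute. The condition $q-1\mid n$ forces $N(\mathfrak{p})=q\leq n+1$, and by the second clause of Definition~\ref{def of minkowski} there are only finitely many such $\mathfrak{p}$. Call this finite set $S$. For each $\mathfrak{p}\in S$ we have $\hat{A}_{\mathfrak{p}}/H_n^{\mathfrak{p}}\cong \hat{A}_{\mathfrak{p}}/\mathfrak{p}^{k_{\mathfrak{p}}}\cong A/\mathfrak{p}^{k_{\mathfrak{p}}}$ (the last isomorphism is the standard comparison between $A$ and its $\mathfrak{p}$-adic completion modulo a power of the maximal ideal), and this quotient is finite since $\mathbb{F}_q$ is. Hence the target of $pr_n$ is naturally identified with the finite ring $\prod_{\mathfrak{p}\in S} A/\mathfrak{p}^{k_{\mathfrak{p}}}$, and under this identification $pr_n$ is the classical reduction map.

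Finally, surjectivity of $pr_n$ follows from the Chinese Remainder Theorem in the Dedekind domain $A$: for any finite collection of distinct maximal ideals $\mathfrak{p}\in S$ and exponents $k_{\mathfrak{p}}$, the canonical homomorphism $A\to\prod_{\mathfrak{p}\in S} A/\mathfrak{p}^{k_{\mathfrak{p}}}$ is surjective because the ideals $\mathfrak{p}^{k_{\mathfrak{p}}}$ are pairwise comaximal. Since the target is finite and $pr_n$ surjects onto it, $A/\ker pr_n$ is finite. I expect the only real content to be Step~2, the Teichm\"uller computation identifying exactly which $\mathfrak{p}$ give a trivial factor; everything else is standard Dedekind-domain bookkeeping.
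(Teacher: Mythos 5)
Your proof follows essentially the same route as the paper's: identify that $\hat{A}_{\mathfrak{p}}/H_n^{\mathfrak{p}}=0$ whenever $N(\mathfrak{p})-1\nmid n$, invoke the Minkowski hypothesis to get finiteness of the set of contributing primes, note $H_n^{\mathfrak{p}}\neq 0$, and finish with the Chinese Remainder Theorem. The Teichm\"uller splitting you invoke is a more structured way of producing the unit $x$ with $x^n-1\in\hat{A}_{\mathfrak{p}}^{\times}$; the paper just observes directly that $(A/\mathfrak{p})^{\times}$ has order $N(\mathfrak{p})-1$, so an element $x$ of maximal multiplicative order (lifted by Hensel to $\hat{A}_{\mathfrak{p}}^{\times}$) has $x^n\not\equiv 1\pmod{\mathfrak{p}}$ whenever $N(\mathfrak{p})-1\nmid n$. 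Both routes are fine; yours also computes $H_n^{\mathfrak{p}}$ exactly as a power of $\mathfrak{p}$, which is more than the lemma needs.

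One small correction: your witness for $H_n^{\mathfrak{p}}\neq 0$, namely ``taking $u=1+\pi$ shows $(1+\pi)^n-1\neq 0$,'' is not literally always true. If $\hat{A}_{\mathfrak{p}}$ has mixed characteristic and contains $\zeta_p$, it is possible that $\zeta_p-1$ is a uniformizer (for instance $A=\mathbb{Z}[\zeta_p]$ with $\mathfrak{p}$ the prime over $p$), and then with $\pi=\zeta_p-1$ and $p\mid n$ one gets $(1+\pi)^n-1=\zeta_p^n-1=0$. The conclusion $H_n^{\mathfrak{p}}\neq 0$ is still correct, because the roots of unity in $1+\mathfrak{p}$ form a finite group (the group of $p$-power roots of unity in the local field $\Frac(\hat{A}_{\mathfrak{p}})$ is finite in mixed characteristic, and $1+\mathfrak{p}$ is torsion-free in equal characteristic $p$), while $1+\mathfrak{p}$ itself is infinite, so there is always some $u\in 1+\mathfrak{p}$ with $u^n\neq 1$. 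Better still, you don't need to exhibit any $u$ at all: the definition of $H_n^{\mathfrak{p}}$ already includes the generators $\zeta^n u^n-1$ for $\zeta\in\mu_{q-1}$, and in the case $q-1\mid n$ it suffices that the finitely many roots of unity cannot exhaust the infinite set $\hat{A}_{\mathfrak{p}}^{\times}$.
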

\begin{proof}
I claim that, for all but finitely many maximal ideals $\mathfrak{p}$ in $A$, the quotient $\hat{A}_p/H_n^{\mathfrak{p}}$ is the zero ring.
(Of course, which of the maximal ideals $\mathfrak{p}$ have nonzero quotient $\hat{A}_p/H_n^{\mathfrak{p}}$ depends on the choice of $n$; but
for any given choice of $n$, there are only finitely many such maximal ideals $\mathfrak{p}$.)
Suppose $\mathfrak{p}$ is a maximal ideal in $A$ such that $n$ is not divisible by $N(\mathfrak{p})-1 = \#\left( (A/\mathfrak{p})^{\times}\right)$.
Then there exists an element $x$ in $\hat{A}_p^{\times}$ such that $x^n$ is not congruent to $1$ modulo $\mathfrak{p}$,
hence $x^n-1$ is a unit in $\hat{A}_p$, hence the ideal $H_n^{\mathfrak{p}}\subseteq \hat{A}_p$ contains a unit.
Consequently $\hat{A}_p/H_n^{\mathfrak{p}}$ is only nonzero when $n$ is divisible by $\#(A/\mathfrak{p})-1$, which only happens for
finitely many $\mathfrak{p}$, since $A$ is Minkowski.

Clearly the ideal $H_n^{\mathfrak{p}}$ is also never the zero ideal, hence $A/H_n^{\mathfrak{p}}$ is always finite. 
Now the Chinese Remainder Theorem implies surjectivity of the map $pr_n$,
and hence $\prod_{\mbox{maximal}\ \mathfrak{p}\subseteq A} \hat{A}_{\mathfrak{p}}/H_n^{\mathfrak{p}} \cong A/\ker pr_n$ is finite.
\end{proof}

\begin{theorem}{\bf (Hasse principle for $n$-congruing ideals.)}\label{hasse principle}
Let $A$ be a Minkowski Dedekind domain, and let $n$ be a positive integer.
Let $pr_n$ be as in Lemma~\ref{finiteness of A/pr}.

Then the following conditions on an ideal $I$ of $A$ are equivalent:
\begin{enumerate}
\item \label{condition 1} $I$ is $n$-congruing.
\item \label{condition 4} For all maximal ideals $\mathfrak{p}$ of $A$, the ideal $\hat{I}_{\mathfrak{p}}$ of $\hat{A}_{\mathfrak{p}}$ is $n$-congruing.
\item \label{condition 2} $\ker pr_n \subseteq I$.
\end{enumerate}

Consequently, there exists a unique minimal $n$-congruing ideal of $A$, namely, $\ker pr_n$.
\end{theorem}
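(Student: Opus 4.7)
The plan is to translate all three conditions into inequalities of $\mathfrak{p}$-adic valuations, exploiting that $A$ is Dedekind. For an ideal $J$ of $A$ let $\nu_\mathfrak{p}(J)$ denote its $\mathfrak{p}$-adic valuation, so that $\hat{J}_\mathfrak{p} = \mathfrak{p}^{\nu_\mathfrak{p}(J)}\hat{A}_\mathfrak{p}$, and let $u_{\mathfrak{p},n} = \nu_\mathfrak{p}(H_n^\mathfrak{p})$ as in Lemma~\ref{valuation inequality lemma}. Lemma~\ref{finiteness of A/pr} shows that $u_{\mathfrak{p},n} = 0$ for all but finitely many $\mathfrak{p}$, and an element $a \in A$ lies in $\ker pr_n$ precisely when $\nu_\mathfrak{p}(a) \geq u_{\mathfrak{p},n}$ for every $\mathfrak{p}$, so $\ker pr_n = \prod_\mathfrak{p} \mathfrak{p}^{u_{\mathfrak{p},n}}$. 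Consequently condition~\ref{condition 2} reads $\nu_\mathfrak{p}(I) \leq u_{\mathfrak{p},n}$ for all $\mathfrak{p}$, and by Lemma~\ref{adic congruing ideals} so does condition~\ref{condition 4}, giving (\ref{condition 4})$\Leftrightarrow$(\ref{condition 2}) immediately.

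For (\ref{condition 1})$\Rightarrow$(\ref{condition 4}), I would fix $\mathfrak{p}$ and invoke Lemma~\ref{valuation inequality lemma} to choose $k \in A \setminus \mathfrak{p}$ with $\nu_\mathfrak{p}(k^n - 1) = u_{\mathfrak{p},n}$; since $I$ is $n$-congruing there is $N$ with $k^N(k^n-1) \in I$, and since $k$ is a $\mathfrak{p}$-unit this element has $\nu_\mathfrak{p} = u_{\mathfrak{p},n}$, forcing $\nu_\mathfrak{p}(I) \leq u_{\mathfrak{p},n}$. For the converse (\ref{condition 4})$\Rightarrow$(\ref{condition 1}), given $a \in A$, note that $\nu_\mathfrak{p}(I) > 0$ holds for only finitely many $\mathfrak{p}$; at each such prime, either $a \notin \mathfrak{p}$, in which case Lemma~\ref{valuation inequality lemma} already gives $\nu_\mathfrak{p}(a^n - 1) \geq u_{\mathfrak{p},n} \geq \nu_\mathfrak{p}(I)$ for free, or $a \in \mathfrak{p}$, in which case $\nu_\mathfrak{p}(a^N) \geq N$ exceeds $\nu_\mathfrak{p}(I)$ once $N$ is large enough. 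Taking $N$ to be the maximum such bound over the finitely many relevant primes yields a single $N$ with $\nu_\mathfrak{p}(a^N(a^n-1)) \geq \nu_\mathfrak{p}(I)$ at every $\mathfrak{p}$, hence $a^N(a^n-1) \in I$.

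The concluding assertion is then a formal consequence: $\ker pr_n$ trivially satisfies condition~\ref{condition 2} taken with $I = \ker pr_n$, hence is $n$-congruing by (\ref{condition 2})$\Rightarrow$(\ref{condition 1}), and every $n$-congruing ideal contains $\ker pr_n$ by (\ref{condition 1})$\Rightarrow$(\ref{condition 2}), making $\ker pr_n$ the unique minimal $n$-congruing ideal. The one technical input worth singling out is the second half of Lemma~\ref{valuation inequality lemma}, which realizes $u_{\mathfrak{p},n}$ as $\nu_\mathfrak{p}(k^n-1)$ for an \emph{honest} $k \in A$ rather than merely some $k \in \hat{A}_\mathfrak{p}$; without this the $n$-congruing hypothesis on $I$, which only quantifies over $a \in A$, would not detect the size of $\hat{I}_\mathfrak{p}$ locally. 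That is the main (and rather mild) obstacle; everything else is bookkeeping with valuations.
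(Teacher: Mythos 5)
Your proposal is correct, and it takes a genuinely different (and somewhat more elementary) route than the paper's own proof, while relying on the same foundational lemmas. The paper proves (\ref{condition 1})$\Rightarrow$(\ref{condition 4}) by a truncation-and-limit argument (Lemma~\ref{valuation powers convergence lemma}): given $k \in \hat{A}_\mathfrak{p}$, it approximates $k$ by elements $k(j)$ coming from $A$, uses the $n$-congruing hypothesis on each, and passes to the limit using closedness of $\hat{I}_\mathfrak{p}$. You sidestep all of that by observing that the second half of Lemma~\ref{valuation inequality lemma} already produces an honest $k \in A \setminus \mathfrak{p}$ realizing $\nu_\mathfrak{p}(k^n-1) = u_{\mathfrak{p},n}$, so a single application of the $n$-congruing hypothesis to this $k$ forces $\nu_\mathfrak{p}(I) \le u_{\mathfrak{p},n}$ directly — no limits, no Lemma~\ref{valuation powers convergence lemma}. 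Interestingly, the paper proves Lemma~\ref{valuation inequality lemma} but never invokes it in the proof of the Hasse principle; your proof puts it to direct use. In the other direction, the paper proves (\ref{condition 2})$\Rightarrow$(\ref{condition 1}) for non-units $a$ by a finiteness-of-$A/\ker pr_n$ argument (every element of $A/I$ has ``finite order''), which is a bit terse as written; your version replaces this with an explicit uniform valuation bound: since $I \ne 0$ (forced by any of the three conditions), $\nu_\mathfrak{p}(I) > 0$ only at finitely many $\mathfrak{p}$, and at each such $\mathfrak{p}$ one either has $\nu_\mathfrak{p}(a^n-1) \ge u_{\mathfrak{p},n} \ge \nu_\mathfrak{p}(I)$ (if $a \notin \mathfrak{p}$) or can crank up $\nu_\mathfrak{p}(a^N)$ (if $a \in \mathfrak{p}$), and the max of the finitely many required exponents gives a single $N$. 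This is cleaner and sharper than the paper's argument, which leaves the passage from ``finite order in $A/I$'' to ``$a^N(a^n-1) \in I$'' somewhat implicit. The organizing idea — reducing all three conditions to the single inequality $\nu_\mathfrak{p}(I) \le u_{\mathfrak{p},n}$ at every $\mathfrak{p}$ — is also a nice conceptual gain over the paper's implication-by-implication bookkeeping. One small technicality you handle only implicitly: your appeal to ``$\nu_\mathfrak{p}(I) > 0$ for only finitely many $\mathfrak{p}$'' requires $I \ne 0$, which is fine because the zero ideal fails all three conditions (it is not $n$-congruing in $A$ since $A$ is a domain, and $\ker pr_n \ne 0$ by Lemma~\ref{finiteness of A/pr}), but it would be worth saying so explicitly.
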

\begin{proof}
\begin{itemize}
\item That condition~\ref{condition 4} is equivalent to condition~\ref{condition 2} is Lemma~\ref{adic congruing ideals}.
\item That condition~\ref{condition 2} implies condition~\ref{condition 1} is as follows: suppose that $I$ is an ideal of $A$, and
suppose that $\ker pr_n\subseteq I$. For each maximal ideal $\mathfrak{p}$ of $A$, we write $c_{\mathfrak{p}}$ for the completion map
$c_{\mathfrak{p}}: A \rightarrow \hat{A}_{\mathfrak{p}}$.
Then $I$ contains every element $a$ of $A$ with the property that, for all maximal ideals $\mathfrak{p}$ of $A$,
$c_{\mathfrak{p}}(a)$ is divisible by $x^n-1$ for some unit $x\in \hat{A}_{\mathfrak{p}}^{\times}$.

Consequently, if $a\in A^{\times}$, then for each maximal ideal $\mathfrak{p}$ of $A$, 
$c_{\mathfrak{p}}(a)^n\in \hat{A}_{\mathfrak{p}}^{\times}$ and hence $c_{\mathfrak{p}}(a)^n - 1$ is divisible by $x^n-1$ for some unit $x\in \hat{A}_{\mathfrak{p}}^{\times}$
(namely, $x = c_{\mathfrak{p}}(a)$). This is the case for all maximal ideals $\mathfrak{p}$ of $A$, so $a^n-1\in I$.

On the other hand, if $a\notin A^{\times}$, then either $a^N(a^n-1)\in I$ for some $N\in\mathbb{N}$, or the quotient ring
$A/I$ has an element $a$ of infinite order. But this second case is impossible, 
since $I\supseteq \ker pr_n$ and so $A/I$ is a quotient of $A/\ker pr_n$, which is finite 
by Lemma~\ref{finiteness of A/pr}.
So, if $a\notin A^{\times}$, then $a^N(a^n-1)\in I$.

Consequently, for all $a\in A$, $a^N(a^n-1)\in I$ for some $N\in\mathbb{N}$.
Hence $I$ is $n$-congruing.
\item That condition~\ref{condition 1} implies condition~\ref{condition 4} is as follows:
suppose that $I$ is an $n$-congruing ideal in $A$.
Choose a maximal ideal $\mathfrak{p}$ of $A$. We continue to write $c_{\mathfrak{p}}$ for the completion map
$c_{\mathfrak{p}}: A \rightarrow \hat{A}_{\mathfrak{p}}$.
Let $k\in \hat{A}_{\mathfrak{p}}$ be nonzero. Then we apply Lemma~\ref{valuation powers convergence lemma} to $k$:
each $k(j)\in \hat{A}_{\mathfrak{p}}$ for $j\in \mathbb{N}$ is in the image of the completion map $c_{\mathfrak{p}}$, 
consequently for each $j$ there exists some $N\in\mathbb{N}$ such that $k(j)^N(k(j)^n-1)\in I$, 
since $I$ is $n$-congruing. So the assumptions made in the statement of Lemma~\ref{valuation powers convergence lemma} are satisfied in our case.

Now Lemma~\ref{valuation powers convergence lemma} tells us that,
if we write $N(j)$ for the least natural number $N$ such that 
$k(j)^N(k(j)^n-1)\in I$, then the sequence $(N(1), N(2), N(3), \dots)$ is eventually constant.
So we can choose some natural number $j$ such that $N(j) = N(j+c)$ for all $c\geq 0$.
Then $k(j+c)^{N(j)}(k(j+c)^n-1)\in \hat{I}_{\mathfrak{p}}$ for all $c\geq 0$.
Taking the limit over $c$, we have that
\begin{align*} \lim_{c\rightarrow \infty} \left( k(j+c)^{N(j)}(k(j+c)^n-1)\right)  
 &= \left(\lim_{c\rightarrow \infty} k(j+c)\right)^{N(j)}\left((\lim_{c\rightarrow\infty}k(j+c))^n-1\right) \\
 &= k^{N(j)}(k^n-1) \in \hat{I}_{\mathfrak{p}},\end{align*}
with the containment in $\hat{I}_{\mathfrak{p}}$ because $\hat{I}_{\mathfrak{p}}$ is closed (again, a standard fact about nonzero ideals in a complete discrete valuation ring with finite residue field, or more generally, finite-index subgroups of a profinite group: they are both open and closed) 
and hence contains all limits of sequences contained in itself.

Consequently, for any $k\in \hat{A}_{\mathfrak{p}}$, there exists some integer $N$ (namely, the $N(j)$ produced above) such that
$k^{N(j)}(k^n-1) \in \hat{I}_{\mathfrak{p}}\subseteq \hat{A}_{\mathfrak{p}}$. So $\hat{I}_{\mathfrak{p}}$ is an $n$-congruing ideal in $\hat{A}_{\mathfrak{p}}$.
\end{itemize}
\end{proof}

Consequently, ``universal'' (in the sense that the $n$-congruing ideal involved in the conjecture is the ``universal'' one, i.e., the minimal one) precise versions of Ravenel's Global Conjecture, Conjecture~\ref{global conj}, 
are Conjectures~\ref{weaker rigorous global conj} and~\ref{rigorous global conj}. I prove many cases of Conjecture~\ref{rigorous global conj} in Corollary~\ref{strong form of global conj holds}, and 
I prove completely (handling all cases) the weaker Conjecture~\ref{weaker rigorous global conj}, in Corollary~\ref{weak form of global conj holds}.

\subsection{Relation to Adams's $h(f,t)$.}

In~\cite{MR0198468}, Adams makes the following definition:
\begin{definition}
Let $f: \mathbb{Z}\rightarrow\mathbb{N}$ be a function. 
Then we define $h(f,t)$ to be the greatest common divisor
of the set of integers
\[ \{ k^{f(k)}(k^n-1): k\in\mathbb{Z}\}.\]
\end{definition}

Adams then uses his numbers $h(f,n)$ to prove essentially the $K=\mathbb{Q}$ case of our Theorem~\ref{global computation}, that is,
the original computation which motivated Ravenel to make the Global Conjecture
(and indeed, the only case of the Global Conjecture which was known to hold,
before the results of the present paper were obtained). 

In our setting, where $K$ is a finite extension of $\mathbb{Q}$, it is somewhat problematic to try to use Adams's numbers $h(f,n)$, because really what we are after is the ideal that they generate, and when the class number of the ring of integers of $K$ is greater than one, the ideal one really wants to work with may not be principal.
So it is reasonable to instead make the definition of the ideals $H(f,t)$ in the following proposition:
\begin{prop}\label{valuation inequality lemma 2}
Let $K/\mathbb{Q}$ be a finite extension with ring of integers $A$. 
Let $f: A \rightarrow \mathbb{N}$ be a function.
If $t\in \mathbb{N}$, we let $H(f,t)$ denote the ideal of $A$ which is the sum,
over all $k\in A$, of the principal ideals $(k^{f(k)}(k^t-1))$, i.e.,
\begin{align*} H(f,t) 
 &= \sum_{k\in A} \left( k^{f(k)}(k^t-1)\right) \end{align*}
Let 
\[ pr_t: A \rightarrow \prod_{\mbox{maximal}\ \mathfrak{p}\subseteq A} \hat{A}_{\mathfrak{p}}/H_t^{\mathfrak{p}} \]
be the ring homomorphism whose $\mathfrak{p}$-component $A\rightarrow \hat{A}_{\mathfrak{p}}/H_t^{\mathfrak{p}}$ 
is the composite of the completion map $A\rightarrow \hat{A}_{\mathfrak{p}}$ with the modulo-$H_t^{\mathfrak{p}}$-reduction
map $\hat{A}_{\mathfrak{p}}\rightarrow \hat{A}_{\mathfrak{p}}/H_t^{\mathfrak{p}}$.

Then $\ker pr_t \subseteq H(f,t)$.
\end{prop}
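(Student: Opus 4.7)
The plan is to deduce this immediately from the Hasse principle established in Theorem~\ref{hasse principle}. Recall that the Hasse principle, applied to the Minkowski Dedekind domain $A$ (the ring of integers of a finite extension of $\mathbb{Q}$ is Minkowski) and the integer $t$, identifies $\ker pr_t$ as the unique minimal $t$-congruing ideal of $A$. Consequently, to show that $\ker pr_t \subseteq H(f,t)$, it suffices to show that the ideal $H(f,t)$ is $t$-congruing in the sense of Definition~\ref{def of n-congruing ideal}.

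First I would unwind what needs to be checked: an ideal $I \subseteq A$ is $t$-congruing if for every $a \in A$ there exists some $N \in \mathbb{N}$ with $a^N(a^t-1) \in I$. For $I = H(f,t)$, the required $N$ is simply $f(a)$: by the very definition of $H(f,t)$ as the sum over all $k \in A$ of the principal ideals $(k^{f(k)}(k^t-1))$, we have $a^{f(a)}(a^t-1) \in H(f,t)$ for every $a \in A$. So $H(f,t)$ is $t$-congruing essentially by inspection.

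Then I would invoke Theorem~\ref{hasse principle} to conclude. Since $\ker pr_t$ is the minimal $t$-congruing ideal and $H(f,t)$ is $t$-congruing, we obtain $\ker pr_t \subseteq H(f,t)$, as required.

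There is no real obstacle here --- the result is a direct corollary of the Hasse principle together with the tautological observation that the generators of $H(f,t)$ are precisely witnesses to its being $t$-congruing. The only subtlety worth flagging is that one uses the hypothesis that $A$ is the ring of integers of a number field in order to guarantee that $A$ is Minkowski (Definition~\ref{def of minkowski}), which is needed to apply Theorem~\ref{hasse principle}; but this is built into the hypotheses of the proposition.
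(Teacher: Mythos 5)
Your proof is correct, and it takes a genuinely different route from the paper's. The paper's own proof argues directly at the level of $\mathfrak{p}$-adic valuations: for each maximal ideal $\mathfrak{p}$, it invokes Lemma~\ref{valuation inequality lemma} to produce an element $k\notin\mathfrak{p}$ with $\nu_{\mathfrak{p}}\bigl(k^{f(k)}(k^t-1)\bigr)=\nu_{\mathfrak{p}}(k^t-1)=u_{\mathfrak{p},t}$, so that $\nu_{\mathfrak{p}}\bigl(H(f,t)\bigr)\leq\nu_{\mathfrak{p}}\bigl(H_t^{\mathfrak{p}}\bigr)$ for every $\mathfrak{p}$, which gives $\ker pr_t\subseteq H(f,t)$. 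The paper then uses this result, in Corollary~\ref{adams counting condition implies n-congruing}, together with the Hasse principle to conclude that $H(f,n)$ is $n$-congruing. You reverse the flow of logic: you observe (tautologically) that $H(f,t)$ is $t$-congruing, since $a^{f(a)}(a^t-1)\in H(f,t)$ by construction, and then deduce $\ker pr_t\subseteq H(f,t)$ from Theorem~\ref{hasse principle}, which identifies $\ker pr_t$ as the minimal $t$-congruing ideal. Your chain of dependencies is valid --- Theorem~\ref{hasse principle} is proved before this proposition and does not rely on it --- and your observation that $A$ is Minkowski is exactly what makes the theorem applicable. What the paper's route buys is independence from the full Hasse principle (Proposition~\ref{valuation inequality lemma 2} depends only on the comparatively elementary Lemma~\ref{valuation inequality lemma}), whereas your route imports the heavier machinery of Theorem~\ref{hasse principle} up front; on the other hand, your argument is shorter, conceptually cleaner, and makes Corollary~\ref{adams counting condition implies n-congruing} immediate (indeed it becomes the first step of your proof rather than a downstream corollary).
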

\begin{proof}
Choose a function $f: A \rightarrow\mathbb{N}$, and let $\mathfrak{p}$ be a maximal ideal in $A$.
Let $w_{\mathfrak{p}}$ be the $\mathfrak{p}$-adic valuation of the ideal $H(f,t)$, i.e., 
$w_{\mathfrak{p}}$ is the greatest integer $w$ such that $H(f,t)\subseteq \mathfrak{p}^w$.
By Lemma~\ref{valuation inequality lemma}, there exists some $k$ not in $\mathfrak{p}$ such that
\[
\nu_{\mathfrak{p}}(k^{f(t)}(k^t-1)) = \nu_{\mathfrak{p}}(k^t-1) = u_{\mathfrak{p},t},\] and all other choices of $k_0$ not in $\mathfrak{p}$
yield 
\[ \nu_{\mathfrak{p}}(k_0^{f(t)}(k_0^t-1))= \nu_{\mathfrak{p}}(k_0^t-1) \geq u_{\mathfrak{p},t}.\] 
The valuation behavior of elements $k_0\in \mathfrak{p}$
is irrelevant now: since there exists some $k$ such that
$\nu_{\mathfrak{p}}(k^{f(t)}(k^t-1))  = u_{\mathfrak{p},t}$,
we have
\begin{equation}\label{valuation inequality 3} \nu_{\mathfrak{p}}(H(f,t)) = w_{\mathfrak{p},t}\leq u_{\mathfrak{p},t} = \nu_{\mathfrak{p}}(H_t^{\mathfrak{p}}),\end{equation} which is what we want.

Since we have inequality~\ref{valuation inequality 3} for all maximal primes $\mathfrak{p}$ in $A$, we then have
that $\ker pr_t\subseteq H(f,t)$.
\end{proof}

The ideals $H(f,n)$ still are not central to the ideas in the present paper, but I include Proposition~\ref{valuation inequality lemma 2} and the following
Corollary~\ref{adams counting condition implies n-congruing} to make more clear what the relationship is between our $n$-congruing ideals and 
the numbers $h(f,n)$ defined by Adams.

\begin{corollary}\label{adams counting condition implies n-congruing}
Let $K/\mathbb{Q}$ be a finite extension with ring of integers $A$, and let $n$ be a positive integer.
For every function $f: A \rightarrow\mathbb{N}$, the ideal $H(f,n)$ satisfies the equivalent conditions of Theorem~\ref{hasse principle}.
In particular, $H(f,n)$ is $n$-congruing.
\end{corollary}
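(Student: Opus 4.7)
The plan is to observe that this corollary is essentially a direct consequence of Proposition~\ref{valuation inequality lemma 2} combined with Theorem~\ref{hasse principle}, so there is very little left to do; the substantive work has already been carried out in those two earlier results.

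First I would recall that Proposition~\ref{valuation inequality lemma 2} establishes exactly the containment $\ker pr_n \subseteq H(f,n)$ for any function $f: A\rightarrow \mathbb{N}$. This is precisely condition~\ref{condition 2} in the statement of the Hasse principle, Theorem~\ref{hasse principle}, with the ideal $I$ taken to be $H(f,n)$.

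Then I would simply invoke Theorem~\ref{hasse principle}, which tells us that condition~\ref{condition 2} is equivalent to condition~\ref{condition 1}, namely that $H(f,n)$ is $n$-congruing, as well as to condition~\ref{condition 4}, that $\widehat{H(f,n)}_{\mathfrak{p}}$ is $n$-congruing in $\hat{A}_{\mathfrak{p}}$ for every maximal ideal $\mathfrak{p}$. Hence $H(f,n)$ satisfies all three equivalent conditions of Theorem~\ref{hasse principle}, and in particular it is $n$-congruing, as claimed.

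There is no real obstacle here: the corollary is designed to be a bridge between the classical ideals built from Adams's gcd construction and the notion of $n$-congruing ideals introduced in the present paper, and the bridge is provided by the Hasse principle. If I wanted to make the argument fully self-contained without citing Theorem~\ref{hasse principle}, I would need to redo the approximation argument from Lemma~\ref{valuation powers convergence lemma} and the finiteness input from Lemma~\ref{finiteness of A/pr}, but since those results are already in hand, the cleanest presentation is the two-sentence one above.
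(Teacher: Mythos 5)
Your proposal is correct and is exactly the paper's own argument: cite Proposition~\ref{valuation inequality lemma 2} to obtain $\ker pr_n \subseteq H(f,n)$, which is condition~\ref{condition 2} of Theorem~\ref{hasse principle}, and then invoke the equivalence of conditions there. No gaps.
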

\begin{proof}
By Proposition~\ref{valuation inequality lemma 2}, $H(f,n)$ satisfies condition~\ref{condition 2} of Theorem~\ref{hasse principle}.
\end{proof}

I do not know if the converse to Corollary~\ref{adams counting condition implies n-congruing} holds. That is,
I do not know if every $n$-congruing ideal in $A$ is of the form $H(f,n)$ for some function $f: A \rightarrow\mathbb{N}$.
It seems likely that this is so (Adams proves it for $A = \mathbb{Z}$, in~\cite{MR0198468}), 
but it is not necessary for the arguments in the present paper, so I do not pursue the question here.

\section{Proof of Ravenel's Global Conjecture.}

In order to prove the Global Conjecture, I need to invoke 
A. Pearlman's proof of Ravenel's Local-Global Conjecture (originally conjectured
in~\cite{MR745362}, but note that the statement below is slightly different from Ravenel's statement in~\cite{MR745362}, which seems to have a typographical error in the coefficients), from Pearlman's unpublished 
thesis~\cite{pearlmanthesis}:
\begin{theorem}{\bf (Ravenel's Local-Global conjecture.)}\label{local-to-global conj}
Suppose $K/\mathbb{Q}$ is a finite Galois extension, and let $A$ be the ring of
integers of $K$.
Then, for any $L^AB$-comodule $M$ and any choice of prime $\mathfrak{p}$ in $A$, we have an isomorphism of graded $A_{\mathfrak{p}}$-modules:
\[ A_{\mathfrak{p}} \otimes_{A} \Ext^{*}_{(L^A,L^AB)}(L^A, M) 
 \cong \Ext^*_{(V^{A_{\mathfrak{p}}},V^{A_{\mathfrak{p}}}T)}(V^{A_{\mathfrak{p}}}, V^{A_{\mathfrak{p}}} \otimes_{L^A} M).\]
\end{theorem}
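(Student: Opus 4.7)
The plan is to follow the outline implicit in Pearlman's thesis, which proceeds in three steps: localize, typify, and apply change-of-rings.

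First, I would exploit the flatness of $A_\mathfrak{p}$ over $A$ to move the localization past the relative $\Ext$-groups. Since $L^A$ is free as an $A$-module, the two-sided cobar complex of $(L^A, L^AB)$ with left coefficients $L^A$ and right coefficients $M$ computes $\Ext^*_{(L^A,L^AB)}(L^A, M)$, and because $A_\mathfrak{p}$ is flat over $A$, tensoring the cobar complex levelwise with $A_\mathfrak{p}$ preserves cohomology. The universal properties classifying formal $A$-modules and their strict isomorphisms give canonical identifications $A_\mathfrak{p}\otimes_A L^A \cong L^{A_\mathfrak{p}}$ and $A_\mathfrak{p}\otimes_A L^AB \cong L^{A_\mathfrak{p}}B$ (a formal $A_\mathfrak{p}$-module structure on an $A_\mathfrak{p}$-algebra $R$ is the same data as a formal $A$-module structure on $R$ regarded as an $A$-algebra), so this first step yields
\[ A_\mathfrak{p}\otimes_A \Ext^*_{(L^A,L^AB)}(L^A,M) \;\cong\; \Ext^*_{(L^{A_\mathfrak{p}},L^{A_\mathfrak{p}}B)}\bigl(L^{A_\mathfrak{p}},\, L^{A_\mathfrak{p}}\otimes_{L^A} M\bigr). \]

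Second, I would invoke the $A_\mathfrak{p}$-module analogue of Cartier's typification: every formal $A_\mathfrak{p}$-module over an $A_\mathfrak{p}$-algebra is canonically and functorially strictly isomorphic to an $A_\mathfrak{p}$-typical one, via the idempotent projector on the universal curve group cut out by the $A_\mathfrak{p}$-typical curves. This is carried out in chapter 21 of Hazewinkel's book. In Hopf-algebroid language, the construction provides mutually inverse morphisms (up to canonical natural $2$-cells) between the groupoid schemes $(\Spec L^{A_\mathfrak{p}}, \Spec L^{A_\mathfrak{p}}B)$ and $(\Spec V^{A_\mathfrak{p}}, \Spec V^{A_\mathfrak{p}}T)$, equivalently, a weak equivalence of Hopf algebroids presenting the same stack $\mathcal{M}_{fmA_\mathfrak{p}}$.

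Third, I would apply the standard Hopf-algebroid change-of-rings theorem for weakly equivalent Hopf algebroids (Theorem A1.1.6 of Ravenel's book, or equivalently the fact that both Hopf algebroids have the same associated stack and hence the same categories of quasi-coherent sheaves, as in Conventions~\ref{running conventions}) to transport Ext across the equivalence. The comodule $L^{A_\mathfrak{p}}\otimes_{L^A} M$ corresponds under the equivalence to $V^{A_\mathfrak{p}}\otimes_{L^{A_\mathfrak{p}}}(L^{A_\mathfrak{p}}\otimes_{L^A} M) \cong V^{A_\mathfrak{p}}\otimes_{L^A} M$, giving the stated isomorphism.

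The main obstacle is the second step: verifying that $A_\mathfrak{p}$-typification really does induce a weak equivalence of Hopf algebroids (and therefore of comodule categories preserving $\Ext$). Two things must be checked: (i) every formal $A_\mathfrak{p}$-module over an arbitrary $A_\mathfrak{p}$-algebra is strictly isomorphic to an $A_\mathfrak{p}$-typical one, and (ii) this passage can be promoted to a morphism of Hopf algebroids. Point (i) is the $A_\mathfrak{p}$-module version of Cartier's theorem, obtained from an explicit idempotent on the universal curve module; point (ii) then follows by evaluating the construction on the universal formal $A_\mathfrak{p}$-module and the universal strict isomorphism classified by $L^{A_\mathfrak{p}}B$, producing the needed ring maps $V^{A_\mathfrak{p}}\to L^{A_\mathfrak{p}}$ and $V^{A_\mathfrak{p}}T\to L^{A_\mathfrak{p}}B$ whose composite with the evident inclusion goes the other way up to an inner automorphism of the groupoid. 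Once this Morita-type equivalence is in place, the remaining bookkeeping is routine.
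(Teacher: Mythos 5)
The paper does not present a proof of this theorem at all: its ``proof'' consists entirely of the citation ``See~\cite{pearlmanthesis},'' deferring to Pearlman's unpublished thesis. So there is no proof in the paper for me to compare your argument against in detail. What the paper does say, in the introduction, is that Pearlman's argument used ``Cartier typicalization and a relatively straightforward Hopf algebroid change-of-rings argument,'' and your three-step outline (localize, typify, change of rings) matches that characterization precisely. At the level of strategy your proposal is consistent with the approach the paper attributes to Pearlman, and the individual steps are plausible.

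Two points worth tightening if you were to flesh this out. First, the justification you give for the cobar complex computing $\Ext$ is slightly off-target: you invoke freeness of $L^A$ as an $A$-module, but (a) when the class number of $A$ exceeds one, $L^A$ is only a subalgebra of a polynomial ring and need not be $A$-free, and (b) what the paper's Conventions actually require is projectivity of the \emph{left} argument of $\Ext$ over the base ring of the Hopf algebroid, which here is $L^A$ viewed as an $L^A$-module, trivially free; the flatness of $A_{\mathfrak{p}}$ over $A$ is what lets you commute $A_{\mathfrak{p}}\otimes_A(-)$ past the cohomology of that complex. Second, the statement is for the localization $A_{\mathfrak{p}}$ (not the completion $\hat{A}_{\mathfrak{p}}$), and the paper's review of the local Lazard-type theorems only asserts the existence of $(V^{A}, V^{A}T)$ and the typification equivalence for $A$ the ring of integers in a finite extension of $\mathbb{Q}_p$ (i.e.\ complete). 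You implicitly assert that $A_{\mathfrak{p}}$-typification and the resulting Hopf algebroid equivalence $(L^{A_{\mathfrak{p}}}, L^{A_{\mathfrak{p}}}B)\simeq(V^{A_{\mathfrak{p}}},V^{A_{\mathfrak{p}}}T)$ work over the non-complete local ring $A_{\mathfrak{p}}$. This is true, but it is exactly the technical content your ``main obstacle'' paragraph flags without resolving; a complete proof would need to establish that the idempotent projector and the resulting weak equivalence of groupoid schemes exist integrally over $A_{\mathfrak{p}}$, not merely after completion. You are right to identify this as the crux, and it is precisely the part Pearlman's thesis is being cited for.
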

\begin{proof}
See~\cite{pearlmanthesis}.
\end{proof}

Now we define a useful invariant of an extension $K/\mathbb{Q}$:
\begin{definition}\label{def of prime power disc}
Let $K/\mathbb{Q}$ be a finite field extension with ring of integers $A$.
Then by the {\em prime-power-ramification discriminant} $\underline{\Delta}_{K/\mathbb{Q}}$ we mean the
number $\underline{\Delta}_{K/\mathbb{Q}}\in\mathbb{N}$ which is the product of all the prime numbers $p$
with the property that there exists a prime ideal $\mathfrak{p}$ of $A$ over $p$
with ramification degree $e_{\mathfrak{p}}$ such that $\log_p(\frac{e_{\mathfrak{p}}}{p-1})$ is an integer.
\end{definition}

\begin{example}\label{examples of ppr discriminants}
\begin{itemize}
\item $\underline{\Delta}_{\mathbb{Q}/\mathbb{Q}} = 2$, since $\log_2(\frac{1}{2-1}) = 0$, an integer, while $\log_p(\frac{1}{p-1})$ is not an integer if $p>2$.
\item For quadratic $K/\mathbb{Q}$, $\underline{\Delta}_{K/\mathbb{Q}} = 6$ if $3$ ramifies,
and $\underline{\Delta}_{K/\mathbb{Q}} = 2$ if $3$ does not ramify.
\item For cubic $K/\mathbb{Q}$, we have $2\mid \underline{\Delta}_{K/\mathbb{Q}}$ if and only if $2$ does not ramify totally in $K$, and 
$3\mid \underline{\Delta}_{K/\mathbb{Q}}$ if and only if some prime over $3$ has ramification degree $2$ over $3$. No other primes divide $\underline{\Delta}_{K/\mathbb{Q}}$.
\item In particular, for cubic Galois $K/\mathbb{Q}$, we have $\underline{\Delta}_{K/\mathbb{Q}} = 1$ if $2$ ramifies in $K$, and
$\underline{\Delta}_{K/\mathbb{Q}} = 2$ otherwise.
\item More generally, if $\ell$ is an odd prime number and $K/\mathbb{Q}$ is a Galois extension of degree $\ell$, then we have $\underline{\Delta}_{K/\mathbb{Q}} = 1$ if $2$ ramifies in $K$, and
$\underline{\Delta}_{K/\mathbb{Q}} = 2$ otherwise.
\item Let $K$ be the splitting field of $x^5 + x + 2$ over $\mathbb{Q}$. Then $K/\mathbb{Q}$ is Galois and $[K:\mathbb{Q}] = 24$.
The only primes of $\mathbb{Z}$ which ramify in the ring of integers of $K$ are $2$ and $349$; the primes over $2$ have 
ramification degree $3$, and the primes over $349$ have ramification degree $2$, so the prime-power-ramification discriminant of $K/\mathbb{Q}$ is $1$. Furthermore, the Galois group $G_{K/\mathbb{Q}}$ is an order $24$ subgroup of the symmetric group on $5$ letters, hence is nonabelian. 
So $K$ is
an example of a nonabelian Galois extension of $\mathbb{Q}$ with trivial prime-power-ramification discriminant.
\end{itemize}
\end{example}

\begin{observation}\label{discriminant divisibility}
If $K\neq \mathbb{Q}$, then the prime-power-ramification discriminant divides two times the classical discriminant, that is, $\underline{\Delta}_{K/\mathbb{Q}}\mid 2\Delta_{K/\mathbb{Q}}$.
\end{observation}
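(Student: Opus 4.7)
The plan is to unpack the two definitions and argue prime-by-prime. Since $\underline{\Delta}_{K/\mathbb{Q}}$ is squarefree by construction (being a product of distinct primes), it suffices to show that every prime $p$ dividing $\underline{\Delta}_{K/\mathbb{Q}}$ also divides $2\Delta_{K/\mathbb{Q}}$.

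So I fix a prime $p \mid \underline{\Delta}_{K/\mathbb{Q}}$. By Definition~\ref{def of prime power disc}, there is a prime ideal $\mathfrak{p} \subseteq A$ lying over $p$ whose ramification index $e_\mathfrak{p}$ satisfies $\log_p(e_\mathfrak{p}/(p-1)) \in \mathbb{Z}$. I first observe that this integer must be non-negative: writing it as $k$ gives $e_\mathfrak{p} = (p-1)p^k$, and if $k < 0$ the right side would be $(p-1)/p^{|k|}$, which is not an integer because $\gcd(p, p-1) = 1$. So $e_\mathfrak{p} = (p-1)p^k$ with $k \in \mathbb{N}$.

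Now I split into two cases. If $p = 2$, then trivially $p \mid 2 \mid 2\Delta_{K/\mathbb{Q}}$, and we are done. If $p$ is odd, then $(p-1)p^k \geq p - 1 \geq 2$, so $e_\mathfrak{p} > 1$, which says that $\mathfrak{p}$ ramifies over $p$. By the standard characterization of the discriminant of a number field (a rational prime ramifies in $A$ if and only if it divides $\Delta_{K/\mathbb{Q}}$; see e.g.\ any standard reference on algebraic number theory), this forces $p \mid \Delta_{K/\mathbb{Q}}$, hence $p \mid 2\Delta_{K/\mathbb{Q}}$.

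Combining the two cases and using squarefreeness gives $\underline{\Delta}_{K/\mathbb{Q}} \mid 2\Delta_{K/\mathbb{Q}}$. There is no real obstacle here; the only point to take care with is verifying that the exponent $k$ appearing in $e_\mathfrak{p} = (p-1)p^k$ is non-negative, so that one can conclude $e_\mathfrak{p} > 1$ when $p$ is odd. The hypothesis $K \neq \mathbb{Q}$ plays no essential role in the argument (in fact the inequality holds for $K = \mathbb{Q}$ as well, with both sides equal to $2$); it is presumably stated this way only to exclude an uninteresting edge case.
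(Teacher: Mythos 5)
Your proof is correct and follows essentially the same approach as the paper's, which simply lists the three observations you are verifying: the prime-power-ramification discriminant is squarefree, odd primes dividing it must ramify, and ramified primes divide the classical discriminant. Your writeup is a cleaner expansion of this outline, and the small preliminary step you include (checking that the integer $k$ in $e_{\mathfrak{p}} = (p-1)p^k$ is nonnegative, so that $e_{\mathfrak{p}} > 1$ when $p$ is odd) is a detail the paper leaves implicit. Your remark at the end is also accurate: by Example~\ref{examples of ppr discriminants} one has $\underline{\Delta}_{\mathbb{Q}/\mathbb{Q}} = 2$ and $\Delta_{\mathbb{Q}/\mathbb{Q}} = 1$, so the divisibility holds (with equality) for $K = \mathbb{Q}$ as well, and the hypothesis $K \neq \mathbb{Q}$ is not actually needed.
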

\begin{proof} 
This follows from three easy observations: the primes that ramify in $K$ divide the classical discriminant, no odd prime divides the prime-power-ramification discriminant unless it ramifies in $K$, and the prime-power-ramification discriminant is square-free.
\end{proof}

One can generalize Definition~\ref{def of prime power disc} by defining a ``relative prime-power-ramification discriminant'' for an extension of number fields $L/K$, 
which is an ideal in the ring of integers of $K$, but I do not know any examples of things you can do with the extra generality in the definition.

\begin{theorem}\label{global computation}
Suppose $K/\mathbb{Q}$ is a finite Galois extension, and let $A$ be the ring of
integers of $K$.
Let $\underline{\Delta}_{K/\mathbb{Q}}$ denote the prime-power-ramification discriminant of $K/\mathbb{Q}$ (defined in Definition~\ref{def of prime power disc}).
Then, for each $n\in\mathbb{N}$,
\[ \Ext^{1,2n+1}_{(L^A,L^AB)}(L^A,L^A) \cong 0,\]
and
the $A[\underline{\Delta}^{-1}]$-module 
\[ \Ext^{1,2n}_{(L^A,L^AB)}(L^A,L^A)[\underline{\Delta}^{-1}]\]
is isomorphic to the localized direct sum
\[ \left( \oplus_{\mbox{max'l\ } \mathfrak{p} \subseteq A} \hat{A}_{\mathfrak{p}}/H^{\mathfrak{p}}_n\right)  [\underline{\Delta}^{-1}],\]
i.e., the localization (inverting $\underline{\Delta}$) of the direct sum, over all maximal ideals $\mathfrak{p}$ of $A$,
of the modules $\hat{A}_{\mathfrak{p}}/H^{\mathfrak{p}}_{n}$, where $H^{\mathfrak{p}}_{n}$ is defined as in Lemma~\ref{valuation inequality lemma}.
\end{theorem}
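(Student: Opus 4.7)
The plan is to combine Pearlman's Local-Global Theorem (Theorem \ref{local-to-global conj}) with Corollary \ref{local computation} at each maximal prime of $A$, exploiting the fact that inverting $\underline{\Delta}_{K/\mathbb{Q}}$ is precisely what ensures the hypothesis $\log_p(e_{\mathfrak{p}}/(p-1)) \notin \mathbb{Z}$ of Corollary \ref{local computation} holds at every remaining prime. The odd-degree vanishing $\Ext^{1,2n+1}_{(L^A,L^AB)}(L^A,L^A) = 0$ is immediate from the doubled-grading convention of Conventions \ref{running conventions}: all generators $S_i^A$ and $b_i^A$ live in even internal degree, so the cobar complex computing $\Ext^{s,t}$ is identically zero in odd $t$.

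For the even case, write $E = \Ext^{1,2n}_{(L^A,L^AB)}(L^A,L^A)$. Pearlman's theorem gives, for each maximal ideal $\mathfrak{p}$ of $A$,
\[ A_{\mathfrak{p}} \otimes_A E \cong \Ext^{1,2n}_{(V^{A_{\mathfrak{p}}},V^{A_{\mathfrak{p}}}T)}(V^{A_{\mathfrak{p}}},V^{A_{\mathfrak{p}}}). \]
After inverting $\underline{\Delta}_{K/\mathbb{Q}}$, every surviving $\mathfrak{p}$ satisfies the hypothesis of Corollary \ref{local computation}, and the proof of Theorem \ref{local conjecture} (via the formal $A$-module chromatic spectral sequence) further shows that each local contribution is a $\pi$-power-torsion module of finite length. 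Consequently, $\mathfrak{p}$-adic completion does not change the local Ext group, and Corollary \ref{local computation} identifies it with $\hat{A}_{\mathfrak{p}}/H^{\mathfrak{p}}_n$.

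To globalize, observe that $H^{\mathfrak{p}}_n = \hat{A}_{\mathfrak{p}}$ (so the local piece vanishes) whenever $N(\mathfrak{p}) - 1 \nmid n$; by the Minkowski condition (Definition \ref{def of minkowski}), only finitely many primes contribute. Moreover, $K \otimes_A E = 0$, since every formal group law over a $\mathbb{Q}$-algebra is strictly isomorphic to the additive one, so the Lazard Hopf algebroid $(L^K, L^K B)$ has vanishing higher $\Ext$. Therefore $E[\underline{\Delta}^{-1}]$ is a finitely generated torsion module over the Dedekind domain $A[\underline{\Delta}^{-1}]$, and canonically decomposes as the direct sum of its $\mathfrak{p}$-primary localizations, which by the previous paragraph is exactly $\bigl(\bigoplus_{\mathfrak{p}} \hat{A}_{\mathfrak{p}}/H^{\mathfrak{p}}_n\bigr)[\underline{\Delta}^{-1}]$.

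The principal technical obstacle will be the transition from Pearlman's localized $\Ext$ at $\mathfrak{p}$ to the completed local $\Ext$ computed by Corollary \ref{local computation}: the cobar complex for $(V^{A_{\mathfrak{p}}}, V^{A_{\mathfrak{p}}}T)$ is not finitely generated in each cohomological degree, so commuting $\Ext$ with $\pi$-adic completion is not automatic. I would address this by a compactness-style argument in the spirit of Lemma \ref{ext and compactness lemma}, showing that $\Ext^{1,2n}$-classes are represented by cocycles involving only finitely many monomials in $v_1^A$ and $t_1^A$, so that completion acts trivially on the cohomology. Granted this, the remainder of the argument is routine Dedekind-domain and Hasse-principle bookkeeping, exactly matching the shape of the desired direct-sum decomposition.
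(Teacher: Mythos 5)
Your overall plan is the same as the paper's: vanishing in odd degree from the grading convention, Pearlman's Local--Global Theorem to reduce to local computations, inverting $\underline{\Delta}_{K/\mathbb{Q}}$ to put every surviving prime inside the scope of Corollary~\ref{local computation}, and then Dedekind-domain bookkeeping to assemble the direct sum. Your supplementary argument for the torsion property via rational vanishing ($K\otimes_A E=0$, because logarithms trivialize formal $A$-modules over $\mathbb{Q}$-algebras) is a reasonable alternative to the paper's citation of Ravenel's Theorem~3.2.

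However, you have misidentified the ``principal technical obstacle'' and consequently propose an unnecessarily heavy remedy. The paper dispatches the localization-versus-completion issue with one short observation, which your proposal omits entirely: $\Ext^{1,2n}_{(L^A,L^AB)}(L^A,L^A)$ is not merely torsion but a \emph{finitely generated} $A$-module, because it is a subquotient of the cobar $1$-cochains $(L^AB)^{2n}$, which is finitely generated over $A$ in each internal degree. For a finitely generated torsion module over a Dedekind domain, the structure theorem gives a finite decomposition into cyclic pieces $A/\mathfrak{p}^k$, and each such piece is already its own $\mathfrak{p}$-adic completion; hence $E$ splits canonically as $\bigoplus_{\mathfrak{p}} E^{\widehat{\ }}_{\mathfrak{p}}$, and localization at $\mathfrak{p}$, tensoring with $\hat A_{\mathfrak{p}}$, and $\mathfrak{p}$-adic completion all coincide. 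That immediately bridges Pearlman's localized statement to the completed local $\Ext$ groups computed by Corollary~\ref{local computation}. There is no need to re-run a Lemma~\ref{ext and compactness lemma}-type compactness argument on the cobar complex for $(V^{A_{\mathfrak{p}}}, V^{A_{\mathfrak{p}}}T)$, and in any case that complex \emph{is} finitely generated over the base in each bidegree (the generators $v_i^A$, $t_i^A$ are in strictly positive degree), so the worry you raise about it is a phantom. You should replace the last paragraph of your proposal with the explicit observation that $\Ext^{1,2n}$ is a finitely generated $A$-module; without that step you have not actually established the finite direct-sum decomposition you invoke when you say $E[\underline{\Delta}^{-1}]$ ``canonically decomposes.''
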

\begin{proof}
First, a few easy observations:
\begin{enumerate}
\item $\Ext_{(L^A,L^AB)}^{n,i}(L^A, L^A)$ vanishes for $i$ odd, since $L^A$ and $L^AB$ are concentrated in even grading degrees.
In particular, $\Ext_{(L^A,L^AB)}^{1,i}(L^A, L^A) \cong 0$ for odd $i$, as claimed.
\item By Theorem~3.2 of~\cite{MR745362}, 
$\Ext_{(L^A,L^AB)}^{i}(L^A, L^A)$ is a torsion $A$-module for all $i>0$.
In particular, $\Ext_{(L^A,L^AB)}^{1}(L^A, L^A)$ is a torsion $A$-module, hence each of its summands $\Ext_{(L^A,L^AB)}^{1,n}(L^A, L^A)$ is a torsion $A$-module.
\item For all $i$, the grading degree $i$ summand $(L^AB)^i$
of $L^AB$ is a finitely generated $A$-module.
\item For all $i$, $\Ext^{1,i}_{(L^A,L^AB)}(L^A, L^A)$ is a subquotient
of $(L^AB)^i$, since the cobar complex of $(L^A,L^AB)$ computes
$\Ext^{1,i}_{(L^A,L^AB)}(L^A, L^A)$.
\end{enumerate}
Consequently $\Ext^{1,i}_{(L^A,L^AB)}(L^A, L^A)$ is a finitely generated
torsion $A$-module for all $i$.
Since $A$ is the ring of integers in a number field, $A$ is a Dedekind domain,
and now the classification of finitely generated modules over a Dedekind domain
tells us that $\Ext^{1,i}_{(L^A,L^AB)}(L^A, L^A)$
splits as a direct sum of finitely many cyclic submodules of the form 
$A/\mathfrak{p}^n$, for various prime ideals $\mathfrak{p}$ of $A$
and various positive integers $n$.

Now we use Pearlman's result. As a consequence of Theorem~\ref{local-to-global conj},
for any prime ideal $\mathfrak{p}$ of $A$, we have an isomorphism of graded $\hat{A}_{\mathfrak{p}}$-modules
\[ \Ext^{*}_{(L^A,L^AB)}(L^A, M)^{\widehat{}}_{\mathfrak{p}}
 \cong \Ext^*_{(V^{\hat{A}_{\mathfrak{p}}},V^{\hat{A}_{\mathfrak{p}}}T)}(V^{\hat{A}_{\mathfrak{p}}}, V^{\hat{A}_{\mathfrak{p}}} \otimes_{L^A} M).\]
(We do not distinguish between $\hat{A}_{{\mathfrak{p}}} \otimes_{A} \Ext^{*}_{(L^A,L^AB)}(L^A, M)$ and
$\Ext^{*}_{(L^A,L^AB)}(L^A, M)^{\widehat{}}_{\mathfrak{p}}$ because $A$ is Noetherian and $\Ext^{*}_{(L^A,L^AB)}(L^A, M)$
is a finitely generated $A$-module in each bidegree $\Ext^{i,j}$, for the same reasons
described above for $\Ext^{1,i}_{(V^A,V^AT)}(V^A, V^A)$;
consequently ${\mathfrak{p}}$-adic completion on $\Ext^{*}_{(L^A,L^AB)}(L^A, M)$ coincides with 
taking the tensor product $\hat{A}_{\mathfrak{p}}\otimes_A \Ext^{*}_{(L^A,L^AB)}(L^A, M)$.
See Proposition~10.13 of~\cite{MR0242802} for the classical result on comparison of adic completion of modules to
the base-change to the adic completion of the base ring.)

Now each cyclic summand $A/\underline{q}^n$ of $\Ext^{1,2i}_{(V^A,V^AT)}(V^A, V^A)$ is 
annihilated by $\mathfrak{p}$-adic completion for all maximal ideals $\mathfrak{p}$ of $A$ except for one, namely, the prime $\mathfrak{p} = \underline{q}$.
Furthermore $A/\underline{q}^n$ is isomorphic to its
own $\underline{q}$-adic completion.
Consequently, for all $i$, the $A$-module $\Ext^{1,2i}_{(L^A,L^AB)}(L^A, L^A)$ splits as a direct sum of its own $\mathfrak{p}$-adic completions:
\begin{align}\label{splitting into completions} \Ext^{1,2i}_{(L^A,L^AB)}(L^A, L^A) &\cong \oplus_{{\mathfrak{p}}} \Ext^{1,2i}_{(L^A,L^AB)}(L^A, L^A)^{\widehat{}}_{\mathfrak{p}}\\
 \label{consequence of pearlmans thm} &\cong \oplus_{{\mathfrak{p}}} \Ext^{1,2i}_{(V^{\hat{A}_{\mathfrak{p}}},V^{\hat{A}_{\mathfrak{p}}}T)}(V^{\hat{A}_{\mathfrak{p}}}, V^{\hat{A}_{\mathfrak{p}}}),
\end{align}
with~\ref{consequence of pearlmans thm} the consequence of Theorem~\ref{local-to-global conj}, as described above.

Consequently, 
since $\Ext^{1,2i}_{(L^A,L^AB)}(L^A, L^A)$ splits as a direct sum of
summands, each of which is $\Ext^{1,2i}_{(V^{\hat{A}_{\mathfrak{p}}},V^{\hat{A}_{\mathfrak{p}}}T)}(V^{\hat{A}_{\mathfrak{p}}}, V^{\hat{A}_{\mathfrak{p}}})$ and hence
$\mathfrak{p}$-adically complete for some
maximal ideal $\mathfrak{p}$ in $A$, the effect
of inverting the prime-power-ramification discriminant $\underline{\Delta}_{K/\mathbb{Q}}$ 
is that the localized $\Ext$ $A$-module
$\Ext^{1,2i}_{(L^A,L^AB)}(L^A, L^A)[\underline{\Delta}_{K/\mathbb{Q}}^{-1}]$
splits as a direct sum 
\[ \Ext^{1,2i}_{(L^A,L^AB)}(L^A, L^A)[\underline{\Delta}_{K/\mathbb{Q}}^{-1}] \cong
 \oplus_{\underline{\Delta}_{K/\mathbb{Q}}\notin {\mathfrak{p}}} \Ext^{1,2i}_{(V^{\hat{A}_{\mathfrak{p}}},V^{\hat{A}_{\mathfrak{p}}}T)}(V^{\hat{A}_{\mathfrak{p}}}, V^{\hat{A}_{\mathfrak{p}}}),\]
with the direct sum taken over all maximal ideals $\mathfrak{p}$ in $A$
which do not contain the prime-power-ramification discriminant $\underline{\Delta}_{K/\mathbb{Q}}$.

Finally, by the cases of the Local Conjecture proved in Theorem~\ref{local conjecture} (and, in particular, the formulation in Corollary~\ref{local computation})
for each $i\in\mathbb{Z}$ and for each maximal ideal $\mathfrak{p}$ in $A$ which sits over a prime $p$ in $\mathbb{Z}$ not in $\underline{\Delta}_{K/\mathbb{Q}}$, 
we have the isomorphism of $A$-modules
\begin{align*} \Ext^{1,2i}_{(V^{\hat{A}_{\mathfrak{p}}},V^{\hat{A}_{\mathfrak{p}}}T)}(V^{\hat{A}_{\mathfrak{p}}}, V^{\hat{A}_{\mathfrak{p}}}) &\cong A/H^{\mathfrak{p}}_i
 \end{align*}

Consequently $\Ext^{1,2i}_{(L^A,L^AB)}(L^A, L^A)[\underline{\Delta}_{K/\mathbb{Q}}^{-1}]$ is isomorphic, as an $A$-module, to the direct product of the quotients
$\prod_{\mathfrak{p}} A/H^{\mathfrak{p}}_{i}$, for all maximal ideals $\mathfrak{p}$ in $A$ which do not contain $\underline{\Delta}_{K/\mathbb{Q}}$.
Inverting $\underline{\Delta}_{K/\mathbb{Q}}$ then yields the claimed isomorphism.
\end{proof}

\begin{corollary}\label{weak form of global conj holds}
The weaker form of Ravenel's Global Conjecture, Conjecture~\ref{weaker rigorous global conj}, is true.

Specifically: let $K/\mathbb{Q}$ be a finite field extension with ring of integers $A$.
Then there exists some number $c\in \mathbb{N}$ such that,
for all $m\in \mathbb{N}$,
\[ \Ext_{(L^A,L^AB)}^{1,2m}(L^A,L^A)[c^{-1}] \cong A/(J_m)[c^{-1}],\]
where $J_m$ is the minimal $m$-congruing ideal of $A$.
The number $c$ can be taken to be the prime-power-ramification discriminant,
$c = \underline{\Delta}_{K/\mathbb{Q}}$.

Equivalently, in terms of the moduli stack $\mathcal{M}_{fmA}$ of one-dimensional formal $A$-modules over $\Spec A$, and with notation as in Conventions~\ref{running conventions}:
there exists some number $c\in \mathbb{N}$ such that,
for all $m\in \mathbb{N}$,
\[ H^{1,2m}_{fl}(\mathcal{M}_{fmA}; \mathcal{O})[c^{-1}] \cong A/(J_m)[c^{-1}],\]
where $J_m$ is the minimal $m$-congruing ideal of $A$.
The number $c$ can be taken to be the prime-power-ramification discriminant,
$c = \underline{\Delta}_{K/\mathbb{Q}}$.
\end{corollary}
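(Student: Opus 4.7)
The plan is to combine Theorem~\ref{global computation} with the Hasse principle (Theorem~\ref{hasse principle}) and the finiteness result of Lemma~\ref{finiteness of A/pr}; essentially nothing new needs to be done, as the corollary is a direct repackaging.

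First, Theorem~\ref{global computation} gives the isomorphism
\[ \Ext^{1,2m}_{(L^A,L^AB)}(L^A,L^A)[\underline{\Delta}_{K/\mathbb{Q}}^{-1}] \;\cong\; \Bigl(\bigoplus_{\mathfrak{p}} \hat{A}_{\mathfrak{p}}/H^{\mathfrak{p}}_m\Bigr)[\underline{\Delta}_{K/\mathbb{Q}}^{-1}], \]
where the sum runs over the maximal ideals $\mathfrak{p}$ of $A$. So it suffices to identify the right-hand side with $A/J_m$ after inverting $\underline{\Delta}_{K/\mathbb{Q}}$.

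Next, I would invoke Theorem~\ref{hasse principle}, which says that the minimal $m$-congruing ideal of $A$ is exactly $J_m = \ker pr_m$, where
\[ pr_m : A \longrightarrow \prod_{\mathfrak{p}} \hat{A}_{\mathfrak{p}}/H^{\mathfrak{p}}_m \]
is the product of the $\mathfrak{p}$-adic reduction maps. Lemma~\ref{finiteness of A/pr} tells us two things: first, only finitely many factors $\hat{A}_{\mathfrak{p}}/H^{\mathfrak{p}}_m$ are nonzero (so the product and the direct sum coincide), and second, $pr_m$ is surjective. Hence
\[ A/J_m \;\cong\; \prod_{\mathfrak{p}} \hat{A}_{\mathfrak{p}}/H^{\mathfrak{p}}_m \;\cong\; \bigoplus_{\mathfrak{p}} \hat{A}_{\mathfrak{p}}/H^{\mathfrak{p}}_m. \]

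Localizing both sides at $\underline{\Delta}_{K/\mathbb{Q}}$ and comparing with the display from Theorem~\ref{global computation} gives the required isomorphism $\Ext^{1,2m}_{(L^A,L^AB)}(L^A,L^A)[\underline{\Delta}_{K/\mathbb{Q}}^{-1}] \cong A/(J_m)[\underline{\Delta}_{K/\mathbb{Q}}^{-1}]$. The equivalent statement in terms of $H^{1,2m}_{fl}(\mathcal{M}_{fmA};\mathcal{O})$ then follows from the dictionary between Hopf algebroid $\Ext$ and flat cohomology of the associated stack recorded in Conventions~\ref{running conventions}.

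There is no real obstacle here beyond bookkeeping: the substantive content lies entirely in Theorem~\ref{global computation} (which itself rests on the local computations of Corollary~\ref{local computation} and Pearlman's Local-Global theorem) and in the Hasse principle of Theorem~\ref{hasse principle}. The only thing to check carefully is that the direct-sum-versus-direct-product distinction is harmless, which is immediate because for any fixed $m$ only finitely many $\mathfrak{p}$ contribute a nonzero factor.
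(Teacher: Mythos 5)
Your proof is correct and follows essentially the same approach as the paper, relying on the same three ingredients (Theorem~\ref{global computation}, Theorem~\ref{hasse principle}, and Lemma~\ref{finiteness of A/pr}). The only cosmetic difference is order of operations: the paper applies the Hasse principle to the localized ring $A[\underline{\Delta}_{K/\mathbb{Q}}^{-1}]$ directly, whereas you apply it to $A$ itself (which is Minkowski, so this is legitimate) and localize afterward; since localization is exact and kills the $\mathfrak{p}$-torsion summands with $\mathfrak{p}$ lying over $\underline{\Delta}_{K/\mathbb{Q}}$, the two routes produce the same isomorphism.
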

\begin{proof}
To get this result from Theorem~\ref{global computation}, 
just apply Theorem~\ref{hasse principle} to the Minkowski Dedekind domain
$A[\Delta_{K/\mathbb{Q}}^{-1}]$ (the fact that we have to localize $A$ is the reason
we phrased Theorem~\ref{hasse principle} in terms of Minkowski Dedekind domains,
e.g. localized number rings, rather than just number rings!), to get
that $\prod_{\mathfrak{p}} A/H^{\mathfrak{p}}_{i}$ is isomorphic to 
$A/J_i$, $A$ modulo the maximal $i$-congruing ideal in $A$.
\end{proof}

\begin{corollary}\label{strong form of global conj holds}
The strong form of Ravenel's Global Conjecture, Conjecture~\ref{rigorous global conj}, is true for number fields $K$ whose prime-power-ramification discriminant $\underline{\Delta}_{K/\mathbb{Q}}$ is equal to one. The ``correcting factor''
in the statement of Conjecture~\ref{rigorous global conj} can be taken to be one (the strongest possible, that is, no correcting factor is necessary!).

Specifically: 
let $K/\mathbb{Q}$ be a finite field extension with ring of integers $A$.
Suppose that $\underline{\Delta}_{K/\mathbb{Q}} = 1$, i.e.,
suppose that, for every maximal ideal $\mathfrak{p}$ in $A$,
the number $\log_p(\frac{e}{p-1})$ is not an integer, where $p$ is the prime of $\mathbb{Z}$ under $\mathfrak{p}$, and $e$ is the ramification degree of $\mathfrak{p}$.
Then, for all $m\in \mathbb{N}$,
\[\Ext_{(L^A,L^AB)}^{1,2m}(L^A,L^A) \cong A/J_m,\]
where $J_m$ is the minimal $m$-congruing ideal of $A$.
\end{corollary}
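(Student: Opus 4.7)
The plan is to derive this as a straightforward specialization of Theorem~\ref{global computation} combined with Theorem~\ref{hasse principle}. The only content of the statement beyond those prior results is the observation that, when $\underline{\Delta}_{K/\mathbb{Q}} = 1$, the localization $[\underline{\Delta}_{K/\mathbb{Q}}^{-1}]$ appearing throughout becomes the identity functor, since inverting $1$ does nothing.

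Concretely, I would first invoke Theorem~\ref{global computation}. The hypothesis that $\log_p(\frac{e}{p-1})$ fails to be an integer at every prime $\mathfrak{p}$ of $A$ (with $p$ the prime of $\mathbb{Z}$ under $\mathfrak{p}$ and $e$ its ramification degree) is exactly the hypothesis Theorem~\ref{local conjecture} needs at each completion $\hat{A}_{\mathfrak{p}}$, so the local $\Ext$-computation in Corollary~\ref{local computation} applies to every prime of $A$, with no bad primes left to invert. Dropping the now-trivial localization, Theorem~\ref{global computation} gives an isomorphism of $A$-modules
\[ \Ext^{1,2m}_{(L^A,L^AB)}(L^A,L^A) \cong \bigoplus_{\mathfrak{p}} \hat{A}_{\mathfrak{p}}/H^{\mathfrak{p}}_m, \]
with the sum ranging over all maximal ideals $\mathfrak{p}$ of $A$.

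Next I would identify the right-hand side with $A/J_m$ by means of the Hasse principle. By Lemma~\ref{finiteness of A/pr}, all but finitely many summands $\hat{A}_{\mathfrak{p}}/H^{\mathfrak{p}}_m$ vanish, so the direct sum coincides with the direct product $\prod_{\mathfrak{p}} \hat{A}_{\mathfrak{p}}/H^{\mathfrak{p}}_m$, and the same lemma supplies a surjection $pr_m: A \to \prod_{\mathfrak{p}} \hat{A}_{\mathfrak{p}}/H^{\mathfrak{p}}_m$. Theorem~\ref{hasse principle} identifies $\ker pr_m$ with the minimal $m$-congruing ideal $J_m$ of $A$. The first isomorphism theorem then yields $A/J_m \cong \prod_{\mathfrak{p}} \hat{A}_{\mathfrak{p}}/H^{\mathfrak{p}}_m$, and combining with the previous display completes the proof.

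There is essentially no obstacle at this stage: the real work has already been carried out in Theorem~\ref{global computation} (which rests on Pearlman's proof of the Local-Global Conjecture, Theorem~\ref{local-to-global conj}, and the local computation of Theorem~\ref{local conjecture}) and in Theorem~\ref{hasse principle}. The sole new observation is that the value $\underline{\Delta}_{K/\mathbb{Q}} = 1$ of the prime-power-ramification discriminant makes the localization $[\underline{\Delta}_{K/\mathbb{Q}}^{-1}]$ trivial, so that the global-to-local decomposition holds integrally, i.e., with no correcting factor at all.
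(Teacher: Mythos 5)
Your proposal is correct and takes essentially the same approach as the paper. In fact the paper gives no explicit proof of Corollary~\ref{strong form of global conj holds} at all—it is presented as an immediate consequence of Theorem~\ref{global computation} together with Theorem~\ref{hasse principle}, with the only new observation being that $\underline{\Delta}_{K/\mathbb{Q}}=1$ makes the localization trivial; this is exactly the specialization your write-up spells out, and you correctly supply the intermediate steps (finiteness via Lemma~\ref{finiteness of A/pr}, surjectivity of $pr_m$, and the identification $\ker pr_m = J_m$) that the paper leaves implicit after proving Corollary~\ref{weak form of global conj holds}.
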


The next two corollaries recall the classes of examples, from Example~\ref{examples of ppr discriminants}, in which the prime-power-ramification discriminant is one, and hence Corollary~\ref{strong form of global conj holds} applies.
\begin{corollary}
Suppose $K/\mathbb{Q}$ is a Galois extension of odd primary degree,
and suppose that $2$ ramifies in $K$. Let $A$ be the ring of integers of $K$.
Then, for all $m\in \mathbb{N}$,
\[\Ext_{(L^A,L^AB)}^{1,2m}(L^A,L^A) \cong A/J_m,\]
where $J_m$ is the minimal $m$-congruing ideal of $A$.

Equivalently, in terms of the moduli stack $\mathcal{M}_{fmA}$ of one-dimensional formal $A$-modules over $\Spec A$, and with notation as in Conventions~\ref{running conventions}:
for all $m\in \mathbb{N}$,
\[ H^{1,2m}_{fl}(\mathcal{M}_{fmA}; \mathcal{O}) \cong A/J_m,\]
where $J_m$ is the minimal $m$-congruing ideal of $A$.
\end{corollary}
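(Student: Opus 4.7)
The plan is to reduce this immediately to Corollary~\ref{strong form of global conj holds}, which already delivers $\Ext^{1,2m}_{(L^A,L^AB)}(L^A,L^A) \cong A/J_m$ for every finite Galois extension $K/\mathbb{Q}$ whose prime-power-ramification discriminant $\underline{\Delta}_{K/\mathbb{Q}}$ equals $1$ (and the flat cohomology version then follows by the $\Ext$/$H^{*}_{fl}$ dictionary recorded in Conventions~\ref{running conventions}). So the only substantive task is to verify, under the present hypotheses, that $\underline{\Delta}_{K/\mathbb{Q}} = 1$.

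First I would write $[K:\mathbb{Q}] = \ell^n$ for some odd prime $\ell$ and positive integer $n$. Since $K/\mathbb{Q}$ is Galois, for every maximal ideal $\mathfrak{p}$ of $A$ the ramification index $e_{\mathfrak{p}}$ divides $\ell^n$, hence $e_{\mathfrak{p}}$ is a power of the odd prime $\ell$; in particular $e_{\mathfrak{p}}$ is odd. Moreover, in a Galois extension the decomposition groups at the various primes over a fixed rational prime are conjugate, so all primes $\mathfrak{p}$ of $A$ over a fixed rational prime $p$ share the same ramification index.

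The main step is then a short case analysis. By Definition~\ref{def of prime power disc}, a rational prime $p$ divides $\underline{\Delta}_{K/\mathbb{Q}}$ precisely when there exists some $\mathfrak{p}$ over $p$ for which $\log_p(e_{\mathfrak{p}}/(p-1))$ is an integer, equivalently $e_{\mathfrak{p}} = (p-1)p^k$ for some $k \geq 0$. For odd $p$: the right-hand side $(p-1)p^k$ is even (because $p-1$ is even), while $e_{\mathfrak{p}}$ is odd, so no such equation holds. For $p = 2$: the hypothesis that $2$ ramifies in $K$, together with the Galois observation above, forces $e_{\mathfrak{p}} > 1$ for every $\mathfrak{p}$ over $2$, and the equation $e_{\mathfrak{p}} = (2-1)\cdot 2^k = 2^k$ would then require a positive power of the odd prime $\ell$ to equal a positive power of $2$, which is absurd. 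Hence no rational prime divides $\underline{\Delta}_{K/\mathbb{Q}}$, so $\underline{\Delta}_{K/\mathbb{Q}} = 1$, and Corollary~\ref{strong form of global conj holds} finishes the argument.

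I do not anticipate any serious obstacle here; the genuine content of the result lies in Theorem~\ref{global computation} and the many-cases proof of the Local Conjecture in Theorem~\ref{local conjecture}, so the present corollary amounts to the observation that the hypothesis ``$2$ ramifies in an odd-prime-power-degree Galois extension'' is precisely calibrated to force the prime-power-ramification discriminant to vanish, sidestepping simultaneously the obstruction at $p=2$ (via ramification making $e_{\mathfrak{p}}$ odd and $>1$) and the obstruction at odd $p$ (via the parity mismatch between $e_{\mathfrak{p}}$ and $(p-1)p^k$).
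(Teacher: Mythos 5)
Your proof is correct and takes exactly the route the paper intends: the paper itself gives no self-contained argument for this corollary, instead pointing the reader to Example~\ref{examples of ppr discriminants} (where the triviality of $\underline{\Delta}_{K/\mathbb{Q}}$ is asserted) and to Corollary~\ref{strong form of global conj holds}. The one small way you go beyond the paper is worth noting: Example~\ref{examples of ppr discriminants} only treats Galois extensions of degree an odd \emph{prime} $\ell$, while you read ``odd primary degree'' as degree $\ell^n$ and verify $\underline{\Delta}_{K/\mathbb{Q}}=1$ in that generality. Your verification is sound: $e_{\mathfrak{p}} \mid \ell^{n}$ forces $e_{\mathfrak{p}}$ odd, ruling out $e_{\mathfrak{p}}=(p-1)p^{k}$ for odd $p$ by parity, and for $p=2$ the ramification hypothesis forces $e_{\mathfrak{p}}>1$, so $e_{\mathfrak{p}}$ is a nontrivial power of $\ell$ and cannot equal $2^{k}$. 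So whether ``primary'' is a typo for ``prime'' or genuinely means ``prime power,'' your argument covers it, and it subsumes the $n=1$ case the paper explicitly computes.
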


\begin{corollary}
A nonabelian example: let $K$ be the splitting field of $x^5+x+2$ over $\mathbb{Q}$.
Let $A$ be the ring of integers of $K$.
Then, for all $m\in \mathbb{N}$,
\[\Ext_{(L^A,L^AB)}^{1,2m}(L^A,L^A) \cong A/J_m,\]
where $J_m$ is the minimal $m$-congruing ideal of $A$.

Equivalently,
for all $m\in \mathbb{N}$,
\[ H^{1,2m}_{fl}(\mathcal{M}_{fmA}; \mathcal{O}) \cong A/J_m,\]
where $J_m$ is the minimal $m$-congruing ideal of $A$.
\end{corollary}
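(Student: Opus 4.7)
The plan is to observe that this corollary is immediate from Corollary~\ref{strong form of global conj holds} together with the computation already recorded in Example~\ref{examples of ppr discriminants}. Specifically, in that example it was noted that for $K$ the splitting field of $x^5+x+2$ over $\mathbb{Q}$, the extension $K/\mathbb{Q}$ is Galois of degree $24$, the only rational primes that ramify in the ring of integers $A$ are $2$ and $349$, the primes over $2$ have ramification degree $3$, and the primes over $349$ have ramification degree $2$.

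I would then check that for each ramified prime $p$ of $\mathbb{Z}$ and each prime $\mathfrak{p}$ of $A$ above it, the number $\log_p(e_{\mathfrak{p}}/(p-1))$ is not an integer. For $p=2$, $e_{\mathfrak{p}}=3$, and $\log_2(3/1) = \log_2 3$, which is irrational. For $p=349$, $e_{\mathfrak{p}}=2$, and $\log_{349}(2/348) = \log_{349}(1/174)$, which is not an integer (as $174$ is not a power of $349$). For every unramified prime $\mathfrak{p}$ with residue characteristic $p$, $e_{\mathfrak{p}}=1$, and $\log_p(1/(p-1))$ is an integer only when $p=2$, which is excluded since $2$ ramifies. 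Thus $\underline{\Delta}_{K/\mathbb{Q}} = 1$.

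Having verified the hypothesis, Corollary~\ref{strong form of global conj holds} applies verbatim to give the first displayed isomorphism $\Ext_{(L^A,L^AB)}^{1,2m}(L^A,L^A) \cong A/J_m$ for all $m\in\mathbb{N}$, with $J_m$ the minimal $m$-congruing ideal of $A$ (which exists and is unique by Theorem~\ref{hasse principle}). The equivalent reformulation in terms of $H^{1,2m}_{fl}(\mathcal{M}_{fmA}; \mathcal{O})$ follows from the standard equivalence between graded left $L^AB$-comodules and $\mathbb{G}_m$-equivariant quasicoherent sheaves on the moduli stack $\mathcal{M}_{fmA}$, as spelled out in Conventions~\ref{running conventions}.

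There is no real obstacle here: the corollary is a specialization, and the only thing to verify is the numerical claim that $\underline{\Delta}_{K/\mathbb{Q}} = 1$, which rests on the ramification data for this particular quintic splitting field. That ramification data was already asserted in Example~\ref{examples of ppr discriminants}, so the proof is essentially a one-line appeal to Corollary~\ref{strong form of global conj holds}.
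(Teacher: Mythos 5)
The proposal is correct and matches the intended route: the corollary is a direct specialization of Corollary~\ref{strong form of global conj holds}, and the only substantive check is the ramification data from Example~\ref{examples of ppr discriminants} showing $\underline{\Delta}_{K/\mathbb{Q}}=1$. Your verification that $\log_2 3$, $\log_{349}(1/174)$, and $\log_p(1/(p-1))$ for $p$ unramified (and hence odd, since $2$ ramifies) all fail to be integers is exactly what is needed, and the paper states this corollary without separate proof for the same reason you identify.
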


\section{Zeta-functions.}

\subsection{Motivation.}

In~\cite{MR745362}, Ravenel writes: ``The numbers $j_m$ of 3.8 are also related to Bernoulli numbers and the values of
the Riemann zeta function at negative integers, but these properties do not appear
to generalize to other number fields. For example if the field is not totally real its
Dedekind zeta function vanishes at all negative integers.''
Ravenel's observation, that a direct connection between the order of $\Ext_{(L^A,L^AB)}^{1,2m}(L^A,L^A)$ and $\zeta_K(1-m)$ is impossible
unless $K$ is totally real since otherwise $\zeta_K$ vanishes at negative integers, is quite correct.

However, a good relationship between numerical invariants of $\zeta_K$ and the order of $\Ext_{(L^A,L^AB)}^{1,2m}(L^A,L^A)$
is quite possible, if one is willing to work with invariants of $\zeta$-functions other than special values.
The idea is as follows: in an Euler product admitting an analytic continuation, 
the $p$-local Euler factor typically controls prime-to-$p$ denominators of special values at negative integers.
One wants to apply some kind of transform to such an Euler product which ``straightens out'' the relationship between the $p$-local Euler factors and the
$p$-primary factors in the special values of the function, causing the $p$-local Euler factor to contribute to 
$p$-primary factors, rather than prime-to-$p$ factors, in the values of the transformed function. 

In the remaining sections of this paper, I give two distinct ways of doing this,
which I call ``unramified straightening'' and ``Galois-Dedekind straightening.''
\begin{itemize}
\item The ``unramified straightening transform'' $\mathbb{S}(L(s))(n)$ of an
$L$-function $L(s)$ has the appeal of having an extremely simple
and natural definition. On the other hand, it has the disadvantage that,
when $A$ is the ring of integers of a finite extension $K/\mathbb{Q}$
with Dedekind $\zeta$-function $\zeta_K(s)$, the unramified straightening
transform
$\mathbb{S}(\zeta_K(s))(n)$ only describes the order
of $\Ext^{1,2n}_{(L^A,L^AB)}(L^A,L^A)$ after inverting two times 
the classical discriminant $\Delta_{K/\mathbb{Q}}$.
\item On the other hand, the ``Galois-Dedekind straightening transform'' $\mathbb{S}_{GD}(L(s))(n)$ 
is only defined for certain (sufficiently ``nice'') $L$-functions
$L(s)$, and its definition is more complicated and less natural-seeming
than the unramified straightening transform. On the other hand, the Galois-Dedekind straightening transform has the advantage that,
when $A$ is the ring of integers of a finite {\em Galois} extension $K/\mathbb{Q}$
with Dedekind $\zeta$-function $\zeta_K(s)$, the Galois-Dedekind straightening
transform
$\mathbb{S}_{GD}(\zeta_K(s))(n)$ is defined, and it describes the order
of $\Ext^{1,2n}_{(L^A,L^AB)}(L^A,L^A)$ after inverting the prime-power-ramification discriminant $\underline{\Delta}_{K/\mathbb{Q}}$, which is a less destructive operation
than inverting two times the classical discriminant.
\end{itemize}

As I mentioned in the introduction to this paper, these straightening 
transforms also
have some applications elsewhere, in the theory of $L$-functions associated
to Bousfield-localized stable homotopy types of finite CW-complexes, which I 
will say much more about in a later paper, currently in preparation.
The definitions of these straightening transforms are chosen in order to have 
convenient properties which relate them to $\Ext^{1}_{(L^A,L^AB)}(L^A,L^A)$
and to stable homotopy types of finite CW-complexes; I do not know
if these straightening transforms are of any interest or use for number theorists.

\subsection{The unramified straightening transform.}

\begin{definition}\label{def of unramified straightening transform}
If $p$ is a prime number, then by an {\em elementary $p$-local Euler factor} I mean an expression of the form 
\[ \frac{1}{1 - p^{-as}}\]
for some positive integer $a$. By an {\em elementary Euler factor} I mean elementary $p$-local Euler factor for some prime number $p$.

Suppose $L(s)$ is an $L$-function which is equal (for $s$ with sufficiently large real part) to an Euler product which is a product of elementary Euler factors.
By the {\em unramified straightening transform of $L(s)$} I mean the function $\mathbb{S}(L(s)): \mathbb{N} \rightarrow \mathbb{N}$
given as follows: 
\[\mathbb{S}\left( \frac{1}{1-p^{-as}}\right)(n) = \left\{ 
 \begin{array}{ll} 
 1 &\mbox{\ if\ } p^a-1\nmid n \\
 p^{(1+\nu_p(n))a} & \mbox{\ if\ } p^a-1\mid n,\end{array}\right. \]
and $\mathbb{S}$ of a product of elementary Euler factors is the product of $\mathbb{S}$ applied to each Euler factor.
\end{definition}
Obviously this definition is limited to only a very restricted class of $L$-functions, namely, those 
with an Euler product which is a product of elementary Euler factors.
In a later paper (currently in preparation) I extend the definition of this straightening transform to 
a larger class of Euler products, and I demonstrate that this slightly more general unramified $\mathbb{S}$-transform 
has some agreeable properties (e.g. recovering the orders of $KU$-local stable homotopy groups)
when evaluated on certain $L$-functions associated to finite CW-complexes. 

The following proposition, Proposition~\ref{riemann special value denominators},
is equivalent to a computation of the denominators of the numbers
$\zeta(1-n)$ for positive integers $n$. This computation is
very well-known: in the terms most familiar to a topologist, it is the fact that 
the orders of the homotopy groups $\pi_{*}(j)$ in the image of the $J$-homomorphism in degrees congruent to $-1$ mod $4$ (which Adams identified, in~\cite{MR0198470}, with the denominators of the special values at negative integers of the Riemann $\zeta$-function) have the correct $p$-adic valuations to coincide, after $p$-adic
completion for $p>2$,
with the familiar pattern one sees in the homotopy groups of the $K(1)$-local (or $E(1)$-local) sphere, computed by Adams and Baird (see~\cite{MR737778}).
But of course Proposition~\ref{riemann special value denominators} was known to number theorists much, much earlier. I do not know where to find a proof in the literature, so I supply one, below.
\begin{prop}\label{riemann special value denominators}
Let $\zeta$ be the Riemann $\zeta$-function.
Then, for all positive integers $n$, the denominator of $\zeta(1-2n)$ is equal to $\mathbb{S}(\zeta(s))(2n)$.

Suppose further that $m$ is an odd positive integer. Then $\zeta(1-m) = 0$, so it is a matter of convention what the denominator of $\zeta(1-m)$ is.
However, $\mathbb{S}(\zeta(s))(m) = 2$ for all odd positive integers $m$, 
so if we adopt the convention that the denominator of $\zeta(1-m) = 0$ is $2$, then the denominator of $\zeta(1-n)$ is equal to 
$\mathbb{S}(\zeta(s))(n)$ for all positive integers $n$.
\end{prop}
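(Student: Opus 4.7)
The strategy is to rewrite $\mathbb{S}(\zeta(s))(n)$ as an explicit product over primes and then to match it term-by-term with the classical formula for the denominator of $\zeta(1-2n)$ coming from von Staudt--Clausen.

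First, I would unfold the definition on the Euler product $\zeta(s) = \prod_p (1-p^{-s})^{-1}$. Every factor has the form of an elementary Euler factor with parameter $a=1$, so the factor at a prime $p$ contributes $1$ unless $p-1\mid n$, in which case it contributes $p^{\,1+\nu_p(n)}$. This yields the closed form
\[
\mathbb{S}(\zeta(s))(n) \;=\; \prod_{\substack{p \textrm{ prime} \\ (p-1)\mid n}} p^{\,1+\nu_p(n)}.
\]

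Second, I would invoke Euler's identity $\zeta(1-2n) = -B_{2n}/(2n)$ together with the classical von Staudt--Clausen theorem and its Kummer-type $p$-adic refinement: the former gives $\nu_p(B_{2n}) = -1$ for every prime $p$ with $(p-1)\mid 2n$ and $\nu_p(B_{2n}) \geq 0$ otherwise, while the latter supplies the sharper bound $\nu_p(B_{2n}) \geq \nu_p(2n)$ for odd primes $p$ with $(p-1)\nmid 2n$. Assembling these with Euler's identity shows that the denominator of $\zeta(1-2n)$ is exactly $\prod_{(p-1)\mid 2n} p^{\,1+\nu_p(2n)}$, and comparison with the formula of the previous paragraph (applied with $n$ replaced by $2n$) yields the first assertion of the proposition.

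Finally, for odd positive $m$ the condition $(p-1)\mid m$ forces $p-1$ to be odd, which singles out $p=2$ and gives $\mathbb{S}(\zeta(s))(m) = 2^{\,1+\nu_2(m)} = 2$. When $m=1$, this matches the genuine denominator of $\zeta(0) = -1/2$; for odd $m \geq 3$, the value $\zeta(1-m)$ is a trivial zero, and the stated convention that the denominator of $0$ is $2$ makes the equality hold uniformly. There is no serious obstacle here: once the definition of $\mathbb{S}$ is unfolded, the proof reduces to a direct appeal to classical facts about Bernoulli numbers, and the only delicate point is adopting the correct convention for the denominator of $0$ in the odd case.
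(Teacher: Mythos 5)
Your proof is correct and follows essentially the same route as the paper: unfold the definition of $\mathbb{S}$ on the Euler product, cite $\zeta(1-2n) = -B_{2n}/(2n)$, von Staudt--Clausen for the denominator of $B_{2n}$, and the classical bound $\nu_p(N_{2n}) \geq \nu_p(2n)$ for primes $p$ with $(p-1)\nmid 2n$ to control the numerator. Your one small improvement is the explicit observation that $m=1$ is not a trivial zero but rather gives $\zeta(0)=-1/2$ with genuine denominator $2$; the paper's statement and proof gloss over this case by asserting that all odd Bernoulli numbers vanish, which fails for $B_1$.
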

\begin{proof}
First, recall that the Euler product for $\zeta$ is:
\[ \zeta(s) = \prod_{p} \frac{1}{1-p^{-s}}\]
when $\Re s > 1$.
Consequently, for each prime $p$, the $p$-adic valuation of $\mathbb{S}(\zeta(s))(n)$ is given by
\begin{align} \nu_p(\mathbb{S}(\zeta(s))(n)) = 
\label{valuation formula for S-transform} \left\{ \begin{array}{ll} 
  0 & \mbox{\ if\ } (p-1) \nmid n \\
  1 + \nu_p(n) & \mbox{\ if\ } (p-1)\mid n.\end{array}\right. \end{align}

Now we will show that the $p$-adic valuation of $\mathbb{S}(\zeta(s))(2n)$ agrees with the $p$-adic valuation of 
the denominator of $\zeta(1-2n)$. 
It is classical that 
\begin{equation}\label{zeta and bernoullis} \zeta(1-n) = \frac{-B_n}{n}\end{equation}
for positive integers $n$, and that $B_n = 0$ if $n$ is odd and $n>1$.
By the von Staudt-Clausen theorem, the denominator of the Bernoulli number $B_{2n}$ is equal to the product of all primes
$p$ such that $p-1$ divides $2n$. 
Suppose that $p$ is a prime and $n$ a positive integer such that $(p-1)\nmid 2n$.
Let $N_{2n}$ denote the numerator and $D_{2n}$ the denominator of $B_{2n}$.
Then it is known that 
\[\nu_p(2n)\leq \nu_p(N_{2n}),\] that is, the $p$-adic valuation
of $N_{2n}$ is at least as large as the $p$-adic valuation of $2n$;
\cite{MR1041890}
provides, in the introduction, a brief history of this result,
which goes back to von Staudt but has been independently rediscovered many
times. 
Consequently, if $p-1$ does not divide $2n$,
then $D_{2n}$ is not divisible by $p$, and 
$N_{2n}$ is divisible by at least as large a power of $p$
as $2n$ is. Hence the denominator of $\frac{B_{2n}}{2n}$ is not divisible by $p$.

On the other hand, suppose that $p$ is a prime and $n$ a positive integer
such that $(p-1)\mid 2n$. Then $D_{2n}$ {\em is} divisible
by $p$, and hence $N_{2n}$ {\em isn't} divisible by $p$;
consequently we have the equality of $p$-adic valuations
\begin{align*} \nu_p\left(\frac{B_{2n}}{2n}\right) &= -\nu_p\left(D_{2n}\right) - \nu_p(2n) \\
 &= -\nu_p(2n) - 1.\end{align*}

Consequently, using the equation~\ref{zeta and bernoullis}, 
if we let $D_{2n}^{\prime}$ denote the denominator of
$\zeta(1-2n)$, then 
for all primes $p$ and positive integers $n$ we get the formula:
\begin{align*}
 \nu_p\left( D_{2n} \right) &= 
 \left\{ \begin{array}{ll} 
   0 & \mbox{\ if\ } (p-1)\nmid 2n \\
   \nu_p(2n) +1 \mbox{\ if\ } (p-1)\mid 2n.\end{array} \right. 
\end{align*}
This is equal to $\mathbb{S}(\zeta(s))(2n)$ by equation~\ref{valuation formula for S-transform}.

If $m$ is odd, we already observed that $\zeta(1-m) = 0$ since the odd Bernolli numbers $B_m$ are zero. 
On the other hand, for odd $m$, there exists a (unique) prime number $p$ such that $(p-1)\mid m$, namely, $p=2$.
Furthermore, $1 + \nu_2(m) = 1$ for odd $m$,
so by equation~\ref{valuation formula for S-transform}, we have that $\mathbb{S}(\zeta(s))(m) = 2$ for all positive odd integers $m$,
as claimed.
\end{proof}

Now we need to review a few facts about Hecke $L$-functions. 
We will only need the Hecke $L$-functions of trivial Gro{\ss}encharakters (with varying conductors, however),
so the treatment I give here is much more limited than what one can find in a good number theory textbook, such as~\cite{MR1697859}.
\begin{definition-proposition}{\bf (Classical.)}
Let $K/\mathbb{Q}$ be a number field with ring of integers $A$, and let $m$ be a proper ideal of $A$. 
We write $\chi$ for the trivial Gro{\ss}encharakter of $K$ with conductor $m$. 
By the {\em Hecke $L$-function of $\chi$} we mean the complex-valued function
\[ L(s,\chi) = \sum_{I} \frac{1}{N(I)^{-s}},\]
for all complex numbers $s$ with real part $\Re s > 1$, 
where $I$ ranges across all nonzero ideals of $A$ which are coprime to $m$,
and $N(I)$ is the absolute norm of $I$, i.e., $N(I)$ is the cardinality of the residue ring $A/I$.

Then $L(s,\chi)$ admits analytic continuation to a meromorphic function on the complex plane,
and
$L(s,\chi)$ admits an Euler product 
\[ L(s,\chi) = \prod_{\mathfrak{p}} \frac{1}{1 - N(\mathfrak{p})^{-s}}\]
for all complex numbers $s$ with real part $\Re s > 1$,
where the product is taken over all maximal ideals $\mathfrak{p}$ of $A$ which are coprime to $m$.

Hence 
\[ L(s,\chi) = \left(\prod_{\mathfrak{p}\supseteq m}\left(1 - N(\mathfrak{p})^{-s}\right)\right) \zeta_K(s),\]
where $\zeta_K(s)$ is the Dedekind $\zeta$-function of $K$, and the product is taken over all
maximal ideals $\mathfrak{p}$ of $A$ which contain $m$.
In particular, if $m = 1$,
then $L(s,\chi) = \zeta_K(s)$.
\end{definition-proposition}
So Hecke $L$-functions of trivial Gro{\ss}encharakters are a convenient ``language'' for working with 
Dedekind $\zeta$-functions in which certain Euler factors---those associated to the primes containing the conductor ideal---have been omitted.

From Proposition~\ref{riemann special value denominators}, one knows that the unramified $\mathbb{S}$-transform of the Riemann $\zeta$-function recovers
the denominators of the special values of $\zeta(s)$ at negative integers, and consequently (by the classical $K=\mathbb{Q}$ case of Theorem~\ref{global computation}
which motivated Ravenel's Global Conjecture), it recovers the order of $\Ext_{(L,LB)}^{1,*}(L,L)$ up to multiplication by a power of $2$. 
A more remarkable fact is that, for the Dedekind $\zeta$-function $\zeta_K(s)$ of
a number field $K/\mathbb{Q}$, the unramified $\mathbb{S}$-transform $\mathbb{S}(\zeta_K(s))$ {\em does not} recover the 
denominators of the special values of $\zeta(s)$ at negative integers, but instead,
if the discriminant {\em and} the prime-power-ramification discriminant of $K/\mathbb{Q}$ are both trivial,
then $\mathbb{S}(\zeta_K(s))$ recovers the order of $\Ext_{(L^A,L^AB)}^{1,*}(L^A,L^A)$, where $A$ is the ring of integers in $K$.
In fact a stronger statement is true, and
here is a precise statement (and proof):
\begin{theorem}\label{main thm for unramified straightening}
Let $K/\mathbb{Q}$ be a finite field extension with ring of integers $A$.
Let $\chi_{2\Delta_{K/\mathbb{Q}}}$ be the trivial Gro\ss{e}ncharakter of $K$ of conductor equal to $2\Delta_{K/\mathbb{Q}}$, two times the classical discriminant
of $K/\mathbb{Q}$, and let
$L(s,\chi_{2\Delta_{K/\mathbb{Q}}})$ be its associated Hecke $L$-function.

Then, for all positive $n\in\mathbb{N}$, the following numbers are all equal:
\begin{itemize}
\item the order of the abelian group $\Ext^{1,2n}_{(L^A,L^AB)}(L^A,L^A)[(2\Delta_{K/\mathbb{Q}})^{-1}]$,
\item the order of the abelian group $H^{1,2n}_{fl}(\mathcal{M}_{fmA}; \mathcal{O})[(2\Delta_{K/\mathbb{Q}})^{-1}]$,
\item the order of the abelian group $A[(2\Delta_{K/\mathbb{Q}})^{-1}]/J_{n}$, where $J_n$ is the minimal $n$-congruing ideal in $A[(2\Delta_{K/\mathbb{Q}})^{-1}]$, and
\item the number $\mathbb{S}(L(s,\chi_{2\Delta_{K/\mathbb{Q}}}))(n)$.
\end{itemize}
\end{theorem}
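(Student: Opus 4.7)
The equality of the orders of the first two abelian groups is immediate from the stack-theoretic description in Conventions~\ref{running conventions}, which identifies $\Ext^{s,t}_{(L^A,L^AB)}(L^A,-)$ with the $\mathbb{G}_m$-graded flat cohomology $H^{s,t}_{fl}(\mathcal{M}_{fmA};-)$. By Observation~\ref{discriminant divisibility}, the prime-power-ramification discriminant $\underline{\Delta}_{K/\mathbb{Q}}$ divides $2\Delta_{K/\mathbb{Q}}$, so Corollary~\ref{weak form of global conj holds} (equivalently Theorem~\ref{global computation} followed by inverting the additional primes in $2\Delta_{K/\mathbb{Q}}/\underline{\Delta}_{K/\mathbb{Q}}$) gives the isomorphism
\[ \Ext^{1,2n}_{(L^A,L^AB)}(L^A,L^A)[(2\Delta_{K/\mathbb{Q}})^{-1}] \cong A[(2\Delta_{K/\mathbb{Q}})^{-1}]/J_n,\]
so the first three orders agree.

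The remaining (and substantive) task is to identify this common order with $\mathbb{S}(L(s,\chi_{2\Delta_{K/\mathbb{Q}}}))(n)$. I would start by applying the Hasse principle (Theorem~\ref{hasse principle}) to the Minkowski Dedekind domain $A[(2\Delta_{K/\mathbb{Q}})^{-1}]$: this yields the factorization
\[ A[(2\Delta_{K/\mathbb{Q}})^{-1}]/J_n \;\cong\; \prod_{\mathfrak{p}\nmid 2\Delta_{K/\mathbb{Q}}} \hat{A}_{\mathfrak{p}}/H_n^{\mathfrak{p}} ,\]
where the product is taken over the maximal ideals $\mathfrak{p}$ of $A$ coprime to $2\Delta_{K/\mathbb{Q}}$ (and is finite by Lemma~\ref{finiteness of A/pr}). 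Each such $\mathfrak{p}$ is unramified over $\mathbb{Q}$, so has ramification degree $e=1$, and lies over an odd prime $p$.

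Next I would compute the order of each local factor $\hat{A}_{\mathfrak{p}}/H_n^{\mathfrak{p}}$. By Corollary~\ref{local computation}, this factor vanishes unless $N(\mathfrak{p})-1 = p^f-1$ divides $n$ (where $f$ is the residue degree of $\mathfrak{p}$); when $p^f-1\mid n$, it is isomorphic to $\hat{A}_{\mathfrak{p}}/I_{n/(p^f-1)}$. Since $p$ is odd and $e=1$, the quantity $\log_p(e/(p-1)) = -\log_p(p-1)$ is strictly negative and not an integer, so Lemma~\ref{valuations predicted by local conjecture} applies and yields $\hat{A}_{\mathfrak{p}}/I_{n/(p^f-1)} \cong \hat{A}_{\mathfrak{p}}/\pi^{\,1+\nu_p(n)}$, an $\mathbb{F}_{p^f}$-vector space of order $N(\mathfrak{p})^{1+\nu_p(n)}$.

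Finally, I would compare with the straightening side. The Euler product
\[ L(s,\chi_{2\Delta_{K/\mathbb{Q}}}) \;=\; \prod_{\mathfrak{p}\nmid 2\Delta_{K/\mathbb{Q}}} \frac{1}{1-N(\mathfrak{p})^{-s}} \]
is a product of elementary Euler factors (since each such $\mathfrak{p}$ is unramified, $N(\mathfrak{p})$ is a prime power), so Definition~\ref{def of unramified straightening transform} gives
\[ \mathbb{S}(L(s,\chi_{2\Delta_{K/\mathbb{Q}}}))(n) \;=\; \prod_{\substack{\mathfrak{p}\nmid 2\Delta_{K/\mathbb{Q}} \\ N(\mathfrak{p})-1 \mid n}} N(\mathfrak{p})^{1+\nu_p(n)} ,\]
which matches the product of orders computed above factor-by-factor. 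The main obstacle is simply bookkeeping: one must check that the local cases of the Lemma~\ref{valuations predicted by local conjecture} formula are genuinely controlled by $\nu_p(n)$ rather than by $\nu_p(n/(p^f-1))$ (they agree because $p\nmid p^f-1$), and that the restriction to primes not dividing $2\Delta_{K/\mathbb{Q}}$ matches on both sides --- the factor of $2$ in the conductor is precisely what is needed to exclude the bad prime $p=2$ where $\log_2(1/(p-1))=0$ is an integer and Lemma~\ref{valuations predicted by local conjecture} fails to apply.
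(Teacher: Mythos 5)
Your proof is correct and follows essentially the same strategy as the paper's: equate the first three orders via Corollary~\ref{weak form of global conj holds}, then compute the remaining order locally using Theorem~\ref{local-to-global conj}, Corollary~\ref{local computation}, and Lemma~\ref{valuations predicted by local conjecture}, and match the result against the Euler product. Your write-up is slightly cleaner in one respect: by passing through the Hasse-principle product $\prod_{\mathfrak{p}\nmid 2\Delta_{K/\mathbb{Q}}} \hat{A}_{\mathfrak{p}}/H_n^{\mathfrak{p}}$ and observing that the conductor $2\Delta_{K/\mathbb{Q}}$ forces every surviving $\mathfrak{p}$ to be unramified \emph{and} over an odd prime, you make explicit why $\log_p(e/(p-1))$ is never an integer for the relevant $\mathfrak{p}$; the paper instead splits on whether $p$ ramifies and so silently passes over the unramified $p=2$ case, which is harmless only because the conductor kills the $2$-local contribution on both sides --- exactly the point you flag at the end.
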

\begin{proof}
First, since $(\underline{\Delta}_{K/\mathbb{Q}}))\supseteq ({2\Delta_{K/\mathbb{Q}}})$ by Observation~\ref{discriminant divisibility}, 
inverting ${2\Delta_{K/\mathbb{Q}}}$ implies also inverting $\underline{\Delta}_{K/\mathbb{Q}}$,
so Corollary~\ref{weak form of global conj holds} implies that we have an isomorphism of $A$-modules
\[ \Ext^{1,2n}_{(L^A,L^AB)}(L^A,L^A)[(2\Delta_{K/\mathbb{Q}})^{-1}] \cong A[(2\Delta_{K/\mathbb{Q}})^{-1}]/J_{n},\]
consequently these two abelian groups have equal order. So for the rest of the proof we will concern ourselves with the question of why
the order of $\Ext^{1,2n}_{(L^A,L^AB)}(L^A,L^A)[(2\Delta_{K/\mathbb{Q}})^{-1}]$ is equal to $\mathbb{S}(L(s,\chi_{2\Delta_{K/\mathbb{Q}}}))(n)$.

Now we need to see why $\Ext^{1,2n}_{(L^A,L^AB)}(L^A,L^A)[(2\Delta_{K/\mathbb{Q}})^{-1}]$ has finite order!
We know that $\Ext^{1,2n}_{(L^A,L^AB)}(L^A,L^A)$ is a finitely generated torsion $A$-module (see the beginning of the proof of Theorem~\ref{global computation}),
hence has finite order. Furthermore, by the fundamental theorem of finitely generated modules over a Dedekind domain, 
$\Ext^{1,2n}_{(L^A,L^AB)}(L^A,L^A)$ is a direct sum of finitely many $A$-modules of the form $A/\mathfrak{p}^i$,
for various prime ideals $\mathfrak{p}$ of $A$ and various positive integers $i$.
Consequently, the effect of inverting ${2\Delta_{K/\mathbb{Q}}}$ on $\Ext^{1,2n}_{(L^A,L^AB)}(L^A,L^A)$ is the same as quotienting out those summands of $A$ of the form
$A/\mathfrak{p}^i$ where ${2\Delta_{K/\mathbb{Q}}}\in \mathfrak{p}$. So $\Ext^{1,2n}_{(L^A,L^AB)}(L^A,L^A)[(2\Delta_{K/\mathbb{Q}})^{-1}]$ is a quotient $A$-module of the finite
$A$-module $\Ext^{1,2n}_{(L^A,L^AB)}(L^A,L^A)$. So $\Ext^{1,2n}_{(L^A,L^AB)}(L^A,L^A)[(2\Delta_{K/\mathbb{Q}})^{-1}]$ is itself finite.

Now clearly $\mathbb{S}(L(s,\chi_{2\Delta_{K/\mathbb{Q}}}))(n)$ and the order of $\Ext^{1,2n}_{(L^A,L^AB)}(L^A,L^A)[(2\Delta_{K/\mathbb{Q}})^{-1}]$ are both positive integers,
so if we show that the $p$-adic valuation of $\mathbb{S}(L(s,\chi_{2\Delta_{K/\mathbb{Q}}}))(n)$ agrees with the $p$-adic valuation of the order of
$\Ext^{1,2n}_{(L^A,L^AB)}(L^A,L^A)[(2\Delta_{K/\mathbb{Q}})^{-1}]$ for all prime numbers $p$, then we are done.

Suppose $p$ is a prime number such that some prime $\mathfrak{p}$ of $A$ over $p$ is ramified. 
Then $\mathfrak{p} \supseteq (\Delta_{K/\mathbb{Q}})\supseteq ({2\Delta_{K/\mathbb{Q}}})$,
and consequently the Euler product for $L(s, \chi_{2\Delta_{K/\mathbb{Q}}})$ contains no $p$-local Euler factors. Hence $\nu_p\left(\mathbb{S}(L(s,\chi_{2\Delta_{K/\mathbb{Q}}}))(n)\right) = 0$.
On the other hand, 
for each prime $\underline{\ell}$ of $A$ over a given prime number $\ell$, the completion $\left(\Ext^{1,2n}_{(L^A,L^AB)}(L^A,L^A)\right)^{\hat{}}_{\underline{\ell}}$ is 
a finite $\hat{A}_{\underline{\ell}}$-module, hence has order equal to some power of $\ell$, and $\Ext^{1,2n}_{(L^A,L^AB)}(L^A,L^A)$
splits as a direct sum of its completions at the various maximal ideals $\mathfrak{p}$ of $A$ (this is isomorphism~\ref{splitting into completions} from the
proof of Theorem~\ref{global computation}).
Consequently $\Ext^{1,2n}_{(L^A,L^AB)}(L^A,L^A)[(2\Delta_{K/\mathbb{Q}})^{-1}]$ has no summands of order a power of $p$.
Consequently the $p$-adic valuation of the order of $\Ext^{1,2n}_{(L^A,L^AB)}(L^A,L^A)[(2\Delta_{K/\mathbb{Q}})^{-1}]$ is zero.

The more important situation is when $p$ is a prime number such that no primes of $A$ over $p$ are ramified. Suppose $p$ is such a prime.
Then the $p$-adic valuation of the order of $\Ext^{1,2n}_{(L^A,L^AB)}(L^A,L^A)[(2\Delta_{K/\mathbb{Q}})^{-1}]$
is equal to the $p$-adic valuation of the order of the completion $(\Ext^{1,2n}_{(L^A,L^AB)}(L^A,L^A))^{\hat{}}_p$, i.e.,
the product of the $p$-adic valuations of the orders of the completions $(\Ext^{1,2n}_{(L^A,L^AB)}(L^A,L^A))^{\hat{}}_{\mathfrak{p}}$ for all primes
$\mathfrak{p}$ of $A$ over $p$.
By Theorem~\ref{local-to-global conj}, the completion $(\Ext^{1,2n}_{(L^A,L^AB)}(L^A,L^A))^{\hat{}}_{\mathfrak{p}}$ is isomorphic as a $\hat{A}_{\mathfrak{p}}$-module
to $\Ext^{1,2n}_{(V^{\hat{A}_{\mathfrak{p}}},V^{\hat{A}_{\mathfrak{p}}}T)}(V^{\hat{A}_{\mathfrak{p}}},V^{\hat{A}_{\mathfrak{p}}})$,
which is in turn isomorphic, by Corollary~\ref{local computation}, to $\hat{A}_{\mathfrak{p}}/H_n$, where $H_n$ is the ideal of $A_{\mathfrak{p}}$
generated by all elements of the form $x^n-1$, where $x\in \hat{A}_{\mathfrak{p}}^{\times}$.
As we showed in Theorem~\ref{local conjecture}, $\hat{A}_{\mathfrak{p}}/H_n \cong 0$ if $p^f-1 \nmid n$ where $p^f$ is the cardinality of the residue field
of $\hat{A}_{\mathfrak{p}}$, and if $p^f-1\mid n$, then $H_n = I_{n/(p^f-1)}$, where $I_{n/(p^f-1)}$ is the ideal in $\hat{A}_{\mathfrak{p}}$ generated by all
elements of the form $a^{n/(p^f-1)}-1$ for elements $a\in \hat{A}_{\mathfrak{p}}$ congruent to $1$ modulo $\mathfrak{p}$ (this is by the Hensel's Lemma
argument at the end of the proof of Corollary~\ref{local computation}).

Finally, Lemma~\ref{valuations predicted by local conjecture} gives us that the order of
$\hat{A}_{\mathfrak{p}}/I_{n/(p^f-1)}$ is equal to $p^{fj}$, where
\begin{align*} j 
 &= 
  \left\{ \begin{array}{ll} 
   e\left(\nu_p(n/(p^f-1)) - \ceiling{\log_p \frac{e}{p-1}}\right) + p^{\ceiling{\log_p\frac{e}{p-1}}} &\mbox{\ if\ } \ceiling{\log_p\frac{e}{p-1}} \leq \nu_p(n/(p^f-1)) \\
   p^{\nu_p(n/(p^f-1))} &\mbox{\ if\ } \ceiling{\log_p\frac{e}{p-1}} > \nu_p(n/(p^f-1)) \end{array}\right. \\
 &= 
   \nu_p(n)  + 1,
\end{align*}
since by assumption $\mathfrak{p}$ is not ramified and hence $e=1$.
Consequently our formula for the $p$-adic valuation of $\Ext^{1,2n}_{(L^A,L^AB)}(L^A,L^A)[(2\Delta_{K/\mathbb{Q}})^{-1}]$ is:
\begin{equation*}\label{p-adic valuation of order of Ext1} \nu_p\left( \#\left( \Ext^{1,2n}_{(L^A,L^AB)}(L^A,L^A)[(2\Delta_{K/\mathbb{Q}})^{-1}] \right) \right) = \sum_{\mathfrak{p}} \left( (\nu_p(n)+1)f_{\mathfrak{p}}\right),\end{equation*}
where the sum is taken over all primes $\mathfrak{p}$ of $A$ over $p$ such that
the cardinality $p^{f_{\mathfrak{p}}}$ of the residue field $A/\mathfrak{p}$ satisfies $(p^{f_{\mathfrak{p}}}-1) \mid n$.

Now we turn to the $p$-adic valuation of $\mathbb{S}(L(s,\chi_{2\Delta_{K/\mathbb{Q}}}))(n)$.
This follows easily from the definition of $\mathbb{S}$ and the Euler product for $L(s,\chi_{2\Delta_{K/\mathbb{Q}}})$:
\begin{equation*}\label{p-adic valuation of order of Ext1 2} \nu_p\left( \mathbb{S}(L(s, \chi_{2\Delta_{K/\mathbb{Q}}}))(n) \right) = \sum_{\mathfrak{p}} \left( (\nu_p(n)+1)f_{\mathfrak{p}}\right),\end{equation*}
where the sum is taken over all primes $\mathfrak{p}$ of $A$ over $p$ such that
the cardinality $p^{f_{\mathfrak{p}}}$ of the residue field $A/\mathfrak{p}$ satisfies $(p^{f_{\mathfrak{p}}}-1) \mid n$,
i.e., 
\[ \nu_p\left( \mathbb{S}(L(s, \chi_{2\Delta_{K/\mathbb{Q}}}))(n) \right) = \nu_p\left( \#\left( \Ext^{1,2n}_{(L^A,L^AB)}(L^A,L^A)[(2\Delta_{K/\mathbb{Q}})^{-1}] \right) \right),\]
as desired.
\end{proof}

\begin{definition}\label{def of arithmetic equivalence}
Recall that two number fields $K_1/\mathbb{Q}$ and $K_2/\mathbb{Q}$ are said to be {\em arithmetically equivalent} if their Dedekind $\zeta$-functions
$\zeta_{K_1}(s)$ and $\zeta_{K_2}(s)$ are equal. 
\end{definition}
See~\cite{MR2041074}
for some nice examples of arithmetically equivalent number fields; in particular, Bosma and de Smit show that the smallest $n$ such that there exist distinct arithmetically equivalent number fields $K_1,K_2$ of degree $n$ over $\mathbb{Q}$ is $n=7$.

If $K_1/\mathbb{Q}$ and $K_2/\mathbb{Q}$ are number fields, a weaker notion than arithmetic equivalence is to ask whether the Dedekind $\zeta$-functions of $K_1$ and of $K_2$ agree when certain Euler factors are ignored. In other words:
\begin{definition}\label{def of arithmetic equivalence mod m}
Let $K_1,K_2$ be number fields, and let $m$ be a positive integer. We will say that {\em $K_1$ and $K_2$ are arithmetically equivalent modulo $m$} if 
the Hecke $L$-function of the trivial Gro{\ss}encharakter on $K_1$ with conductor $m$ is equal to the 
the Hecke $L$-function of the trivial Gro{\ss}encharakter on $K_2$ with conductor $m$.
\end{definition}

Now we have the following corollary of Theorem~\ref{main thm for unramified straightening}:
\begin{corollary}
Let $K_1/\mathbb{Q}$ and $K_2/\mathbb{Q}$ be finite field extensions with ring of integers $A_1$ and $A_2$, respectively.
Let $m$ be any integer which is divisible by $2, \Delta_{K_1/\mathbb{Q}}$, and $\Delta_{K_2/\mathbb{Q}}$.
If $K_1$ and $K_2$ are arithmetically equivalent modulo $m$, then
for all positive $n\in\mathbb{N}$, 
the order of the abelian group
\[ \Ext^{1,2n}_{(L^{A_1},L^{A_1}B)}(L^{A_1},L^{A_1})[m^{-1}]\]
is equal to the order of the abelian group
\[ \Ext^{1,2n}_{(L^{A_2},L^{A_2}B)}(L^{A_2},L^{A_2})[m^{-1}].\]

Equivalently, in terms of the moduli stack $\mathcal{M}_{fmA}$ of one-dimensional formal $A$-modules over $\Spec A$, and with notation as in Conventions~\ref{running conventions}:
the order of the abelian group
\[ H^{1,2n}_{fl}(\mathcal{M}_{fmA_1}; \mathcal{O})[m^{-1}]\]
is equal to the order of the abelian group
\[ H^{1,2n}_{fl}(\mathcal{M}_{fmA_2}; \mathcal{O})[m^{-1}].\]
\end{corollary}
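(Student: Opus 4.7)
My plan is to extend Theorem~\ref{main thm for unramified straightening} from the single conductor $2\Delta_{K/\mathbb{Q}}$ to an arbitrary multiple $m$ thereof, and then apply the extended statement to both $K_1$ and $K_2$. Concretely, for a finite extension $K/\mathbb{Q}$ with ring of integers $A$ and for any positive integer $m$ divisible by $2\Delta_{K/\mathbb{Q}}$, let $\chi_m$ denote the trivial Gro\ss{e}ncharakter of $K$ of conductor $m$; the auxiliary claim I would prove is
\[ \#\,\Ext^{1,2n}_{(L^{A},L^{A}B)}(L^{A},L^{A})[m^{-1}] = \mathbb{S}(L(s,\chi_m))(n). \]

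I would prove this prime-by-prime, mirroring the proof of Theorem~\ref{main thm for unramified straightening}. For primes $p$ dividing $2\Delta_{K/\mathbb{Q}}$, that argument already gives $p$-adic valuation zero on both sides. For primes $p \nmid m$, nothing changes from Theorem~\ref{main thm for unramified straightening}: enlarging the conductor from $2\Delta_{K/\mathbb{Q}}$ to $m$ affects neither the $p$-local Euler factors of $L(s,\chi_m)$ nor the $p$-primary summands of $\Ext^{1,2n}$. For primes $p$ dividing $m$ but not $2\Delta_{K/\mathbb{Q}}$, $p$ is unramified in $A$; inverting $p$ annihilates every summand of $\Ext^{1,2n}$ over $p$ by the direct-sum decomposition~\ref{splitting into completions}, while simultaneously the Euler factors at primes of $A$ over $p$ are omitted from the Euler product for $L(s,\chi_m)$, so both sides contribute $p$-adic valuation zero.

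Granted the auxiliary claim, the corollary is immediate. By the definition of arithmetic equivalence modulo $m$ (Definition~\ref{def of arithmetic equivalence mod m}), the Hecke $L$-functions with conductor $m$ for $K_1$ and for $K_2$ agree, so their images under $\mathbb{S}$ agree at every positive integer argument. Applying the auxiliary claim separately to $K_1$ and to $K_2$ then yields the desired equality of $\Ext$-group orders, and the equivalent flat cohomology statement follows at once from Conventions~\ref{running conventions}.

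The main obstacle is the bookkeeping in the auxiliary claim: one must verify, prime by prime, that killing the $p$-primary part of the $\Ext$ group is compatible with removing the corresponding $p$-local Euler factors from $\mathbb{S}$. This compatibility is not deep---it rests only on the multiplicativity of $\mathbb{S}$ over elementary Euler factors together with the direct-sum decomposition~\ref{splitting into completions} of $\Ext^{1,2n}_{(L^{A},L^{A}B)}(L^{A},L^{A})$ into its $\mathfrak{p}$-adic completions---but it is the one new point beyond a verbatim quotation of the proof of Theorem~\ref{main thm for unramified straightening}.
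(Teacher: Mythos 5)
Your plan is correct and follows the natural route---the paper offers no written proof of this corollary, leaving it as a consequence of Theorem~\ref{main thm for unramified straightening}, and your prime-by-prime extension of that theorem to larger conductors is exactly the argument that is implicitly intended.

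One small bookkeeping point deserves a fix. Your auxiliary claim is stated for $m$ divisible by $2\Delta_{K/\mathbb{Q}}$, but the corollary's hypothesis only gives that $m$ is divisible by $2$ and by each $\Delta_{K_i/\mathbb{Q}}$ separately; when $\Delta_{K_i/\mathbb{Q}}$ is even, this does \emph{not} imply $2\Delta_{K_i/\mathbb{Q}}\mid m$. So, as literally stated, the auxiliary claim cannot be applied. The repair is painless, because your prime-by-prime verification never actually uses the exact integer $2\Delta_{K/\mathbb{Q}}\mid m$: it only uses that every prime dividing $2\Delta_{K/\mathbb{Q}}$ also divides $m$, i.e., only the radical of $m$ enters, both through localization (inverting $m$ equals inverting the primes dividing $m$) and through the Euler product (the conductor removes precisely those $\mathfrak{p}$ over primes dividing $m$). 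Restate the auxiliary claim with the hypothesis that every prime factor of $2\Delta_{K/\mathbb{Q}}$ divides $m$, and the corollary's hypothesis ($2\mid m$ and $\Delta_{K_i/\mathbb{Q}}\mid m$) supplies exactly that. With this adjustment the argument is complete.
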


\begin{remark}\label{remark on S-transform and tl}
I do not know if the $\mathbb{S}$-transform is of any independent interest for number theorists (it is a very naive construction, so perhaps not!), but I do know of another topological situation in which the $\mathbb{S}$-transform is very useful, namely, in the problem of  
associating an $L$-function $L(s,X)$ to a finite CW-complex $X$ in such a way that the special values $L(-n,X)$ recover the orders of the homotopy groups of
various Bousfield localizations (say, the $KU$-localization) of $X$: this problem is in general not solvable (except for very particular choices of $X$) if one
insists on working with special values, but if one instead asks that the values $\mathbb{S}(L(s,X))(n)$ of the $\mathbb{S}$-transform coincide
with the orders of the homotopy groups of the $KU$-localization of $X$, then this problem is very tractable, and I will give my solution to this problem in a paper
which is currently in preparation.
\end{remark}

\subsection{The Galois-Dedekind straightening transform.}

Theorem~\ref{main thm for unramified straightening}, identifying the values of the unramified straightening transform of a Hecke $L$-function of $K/\mathbb{Q}$
in terms of orders of $\Ext^1_{(L^A,L^AB)}$ groups after inverting the conductor of the Gro{\ss}encharakter, is an interesting result but there is something
not totally satisfying about it: the conductor of the Gro{\ss}encharakter is required to be divisible by twice the (classical) discriminant of $K/\mathbb{Q}$, meaning
that the result always requires that we invert at least one prime (2) as well as any primes that ramify in $K$. It would be nice to have a generalization of
Theorem~\ref{main thm for unramified straightening} which holds without having to invert so much, in particular, one which holds for at least some number fields
$K$ without having to invert {\em any} primes at all.

In this section I state and prove Theorem~\ref{main thm for g-d straightening}, 
which is just such a generalization of Theorem~\ref{main thm for unramified straightening}. Theorem~\ref{main thm for g-d straightening} comes at a price, however: 
it is phrased in terms of the Galois-Dedekind straightening transform (also defined in this section), 
which is significantly less simple and natural-looking than the unramified straightening transform.

Recall that elementary Euler factors were defined in Definition~\ref{def of unramified straightening transform}.
\begin{definition}\label{def of g-d l-function}
Suppose $L(s)$ is an $L$-function which is equal (for $s$ with sufficiently large real part) to an Euler product which is a product of elementary Euler factors.
I will say that the Euler product is {\em of Galois-Dedekind $d$-type} if the following two conditions hold:
\begin{itemize}
\item {\em (Galois axiom:)} For each prime number $p$, 
if 
\[ \frac{1}{1 - p^{-as}} \mbox{\ \ and\ \ } \frac{1}{1 - p^{-bs}} \]
are factors appearing in the Euler product for $L(s)$, then $a=b$.
\item {\em (Degree axiom:)} For all but finitely many prime numbers $p$,
there exists at least one $p$-local elementary Euler factor in the Euler product for $L(s)$,
and, writing
$\frac{1}{1 - p^{as}}$ for any such $p$-local elementary Euler factor,
we have the equation $an = d$, where $n$ is the number of factors $\frac{1}{1 - p^{as}}$ appearing in the Euler product for $L(s)$.
\end{itemize}

I will say that an Euler product is {\em of Galois-Dedekind type} if it is of Galois-Dedekind $d$-type for some positive integer $d$.
\end{definition}

\begin{example}\label{examples of g-d l-functions}
If $K/\mathbb{Q}$ is a finite extension, then the Dedekind $\zeta$-function $\zeta_K(s)$ is not necessarily of Galois-Dedekind type.
However, if $K/\mathbb{Q}$ is also Galois, then the Dedekind $\zeta$-function {\em is} of Galois-Dedekind type, specifically of
Galois-Dedekind $[K: \mathbb{Q}]$-type.

In fact, a more general statement is true:
suppose $K/\mathbb{Q}$ is a finite field extension which is Galois.
Let $A$ be the ring of integers of $K$, and let $m\in \mathbb{Z}$. Then the Hecke $L$-function $L(s, \chi_m)$ of the 
trivial Gro{\ss}encharakter of $A$ with conductor $m$ is of Galois-Dedekind $[K:\mathbb{Q}]$-type.
The reason for this is as follows: clearly all the factors in the Euler product for $L(s, \chi_m)$ are elementary.
If $p$ is a prime number and $p\mid m$, then $L(s, \chi_m)$ has no $p$-local Euler factors,
so the Galois axiom is automatically satisfied for such $p$.

If $p\nmid m$, then $L(s, \chi_m)$ has $p$-local Euler factors, one for each prime $\mathfrak{p}$ of $A$ over $p$.
Since $K/\mathbb{Q}$ is Galois, the Galois group $G_{K/\mathbb{Q}}$ acts transitively on the set of primes of $A$ over $p$, 
each of which has the same residue degree and ramification degree. Consequently, for fixed $p$, the elementary $p$-local Euler factors
of $L(s, \chi_m)$ are all equal. So the Galois axiom is satisfied for such $p$. 
Furthermore there will be exactly one elementary $p$-local Euler factor $\frac{1}{1 - p^{-f_ps}}$
for each prime $\mathfrak{p}$ of $A$ over $p$, and consequently the number of such factors will be equal to $\frac{[K : \mathbb{Q}]}{e_pf_p}$,
where $e_p$ is the ramification degree and $f_p$ the residue degree of any of the primes $\mathfrak{p}$ over $p$.

However, only finitely many primes $p$ will divide the classical discriminant $\Delta_{K/\mathbb{Q}}$, hence only finitely many primes $p$
will have $e_p> 1$. Hence, for all but finitely many prime numbers $p$ (those finitely many prime numbers $p$ are the ones which either ramify in $A$ 
or which divide the conductor $m$), the number of elementary $p$-local Euler factors $\frac{1}{1 - p^{-f_ps}}$ in $L(s, \chi_m)$ is equal to $\frac{[K: \mathbb{Q}]}{f_p}$.
Hence the degree axiom is satisfied.

In the case $m=1$, this shows that the Dedekind $\zeta$-function $\zeta_K(s)$ of a finite Galois extension $K/\mathbb{Q}$ is
of Galois-Dedekind type $[K: \mathbb{Q}]$.
\end{example}

\begin{definition}\label{def of g-d transform}
Let $d$ be a positive integer and let $L(s)$ be an $L$-function of Galois-Dedekind $d$-type.
By the {\em Galois-Dedekind straightening transform of $L(s)$} I mean the function $\mathbb{S}_{GD}(L(s)): \mathbb{N} \rightarrow \mathbb{N}$
given as follows: 
\[\mathbb{S}_{GD}\left( \frac{1}{1-p^{-as}}\right)(n) = \left\{ 
 \begin{array}{ll} 
 1 &\mbox{\ if\ } p^a-1\nmid n \\
 p^{\frac{d}{g_p}\left( \nu_p(n) - \ceiling{\log_p \frac{d}{ag_p(p-1)}}\right) + ap^{\ceiling{\log_p \frac{d}{ag_p(p-1)}}}} & \mbox{\ if\ } p^a-1\mid n\mbox{\ and\ } \ceiling{\log_p\frac{d}{ag_p(p-1)}} \leq \nu_p(n), \\
 p^{ap^{\nu_p(n)}} & \mbox{\ if\ } p^a-1\mid n\mbox{\ and\ } \ceiling{\log_p\frac{d}{ag_p(p-1)}} > \nu_p(n),
,\end{array}\right. \]
where $g_p$ is the number of elementary $p$-local Euler factors in $L(s)$,
and $\mathbb{S}$ of a product of Euler factors is the product of $\mathbb{S}$ applied to each Euler factor.
\end{definition}

\begin{example}\label{g-d transform of riemann zeta}
Let $\zeta(s)$ be the Riemann $\zeta$-function. Certainly its Galois-Dedekind straightening transform $\mathbb{S}_{GD}(\zeta(s))$ is defined, since
$\zeta(s)$ is a special case of a Dedekind $\zeta$-function, so it is of Galois-Dedekind $1$-type by Example~\ref{examples of g-d l-functions}.

I claim that $\mathbb{S}(\zeta(s)) = \mathbb{S}_{GD}(\zeta(s))$. For each prime number $p$, we will have $g_p = 1$, i.e., $\zeta(s)$ has 
a unique $p$-local elementary Euler factor, and that elementary Euler factor is
\[ \frac{1}{1- p^{-s}},\]
i.e., the number $a$ is $1$ in Definition~\ref{def of g-d transform}. The number $d$ is also $1$ since $\zeta(s)$ is of Galois-Dedekind $1$-type.
Consequently
\begin{equation*} \ceiling{\log_p\frac{d}{ag_p(p-1)}} = \ceiling{\log_p\frac{1}{p-1}} = 0,\end{equation*}
so the condition $\ceiling{\log_p\frac{d}{ag_p(p-1)}} \leq \nu_p(n)$ is always satisfied, and 
we have that
\[ \mathbb{S}_{GD}\left( \frac{1}{1-p^{-s}}\right)(n) = p^{a\nu_p(n) + a}\]
if $(p-1)\mid n$; finally, this agrees with Definition~\ref{def of unramified straightening transform}.

Consequently, by Proposition~\ref{riemann special value denominators}, the denominator of $\zeta(1-2n)$ is equal to $\mathbb{S}_{GD}(\zeta(s))(2n)$ as well as
$\mathbb{S}_{GD}(\zeta(s))(2n)$.
\end{example}

\begin{theorem}\label{main thm for g-d straightening}
Let $K/\mathbb{Q}$ be a finite field extension with ring of integers $A$. Suppose $K/\mathbb{Q}$ is Galois.
Let $\chi_{\underline{\Delta}_{K/\mathbb{Q}}}$ be the trivial Gro\ss{e}ncharakter of $K$ of conductor equal to the prime-power-ramification discriminant $\underline{\Delta}_{K/\mathbb{Q}}$, and let 
$L(s,\chi_{\underline{\Delta}_{K/\mathbb{Q}}})$ be its associated Hecke $L$-function.

Then, for all positive $n\in\mathbb{N}$, the following numbers are all equal:
\begin{itemize}
\item the order of the abelian group $\Ext^{1,2n}_{(L^A,L^AB)}(L^A,L^A)[{\underline{\Delta}_{K/\mathbb{Q}}}^{-1}]$,
\item the order of the abelian group $H^{1,2n}_{fl}(\mathcal{M}_{fmA}; \mathcal{O})[{\underline{\Delta}_{K/\mathbb{Q}}}^{-1}]$,
\item the order of the abelian group $A[{\underline{\Delta}_{K/\mathbb{Q}}}^{-1}]/J_{n}$, where $J_n$ is the minimal $n$-congruing ideal in $A[{\underline{\Delta}_{K/\mathbb{Q}}}^{-1}]$, and
\item the number $\mathbb{S}_{GD}(L(s,\chi_{\underline{\Delta}_{K/\mathbb{Q}}}))(n)$.
\end{itemize}
\end{theorem}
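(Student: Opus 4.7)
The plan runs parallel to the proof of Theorem~\ref{main thm for unramified straightening}, but with a more refined local input so that we only need to invert $\underline{\Delta}_{K/\mathbb{Q}}$ rather than $2\Delta_{K/\mathbb{Q}}$. The equality of the first three quantities is immediate from Corollary~\ref{weak form of global conj holds} applied to $A[\underline{\Delta}_{K/\mathbb{Q}}^{-1}]$, together with the identification of $\Ext^{*,*}$ with $H^{*,*}_{fl}(\mathcal{M}_{fmA};\mathcal{O})$ from Conventions~\ref{running conventions}. So the substance of the theorem is to check that $\mathbb{S}_{GD}(L(s,\chi_{\underline{\Delta}_{K/\mathbb{Q}}}))(n)$ agrees with the (finite) order of $\Ext^{1,2n}_{(L^A,L^AB)}(L^A,L^A)[\underline{\Delta}_{K/\mathbb{Q}}^{-1}]$, which I will do by comparing $p$-adic valuations prime by prime.

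For primes $p$ dividing $\underline{\Delta}_{K/\mathbb{Q}}$ both sides contribute trivially at $p$: the Ext group has no $p$-primary part after inverting $p$, and $L(s,\chi_{\underline{\Delta}_{K/\mathbb{Q}}})$ has no $p$-local Euler factors because $p$ divides the conductor. Now fix a prime $p\nmid\underline{\Delta}_{K/\mathbb{Q}}$, so that every prime $\mathfrak{p}$ of $A$ over $p$ has ramification degree $e_p$ with $\log_p(e_p/(p-1))$ not an integer. Combining the direct sum splitting~\ref{splitting into completions} from the proof of Theorem~\ref{global computation} with Pearlman's Theorem~\ref{local-to-global conj}, Corollary~\ref{local computation}, and Lemma~\ref{valuations predicted by local conjecture}, the $\mathfrak{p}$-adic completion of $\Ext^{1,2n}_{(L^A,L^AB)}(L^A,L^A)$ has order $p^{f_p\cdot i(\mathfrak{p},n)}$ when $(p^{f_p}-1)\mid n$ and order $1$ otherwise, where
\[ i(\mathfrak{p},n) = \begin{cases} e_p\bigl(\nu_p(n) - \lceil\log_p\tfrac{e_p}{p-1}\rceil\bigr) + p^{\lceil\log_p\tfrac{e_p}{p-1}\rceil} & \text{if } \lceil\log_p\tfrac{e_p}{p-1}\rceil \leq \nu_p(n), \\ p^{\nu_p(n)} & \text{otherwise.} \end{cases} \]
By the Galois hypothesis, the $g_p = [K:\mathbb{Q}]/(e_p f_p)$ primes $\mathfrak{p}$ of $A$ over $p$ all share the same invariants and each contribute the same factor, so the total $p$-adic valuation on this side is $g_p f_p \cdot i(\mathfrak{p},n)$.

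On the other side, Example~\ref{examples of g-d l-functions} shows that $L(s,\chi_{\underline{\Delta}_{K/\mathbb{Q}}})$ is of Galois-Dedekind $[K:\mathbb{Q}]$-type, with precisely $g_p$ identical elementary $p$-local Euler factors $\tfrac{1}{1-p^{-f_p s}}$ for each $p\nmid\underline{\Delta}_{K/\mathbb{Q}}$. In the notation of Definition~\ref{def of g-d transform} we therefore have $a=f_p$, $d=[K:\mathbb{Q}]=e_p f_p g_p$, $d/g_p=e_p f_p$, and $d/(ag_p(p-1))=e_p/(p-1)$. Substituting these into the formula of Definition~\ref{def of g-d transform} and simplifying, each single Euler factor contributes exactly $p^{f_p\cdot i(\mathfrak{p},n)}$ with the same three case distinctions, so the product over all $g_p$ factors reproduces exactly the valuation computed in the previous paragraph.

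The only real difficulty I foresee is bookkeeping: carefully confirming that the three branches in Definition~\ref{def of g-d transform} ($p^a-1\nmid n$; and the two subcases of $p^a-1\mid n$) correspond, under the substitutions $a=f_p$ and $d=[K:\mathbb{Q}]$, to the three cases in Corollary~\ref{local computation} and Lemma~\ref{valuations predicted by local conjecture} (namely $p^{f_p}-1\nmid n$, where $A/H_n\cong 0$ by the Hensel's lemma argument from Corollary~\ref{local computation}; and the two subcases of the Lemma). No new mathematical input is needed beyond what has already been proved, just arithmetic.
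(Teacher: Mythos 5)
Your proposal is correct and follows essentially the same route as the paper's proof: equality of the first three quantities via Corollary~\ref{weak form of global conj holds}, then a prime-by-prime comparison of $p$-adic valuations, with primes dividing $\underline{\Delta}_{K/\mathbb{Q}}$ handled trivially on both sides, and primes prime to $\underline{\Delta}_{K/\mathbb{Q}}$ handled by feeding the direct sum splitting~\ref{splitting into completions}, Pearlman's Theorem~\ref{local-to-global conj}, Corollary~\ref{local computation}, and Lemma~\ref{valuations predicted by local conjecture} into the Galois hypothesis, and then matching against the substitutions $a=f_p$, $d=[K:\mathbb{Q}]=e_pf_pg_p$ in Definition~\ref{def of g-d transform}, exactly as the paper does (with its intermediate quantities $j_p(n)$ and $h_p(n)$). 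The substitutions $d/g_p=e_pf_p$ and $d/(ag_p(p-1))=e_p/(p-1)$ do indeed make the three branches of Definition~\ref{def of g-d transform} line up with the cases of Corollary~\ref{local computation} and Lemma~\ref{valuations predicted by local conjecture}, so the remaining ``arithmetic bookkeeping'' you flag is genuinely straightforward and checks out.
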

\begin{proof}
This proof is much the same as that of Theorem~\ref{main thm for unramified straightening}.
Corollary~\ref{weak form of global conj holds} implies that we have an isomorphism of $A$-modules
\[ \Ext^{1,2n}_{(L^A,L^AB)}(L^A,L^A)[{\underline{\Delta}_{K/\mathbb{Q}}}^{-1}] \cong A[{\underline{\Delta}_{K/\mathbb{Q}}}^{-1}]/J_{n},\]
consequently these two abelian groups have equal order. So for the rest of the proof we will concern ourselves with the question of why
the order of $\Ext^{1,2n}_{(L^A,L^AB)}(L^A,L^A)[{\underline{\Delta}_{K/\mathbb{Q}}}^{-1}]$ is equal to $\mathbb{S}_{GD}(L(s,\chi_{\underline{\Delta}_{K/\mathbb{Q}}}))(n)$.

That $\Ext^{1,2n}_{(L^A,L^AB)}(L^A,L^A)[{\underline{\Delta}_{K/\mathbb{Q}}}^{-1}]$ has finite order is proven by the same argument as in the proof of Theorem~\ref{main thm for unramified straightening}.
Now clearly $\mathbb{S}_{GD}(L(s,\chi_{\underline{\Delta}_{K/\mathbb{Q}}}))(n)$ and the order of $\Ext^{1,2n}_{(L^A,L^AB)}(L^A,L^A)[{\underline{\Delta}_{K/\mathbb{Q}}}^{-1}]$ are both positive integers,
so if we show that the $p$-adic valuation of $\mathbb{S}_{GD}(L(s,\chi_{\underline{\Delta}_{K/\mathbb{Q}}}))(n)$ agrees with the $p$-adic valuation of the order of
$\Ext^{1,2n}_{(L^A,L^AB)}(L^A,L^A)[{\underline{\Delta}_{K/\mathbb{Q}}}^{-1}]$ for all prime numbers $p$, then we are done.

Suppose $p$ is a prime number such that some prime $\mathfrak{p}$ of $A$ over $p$ has ramification degree $e$ with the property that
$\log_p\frac{e}{p-1}$ is an integer.
Then $\mathfrak{p} \supseteq (p) \supseteq ({\underline{\Delta}_{K/\mathbb{Q}}})$,
and consequently the Euler product for $L(s, \chi_{\underline{\Delta}_{K/\mathbb{Q}}})$ contains no $p$-local Euler factors. Hence $\nu_p\left(\mathbb{S}_{GD}(L(s,\chi_{\underline{\Delta}_{K/\mathbb{Q}}}))(n)\right) = 0$.
On the other hand, 
for each prime $\underline{\ell}$ of $A$ over a given prime number $\ell$, the completion $\left(\Ext^{1,2n}_{(L^A,L^AB)}(L^A,L^A)\right)^{\hat{}}_{\underline{\ell}}$ is 
a finite $\hat{A}_{\underline{\ell}}$-module, hence has order equal to some power of $\ell$, and $\Ext^{1,2n}_{(L^A,L^AB)}(L^A,L^A)$
splits as a direct sum of its completions at the various maximal ideals $\mathfrak{p}$ of $A$ (this is isomorphism~\ref{splitting into completions} from the
proof of Theorem~\ref{global computation}).
Consequently $\Ext^{1,2n}_{(L^A,L^AB)}(L^A,L^A)[{\underline{\Delta}_{K/\mathbb{Q}}}^{-1}]$ has no summands of order a power of $p$.
Consequently the $p$-adic valuation of the order of $\Ext^{1,2n}_{(L^A,L^AB)}(L^A,L^A)[{\underline{\Delta}_{K/\mathbb{Q}}}^{-1}]$ is zero.

The more important situation is when $p$ is a prime number such that no primes of $A$ over $p$ have ramification degree $e$ such that
$\log_p\frac{e}{p-1}$ is an integer. Suppose $p$ is such a prime.
Then the $p$-adic valuation of the order of $\Ext^{1,2n}_{(L^A,L^AB)}(L^A,L^A)[{\underline{\Delta}_{K/\mathbb{Q}}}^{-1}]$
is equal to the $p$-adic valuation of the order of the completion $(\Ext^{1,2n}_{(L^A,L^AB)}(L^A,L^A))^{\hat{}}_p$, i.e.,
the product of the $p$-adic valuations of the orders of the completions $(\Ext^{1,2n}_{(L^A,L^AB)}(L^A,L^A))^{\hat{}}_{\mathfrak{p}}$ for all primes
$\mathfrak{p}$ of $A$ over $p$.
By Theorem~\ref{local-to-global conj}, the completion $(\Ext^{1,2n}_{(L^A,L^AB)}(L^A,L^A))^{\hat{}}_{\mathfrak{p}}$ is isomorphic as a $\hat{A}_{\mathfrak{p}}$-module
to $\Ext^{1,2n}_{(V^{\hat{A}_{\mathfrak{p}}},V^{\hat{A}_{\mathfrak{p}}}T)}(V^{\hat{A}_{\mathfrak{p}}},V^{\hat{A}_{\mathfrak{p}}})$,
which is in turn isomorphic, by Corollary~\ref{local computation}, to $\hat{A}_{\mathfrak{p}}/H_n$, where $H_n$ is the ideal of $A_{\mathfrak{p}}$
generated by all elements of the form $x^n-1$, where $x\in \hat{A}_{\mathfrak{p}}^{\times}$.
As we showed in Theorem~\ref{local conjecture}, $\hat{A}_{\mathfrak{p}}/H_n \cong 0$ if $p^f-1 \nmid n$ where $p^f$ is the cardinality of the residue field
of $\hat{A}_{\mathfrak{p}}$, and if $p^f-1\mid n$, then $H_n = I_{n/(p^f-1)}$, where $I_{n/(p^f-1)}$ is the ideal in $\hat{A}_{\mathfrak{p}}$ generated by all
elements of the form $a^{n/(p^f-1)}-1$ for elements $a\in \hat{A}_{\mathfrak{p}}$ congruent to $1$ modulo $\mathfrak{p}$ (this is by the Hensel's Lemma
argument at the end of the proof of Corollary~\ref{local computation}).

Finally, Lemma~\ref{valuations predicted by local conjecture} gives us that the order of
$\hat{A}_{\mathfrak{p}}/I_{n/(p^f-1)}$ is equal to $p^{fj}$, where
\begin{align*} j 
 &= 
  \left\{ \begin{array}{ll} 
   e\left(\nu_p(n/(p^f-1)) - \ceiling{\log_p \frac{e}{p-1}}\right) + p^{\ceiling{\log_p\frac{e}{p-1}}} &\mbox{\ if\ } \ceiling{\log_p\frac{e}{p-1}} \leq \nu_p(n/(p^f-1)) \\
   p^{\nu_p(n/(p^f-1))} &\mbox{\ if\ } \ceiling{\log_p\frac{e}{p-1}} > \nu_p(n/(p^f-1)). \end{array}\right. 
\end{align*}
Consequently our formula for the $p$-adic valuation of $\Ext^{1,2n}_{(L^A,L^AB)}(L^A,L^A)[{\underline{\Delta}_{K/\mathbb{Q}}}^{-1}]$ is:
\begin{equation*}\label{p-adic valuation of order of Ext1 3} \nu_p\left( \#\left( \Ext^{1,2n}_{(L^A,L^AB)}(L^A,L^A)[{\underline{\Delta}_{K/\mathbb{Q}}}^{-1}] \right) \right) = \sum_{\mathfrak{p}} (f_{\mathfrak{p}}j_{\mathfrak{p}}(n)),
\end{equation*}
where the sum is taken over all primes $\mathfrak{p}$ of $A$ over $p$ such that
the cardinality $p^{f_{\mathfrak{p}}}$ of the residue field $A/\mathfrak{p}$ satisfies $(p^{f_{\mathfrak{p}}}-1) \mid n$,
and where $j_{\mathfrak{p}}(n)$ is defined by the formula
\begin{equation}\label{def of j} j_{\mathfrak{p}}(n) = \left\{ \begin{array}{ll} 
   e_{\mathfrak{p}}\left(\nu_p(n) - \ceiling{\log_p \frac{e_{\mathfrak{p}}}{p-1}}\right) + p^{\ceiling{\log_p\frac{e_{\mathfrak{p}}}{p-1}}} &\mbox{\ if\ } \ceiling{\log_p\frac{e_{\mathfrak{p}}}{p-1}} \leq \nu_p(n)  \\
   p^{\nu_p(n)} &\mbox{\ if\ } \ceiling{\log_p\frac{e_{\mathfrak{p}}}{p-1}} > \nu_p(n). \end{array}\right. \end{equation}
But $K/\mathbb{Q}$ is assumed Galois, so
the Galois group $G_{K/\mathbb{Q}}$ acts transitively on the set of primes $\mathfrak{p}$ of $A$ over $p$,
and any two such primes $\mathfrak{p}$ have equal ramification degrees and equal residue degrees, and finally, 
the number of distinct primes $\mathfrak{p}$ of $A$ over $p$ 
is equal to $\frac{[K:\mathbb{Q}]}{e_pf_p}$, where $e_p$ and $f_p$ are the ramification degree and residue degree, respectively, of any such prime $\mathfrak{p}$. 
Consequently,
\begin{equation*} \label{p-adic valuation of order of Ext1 5} \nu_p\left( \#\left( \Ext^{1,2n}_{(L^A,L^AB)}(L^A,L^A)[{\underline{\Delta}_{K/\mathbb{Q}}}^{-1}] \right) \right) =  \frac{[K: \mathbb{Q}]}{e_p} j_{p}(n),\end{equation*}
where $j_p(n)$ is equal to the number $j_{\mathfrak{p}}(n)$ defined in~\ref{def of j} for any prime $\mathfrak{p}$ of $A$
over $p$.

Now we turn to the $p$-adic valuation of $\mathbb{S}_{GD}(L(s,\chi_{\underline{\Delta}_{K/\mathbb{Q}}}))(n)$. We are still assuming that $p$ does not divide $\underline{\Delta}_{K/\mathbb{Q}}$.
Again, since $K/\mathbb{Q}$ is Galois, every prime $\mathfrak{p}$ of $A$ over $p$ has the same residue degree, so 
the $p$-local Euler factors in $L(s,\chi_{\underline{\Delta}_{K/\mathbb{Q}}})$, taken all together, are of the form
\[ \left( \frac{1}{1-p^{-f_p s}}\right)^{[K: \mathbb{Q}]/(e_pf_p)},\]
where $e_p,f_p$ are (still) the ramification degree and the residue degree, respectively, of any prime $\mathfrak{p}$ of $A$ over $p$.
Consequently:
\begin{equation*}\label{p-adic valuation of order of Ext1 4} \nu_p\left( \mathbb{S}_{GD}(L(s, \chi_{\underline{\Delta}_{K/\mathbb{Q}}}))(n) \right) = 
 \frac{[K:\mathbb{Q}]}{e_pf_p} h_p(n),\end{equation*}
where $h_p(n)$ is defined by the formula
\begin{align} \nonumber h_p(n) &= \left\{ 
 \begin{array}{ll} 
 0 &\mbox{\ if\ } p^{f_p}-1\nmid n \\
 \frac{[K:\mathbb{Q}]}{g_p}\left( \nu_p(n) - \ceiling{\log_p \frac{[K:\mathbb{Q}]}{f_pg_p(p-1)}}\right) + f_pp^{\ceiling{\log_p \frac{[K:\mathbb{Q}]}{f_pg_p(p-1)}}} & \mbox{\ if\ } p^{f_p}-1\mid n\mbox{\ and\ } \ceiling{\log_p\frac{[K:\mathbb{Q}]}{f_pg_p(p-1)}} \leq \nu_p(n), \\
 f_pp^{\nu_p(n)} & \mbox{\ if\ } p^{f_p}-1\mid n\mbox{\ and\ } \ceiling{\log_p\frac{[K:\mathbb{Q}]}{f_pg_p(p-1)}} > \nu_p(n)
\end{array}\right. \\
\label{equation 1001}                     &= \left\{ 
 \begin{array}{ll} 
 0 &\mbox{\ if\ } p^{f_p}-1\nmid n \\
 e_pf_p\left( \nu_p(n) - \ceiling{\log_p \frac{e_p}{p-1}}\right) + f_pp^{\ceiling{\log_p \frac{e_p}{p-1}}} & \mbox{\ if\ } p^{f_p}-1\mid n\mbox{\ and\ } \ceiling{\log_p\frac{e_p}{p-1}} \leq \nu_p(n), \\
 f_pp^{\nu_p(n)} & \mbox{\ if\ } p^{f_p}-1\mid n\mbox{\ and\ } \ceiling{\log_p\frac{e_p}{p-1}} > \nu_p(n),
\end{array}\right. \\
\end{align}
where $g_p$ is the number of distinct primes of $A$ over $p$, i.e., $g_p = \frac{[K: \mathbb{Q}]}{e_pf_p}$.

Now it takes only a moment's worth of arithmetic to verify, using equations~\ref{def of j} and~\ref{equation 1001},
 that 
\[ \frac{[K: \mathbb{Q}]}{e_p} j_{p}(n) = \frac{[K:\mathbb{Q}]}{e_pf_p} h_p(n),\]
and hence that the $p$-adic valuation of $\#\left( \Ext^{1,2n}_{(L^A,L^AB)}(L^A,L^A)[{\underline{\Delta}_{K/\mathbb{Q}}}^{-1}] \right)$
agrees with the $p$-adic valuation of $\mathbb{S}_{GD}(L(s, \chi_{\underline{\Delta}_{K/\mathbb{Q}}}))(n)$.
\end{proof}

Theorem~\ref{main thm for g-d straightening} is, to me, much more satisfying than
Theorem~\ref{main thm for unramified straightening}, because Theorem~\ref{main thm for g-d straightening} implies really ``global'' statements that do not require any localization whatsoever, while Theorem~\ref{main thm for unramified straightening} always requires that, at the very least, $2$ be inverted.
So Theorem~\ref{main thm for g-d straightening} allows us (when the prime-power-ramification discriminant is trivial)
to work with the Dedekind $\zeta$-function and not just a Hecke $L$-function for some Gro{\ss}encharakter with restrictions on its conductor.
For example:
\begin{corollary}\label{global corollary on dedekind zeta}
Let $K/\mathbb{Q}$ be a finite field extension with ring of integers $A$. Suppose $K/\mathbb{Q}$ is Galois
and suppose that the prime-power-ramification discriminant $\underline{\Delta}_{K/\mathbb{Q}}$ is equal to one.

Then, for all positive $n\in\mathbb{N}$, the following numbers are all equal:
\begin{itemize}
\item the order of the abelian group $\Ext^{1,2n}_{(L^A,L^AB)}(L^A,L^A)$,
\item the order of the abelian group $H^{1,2n}_{fl}(\mathcal{M}_{fmA}; \mathcal{O})$,
\item the order of the abelian group $A/J_{n}$, where $J_n$ is the minimal $n$-congruing ideal in $A$, and
\item the number $\mathbb{S}_{GD}(\zeta_K(s))(n)$.
\end{itemize}
\end{corollary}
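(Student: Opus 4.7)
The plan is to obtain this result as an immediate specialization of Theorem~\ref{main thm for g-d straightening} under the hypothesis $\underline{\Delta}_{K/\mathbb{Q}} = 1$. There are two simplifications to observe, and then the work is essentially done.

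First, I would observe that inverting the integer $1$ is a trivial operation: for any abelian group $M$, the natural map $M \to M[1^{-1}]$ is an isomorphism. Consequently, when $\underline{\Delta}_{K/\mathbb{Q}} = 1$, the three groups $\Ext^{1,2n}_{(L^A,L^AB)}(L^A,L^A)[\underline{\Delta}_{K/\mathbb{Q}}^{-1}]$, $H^{1,2n}_{fl}(\mathcal{M}_{fmA}; \mathcal{O})[\underline{\Delta}_{K/\mathbb{Q}}^{-1}]$, and $A[\underline{\Delta}_{K/\mathbb{Q}}^{-1}]/J_n$ appearing in the statement of Theorem~\ref{main thm for g-d straightening} coincide with the corresponding non-localized groups $\Ext^{1,2n}_{(L^A,L^AB)}(L^A,L^A)$, $H^{1,2n}_{fl}(\mathcal{M}_{fmA}; \mathcal{O})$, and $A/J_n$ appearing in the present corollary.

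Second, I would identify $L(s,\chi_{\underline{\Delta}_{K/\mathbb{Q}}})$ with $\zeta_K(s)$. The trivial Gro{\ss}encharakter $\chi_1$ of $K$ with conductor $1$ has the property that its Hecke $L$-function is, by definition, the Euler product over all maximal ideals $\mathfrak{p}$ of $A$ which are coprime to $(1) = A$; but every maximal ideal is coprime to the unit ideal, so no Euler factors are omitted, and $L(s,\chi_1) = \zeta_K(s)$. In particular, $\mathbb{S}_{GD}(L(s,\chi_{\underline{\Delta}_{K/\mathbb{Q}}}))(n) = \mathbb{S}_{GD}(\zeta_K(s))(n)$ when $\underline{\Delta}_{K/\mathbb{Q}} = 1$. (One also needs to know that $\mathbb{S}_{GD}(\zeta_K(s))$ is even defined, which follows from Example~\ref{examples of g-d l-functions}, where it is observed that $\zeta_K(s)$ is of Galois-Dedekind $[K:\mathbb{Q}]$-type when $K/\mathbb{Q}$ is Galois.)

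With these two observations in hand, the corollary is a direct translation of Theorem~\ref{main thm for g-d straightening}. The only real ``content'' to check is that there is no subtle issue with applying the Galois-Dedekind straightening transform to the Dedekind $\zeta$-function itself rather than to the Hecke $L$-function of some conductor-$m$ Gro{\ss}encharakter; since the latter just discards some Euler factors, and since we are discarding none when $\underline{\Delta}_{K/\mathbb{Q}} = 1$, there is no obstacle. I do not expect any step here to be difficult, since the heavy lifting — the Hasse principle, the local computation, Pearlman's local-global theorem, and the bookkeeping that matches $p$-adic valuations of $\mathbb{S}_{GD}$-values with orders of $\Ext^1$-completions — has already been done in the proof of Theorem~\ref{main thm for g-d straightening}.
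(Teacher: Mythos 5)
Your proposal is correct and matches the paper's (implicit) treatment: the paper states this corollary immediately after Theorem~\ref{main thm for g-d straightening} with no separate proof, clearly regarding it as a direct specialization, exactly as you argue. Both of your simplifying observations — that localizing at $1$ is the identity, and that the conductor-$1$ Hecke $L$-function is $\zeta_K(s)$ — are the intended ones.
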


Arithmetic equivalence and arithmetic equivalence modulo $m$ were defined in Definitions~\ref{def of arithmetic equivalence}
and~\ref{def of arithmetic equivalence mod m}, respectively.

\begin{corollary}\label{mod m arithmetic equivalence corollary, g-d}
Let $K_1/\mathbb{Q}$ and $K_2/\mathbb{Q}$ be finite field extensions with ring of integers $A_1$ and $A_2$, respectively.
Suppose that $K_1/\mathbb{Q}$ and $K_2/\mathbb{Q}$ are both Galois.
Let $m$ be any integer which is divisible by both $\underline{\Delta}_{K_1/\mathbb{Q}}$ and $\underline{\Delta}_{K_2/\mathbb{Q}}$.
If $K_1$ and $K_2$ arithmetically equivalent modulo $m$, then
for all positive $n\in\mathbb{N}$, 
the order of the abelian group
\[ \Ext^{1,2n}_{(L^{A_1},L^{A_1}B)}(L^{A_1},L^{A_1})[m^{-1}]\]
is equal to the order of the abelian group
\[ \Ext^{1,2n}_{(L^{A_2},L^{A_2}B)}(L^{A_2},L^{A_2})[m^{-1}].\]
\end{corollary}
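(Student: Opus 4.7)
The plan is to deduce this corollary from a mild generalization of Theorem~\ref{main thm for g-d straightening} in which the Gro\ss{}encharakter's conductor is allowed to be any multiple of the prime-power-ramification discriminant (not just the discriminant itself), and then to apply this generalization to each of $K_1$ and $K_2$ and compare.

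First I would verify that, for any integer $m$ divisible by $\underline{\Delta}_{K/\mathbb{Q}}$, the Hecke $L$-function $L(s,\chi_m)$ of the trivial Gro\ss{}encharakter of $K$ of conductor $m$ is still of Galois-Dedekind $[K:\mathbb{Q}]$-type in the sense of Definition~\ref{def of g-d l-function}. This is immediate from Example~\ref{examples of g-d l-functions}: $L(s,\chi_m)$ is obtained from $\zeta_K(s)$ by deleting the (finitely many) $p$-local Euler factors for $p\mid m$, the Galois axiom is preserved under removing primes entirely, and the degree axiom only needs to hold for all but finitely many primes. So $\mathbb{S}_{GD}(L(s,\chi_m))$ is defined.

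Next I would prove the following enhancement of Theorem~\ref{main thm for g-d straightening}: for any finite Galois $K/\mathbb{Q}$ with ring of integers $A$ and any integer $m$ divisible by $\underline{\Delta}_{K/\mathbb{Q}}$,
\[ \#\bigl(\Ext^{1,2n}_{(L^A,L^AB)}(L^A,L^A)[m^{-1}]\bigr) \;=\; \mathbb{S}_{GD}\bigl(L(s,\chi_m)\bigr)(n). \]
The argument is essentially the proof of Theorem~\ref{main thm for g-d straightening}, repeated verbatim, after observing the following compatibility prime by prime. On the cohomology side, by the splitting of $\Ext^{1,2n}_{(L^A,L^AB)}(L^A,L^A)$ into its $\mathfrak{p}$-adic completions (isomorphism~\ref{splitting into completions} in the proof of Theorem~\ref{global computation}), inverting $m$ (rather than just $\underline{\Delta}_{K/\mathbb{Q}}$) simply kills those additional summands corresponding to maximal ideals $\mathfrak{p}$ sitting over primes $p\mid m$; the contribution to the $p$-adic valuation of the order is then $0$ for such $p$. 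On the $L$-function side, the factors $\frac{1}{1-p^{-f_ps}}$ are removed from the Euler product for precisely those $p$, so by Definition~\ref{def of g-d transform}, $\nu_p(\mathbb{S}_{GD}(L(s,\chi_m))(n)) = 0$ for such $p$ as well. For primes $p\nmid m$ the two sides agree exactly as in the proof of Theorem~\ref{main thm for g-d straightening}, using the local calculation (Theorem~\ref{local conjecture}, Corollary~\ref{local computation}, and Lemma~\ref{valuations predicted by local conjecture}) together with Pearlman's local-global isomorphism (Theorem~\ref{local-to-global conj}) and the transitivity of $\Gal(K/\mathbb{Q})$ on primes over $p$. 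The arithmetic check, where $\frac{[K:\mathbb{Q}]}{e_p}j_p(n) = \frac{[K:\mathbb{Q}]}{e_pf_p}h_p(n)$, is identical to the one at the end of the proof of Theorem~\ref{main thm for g-d straightening}; no new calculation is needed.

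With the enhancement in hand, the corollary follows at once: applying it to both $K_1$ and $K_2$ (both of whose prime-power-ramification discriminants divide $m$, by hypothesis), we obtain
\[ \#\bigl(\Ext^{1,2n}_{(L^{A_i},L^{A_i}B)}(L^{A_i},L^{A_i})[m^{-1}]\bigr) \;=\; \mathbb{S}_{GD}\bigl(L(s,\chi_{m,i})\bigr)(n) \quad (i=1,2), \]
where $\chi_{m,i}$ is the trivial Gro\ss{}encharakter on $K_i$ of conductor $m$. Arithmetic equivalence of $K_1$ and $K_2$ modulo $m$ (Definition~\ref{def of arithmetic equivalence mod m}) means exactly that $L(s,\chi_{m,1}) = L(s,\chi_{m,2})$ as functions, hence as Euler products, hence their $\mathbb{S}_{GD}$-transforms agree, giving equality of the two orders.

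The only genuine obstacle is verifying the prime-by-prime matching in the enhancement: one must confirm that, at a prime $p\mid m$ but $p\nmid \underline{\Delta}_{K/\mathbb{Q}}$, killing the $p$-primary part of $\Ext^1$ by localization is consistent with deleting the (possibly nontrivial) $p$-local Euler factors of $\zeta_K$. This is harmless because both operations produce the value $p^0 = 1$ at such $p$, but it is the one place where Theorem~\ref{main thm for g-d straightening} as stated does not directly apply and one must revisit the argument. Everything else is a direct appeal to results already established.
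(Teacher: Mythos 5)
Your proof is correct, and it is essentially the argument the paper has in mind: the corollary is treated as an immediate consequence of Theorem~\ref{main thm for g-d straightening}, and later, in the proof of Theorem~\ref{main equivalence thm after localization}, the paper explicitly invokes ``By Theorem~\ref{main thm for g-d straightening} \ldots the order of $\#(\Ext^{1,2n}_{(L^{A_i},L^{A_i}B)}(L^{A_i},L^{A_i})[m^{-1}])$ is equal to $\mathbb{S}_{GD}(L(s,\chi_{m,i}))(n)$'' for a general multiple $m$ of $\underline{\Delta}_{K_i/\mathbb{Q}}$, which is exactly the enhancement you spell out. You are right, and more careful than the paper, to flag that Theorem~\ref{main thm for g-d straightening} as literally stated covers only $m=\underline{\Delta}_{K/\mathbb{Q}}$, so the extension to arbitrary such $m$ deserves a word; your prime-by-prime check (valuation $0$ on both sides for $p\mid m$, since the $\mathfrak{p}$-adic summands of $\Ext^1$ vanish after inverting $m$ and the $p$-local Euler factors disappear from $L(s,\chi_m)$, while for $p\nmid m$ nothing changes) is precisely the observation needed to close that small gap, and it is consistent with the splitting~\ref{splitting into completions}, Lemma~\ref{valuations predicted by local conjecture}, Corollary~\ref{local computation}, and Pearlman's Theorem~\ref{local-to-global conj}, as you cite.
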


\begin{corollary}\label{arithmetic equivalence corollary, g-d}
Let $K_1/\mathbb{Q}$ and $K_2/\mathbb{Q}$ be finite field extensions with ring of integers $A_1$ and $A_2$, respectively.
Suppose that $K_1/\mathbb{Q}$ and $K_2/\mathbb{Q}$ are both Galois, and suppose that
$\underline{\Delta}_{K_1/\mathbb{Q}}$ and $\underline{\Delta}_{K_2/\mathbb{Q}}$ are both equal to $1$.
If $K_1$ and $K_2$ arithmetically equivalent, then
for all positive $n\in\mathbb{N}$, 
the order of the abelian group
\[ \Ext^{1,2n}_{(L^{A_1},L^{A_1}B)}(L^{A_1},L^{A_1})\]
is equal to the order of the abelian group
\[ \Ext^{1,2n}_{(L^{A_2},L^{A_2}B)}(L^{A_2},L^{A_2}).\]

Equivalently, in terms of the moduli stack $\mathcal{M}_{fmA}$ of one-dimensional formal $A$-modules over $\Spec A$, and with notation as in Conventions~\ref{running conventions}:
the order of the abelian group
\[ H^{1,2n}_{fl}(\mathcal{M}_{fmA_1}; \mathcal{O})\]
is equal to the order of the abelian group
\[ H^{1,2n}_{fl}(\mathcal{M}_{fmA_2}; \mathcal{O}).\]
\end{corollary}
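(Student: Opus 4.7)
The plan is to derive this corollary as a direct specialization of Corollary~\ref{global corollary on dedekind zeta} once it is in place, together with the tautological fact that two extensions are arithmetically equivalent if and only if their Dedekind $\zeta$-functions are literally equal as meromorphic functions.

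First I would apply Corollary~\ref{global corollary on dedekind zeta} separately to $K_1$ and to $K_2$. The hypothesis that $\underline{\Delta}_{K_1/\mathbb{Q}} = \underline{\Delta}_{K_2/\mathbb{Q}} = 1$, together with the Galois hypothesis, is exactly what is needed to invoke that corollary; it yields, for each $i \in \{1,2\}$ and each positive $n$, the equality
\[
  \#\Ext^{1,2n}_{(L^{A_i},L^{A_i}B)}(L^{A_i},L^{A_i}) \;=\; \mathbb{S}_{GD}(\zeta_{K_i}(s))(n).
\]
Second, by Definition~\ref{def of arithmetic equivalence}, arithmetic equivalence of $K_1$ and $K_2$ means $\zeta_{K_1}(s) = \zeta_{K_2}(s)$ as functions, so in particular their Euler products (which are of Galois-Dedekind $[K_i:\mathbb{Q}]$-type by Example~\ref{examples of g-d l-functions}) agree factor-by-factor, and hence $\mathbb{S}_{GD}(\zeta_{K_1}(s))(n) = \mathbb{S}_{GD}(\zeta_{K_2}(s))(n)$ for all $n$. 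Combining the two displayed equalities then gives the claimed equality of orders of the $\Ext^1$-groups.

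Finally, for the reformulation in terms of flat cohomology of the moduli stacks $\mathcal{M}_{fmA_i}$, I would invoke the standard translation recorded in Conventions~\ref{running conventions}: since $(L^{A_i}, L^{A_i}B)$ is the classifying Hopf algebroid of the stack $\mathcal{M}_{fmA_i}$, we have a natural identification
\[
  \Ext^{1,2n}_{(L^{A_i},L^{A_i}B)}(L^{A_i},L^{A_i}) \;\cong\; H^{1,2n}_{fl}(\mathcal{M}_{fmA_i}; \mathcal{O}),
\]
so the equality of orders on the $\Ext$ side immediately transports to the flat cohomology side.

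Given the preparatory machinery, there is no real obstacle: the entire argument is a two-step chain through Corollary~\ref{global corollary on dedekind zeta} and the definition of arithmetic equivalence. The one point that deserves a sentence of care is confirming that the Galois-Dedekind straightening transform depends only on the Euler product data of $\zeta_{K_i}(s)$ (so that equality of the functions, not merely of their special values, really does force equality after applying $\mathbb{S}_{GD}$); this is immediate from Definition~\ref{def of g-d transform}, where $\mathbb{S}_{GD}$ is read off Euler-factor-by-Euler-factor.
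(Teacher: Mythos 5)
Your argument is correct and matches the approach the paper intends: Corollary~\ref{arithmetic equivalence corollary, g-d} is left without an explicit proof precisely because it is the immediate consequence of Corollary~\ref{global corollary on dedekind zeta} (the $m=1$ case of Theorem~\ref{main thm for g-d straightening}) combined with the fact that arithmetic equivalence means equality of Dedekind $\zeta$-functions, hence of their Euler products, hence of their Galois-Dedekind straightening transforms. The only detail worth one extra clause is that equality of $\zeta_{K_1}(s)$ and $\zeta_{K_2}(s)$ as meromorphic functions forces equality of their Euler products by the uniqueness of Dirichlet-series coefficients, which is what licenses the claim that $\mathbb{S}_{GD}$ agrees on the two.
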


In Corollaries~\ref{mod m arithmetic equivalence corollary, g-d} and~\ref{arithmetic equivalence corollary, g-d},
the phrase ``for all positive $n\in\mathbb{N}$'' can be replaced with ``for all $n\in\mathbb{Z}$'' without affecting the truth of the statements, since
$\Ext^{1,2n}_{(L^{A},L^{A}B)}(L^{A},L^{A})$ vanishes for $n\leq 0$.

The converse of Corollary~\ref{mod m arithmetic equivalence corollary, g-d} is proven in Theorem~\ref{main equivalence thm after localization}, and the
converse of Corollary~\ref{arithmetic equivalence corollary, g-d} is in Corollary~\ref{main equivalence thm when ppr disc is trivial}.
Theorem~\ref{main equivalence thm after localization} and Corollary~\ref{main equivalence thm when ppr disc is trivial} also imply strengthened versions of
Corollaries~\ref{mod m arithmetic equivalence corollary, g-d} and~\ref{arithmetic equivalence corollary, g-d}:
specifically, in the conclusions of Corollaries~\ref{mod m arithmetic equivalence corollary, g-d} and~\ref{arithmetic equivalence corollary, g-d} we get more 
than just that the two abelian groups are of equal order: we even get that they are {\em isomorphic.}

\subsection{The inverse Galois-Dedekind straightening transform.}

In this section I construct an inverse to the Galois-Dedekind straightening transform, and I use it to prove the main results of this section, Theorem~\ref{main equivalence thm after localization} and Corollary~\ref{main equivalence thm when ppr disc is trivial}.

First I have to say what kind of functions the inverse Galois-Dedekind straightening transform can be applied to. This requires a preliminary definition:
\begin{definition}
Let $h: \mathbb{N} \rightarrow\mathbb{N}$ be a function, and let $p$ be a prime number. I will write $\Xi_p(h)$ for the least positive integer $n$
such that $\nu_p(h(n))>0$, or $\Xi_p(h) = \infty$ if $\nu_p(h(n)) = 0$ for all $n$.
\end{definition}

\begin{definition}
Let $h: \mathbb{N} \rightarrow \mathbb{N}$ be a function. I will say that $h$ is of {\em Dedekind type}
if $h$ satisfies all of the following conditions:
\begin{itemize}
\item For all $n\in\mathbb{N}$, the integer $h(n)$ is positive.
\item For each prime number $p$, either $\Xi_p(h) = \infty$ or the number $\log_p\left( 1+ \Xi_p(h) \right)$ is an integer which divides
$\nu_p\left(h(\Xi_p(h))\right)$.

\end{itemize}
%
\end{definition}

\begin{example}
Let $K/\mathbb{Q}$ be a finite Galois extension with ring of integers $A$, let $m$ be a positive integer, and let $L(s, \chi_m)$ be the Hecke $L$-function of the trivial
Gro{\ss}encharakter of $K$ of conductor $m$. Then the Galois-Dedekind straightening transform $\mathbb{S}_{GD}(L(s,\chi_m))$ of $L(s,\chi_m)$ is
of Dedekind type: clearly $\mathbb{S}_{GD}(L(s,\chi_m))(n) >0$ for all $n$, and for any prime number $p$, the smallest integer $n$
such that $\mathbb{S}_{GD}(L(s,\chi_m))(n)$ is divisible by $p$ is $p^{f_p}-1$, where $f_p$ is the residue degree of any prime of $A$ over $p$.
Hence $\Xi_p\left( \mathbb{S}_{GD}(L(s,\chi_m))\right) + 1$ is a power of $p$, hence 
$\log_p (\Xi_p\left( \mathbb{S}_{GD}(L(s,\chi_m))\right) + 1)$ is an integer, namely, $f_p$.
One can easily use Definition~\ref{def of g-d transform} to check that
\begin{align} 
\nonumber \nu_p\left( \mathbb{S}_{GD}(L(s,\chi_m))\left(\Xi_p\left( \mathbb{S}_{GD}(L(s,\chi_m)) \right)\right)\right) 
  &= \nu_p\left( \mathbb{S}_{GD}(L(s,\chi_m)\right)\left( p^{f_p} -1\right) \\
\label{valuation equation 16}  &= f_pg_p,\end{align}
where $g_p$ is the number of primes of $A$ over $p$.
Hence $\log_p (\Xi_p\left( \mathbb{S}_{GD}(L(s,\chi_m))\right) + 1)$ is indeed an integer which divides
$\nu_p\left( \mathbb{S}_{GD}(L(s,\chi_m))\left(\Xi_p\left( \mathbb{S}_{GD}(L(s,\chi_m)) \right)\right)\right)$.
 
As the special case when $m=1$, the Galois-Dedekind straightening transform $\mathbb{S}_{GD}(\zeta_K(s))$ of any finite Galois extension $K/\mathbb{Q}$ is
of Dedekind type.
\end{example}

\begin{definition}\label{def of inverse g-d s-transform}
Let $h: \mathbb{N}\rightarrow\mathbb{N}$ be a function of Dedekind type.
By the {\em inverse Galois-Dedekind straightening transform of $h$} I mean the formal product
\begin{align*} 
 \mathbb{S}^{-1}_{GD}(h)(s) &=  \prod_{p} \frac{1}{(1-p^{-f_ps})^{g_p}},
 \end{align*}
where $f_p$ and $g_p$ are defined by the formulas
\begin{align*}
 f_p &= \log_p\left(1+ \Xi_p(h)\right) ,\\
 g_p &= \frac{\nu_p\left(h(\Xi_p(h))\right)}{f_p}.\end{align*}
\end{definition}
It is not at all clear that the inverse Galois-Dedekind straightening transform of a function of Dedekind type will converge anywhere in the complex plane at all.
However, inverse Galois-Dedekind straightening transform of a function which is itself the Galois-Dedekind straightening transform of a Hecke $L$-function of
a trivial Gro{\ss}encharakter of some conductor recovers that Hecke $L$-function, hence converges:
\begin{prop}\label{inverse g-d s-transform really is inverse}
Let $K/\mathbb{Q}$ be a finite Galois extension, let $m$ be a positive integer, and let $L(s, \chi_m)$ be the Hecke $L$-function of the trivial
Gro{\ss}encharakter of $K$ of conductor $m$. 
Then the formal product $\mathbb{S}^{-1}_{GD}\left( \mathbb{S}_{GD}L(s,\chi_m)\right)(t)$ converges for all complex numbers $t$ with real part $\Re t>1$, and
furthermore,
\[ \mathbb{S}^{-1}_{GD}\left( \mathbb{S}_{GD}L(s,\chi_m)\right)(t) = L(t,\chi_m).\]
\end{prop}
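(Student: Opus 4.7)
The plan is to unwind both sides prime-by-prime and check that the inverse transform reconstructs the Euler factors of $L(s,\chi_m)$ exactly. Since $K/\mathbb{Q}$ is Galois, all primes $\mathfrak{p}$ of $A$ above a given rational prime $p$ share a common residue degree $f_p$ and ramification degree $e_p$, and there are $g_p = [K:\mathbb{Q}]/(e_pf_p)$ of them. Consequently the Euler product for $L(s,\chi_m)$ consolidates as
\[
L(s,\chi_m) \;=\; \prod_{p\,\nmid\, m} \left(\frac{1}{1-p^{-f_p s}}\right)^{g_p},
\]
which is of Galois-Dedekind $d$-type with $d=[K:\mathbb{Q}]$ by Example~\ref{examples of g-d l-functions}. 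Writing $h(n)\coloneqq \mathbb{S}_{GD}(L(s,\chi_m))(n)$, the proposal reduces to verifying two prime-local statements: (i) if $p\mid m$, then $\Xi_p(h)=\infty$; (ii) if $p\nmid m$, then $\Xi_p(h)=p^{f_p}-1$ and $\nu_p\bigl(h(\Xi_p(h))\bigr)=f_pg_p$. Given (i) and (ii), Definition~\ref{def of inverse g-d s-transform} immediately gives back the $p$-local factor $(1-p^{-f_p s})^{-g_p}$ for every $p\nmid m$ and nothing for $p\mid m$, reproducing the displayed Euler product for $L(s,\chi_m)$, and convergence for $\Re s>1$ then follows from the classical convergence theorem for Hecke $L$-functions.

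For step (i), a prime $p\mid m$ contributes no elementary Euler factor to $L(s,\chi_m)$ at all. Since $\mathbb{S}_{GD}$ is multiplicative across elementary factors, and an elementary factor at a prime $q\neq p$ contributes only powers of $q$ to the value, no $p$-adic contribution is ever produced, so $\nu_p(h(n))=0$ for every $n$, i.e.\ $\Xi_p(h)=\infty$, as required.

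Step (ii) is the main technical check. Fix $p\nmid m$. For an individual elementary factor $\tfrac{1}{1-p^{-f_p s}}$, Definition~\ref{def of g-d transform} makes $\mathbb{S}_{GD}(\cdot)(n)$ equal to $1$ unless $p^{f_p}-1\mid n$, which forces $\Xi_p(h)\geq p^{f_p}-1$; at $n=p^{f_p}-1$ we have $\nu_p(n)=0$. Substituting $a=f_p$, $d/g_p=e_pf_p$, and $d/(ag_p(p-1))=e_p/(p-1)$ into Definition~\ref{def of g-d transform}, the exponent $e_pf_p\bigl(\nu_p(n)-\lceil\log_p(e_p/(p-1))\rceil\bigr)+f_pp^{\lceil\log_p(e_p/(p-1))\rceil}$ (first subcase) or $f_pp^{\nu_p(n)}$ (second subcase) must be evaluated. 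The key observation is that $\lceil\log_p(e_p/(p-1))\rceil\geq 0$ always, because $e_p\geq 1$ forces $e_p/(p-1)\geq 1/(p-1)>p^{-1}$; hence when $e_p\leq p-1$ the ceiling is exactly $0$ and the first subcase yields $p^{f_p}$, while when $e_p>p-1$ the ceiling is positive, placing us in the second subcase which also yields $p^{f_p}$. So each of the $g_p$ factors contributes $p^{f_p}$, giving $\nu_p(h(p^{f_p}-1))=f_pg_p$ and confirming $\Xi_p(h)=p^{f_p}-1$. Finally,
\[
f_p^{\mathrm{inv}}=\log_p\!\bigl(1+(p^{f_p}-1)\bigr)=f_p,\qquad g_p^{\mathrm{inv}}=\frac{f_pg_p}{f_p}=g_p,
\]
so Definition~\ref{def of inverse g-d s-transform} outputs $(1-p^{-f_ps})^{-g_p}$, matching the $p$-local factor of $L(s,\chi_m)$ on the nose.

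I expect the arithmetic of step (ii)---particularly the case split on whether $\lceil\log_p(e_p/(p-1))\rceil$ is zero or positive---to be the only real obstacle, since the definition of $\mathbb{S}_{GD}$ was engineered precisely to track ramified local behavior and one needs to verify that the ``extra'' ramified contributions all collapse to the clean value $p^{f_p}$ per factor at the minimal witness $n=p^{f_p}-1$. Once this collapse is established, everything else is bookkeeping, and the proposition follows.
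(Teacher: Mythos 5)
Your proof is correct and takes essentially the same approach as the paper's: reduce to comparing elementary $p$-local Euler factors, handle $p\mid m$ by the vanishing of the $p$-adic valuation, and for $p\nmid m$ verify that $\Xi_p = p^{f_p}-1$ and $\nu_p(h(\Xi_p)) = f_pg_p$ so that the recovered invariants match those in the Euler product. The only difference is that where the paper dismisses the computation of $\mathbb{S}_{GD}(L(s,\chi_m))(p^{f_p}-1)=p^{f_pg_p}$ with ``as one can easily check,'' you carry out the explicit case-split on $\bigl\lceil\log_p(e_p/(p-1))\bigr\rceil$, which is welcome extra detail but not a different route.
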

\begin{proof}
Both $\mathbb{S}^{-1}_{GD}\left( \mathbb{S}_{GD}L(s,\chi_m)\right)(t)$ and $L(t,\chi_m)$ are products of elementary Euler factors, so if we can show
that the two have the same elementary $p$-local Euler factors for all prime numbers $p$, then we will be done.

Let $p$ be a prime number dividing $m$. Then $L(s,\chi_m)$ has no $p$-local elementary Euler factors,
hence $\mathbb{S}_{GD}(L(s,\chi_m))(n)$ is prime to $p$ for all $n$, hence
$\Xi_p\left(\mathbb{S}_{GD}L(s,\chi_m)\right) = \infty$. Hence $\mathbb{S}^{-1}_{GD}\left( \mathbb{S}_{GD}L(s,\chi_m)\right)$
also has no $p$-local elementary Euler factors.

Now let $p$ be a prime number not dividing $m$, and write $A$ for the ring of integers of $K$.
Then there is one $p$-local elementary Euler factors of $L(s,\chi_m)$
for each distinct prime $\mathfrak{p}$ of $A$ over $p$, and that $p$-local elementary Euler factor is equal to
$\frac{1}{1-p^{-f_{\mathfrak{p}}s}}$, where $f_{\mathfrak{p}}$ is the residue degree of $\mathfrak{p}$.
Since $K/\mathbb{Q}$ was assumed Galois, any two primes of $A$ over $p$ have equal residue degrees, so the $p$-local elementary Euler factors of $L(s,\chi_m)$,
taken all together, are equal to 
\begin{equation}\label{local euler factors}\frac{1}{\left( 1-p^{-f_ps}\right)^{g_p}},\end{equation} where $g_p$ is the number of primes of $A$ over $p$, and
$f_p$ is equal to $f_{\mathfrak{p}}$ for any such prime $\mathfrak{p}$.

Hence, from the definition of $\mathbb{S}_{GD}$ in Definition~\ref{def of g-d transform}, the smallest positive integer $n$ such that
$\mathbb{S}_{GD}(L(s,\chi_m))(n)$ is divisible by $p$ is $n = p^{f_p}-1$, and
\[ \mathbb{S}_{GD}(L(s,\chi_m))(p^{f_p}-1) = p^{f_pg_p},\] as one can easily check from Definition~\ref{def of g-d transform}.
Hence $\log_p\left(1+ \Xi_p\left(\mathbb{S}_{GD}(L(s,\chi_m))\right)\right) = f_p$, and 
\[ \nu_p\left(\mathbb{S}_{GD}(L(s,\chi_m))\left(\Xi_p\left(\mathbb{S}_{GD}(L(s,\chi_m))\right)\right)\right) = f_pg_p,\]
as already observed in equation~\ref{valuation equation 16}. 
Consequently the $f_p$ and $g_p$ appearing in the formula~\ref{local euler factors} are equal to the $f_p$ and $g_p$ appearing in 
Definition~\ref{def of inverse g-d s-transform}, i.e.,
the elementary $p$-local Euler factors in
\[ \mathbb{S}_{GD}^{-1}\left( \mathbb{S}_{GD}\left( L(s,\chi_m)\right)\right)(t),\]
taken all together, are equal to~\ref{local euler factors}.
\end{proof}

\begin{lemma}\label{dvr quotient description}
Let $K/\mathbb{Q}_p$ be a finite field extension with ring of integers $A$, ramification degree $e$, and residue degree $f$. Let $\pi$ be a uniformizer for $A$, and let $n$ be a positive integer, and write $n$ as $n = qe+r$, with $q$ a nonnegative integer and $r$ an integer satisfying $0\leq r<e$. Then we have an isomorphism of abelian groups:
\begin{equation}\label{ab grp iso 1} A/\pi^n \cong \left((\mathbb{Z}/p^{q+1}\mathbb{Z})^{\oplus f}\right)^{\oplus r} \oplus \left((\mathbb{Z}/p^{q}\mathbb{Z})^{\oplus f}\right)^{\oplus (e-r)}.\end{equation}
\end{lemma}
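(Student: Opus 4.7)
The plan is to apply the structure theorem for finitely generated modules over $\mathbb{Z}_p$ to $A/\pi^n$, and pin down the multiplicities of the cyclic summands by computing the orders of $p^i (A/\pi^n)$ for $i$ near $q$.

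First I would lay out the basic facts. Since $A$ is a complete DVR with residue field of cardinality $p^f$, the quotient $A/\pi^n$ has order $(p^f)^n = p^{fn}$ as an abelian group, and it is a finite abelian $p$-group (being a quotient of a finitely generated $\mathbb{Z}_p$-module). Since $e$ is the ramification degree of $K/\mathbb{Q}_p$, we have $\pi^e = pu$ for some unit $u\in A^{\times}$; hence, as ideals of $A$, $(p^i) = (\pi^{ie})$ for every $i\geq 0$, and so multiplication by $p^i$ on $A/\pi^n$ has image $\pi^{ie}A/\pi^n$, whose order is $p^{f\cdot\max(0,n-ie)}$.

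Next, invoking the structure theorem, write
\[ A/\pi^n \;\cong\; \bigoplus_{j\geq 1} (\mathbb{Z}/p^j\mathbb{Z})^{\oplus c_j} \]
for uniquely determined multiplicities $c_j\geq 0$. I would then pin down the $c_j$ by three easy observations. (i) Because $(q+1)e > qe+r = n$, the element $p^{q+1}$ kills $A/\pi^n$, so $c_j=0$ for $j>q+1$. (ii) The subgroup $p^q(A/\pi^n)$ has order $\prod_j p^{c_j\max(0,j-q)} = p^{c_{q+1}}$ on the one hand, and equals $\pi^{qe}A/\pi^n$, of order $p^{f(n-qe)}=p^{fr}$, on the other; hence $c_{q+1}=fr$. (iii) Similarly, assuming $q\geq 1$, the subgroup $p^{q-1}(A/\pi^n)=\pi^{(q-1)e}A/\pi^n$ has order $p^{f(e+r)}$, while from the decomposition its order is $p^{2c_{q+1}+c_q}$; comparing gives $c_q = f(e+r)-2fr = f(e-r)$.

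Finally I would close with the total-order constraint $\sum_j jc_j = fn$. The contribution of the summands already identified is $(q+1)c_{q+1}+qc_q = (q+1)fr+qf(e-r) = fr+fqe = fn$, so $\sum_{j<q} jc_j = 0$ and therefore $c_j=0$ for every $j<q$. Reassembling,
\[ A/\pi^n \;\cong\; (\mathbb{Z}/p^{q+1}\mathbb{Z})^{\oplus fr}\;\oplus\;(\mathbb{Z}/p^q\mathbb{Z})^{\oplus f(e-r)}, \]
which is the claimed decomposition. (When $q=0$ the case $j=q-1$ is vacuous and the conclusion $c_1=fr=fn$ still follows from step (ii) alone; when $r=0$ we get $(\mathbb{Z}/p^q)^{\oplus fe}$, also consistent.) There is no real obstacle here beyond careful bookkeeping; the only substantive input is the identification $(p^i)=(\pi^{ie})$, everything else is a counting argument.
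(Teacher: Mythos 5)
Your proof is correct, and it takes a genuinely different route from the paper's. The paper argues by induction on $r$: it establishes the base case $r=0$ from the identification $A\cong\hat{\mathbb{Z}}_p^{\oplus ef}$ as abelian groups (so $A/\pi^{qe}\cong A/p^q\cong(\mathbb{Z}/p^q\mathbb{Z})^{\oplus ef}$), and for the inductive step it combines three facts --- the short exact sequence $0\to\pi^{n-1}/\pi^n\to A/\pi^n\to A/\pi^{n-1}\to 0$ with kernel $(\mathbb{Z}/p\mathbb{Z})^{\oplus f}$, the inductive description of $A/\pi^{n-1}$, and the existence of a surjection from $A/\pi^{(q+1)e}\cong(\mathbb{Z}/p^{q+1}\mathbb{Z})^{\oplus ef}$ --- and then asserts that these three properties determine $A/\pi^n$ up to isomorphism. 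You instead apply the structure theorem directly to $A/\pi^n$ and read off the elementary-divisor multiplicities $c_j$ by counting the orders of $p^q(A/\pi^n)$ and $p^{q-1}(A/\pi^n)$ and invoking the total-order constraint, the only non-formal input being the ideal identity $(p^i)=(\pi^{ie})$. Your route has the advantage of being non-inductive and of making the uniqueness explicit: the paper's assertion that its three conditions pin down the isomorphism type is stated without verification (it is true, but it does require an argument), whereas your counting determines every $c_j$ outright. Both proofs are elementary; yours is somewhat more self-contained.
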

\begin{proof}
Since $A$ is the maximal $\hat{\mathbb{Z}}_p$-order in $K$ and $[K:\mathbb{Q}_p] = ef$, we have that $A$ is isomorphic to $\hat{\mathbb{Z}}_p^{\oplus ef}$
as an abelian group. 
Hence $A/(p^m)$ is isomorphic to $\left(\mathbb{Z}/p^m\mathbb{Z}\right)^{\oplus ef}$ as an abelian group. Since $K/\mathbb{Q}_p$ has ramification degree $e$,
we have that $(\pi^e) = (\pi)^e = (p)$, and consequently 
\[ A/\pi^{qe} \cong (\mathbb{Z}/p^{q}\mathbb{Z})^{\oplus ef}\]
as abelian groups, i.e., formula~\ref{ab grp iso 1} holds for $r=0$.

If $0<r<e$, then $A/\pi^n$ sits in a sequence of surjections of $A$-modules
\[ A/\pi^{(q+1)e} \twoheadrightarrow A/\pi^n \twoheadrightarrow A/\pi^{qe},\]
with $A/\pi^{(q+1)e} \cong (\mathbb{Z}/p^{q+1}\mathbb{Z})^{\oplus ef}$ and
$A/\pi^{qe} \cong (\mathbb{Z}/p^{q}\mathbb{Z})^{\oplus ef}$ as abelian groups. 
This observation is the initial step in an induction: suppose $s$ is an integer, $0<s<e$, and
isomorphism~\ref{ab grp iso 1} is known to hold for all $r<s$.

Now we make three observations about $A/\pi^{qe+s}$:
\begin{itemize}
\item $A/\pi^{qe+s}$ sits in a short exact sequence of $A$-modules
\[ 0\rightarrow \pi^{qe+s-1}/\pi^{qe+s} \rightarrow A/\pi^{qe+s} \rightarrow A/\pi^{qe+s-1} \rightarrow 0,\]
and $\pi^{qe+s-1}/\pi^{qe+s} \cong \mathbb{F}_{p^f} \cong (\mathbb{Z}/p\mathbb{Z})^{\oplus f}$ as abelian groups.
\item 
By the inductive hypothesis, we have the isomorphism of abelian groups
\[ A/\pi^{qe+s-1} \cong \left((\mathbb{Z}/p^{q+1}\mathbb{Z})^{\oplus f}\right)^{\oplus (s-1)} \oplus \left((\mathbb{Z}/p^{q}\mathbb{Z})^{\oplus f}\right)^{\oplus (e-s+1)}.\]
\item $A/\pi^{(q+1)e}\cong \left((\mathbb{Z}/p^{q+1}\mathbb{Z})^{\oplus f}\right)^{\oplus e}$ 
surjects on to $A/\pi^{qe+s}$. \end{itemize}
Up to isomorphism, there is only one abelian group $A/\pi^{qe+s}$ which satisfies all three of these properties, namely,
\[ A/\pi^{qe+s} \cong \left((\mathbb{Z}/p^{q+1}\mathbb{Z})^{\oplus f}\right)^{\oplus (s)} \oplus \left((\mathbb{Z}/p^{q}\mathbb{Z})^{\oplus f}\right)^{\oplus (e-s)}.\]
This completes the induction and the proof.
\end{proof}

Now Proposition~\ref{inverse g-d s-transform really is inverse} implies the converse of Corollaries~\ref{mod m arithmetic equivalence corollary, g-d}
and~\ref{arithmetic equivalence corollary, g-d}:
\begin{theorem}\label{main equivalence thm after localization}
Let $K_1/\mathbb{Q}$ and $K_2/\mathbb{Q}$ be finite field extensions with ring of integers $A_1$ and $A_2$, respectively.
Suppose that $K_1/\mathbb{Q}$ and $K_2/\mathbb{Q}$ are both Galois.
Let $m$ be any integer which is divisible by both $\underline{\Delta}_{K_1/\mathbb{Q}}$ and $\underline{\Delta}_{K_2/\mathbb{Q}}$.
Then the following statements are all equivalent:
\begin{enumerate}
\item \label{condition 1 1} $K_1$ and $K_2$ are arithmetically equivalent modulo $m$.
\item \label{condition 1 2} For all positive $n\in\mathbb{N}$, 
the order of the abelian group
\[ \Ext^{1,2n}_{(L^{A_1},L^{A_1}B)}(L^{A_1},L^{A_1})[m^{-1}]\]
is equal to the order of the abelian group
\[ \Ext^{1,2n}_{(L^{A_2},L^{A_2}B)}(L^{A_2},L^{A_2})[m^{-1}].\]
\item \label{condition 1 2stack} For all positive $n\in\mathbb{N}$, 
the order of the abelian group
\[ H^{1,2n}_{fl}(\mathcal{M}_{fmA_1}; \mathcal{O})[m^{-1}]\]
is equal to the order of the abelian group
\[ H^{1,2n}_{fl}(\mathcal{M}_{fmA_2}; \mathcal{O})[m^{-1}].\]
\item \label{condition 1 3} For all positive $n\in\mathbb{N}$, 
the abelian group
\[ \Ext^{1,2n}_{(L^{A_1},L^{A_1}B)}(L^{A_1},L^{A_1})[m^{-1}]\]
is isomorphic to the abelian group
\[ \Ext^{1,2n}_{(L^{A_2},L^{A_2}B)}(L^{A_2},L^{A_2})[m^{-1}].\]
\item \label{condition 1 3stack} For all positive $n\in\mathbb{N}$, 
the abelian group
\[ H^{1,2n}_{fl}(\mathcal{M}_{fmA_1}; \mathcal{O})[m^{-1}]\]
is isomorphic to the abelian group
\[ H^{1,2n}_{fl}(\mathcal{M}_{fmA_2}; \mathcal{O})[m^{-1}].\]
\item \label{condition 1 4} For all $n\in\mathbb{N}$, the order of the abelian group $A_1/(J_{n,1})[m^{-1}]$
is equal to the order of the abelian group $A_2/(J_{n,2})[m^{-1}]$
where $J_{n,1}$ is the minimal $n$-congruing ideal of $A_1[m^{-1}]$.
and $J_{n,2}$ is the minimal $n$-congruing ideal of $A_2[m^{-1}]$.
\item \label{condition 1 5} For all $n\in\mathbb{N}$, the abelian group $A_1/(J_{n,1})[m^{-1}]$
is isomorphic to the abelian group $A_2/(J_{n,2})[m^{-1}]$
where $J_{n,2}$ is the minimal $n$-congruing ideal of $A_1[m^{-1}]$.
and $J_{n,2}$ is the minimal $n$-congruing ideal of $A_2[m^{-1}]$.
\end{enumerate}
\end{theorem}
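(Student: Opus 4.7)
The plan is to close a full cycle of implications among the seven conditions, isolating only two genuinely new steps, namely $(2)\Rightarrow(1)$ and $(1)\Rightarrow(4)$; all remaining equivalences are structural. The pairs $(2)\iff(3)$ and $(4)\iff(5)$ are instances of the $\mathbb{G}_m$-equivariant flat-cohomology interpretation of $\Ext$ over $(L^A,L^AB)$ recorded in Conventions~\ref{running conventions}. Since $m$ is divisible by $\underline{\Delta}_{K_i/\mathbb{Q}}$ for both $i$, Corollary~\ref{weak form of global conj holds} supplies natural isomorphisms
\[
\Ext^{1,2n}_{(L^{A_i},L^{A_i}B)}(L^{A_i},L^{A_i})[m^{-1}]
 \;\cong\; A_i/(J_{n,i})[m^{-1}],
\]
yielding $(2)\iff(6)$ and $(4)\iff(7)$. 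The implications $(4)\Rightarrow(2)$, $(5)\Rightarrow(3)$, and $(7)\Rightarrow(6)$ are trivial because isomorphic finite abelian groups have the same order. It then suffices to prove $(2)\Rightarrow(1)$ and $(1)\Rightarrow(4)$.

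For $(2)\Rightarrow(1)$, my first step would be to observe that Theorem~\ref{main thm for g-d straightening} holds verbatim with conductor $m$ in place of $\underline{\Delta}_{K_i/\mathbb{Q}}$: at primes dividing $m$ both the $p$-adic valuation of $\mathbb{S}_{GD}(L(s,\chi_m))(n)$ and the $p$-primary part of $\Ext^{1,2n}[m^{-1}]$ vanish, while at primes not dividing $m$ the proof given there applies unchanged. Writing $\chi_{m,i}$ for the trivial Gro{\ss}encharakter of $K_i$ of conductor $m$, this yields
\[
\#\Ext^{1,2n}_{(L^{A_i},L^{A_i}B)}(L^{A_i},L^{A_i})[m^{-1}]
 \;=\; \mathbb{S}_{GD}(L(s,\chi_{m,i}))(n)
\]
for $i=1,2$ and all positive $n$. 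Hypothesis $(2)$ then forces equality of these two straightened L-functions as functions $\mathbb{N}\to\mathbb{N}$, and Proposition~\ref{inverse g-d s-transform really is inverse} lets me invert $\mathbb{S}_{GD}$ to recover $L(s,\chi_{m,1})=L(s,\chi_{m,2})$, which is $(1)$.

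For $(1)\Rightarrow(4)$ the plan is to combine the local--global splitting with an explicit accounting of local invariants. By Pearlman's Theorem~\ref{local-to-global conj} together with the completion splitting~\ref{splitting into completions}, after inverting $m$ one has
\[
\Ext^{1,2n}_{(L^{A_i},L^{A_i}B)}(L^{A_i},L^{A_i})[m^{-1}]
 \;\cong\; \bigoplus_{\mathfrak{p}\,\nmid\, m} \hat{A}_{i,\mathfrak{p}}/\pi^{j_{\mathfrak{p}}(n)},
\]
where Corollary~\ref{local computation} and Lemma~\ref{valuations predicted by local conjecture} exhibit the exponent $j_{\mathfrak{p}}(n)$ as an explicit function of $(p, e_{\mathfrak{p}}, f_{\mathfrak{p}}, n)$ alone, and Lemma~\ref{dvr quotient description} shows that the abelian-group isomorphism class of each summand depends only on $(e_{\mathfrak{p}}, f_{\mathfrak{p}}, j_{\mathfrak{p}}(n))$. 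Because $K_i/\mathbb{Q}$ is Galois, the triple $(e_p, f_p, g_p)$ is common to all primes of $A_i$ over a given rational prime $p$. Hypothesis $(1)$ equates $p$-local Euler factors at each $p\nmid m$, which pins down $f_p$ and $g_p$; and by Example~\ref{examples of g-d l-functions} the Galois--Dedekind type of either L-function determines the global degree $d=[K_1:\mathbb{Q}]=[K_2:\mathbb{Q}]$, so $e_p=d/(f_p g_p)$ also matches. The two direct-sum decompositions then coincide summand-by-summand up to isomorphism, establishing $(4)$.

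The main obstacle is the bookkeeping in this last step: the Euler factor $\frac{1}{(1-p^{-f_p s})^{g_p}}$ visibly records the pair $(f_p, g_p)$ but not the ramification index $e_p$, and $e_p$ is precisely what Lemma~\ref{dvr quotient description} requires in order to pin down the abelian-group structure of each local summand. The degree axiom for Galois--Dedekind type rescues this by extracting $[K_i:\mathbb{Q}]$ from the L-function alone, after which $e_p=d/(f_p g_p)$ is forced. The remaining subtlety, the extension of Theorem~\ref{main thm for g-d straightening} from conductor $\underline{\Delta}_{K/\mathbb{Q}}$ to arbitrary multiples $m$, is routine on inspection of its proof.
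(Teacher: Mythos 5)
Your proposal is correct and uses the same key ingredients as the paper's own proof: the equivalence of comodule $\Ext$ with $\mathbb{G}_m$-equivariant stack cohomology for the ``stack'' conditions, Corollary~\ref{weak form of global conj holds} for the $n$-congruing quotient conditions, the Galois--Dedekind straightening transform and its inverse for passage between arithmetic equivalence modulo $m$ and orders of $\Ext^1$-groups, and Lemma~\ref{dvr quotient description} to upgrade equality of orders to isomorphism of abelian groups. The only substantive difference is a reorganization of the implication cycle (you argue $(1)\Rightarrow(4)\Rightarrow(2)\Rightarrow(1)$ directly, whereas the paper proves $(1)\Leftrightarrow(2)$ and then $(2)\Leftrightarrow(4)$ using $(1)$ as an available hypothesis), and you are usefully more explicit about two points the paper glosses over --- the extension of Theorem~\ref{main thm for g-d straightening} from conductor $\underline{\Delta}_{K/\mathbb{Q}}$ to arbitrary multiples $m$, and the recovery of the ramification index $e_p$ via the degree axiom after the L-function only visibly records $(f_p,g_p)$.
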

\begin{proof}
For the duration of this proof, I will write $L(s,\chi_{m,i})$ for the 
Hecke $L$-function of the trivial Gro{\ss}encharakter of conductor $m$ on $K_i$, where $i\in \{ 1,2\}$.

\begin{itemize}
\item {\em Condition~\ref{condition 1 2} is equivalent to condition~\ref{condition 1 2stack} and condition~\ref{condition 1 3} is equivalent to condition~\ref{condition 1 3stack}:} This is the usual cohomology-preserving equivalence of comodules over a Hopf algebroid with quasicoherent modules over the associated algebraic stack.
See Conventions~\ref{running conventions}.
\item {\em Condition~\ref{condition 1 1} implies condition~\ref{condition 1 2}:} If $K_1$ and $K_2$ are arithmetically equivalent modulo $m$, then 
\[ \#\left(\Ext^{1,2n}_{(L^{A_1},L^{A_1}B)}(L^{A_1},L^{A_1})[m^{-1}]\right) = \#\left(\Ext^{1,2n}_{(L^{A_2},L^{A_2}B)}(L^{A_2},L^{A_2})[m^{-1}]\right)\]
by Corollary~\ref{mod m arithmetic equivalence corollary, g-d}.
\item {\em Condition~\ref{condition 1 2} implies condition~\ref{condition 1 1}:} By Theorem~\ref{main thm for g-d straightening}, for $i\in \{ 1,2\}$, the order of $\#\left(\Ext^{1,2n}_{(L^{A_i},L^{A_i}B)}(L^{A_i},L^{A_i})[m^{-1}]\right)$
is equal to 
$\mathbb{S}_{GD}(L(s,\chi_{m,i}))(n)$. By Proposition~\ref{inverse g-d s-transform really is inverse}, we can recover
$L(s,\chi_{m,i})$ from $\mathbb{S}_{GD}(L(s,\chi_{m,i}))$ by applying the inverse Galois-Dedekind straightening transform.
Consequently, if
\[ \#\left(\Ext^{1,2n}_{(L^{A_1},L^{A_1}B)}(L^{A_1},L^{A_1})[m^{-1}]\right) = \#\left(\Ext^{1,2n}_{(L^{A_2},L^{A_2}B)}(L^{A_2},L^{A_2})[m^{-1}]\right),\]
then $L(s,\chi_{m,1}) = L(s,\chi_{m,2})$, so $K_1$ and $K_2$ are arithmetically equivalent modulo $m$.
\item {\em Condition~\ref{condition 1 2} is equivalent to condition~\ref{condition 1 3}:} If 
$\Ext^{1,2n}_{(L^{A_1},L^{A_1}B)}(L^{A_1},L^{A_1})[m^{-1}]$ is isomorphic as an abelian group to $\Ext^{1,2n}_{(L^{A_2},L^{A_2}B)}(L^{A_2},L^{A_2})[m^{-1}]$,
then clearly the two groups have the same order. 

For the converse: suppose
$\Ext^{1,2n}_{(L^{A_1},L^{A_1}B)}(L^{A_1},L^{A_1})[m^{-1}]$ has the same order as $\Ext^{1,2n}_{(L^{A_2},L^{A_2}B)}(L^{A_2},L^{A_2})[m^{-1}]$.
Choose a prime number $p$ not dividing $m$, and write $\mathfrak{p}_{1,1}, \dots ,\mathfrak{p}_{1, g}$ for the set of primes of $A_1$ over $p$, 
and $\mathfrak{p}_{2,1}, \dots ,\mathfrak{p}_{2, g}$ for the set of primes of $A_2$ over $p$. (These two sets do indeed have the same cardinality, namely, the set of 
$p$-local elementary Euler factors in $L(s,\chi_{m,1})$, equivalently, $L(s,\chi_{m,1})$.)
By Corollary~\ref{local computation} and Theorem~\ref{local-to-global conj}, 
the order of the abelian group 
\[ \left(\Ext^{1,2n}_{(L^{A_i},L^{A_i}B)}(L^{A_i},L^{A_i})[m^{-1}]\right)^{\hat{}}_{\mathfrak{p}_{i,j})}\cong \left(\Ext^{1,2n}_{(L^{A_i},L^{A_i}B)}(L^{A_i},L^{A_i})\right)^{\hat{}}_{\mathfrak{p}_{i,j}},\]
the $\mathfrak{p}_{i,j}$-adic completion of $\Ext^{1,2n}_{(L^{A_i},L^{A_i}B)}(L^{A_i},L^{A_i})[m^{-1}]$,
is given by the formula in Lemma~\ref{valuations predicted by local conjecture}, and this formula depends only on the ramification degree and the residue degree
of $\mathfrak{p}_{i,j}$ in the field extension $K_i/\mathbb{Q}$. That ramification degree and that residue degree are both independent of $j$, since
$K_i/\mathbb{Q}$ is Galois, and also independent of $i$, since $K_1,K_2$ are assumed arithmetically equivalent modulo $m$.

Now the $A_i[m^{-1}]$-module $\Ext^{1,2n}_{(L^{A_i},L^{A_i}B)}(L^{A_i},L^{A_i})[m^{-1}]$ is cyclic, by Corollary~\ref{local computation}, hence its $\mathfrak{p}$-adic completion 
$\left(\Ext^{1,2n}_{(L^{A_i},L^{A_i}B)}(L^{A_i},L^{A_i})[m^{-1}]\right)^{\hat{}}_{\mathfrak{p}}$ is a cyclic torsion $(A_i)^{\hat{}}_{\mathfrak{p}}$-module for all maximal ideals $\mathfrak{p}$ of $A_i[m^{-1}]$. Since $(A_i)^{\hat{}}_{\mathfrak{p}}$ is a discrete
valuation ring, every cyclic torsion module over it is isomorphic to
$(A_i)^{\hat{}}_{\mathfrak{p}}/\pi^n$ for some $n$, where $\pi$ is a 
uniformizer for $(A_i)^{\hat{}}_{\mathfrak{p}}$. 
Now by Lemma~\ref{dvr quotient description}, the structure of
$(A_i)^{\hat{}}_{\mathfrak{p}}/\pi^n$ as an abelian group is determined
entirely by $n$ and the ramification degree and residue degree of
the quotient field of $(A_i)^{\hat{}}_{\mathfrak{p}}$ as an extension of $\mathbb{Q}_p$. Since $K_1$ and $K_2$ are arithmetically equivalent modulo $m$ and are each Galois over $\mathbb{Q}$,
the $\mathfrak{p}_1$-adic completion of $K_1$ has the same residue degree and ramification degree over $\mathbb{Q}_p$ as the residue degree and ramification degree of the $\mathfrak{p}_2$-adic completion of $K_2$ over $\mathbb{Q}_p$,
for any choice of prime $\mathfrak{p}_1$ of $A_1$ and prime $\mathfrak{p}_2$ of $A_2$, both over $p$.
Consequently, knowing that 
\begin{align*} \left(\Ext^{1,2n}_{(L^{A_1},L^{A_1}B)}(L^{A_1},L^{A_1})[m^{-1}]\right)^{\hat{}}_{p}
 &\cong \oplus_{\mathfrak{p}_1/p}
  \left(\Ext^{1,2n}_{(L^{A_1},L^{A_1}B)}(L^{A_1},L^{A_1})[m^{-1}]\right)^{\hat{}}_{\mathfrak{p}_1} \end{align*}
and
\begin{align*} \left(\Ext^{1,2n}_{(L^{A_2},L^{A_2}B)}(L^{A_2},L^{A_2})[m^{-1}]\right)^{\hat{}}_{p}
 &\cong \oplus_{\mathfrak{p}_2/p}
  \left(\Ext^{1,2n}_{(L^{A_2},L^{A_2}B)}(L^{A_2},L^{A_2})[m^{-1}]\right)^{\hat{}}_{\mathfrak{p}_2} \end{align*}
have the same order implies that they are isomorphic as abelian groups.

So we have isomorphisms of abelian groups
\begin{align} \label{iso 10000} \left(\Ext^{1,2n}_{(L^{A_1},L^{A_1}B)}(L^{A_1},L^{A_1})[m^{-1}]\right)^{\hat{}}_{p} &\cong \left(\Ext^{1,2n}_{(L^{A_1},L^{A_1}B)}(L^{A_1},L^{A_1})[m^{-1}]\right)^{\hat{}}_{p} \\
\nonumber &\cong T^{\oplus g},\end{align}
for some abelian group $T$ of finite order (that order depends on $n$, and it 
is described by the formula in Lemma~\ref{valuations predicted by local conjecture},
but it is not important for this part of the proof).
The groups 
$\Ext^{1,2n}_{(L^{A_1},L^{A_1}B)}(L^{A_1},L^{A_1})[m^{-1}]$ and $\Ext^{1,2n}_{(L^{A_1},L^{A_1}B)}(L^{A_1},L^{A_1})[m^{-1}]$
are $\mathbb{Z}[m^{-1}]$-modules of finite order, hence are isomorphic as abelian groups if their $p$-adic completions
are isomorphic for all $p$ not dividing $m$.
This indeed happens, since we have the isomorphism~\ref{iso 10000} for all prime numbers $p$ not dividing $m$. 
\item {\em Condition~\ref{condition 1 2} is equivalent to condition~\ref{condition 1 4}, and 
condition~\ref{condition 1 3} is equivalent to condition~\ref{condition 1 5}:} This is the content of Corollary~\ref{weak form of global conj holds}.
\end{itemize}
\end{proof}

\begin{corollary}\label{main equivalence thm when ppr disc is trivial}
Let $K_1/\mathbb{Q}$ and $K_2/\mathbb{Q}$ be finite field extensions with ring of integers $A_1$ and $A_2$, respectively.
Suppose that $K_1/\mathbb{Q}$ and $K_2/\mathbb{Q}$ are both Galois, and suppose that the prime-power-ramification discriminants
$\underline{\Delta}_{K_1/\mathbb{Q}}$ and $\underline{\Delta}_{K_2/\mathbb{Q}}$ are both equal to $1$.
\begin{enumerate}
\item  $K_1$ and $K_2$ are arithmetically equivalent.
\item  For all positive $n\in\mathbb{N}$, 
the order of the abelian group
\[ \Ext^{1,2n}_{(L^{A_1},L^{A_1}B)}(L^{A_1},L^{A_1})\]
is equal to the order of the abelian group
\[ \Ext^{1,2n}_{(L^{A_2},L^{A_2}B)}(L^{A_2},L^{A_2}).\]
\item  For all positive $n\in\mathbb{N}$, 
the order of the abelian group
\[ H^{1,2n}_{fl}(\mathcal{M}_{fmA_1}; \mathcal{O})\]
is equal to the order of the abelian group
\[ H^{1,2n}_{fl}(\mathcal{M}_{fmA_2}; \mathcal{O}).\]
\item  For all positive $n\in\mathbb{N}$, 
the abelian group
\[ \Ext^{1,2n}_{(L^{A_1},L^{A_1}B)}(L^{A_1},L^{A_1})\]
is isomorphic to the abelian group
\[ \Ext^{1,2n}_{(L^{A_2},L^{A_2}B)}(L^{A_2},L^{A_2}).\]
\item  For all positive $n\in\mathbb{N}$, 
the abelian group
\[ H^{1,2n}_{fl}(\mathcal{M}_{fmA_1}; \mathcal{O})\]
is isomorphic to the abelian group
\[ H^{1,2n}_{fl}(\mathcal{M}_{fmA_2}; \mathcal{O}).\]
\item  For all $n\in\mathbb{N}$, the order of the abelian group $A_1/(J_{n,1})$
is equal to the order of the abelian group $A_2/(J_{n,2})$
where $J_{n,1}$ is the minimal $n$-congruing ideal of $A_1$.
and $J_{n,2}$ is the minimal $n$-congruing ideal of $A_2$.
\item For all $n\in\mathbb{N}$, the abelian group $A_1/(J_{n,1})$
is isomorphic to the abelian group $A_2/(J_{n,2})$
where $J_{n,2}$ is the minimal $n$-congruing ideal of $A_1$.
and $J_{n,2}$ is the minimal $n$-congruing ideal of $A_2$.
\end{enumerate}
\end{corollary}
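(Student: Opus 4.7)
The plan is to derive Corollary~\ref{main equivalence thm when ppr disc is trivial} as a direct specialization of Theorem~\ref{main equivalence thm after localization}. Since both $\underline{\Delta}_{K_1/\mathbb{Q}}$ and $\underline{\Delta}_{K_2/\mathbb{Q}}$ are assumed to equal $1$, they both divide $m = 1$, so the hypotheses of Theorem~\ref{main equivalence thm after localization} are met with $m = 1$. Then I would observe that inverting $1$ is a vacuous operation: $A_i[1^{-1}] = A_i$, localization at $m=1$ does nothing to any abelian group, and the trivial Gro{\ss}encharakter of conductor $1$ on $K_i$ has Hecke $L$-function equal to the Dedekind $\zeta$-function $\zeta_{K_i}(s)$. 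Consequently the seven conditions of Theorem~\ref{main equivalence thm after localization} specialize in each case to the seven conditions of the corollary.

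The only non-routine check is in the identification of condition (1): ``$K_1$ and $K_2$ are arithmetically equivalent modulo $1$'' means, by Definition~\ref{def of arithmetic equivalence mod m}, that the Hecke $L$-functions of the trivial Gro{\ss}encharakters of conductor $1$ on $K_1$ and $K_2$ coincide. Since a proper ideal containing $1$ is impossible, the Euler product defining $L(s,\chi_1)$ ranges over all maximal ideals of $A_i$, so $L(s,\chi_1) = \zeta_{K_i}(s)$, and ``arithmetically equivalent modulo $1$'' reduces to ``arithmetically equivalent'' in the sense of Definition~\ref{def of arithmetic equivalence}. The identifications between the $\Ext$-group conditions and the corresponding flat cohomology conditions on $\mathcal{M}_{fmA_i}$ are the standard equivalence from Conventions~\ref{running conventions}, which preserves cohomology in both the non-graded and the $\mathbb{G}_m$-equivariant (bigraded) sense.

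I do not anticipate any serious obstacle: the entire argument is a transcription of Theorem~\ref{main equivalence thm after localization} with $m=1$, together with the verbal observation that Definition~\ref{def of arithmetic equivalence mod m} at conductor $1$ reduces to Definition~\ref{def of arithmetic equivalence}. No additional computation with Euler factors, $n$-congruing ideals, or the inverse Galois-Dedekind straightening transform is required beyond what has already been done in the proof of Theorem~\ref{main equivalence thm after localization}. Thus the proof can be carried out in a single sentence citing Theorem~\ref{main equivalence thm after localization}, or, for the reader's convenience, in a short paragraph making the two reductions above explicit.
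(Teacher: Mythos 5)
Your proposal is correct and is exactly the argument the paper intends: since $\underline{\Delta}_{K_1/\mathbb{Q}} = \underline{\Delta}_{K_2/\mathbb{Q}} = 1$, one may take $m = 1$ in Theorem~\ref{main equivalence thm after localization}, whereupon all the localizations $[m^{-1}]$ become trivial and arithmetic equivalence modulo $1$ coincides with arithmetic equivalence in the sense of Definition~\ref{def of arithmetic equivalence}. The paper itself offers no separate proof of the corollary, treating it as the immediate $m=1$ specialization, which is precisely what you have written out.
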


In Theorem~\ref{main equivalence thm after localization} and Corollary~\ref{main equivalence thm when ppr disc is trivial},
the phrase ``for all positive $n\in\mathbb{N}$'' can be replaced with ``for all $n\in\mathbb{Z}$'' without affecting the truth of the statements, since
$\Ext^{1,2n}_{(L^{A},L^{A}B)}(L^{A},L^{A})$ vanishes for $n\leq 0$.

\begin{observation}\label{what degree 0 and 1 cohomology detect}
Suppose $K_1,K_2$ are finite extensions of $\mathbb{Q}$, with ring of integers $A_1,A_2$, respectively. Then 
the graded abelian group $\Ext^{0,*}_{(L^{A_1}, L^{A_1}B)}(L^{A_1}, L^{A_1})$ is isomorphic to 
the graded abelian group $\Ext^{0,*}_{(L^{A_2}, L^{A_2}B)}(L^{A_2}, L^{A_2})$ if and only if $[K_1: \mathbb{Q}] = [K_2: \mathbb{Q}]$.
This is easy to prove: it is because $\Ext^{0,*}_{(L^{A_i}, L^{A_i}B)}(L^{A_i}, L^{A_i}) \cong A_i$ concentrated in grading degree zero,
and, as an abelian group, $A_i$ is isomorphic to a $[K_i: \mathbb{Q}]$-fold direct sum of copies of $\mathbb{Z}$.
As a slogan: ``degree $0$ cohomology of the moduli stack of formal $A$-modules detects the degree of $K/\mathbb{Q}$.''

Corollary~\ref{main equivalence thm when ppr disc is trivial} then becomes, as a slogan,
``degree $1$ cohomology of the moduli stack of formal $A$-modules detects the arithmetic equivalence class of $K/\mathbb{Q}$,''
at least when $K/\mathbb{Q}$ is Galois and the prime-power-ramification discriminant $\overline{\Delta}_{K/\mathbb{Q}}$ is equal to $1$.
\end{observation}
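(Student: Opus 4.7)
The plan is to reduce the observation to a statement about the degree-zero cohomology of the moduli stack $\mathcal{M}_{fmA}$, and then use basic structure theory of rings of integers. The key identity to establish is
\[ \Ext^{0,*}_{(L^A,L^AB)}(L^A,L^A) \cong A, \]
concentrated in internal grading degree zero, where the isomorphism sends an element $a \in A$ to the corresponding cohomology class. Given this, the graded abelian group $\Ext^{0,*}_{(L^{A_i},L^{A_i}B)}(L^{A_i},L^{A_i})$ is just $A_i$ sitting in grading degree $0$, so an isomorphism of graded abelian groups is the same as an isomorphism of the underlying abelian groups of $A_1$ and $A_2$. Since $A_i$ is a free $\mathbb{Z}$-module of rank $[K_i:\mathbb{Q}]$ (classical: the ring of integers of a degree $d$ number field has an integral basis of size $d$), two such groups are isomorphic if and only if the ranks agree, giving both directions of the iff.

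First I would compute $\Ext^{0,*}_{(L^A,L^AB)}(L^A,L^A)$ via the two-sided cobar complex, which identifies it with the equalizer of $\eta_L, \eta_R : L^A \to L^AB$ in each internal grading degree. In grading degree $0$, both $L^A$ and $L^AB$ agree with $A$ (which sits as the base ring of the Hopf algebroid), and $\eta_L$, $\eta_R$ restrict to the identity on $A$, so the equalizer is all of $A$. In positive even grading degrees $2n$, I need to argue the equalizer vanishes: an element $x \in (L^A)^{2n}$ with $\eta_R(x) = \eta_L(x)$ is a primitive in the sense of the Hopf algebroid, and for the formal $A$-module Lazard Hopf algebroid these vanish for $n > 0$. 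The quickest way to see this is to pass through the global fact that $H^0_{fl}(\mathcal{M}_{fmA};\mathcal{O}(n)) = 0$ for $n \neq 0$ under the $\mathbb{G}_m$-grading convention of Conventions~\ref{running conventions}, since a global section of a nontrivial line-bundle twist on $\mathcal{M}_{fmA}$ would give a canonical nonzero differential on every formal $A$-module, which fails on e.g.\ the additive formal $A$-module after rescaling.

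Alternatively, and more elementarily, one can argue directly from the explicit presentation: when the class number of $A$ is one, $L^A = A[S^A_1, S^A_2, \dots]$ with $S^A_n$ in grading degree $2n$, and $\eta_R(S^A_n) - \eta_L(S^A_n) = b^A_n + (\text{decomposables in higher } b^A_j)$ after base-change to $L^A B$; no positive-degree polynomial in the $S^A_n$ can lie in the kernel of $\eta_R - \eta_L$. The general case reduces to this one after inverting finitely many primes (the class-number obstruction), and since $\Ext^{0,*}$ is torsion-free in degree $0$ this localization is harmless; see~\cite{cmah2}. The main obstacle is this last step, verifying that the equalizer vanishes in positive grading degrees in full generality without a class-number-one assumption, but as noted this is a standard consequence of the fact that the formal $A$-module Lazard ring classifies universally, so the primitives in positive degrees would correspond to universal nonzero invariants of formal $A$-modules, which do not exist.
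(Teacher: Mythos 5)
Your proposal is correct and takes the same route the paper does: identify $\Ext^{0,*}_{(L^A,L^AB)}(L^A,L^A)$ with $A$ concentrated in grading degree $0$, and then observe that a free abelian group of finite rank remembers its rank. The paper simply asserts this identification, while you supply the justification for the vanishing in positive degrees (via primitives/$\mathbb{G}_m$-twists and the additive formal $A$-module), which is the right argument; the quickest clean version of your second, presentation-based route is to note that $\Ext^{0,2n}\subseteq (L^A)^{2n}$ is torsion-free so one may tensor with $\mathbb{Q}$, where $L^A\otimes\mathbb{Q}$ is polynomial over $K$ regardless of class number, sidestepping the localization step you flagged.
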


I know of no reason why the restriction on the prime-power-ramification discriminant in Observation~\ref{what degree 0 and 1 cohomology detect}
cannot be lifted, aside from that the necessary computations (generalizing those in the proof of Theorem~\ref{local conjecture})
are simply much harder. I suspect that that restriction can indeed be lifted, i.e., that the following is true:
\begin{conjecture}{\bf (Arithmetic equivalence conjecture.)}\label{arithmetic equivalence conjecture}
Let $K_1/\mathbb{Q}$ and $K_2/\mathbb{Q}$ be finite field extensions with ring of integers $A_1$ and $A_2$, respectively.
Suppose that $K_1/\mathbb{Q}$ and $K_2/\mathbb{Q}$ are both Galois.
Then $K_1$ and $K_2$ are arithmetically equivalent if and only if, for all $n\in\mathbb{N}$, 
the abelian group
\[ \Ext^{1,2n}_{(L^{A_1},L^{A_1}B)}(L^{A_1},L^{A_1})\]
is isomorphic to the abelian group
\[ \Ext^{1,2n}_{(L^{A_2},L^{A_2}B)}(L^{A_2},L^{A_2}).\]

Equivalently, in terms of the moduli stack $\mathcal{M}_{fmA}$ of one-dimensional formal $A$-modules over $\Spec A$, and with notation as in Conventions~\ref{running conventions}:
$K_1$ and $K_2$ are arithmetically equivalent if and only if, for all $n\in\mathbb{N}$, 
the abelian group
\[ H^{1,2n}_{fl}(\mathcal{M}_{fmA_1}; \mathcal{O})\]
is isomorphic to the abelian group
\[ H^{1,2n}_{fl}(\mathcal{M}_{fmA_2}; \mathcal{O}).\]
\end{conjecture}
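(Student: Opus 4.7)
The plan is to reduce Conjecture~\ref{arithmetic equivalence conjecture} to a completely general form of Ravenel's Local Conjecture, then to bootstrap from the machinery already developed for the Galois-Dedekind straightening transform. Specifically, the proof naturally splits into three steps: (i) prove Conjecture~\ref{local conj} for all remaining finite extensions $K/\mathbb{Q}_p$, particularly those in which $\log_p\!\frac{e}{p-1}$ is an integer; (ii) use the resulting uniform description of $\Ext^{1,*}_{(V^A,V^AT)}(V^A,V^A)$ to extend both $\mathbb{S}_{GD}$ and $\mathbb{S}^{-1}_{GD}$ to the ramified Euler factors; and (iii) run the equivalence argument of Theorem~\ref{main equivalence thm after localization} without any localization.

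First I would attempt to complete the Local Conjecture. In Lemma~\ref{valuations of binomial coefficients} the function $u_{p,n,e}(k) = e\nu_p(\binom{n}{k})+k$ attains its minimum at two consecutive powers of $p$ exactly when $\log_p\!\frac{e}{p-1}$ is an integer and $\log_p\!\frac{e}{p-1}<\nu_p(n)$. This is precisely the case excluded from Theorem~\ref{local conjecture}. In those cases the leading cobar cocycle $(\eta_R-\eta_L)((v_1^A)^n)$ contains two monomials of equal minimal $\pi$-adic valuation, and one must determine whether their sum has strictly larger valuation (due to cancellation modulo a suitable power of $\pi$) or not. Combining K.~Johnson's~\cite{MR887512} resolution of all cases except totally ramified extensions of $\mathbb{Q}_p(\zeta_p)$ of $p$-power degree with an explicit analysis of cocycle cancellation for the remaining exceptional tower $K = \mathbb{Q}_p(\zeta_{p^n})$ would complete the Local Conjecture. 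This is \emph{the main obstacle}: the author explicitly flags $\mathbb{Q}_p(\zeta_{p^n})$ for $n>1$ as currently out of reach, and I expect the analysis of the two competing leading monomials in the cobar complex to require genuinely new techniques (perhaps passing to a higher cobar filtration, or exploiting the explicit Lubin-Tate structure of these extensions).

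Second, once the Local Conjecture is available in full generality, Pearlman's Theorem~\ref{local-to-global conj} gives the splitting
\[
\Ext^{1,2n}_{(L^A,L^AB)}(L^A,L^A) \cong \bigoplus_{\mathfrak{p}} \hat{A}_{\mathfrak{p}}/H_n^{\mathfrak{p}}
\]
without needing to invert the prime-power-ramification discriminant, so each local summand is computable by a formula generalizing Lemma~\ref{valuations predicted by local conjecture}, including a new case when $\log_p\!\frac{e}{p-1}$ is an integer. I would then extend Definition~\ref{def of g-d transform} so that its $p$-local value reproduces this generalized formula; the extension only needs to add a new branch to the piecewise definition for primes $p\mid \underline{\Delta}_{K/\mathbb{Q}}$. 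The inverse transform $\mathbb{S}^{-1}_{GD}$ would be extended compatibly: since the residue degree $f_p$ and the number of primes $g_p$ are still recoverable from $\Xi_p(h)$ and $\nu_p(h(\Xi_p(h)))$ in the new branch (using that $K/\mathbb{Q}$ is Galois, so all primes over $p$ have the same $e_p,f_p$), the proof of Proposition~\ref{inverse g-d s-transform really is inverse} carries over verbatim and recovers the full Dedekind $\zeta$-function.

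Third, with the generalized transforms in place, the argument of Theorem~\ref{main equivalence thm after localization} applies with $m=1$: arithmetic equivalence implies $\zeta_{K_1}=\zeta_{K_2}$, hence $\mathbb{S}_{GD}(\zeta_{K_1})=\mathbb{S}_{GD}(\zeta_{K_2})$, hence equality (and indeed isomorphism, via Lemma~\ref{dvr quotient description} applied prime-by-prime) of the Ext groups; conversely, equality of the Ext groups determines $\mathbb{S}_{GD}(\zeta_{K_i})$ and then $\mathbb{S}^{-1}_{GD}$ recovers the Dedekind $\zeta$-functions, yielding arithmetic equivalence. I expect steps two and three to be routine once step one is in hand; the entire conjecture really does hinge on resolving the Local Conjecture for totally wildly ramified cyclotomic extensions.
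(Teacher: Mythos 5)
This statement is a \emph{conjecture}, not a theorem, in the paper: the author explicitly declines to prove it and places it at the end of the paper precisely because the cases of the Local Conjecture with $\log_p\!\frac{e}{p-1}$ an integer are unresolved. The paper therefore contains no proof of Conjecture~\ref{arithmetic equivalence conjecture} for you to be compared against. What you have written is not a proof but a research program, and to your credit you say so: you flag step (i), the completion of Conjecture~\ref{local conj} at totally wildly ramified cyclotomic extensions such as $\mathbb{Q}_p(\zeta_{p^n})$ for $n>1$, as a genuine obstacle requiring new techniques, and this coincides exactly with the author's own assessment of why the statement is left conjectural.

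Two further remarks on the roadmap, both of which sharpen the acknowledged gap rather than create new ones. First, in the excluded cases Lemma~\ref{valuations of binomial coefficients} yields \emph{two} minimizing values of $k$, so the naive ``smallest-valuation monomial'' argument in the cobar complex truly does break down; there can be cancellation, and one cannot assume a priori that $\Ext^{1,2n}_{(V^A,V^AT)}(V^A,V^A)$ is cyclic, which the argument in Lemma~\ref{dvr quotient description} and your step (iii) both quietly assume. If the local $\Ext^1$ group fails to be a cyclic $A$-module for some ramified $\mathfrak{p}$, the passage from ``orders agree'' to ``groups are isomorphic'' and the invertibility of the extended $\mathbb{S}_{GD}$ would both need to be reworked, not merely re-verified. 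Second, you suggest that the Galois hypothesis makes $f_p$ and $g_p$ recoverable from $\Xi_p(h)$ and $\nu_p(h(\Xi_p(h)))$ ``in the new branch''; this is plausible but not automatic, because in the ramified branch the formula for $\nu_p\!\left(\mathbb{S}_{GD}(L)(p^{f_p}-1)\right)$ would involve $e_p$ as well, and one must check that $f_p$, $g_p$, $e_p$ are jointly determined by the values of the transform (equivalently, that two inequivalent Galois extensions cannot produce the same sequence of $\Ext^1$ orders). Neither of these is a refutation of your outline, but they show that even steps (ii) and (iii) are not entirely routine once step (i) is settled.
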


The assumptions in Conjecture~\ref{arithmetic equivalence conjecture} matter; in particular,
if $K_1,K_2$ are not both Galois, then I do not see any reason to expect anything like Conjecture~\ref{arithmetic equivalence conjecture}
to hold.

\bibliography{/home/asalch/texmf/tex/salch}{}
\bibliographystyle{plain}
\end{document}